\documentclass[twoside,a4paper,11pt]{amsart}

% PACKAGES

\usepackage[T1]{fontenc}
\usepackage{mathpazo} % add possibly `sc` and `osf` options
\usepackage{eulervm}

\usepackage[utf8]{inputenc}

\usepackage{fullpage}
\usepackage{amscd}
\usepackage{amsmath}
\usepackage{amssymb}
\usepackage{amsthm}
\usepackage{color}
\usepackage{graphicx}
\usepackage{hyperref}
\usepackage[noabbrev,capitalize]{cleveref}
\usepackage{mathrsfs}
\usepackage{mathtools}
\usepackage{pict2e}
\usepackage{stackrel}
\usepackage[T1]{fontenc}
\usepackage{tikz}
\usepackage{tikz-cd}
\usetikzlibrary{decorations.pathmorphing,decorations.markings,arrows,calc,shapes.geometric,arrows.meta,positioning}
\usepackage[utf8]{inputenc}
\usepackage{wasysym}
\usepackage[all]{xy}
\usepackage{xcolor}
\usepackage{xy}
\usepackage{CJKutf8}
\usepackage{ipaex-type1}
\usepackage{epigraph}

\hypersetup{
    colorlinks,
    linkcolor=[rgb]{0.733, 0.149, 0.286},
    citecolor=[rgb]{0.733, 0.149, 0.286},
    urlcolor=[rgb]{0.733, 0.149, 0.286}
}

\usepackage{palatino}
\usepackage{mathpazo} 
\allowdisplaybreaks

% INDENTATION

\setlength{\parindent}{0pt}

\newtheorem{lemma}{Lemma}[section]
\newtheorem{theorem}[lemma]{Theorem}
\newtheorem{Corollary}[lemma]{Corollary}
\newtheorem{Proposition}[lemma]{Proposition}

\newtheorem*{Notation}{Notation}

\newtheorem{theoremintro}{Theorem}

% handles for reference theorems for intro

% in \rm
\theoremstyle{definition}
\newtheorem{Definition}[lemma]{Definition}
\newtheorem{Remark}[lemma]{\sc Remark}
\newtheorem{Counterexample}[lemma]{\sc Counter-Example}
\newtheorem{Example}[lemma]{\sc Example}

% MACROS

% categories
\def\colim{\mathop{\mathrm{colim}}}

%Working Categories 

\newcommand{\smod}{\mathbb{S}\textsf{-}\mathsf{mod}}

% Symbols

\newcommand{\ac}{\scriptstyle \textrm{!`}}

\newcommand{\qi}{\xrightarrow{ \,\smash{\raisebox{-0.65ex}{\ensuremath{\scriptstyle\sim}}}\,}}
\newcommand{\lqi}{\xleftarrow{ \,\smash{\raisebox{-0.65ex}{\ensuremath{\scriptstyle\sim}}}\,}}

% cubical sets

% simplicial sets

% operads and algebras

% trees

% tree drawings as indices

% others
\newcommand{\C}{\mathcal{C}}

\newcommand{\kk}{\Bbbk}

%Curved Operads 

\newcommand{\PP}{\mathcal{P}}

%Dirac

\input cyracc.def
\font\tencyr=wncysc10
\def\cyr{\tencyr\cyracc}
\def\diracComb{\mbox{\cyr SH}}

% AUTHOR, ETC.

\author{Victor Roca i Lucio}
\title{Absolute algebras, contramodules, and duality squares}
\date{\today}

\address{Victor Roca i Lucio, Université Paris Cité and Sorbonne Université, CNRS, IMJ-PRG, F-75013 Paris, France}
\email{\href{mailto:roca-lucio@imj-prg.fr}{roca-lucio@imj-prg.fr}}

% CLASSIFICATION/KEYWORDS/THANKS

\subjclass[2020]{Primary 18N40, Secondary 18M70.}

\keywords{Absolute algebras, contramodules, operads, universal enveloping algebras, homotopical algebra.}

\thanks{The author was partially supported by the projects ANR-20-CE40-0016 HighAGT and ANR-22-CE40-0008 SHoCoS funded by the Agence Nationale pour la Recherche.}

\begin{document}
	
\begin{abstract}
Absolute algebras are a new type of algebraic structures, endowed with a meaningful notion of infinite sums of operations without supposing any underlying topology. Opposite to the usual definition of operadic calculus, they are defined as algebras over cooperads. The goal of this article is to develop this new theory. First, we relate the homotopy theory of absolute algebras to the homotopy theory of usual algebras via a \textit{duality square}. It intertwines bar-cobar adjunctions with linear duality adjunctions. In particular, we show that linear duality functors between types of coalgebras and types of algebras are Quillen functors and that they induce equivalences between objects with finiteness conditions on their homology. 
We give general comparison results between absolute types of algebras and their classical counterparts. We work out examples of this theory such as absolute associative algebras and absolute Lie algebras, and show that it includes the theory of contramodules. Finally, in \cite{campos2020lie}, the authors showed that two nilpotent Lie algebras whose universal enveloping algebras are isomorphic as associative algebras must be isomorphic. As an application of our results, we generalize their theorem to the setting of absolute Lie algebras and absolute $\mathcal{L}_\infty$-algebras. 
\end{abstract}

\maketitle

\setcounter{tocdepth}{1}

\tableofcontents

\section*{Introduction}
Let $A$ be a vector space. The data of an associative algebra structure amounts to the data of a "multiplication table" of elements of $A$: given two elements, it states which element in $A$ is their product. One can compile this "multiplication table" into a single morphism
\[
\left\{
\begin{tikzcd}[column sep=3pc,row sep=0pc]
\gamma_A: \displaystyle \bigoplus_{n \geq 1} A^{\otimes n} \arrow[r]
&A \\
a_1 \otimes \cdots \otimes a_n \arrow[r,mapsto]
&\gamma_A(a_1 ,\cdots,a_n)~,
\end{tikzcd}
\right.
\]
which assigns to any $n$-tuple $(a_1, \cdots, a_n)$ the value of their product $\gamma_A(a_1,\cdots,a_n)$. The conditions that an associative algebra structure has to satisfy are encoded by the fact that $\gamma_A$ defines a structure of an \textit{algebra over a monad}. But in this classical algebraic framework, infinite collections of elements do not have an assigned value by this "multiplication table". In order to give a value to expressions like $\sum_{n \geq 1} a_{1,n} \otimes \cdots \otimes a_{n,n}$, the classical approach used so far has been to add the data of a topology on $A$, so that the values assigned to any partial sum \textit{converge} to a well-defined element in $A$. This solution has several issues, specially when one tries to mix \textit{homotopical algebra} and \textit{topology}. For instance, neither the category of topological abelian groups nor the category of topological modules over a topological ring are abelian. This type of problems is the main motivation for the recent approaches of \cite{scholze} and \cite{barwick}.

\medskip

Absolute algebras are a new type of algebraic structures where infinite sums of operations have well-defined images \textit{by definition}. For instance, the data of a \textit{(non-unital) absolute associative algebra structure} on a vector space $A$ amounts to the data of a "transfinite multiplication table", where any series of elements in $A$ is assigned a value in $A$. This information compiles into a single structural map
\[
\left\{
\begin{tikzcd}[column sep=2.5pc,row sep=-0.5pc]
\gamma_A: \displaystyle \prod_{n \geq 1} A^{\otimes n} \arrow[r]
&A \\
\displaystyle \sum_{n \geq 1} a_{1,n} \otimes \cdots \otimes a_{n,n} \arrow[r,mapsto]
&\displaystyle \gamma_A \left(\sum_{n \geq 1} a_{1,n} \otimes \cdots \otimes a_{n,n} \right)~.
\end{tikzcd}
\right.
\]
The bright point is that this type of structures is defined as an algebra over a monad, hence the category of absolute associative algebras enjoys many desirable properties. For example, it is complete and cocomplete. Any absolute associative algebra structure gives an associative algebra structure in the classical sense, simply by restricting the structural map to finite sums. There are many "absolute analogues" of standard types of algebraic structures: absolute Lie algebras, absolute $\mathcal{A}_\infty$-algebra, absolute $\mathcal{L}_\infty$-algebras, etc. This stems from the fact that this type of structures are defined as \textit{algebras over a cooperad}. 

\medskip

Recall that most of classical types of algebraic structures can be successfully encoded by an operad. See \cite{LodayVallette12} for instance. Thus, given an operad $\mathcal{P}$, we get a category of dg $\mathcal{P}$-algebras. The operadic calculus provides us with a powerful tool to study the homotopy category of these dg $\mathcal{P}$-algebras. The key element is the Koszul duality for operads, that allows one to construct a Koszul dual conilpotent cooperad $\mathcal{P}^{\hspace{1pt}\ac}$. This data allows us to construct a bar-cobar adjunction 
\[
\begin{tikzcd}[column sep=5pc,row sep=5pc]
\mathsf{dg}~\mathcal{P}\text{-}\mathsf{alg} \arrow[r,"\mathrm{B}"{name=B},shift left=1.1ex] 
&\mathsf{dg}~\mathcal{P}^{\hspace{1pt}\ac}\text{-}\mathsf{coalg} \arrow[l,"\Omega"{name=C},shift left=1.1ex] \arrow[phantom, from=C, to=B, , "\dashv" rotate=90]
\end{tikzcd}
\]
between the category of dg $\mathcal{P}$-algebras and the category of dg $\mathcal{P}^{\hspace{1pt}\ac}$-coalgebras. Contrary to popular belief, any coalgebra over a cooperad is, by definition, \textit{conilpotent}. Hence we omit this adjective when possible. Using this adjunction, one can transfer the model structure where weak equivalences are given by quasi-isomorphism onto the category of conilpotent $\mathcal{P}^{\hspace{1pt}\ac}$-coalgebras and obtain a Quillen equivalence. This approach, first developed by V. Hinich for dg Lie algebras in \cite{Hinich01} and by K. Lefèvre-Hasegawa in \cite{LefevreHasegawa03} for dg associative algebras, and was later extended to all Koszul operads by B. Vallette in \cite{Vallette14}. Conilpotent coalgebraic types of structures are given by decomposition maps, which assign finite sums to any element. Let $C$ be a vector space. For example, the data of a non-counital \textit{conilpotent} coassociative coalgebra structure on $C$ is equivalent to the data of a "decomposition table", compiled into a map
\[
\left\{
\begin{tikzcd}[column sep=3pc,row sep=-0.5pc]
\Delta_C: C \arrow[r]
&\displaystyle \bigoplus_{n \geq 1} C^{\otimes n}\\
c \arrow[r,mapsto]
&\displaystyle \Delta_C(c) = \sum_{i \in \mathrm{I}} c_1^{(i)} \otimes \cdots \otimes c_n^{(i)}~,
\end{tikzcd}
\right.
\]
where $\mathrm{I}$ is a finite set. It is \textit{conilpotent} precisely because this sum is finite, and therefore $\Delta_C$ lands on the direct sum instead of the product. But most coassociative coalgebras that appear in nature are not conilpotent, e.g: consider a base field $\kk$ together with the diagonal map. 

\medskip

In the same way as algebras are "the Koszul dual notion" to conilpotent coalgebras, since they both are "finitary types of structures", absolute types of algebras are the "Koszul dual notion" to non-conilpotent coalgebras. The infinite decompositions of elements in these non-conilpotent coalgebras are reflected in the "transfinite multiplication tables" of absolute algebras. The notion of an algebra over a cooperad was introduced in an abstract context by B. Le Grignou and D. Lejay in \cite{grignoulejay18}. Their goal was to study the homotopy theory of non necessarily conilpotent coalgebras. Since these types of structures are encoded as \textit{coalgebras over an operad}, the reasonable thing to do was to look at what an algebra over a cooperad looks like. Starting from a cooperad, one can construct a monad by considering a dual version of the Schur functor. Algebras over a cooperad are defined as algebras over its associated monad. Given an operad $\mathcal{P}$ and its Koszul dual cooperad $\mathcal{P}^{\hspace{1pt}\ac}$, the authors of \textit{loc.cit} construct a \textit{complete bar-cobar adjunction}
\[
\begin{tikzcd}[column sep=5pc,row sep=5pc]
\mathsf{dg}~\mathcal{P}\text{-}\mathsf{coalg} \arrow[r,"\widehat{\Omega}"{name=B},shift left=1.1ex] 
&\mathsf{dg}~\mathcal{P}^{\hspace{1pt}\ac}\text{-}\mathsf{alg}~, \arrow[l,"\widehat{\mathrm{B}}"{name=C},shift left=1.1ex] \arrow[phantom, from=C, to=B, , "\dashv" rotate=-90]
\end{tikzcd}
\]
and show that in some cases, the homotopy theory of these non necessarily conilpotent coalgebras can be recovered from the homotopy theory of the dual absolute algebras with a transferred model structure along this complete bar-cobar adjunction.

\medskip

\textbf{Main results.} The goal of this article is to develop the theory of algebras over cooperads, which we call \textit{absolute algebras}. Our first result answers a very natural question: how do these two aforementioned bar-cobar constructions relate to each other? We show that there are duality functors that intertwine both of these adjunctions in a duality square of commuting adjunctions.

\begin{theoremintro}[Duality square, Theorem \ref{thm: magical square}]
There exists a square of adjunctions 
\[
\begin{tikzcd}[column sep=5pc,row sep=5pc]
\left(\mathsf{dg}~\mathcal{P}\text{-}\mathsf{alg}\right)^{\mathsf{op}} \arrow[r,"\mathrm{B}^{\mathsf{op}}"{name=B},shift left=1.1ex] \arrow[d,"(-)^\circ "{name=SD},shift left=1.1ex ]
&\left(\mathsf{dg}~\mathcal{P}^{\hspace{1pt} \mathsf{\ac}} \text{-}\mathsf{coalg}\right)^{\mathsf{op}} \arrow[d,"(-)^*"{name=LDC},shift left=1.1ex ] \arrow[l,"\Omega^{\mathsf{op}}"{name=C},,shift left=1.1ex]  \\
\mathsf{dg}~\mathcal{P}\text{-}\mathsf{coalg} \arrow[r,"\widehat{\Omega}"{name=CC},shift left=1.1ex]  \arrow[u,"(-)^*"{name=LD},shift left=1.1ex ]
&\mathsf{dg}~\mathcal{P}^{\hspace{1pt} \mathsf{\ac}} \text{-}\mathsf{alg}~, \arrow[l,"\widehat{\mathrm{B}}"{name=CB},shift left=1.1ex] \arrow[u,"(-)^\vee"{name=TD},shift left=1.1ex] \arrow[phantom, from=SD, to=LD, , "\dashv" rotate=0] \arrow[phantom, from=C, to=B, , "\dashv" rotate=-90]\arrow[phantom, from=TD, to=LDC, , "\dashv" rotate=0] \arrow[phantom, from=CC, to=CB, , "\dashv" rotate=-90]
\end{tikzcd}
\] 
which commutes in the following sense: right adjoints going from the top right corner to the bottom left corner are naturally isomorphic.
\end{theoremintro}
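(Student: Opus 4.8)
The proof is organized around the calculus of mates: a square of adjunctions has naturally isomorphic composite right adjoints if and only if it has naturally isomorphic composite left adjoints, because a composite $R_1\circ R_2$ of right adjoints is right adjoint to the composite $L_2\circ L_1$ of the corresponding left adjoints, and right adjoints are unique up to canonical natural isomorphism. All four adjunctions are already in hand: the vertical ones are the linear duality adjunctions constructed above, and the horizontal ones are the classical Bar--Cobar adjunction together with the complete Bar--Cobar adjunction of \cite{grignoulejay18}, so the only thing left to establish is the compatibility. Tracing right adjoints from the top right corner to the bottom left corner yields the two composites $\widehat{\mathrm{B}}\circ(-)^*$ and $(-)^\circ\circ\Omega^{\mathsf{op}}$; by the remark above these are naturally isomorphic exactly when their left adjoints $(-)^\vee\circ\widehat{\Omega}$ and $\mathrm{B}^{\mathsf{op}}\circ(-)^*$ agree.

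Unwinding the two opposite categories, this reduces the theorem to the following concrete statement: for every dg $\mathcal{P}$-coalgebra $C$ there is an isomorphism of dg $\mathcal{P}^{\ac}$-coalgebras
\[
\mathrm{B}\!\left(C^*\right)\;\cong\;\left(\widehat{\Omega}\,C\right)^\vee,
\]
natural in $C$, between the classical bar construction of the linear dual algebra $C^*$ and the restricted dual of the complete cobar construction of $C$. This is the computational heart of the proof, and I would prove it by first matching underlying graded objects and then matching differentials.

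On underlying graded objects, the bar construction is the cofree conilpotent $\mathcal{P}^{\ac}$-coalgebra $\mathcal{P}^{\ac}(C^*)=\bigoplus_n \left(\mathcal{P}^{\ac}(n)\otimes (C^*)^{\otimes n}\right)^{\mathbb{S}_n}$, while the complete cobar is the free absolute $\mathcal{P}^{\ac}$-algebra given by the dual Schur construction $\widehat{\mathcal{P}^{\ac}}(C)=\prod_n \mathrm{Hom}\!\left(\mathcal{P}^{\ac}(n),\, C^{\otimes n}\right)^{\mathbb{S}_n}$. The restricted dual $(-)^\vee$ is built precisely so as to convert the arity-wise product into a direct sum and to exchange $\mathbb{S}_n$-invariants with $\mathbb{S}_n$-coinvariants; combined with the identity $\mathrm{Hom}(\mathcal{P}^{\ac}(n), C^{\otimes n})^\vee\cong \mathcal{P}^{\ac}(n)\otimes (C^*)^{\otimes n}$, which uses the arity-wise finite dimensionality of $\mathcal{P}^{\ac}$ so that the double dual of the cooperad collapses and uses that $(-)^\vee$ sends $C^{\otimes n}$ to $(C^*)^{\otimes n}$, this yields a natural isomorphism $\left(\widehat{\mathcal{P}^{\ac}}(C)\right)^\vee\cong \mathcal{P}^{\ac}(C^*)$. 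For the differentials, both the bar differential on $\mathrm{B}(C^*)$ and the cobar differential on $\widehat{\Omega}C$ are the twisting terms associated to the Koszul morphism $\kappa\colon\mathcal{P}^{\ac}\to\mathcal{P}$, the former assembled from the algebra structure $\gamma_{C^*}$ and the latter from the coalgebra structure $\Delta_C$. Since $\gamma_{C^*}=(\Delta_C)^*$ and $\kappa$ is compatible with linear duality, the restricted dual carries one twisting differential onto the other, and naturality in $C$ is immediate; feeding this back through the mate correspondence of the first paragraph completes the proof.

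The formal part, mates and uniqueness of adjoints, is routine; the main obstacle is the computational heart, namely checking that $(-)^\vee$ interchanges the free-absolute-algebra functor with the cofree-conilpotent-coalgebra functor \emph{together with their twisting differentials}. Three points require care: (i) justifying that the restricted dual really does turn the product of invariants defining $\widehat{\mathcal{P}^{\ac}}(C)$ into the direct sum of coinvariants defining $\mathcal{P}^{\ac}(C^*)$, which rests on the precise definition of $(-)^\vee$ and on working over the ground field with arity-wise finite dimensional $\mathcal{P}^{\ac}$; (ii) tracking the (de)suspension and Koszul signs so that the dual of the cobar differential is the bar differential on the nose, and not merely up to a sign twist; and (iii) ensuring the whole identification is natural rather than merely objectwise. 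I expect (ii) to be the genuinely delicate step, and I would streamline it by realizing both differentials as Maurer--Cartan elements in the convolution (pre-)Lie algebra of $\kappa$ and checking that linear duality is an (anti)isomorphism of these convolution algebras carrying the Maurer--Cartan element of $C$ to that of $C^*$.
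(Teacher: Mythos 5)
Your overall strategy is viable and is genuinely the mirror image of the paper's argument. The paper proves the comparison of \emph{right} adjoints directly: for a curved $\C$-coalgebra $D$ it constructs an isomorphism $(\Omega_\alpha D)^\circ \cong \widehat{\mathrm{B}}_\alpha(D^*)$ of dg $\PP$-coalgebras, starting from the identification $\left(\mathscr{S}(\PP)(V)\right)^\circ \cong \mathscr{C}(\PP)(V^*)$ of the Sweedler dual of a free algebra with the cofree coalgebra on the dual, and then checking compatibility of the differentials. You instead prove the comparison of \emph{left} adjoints, $\mathrm{B}_\alpha(C^*) \cong \left(\widehat{\Omega}_\alpha C\right)^\vee$ for a dg $\PP$-coalgebra $C$, and transport it across the square by uniqueness of adjoints. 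That reduction is correct, and the paper itself confirms that the two isomorphisms are mates of one another (see the Remark following Theorem \ref{thm: magical square}), so either side may serve as the computational heart. Your outline of the differential comparison (both twisting terms are built from $\alpha$, and $\gamma_{C^*} = (\Delta_C)^*$) is likewise parallel to the paper's ``direct inspection'' step.

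However, your justification of the underlying-graded identification $\left(\widehat{\mathscr{S}}^c(\C)(C)\right)^\vee \cong \mathscr{S}(\C)(C^*)$ contains a genuine flaw. You invoke arity-wise finite dimensionality of $\mathcal{P}^{\hspace{1pt}\ac}$ so that double duals collapse, and you compute arity by arity via $\mathrm{Hom}(\mathcal{P}^{\hspace{1pt}\ac}(n), C^{\otimes n})^\vee \cong \mathcal{P}^{\hspace{1pt}\ac}(n)\otimes (C^*)^{\otimes n}$. This is not available: Theorem \ref{thm: magical square} is stated and proved for an arbitrary conilpotent curved cooperad $\C$ and any curved twisting morphism $\alpha\colon \C \longrightarrow \PP$, with no finiteness hypothesis whatsoever — for instance $\C$ may be a bar construction, whose components are typically infinite dimensional. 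Moreover, the arity-wise computation is not valid even granting finiteness of $\C$: the functor $(-)^\vee$ is defined on complete $\C$-algebras, not on individual graded pieces; for infinite-dimensional $C$ one has $(C^{\otimes n})^* \not\cong (C^*)^{\otimes n}$; and the linear dual of an arity-wise product is strictly larger than the direct sum of the arity-wise duals, so no termwise argument can produce the conilpotent object $\mathscr{S}(\C)(C^*)$. The correct, assumption-free argument is categorical: $(-)^\vee$ is produced by the Adjoint Lifting Theorem, and lifted left adjoints carry free objects to free objects, i.e.\ $(-)^\vee \circ \widehat{\mathscr{S}}^c(\C)(-) \cong \left(\mathscr{S}(\C)(-)\right)^{\mathsf{op}} \circ \left((-)^*\right)^{\mathsf{op}}$ by uniqueness of left adjoints; this is exactly the isomorphism $\left(\widehat{\mathscr{S}}^c(\C)(V)\right)^\vee \cong \mathscr{S}(\C)(V^*)$ recorded in Proposition \ref{prop: natural mono for topo dual}. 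Replacing your finite-dimensionality argument by this one repairs the proof without changing its architecture.
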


The technical part of the above results is the construction of the linear duality adjunctions. In particular, the functor $(-)^\circ$ is a generalization of the Sweedler dual functor constructed in \cite{Sweedler69}. The above theorem admits a much more general formulation: it also holds for any curved twisting morphism in the sense of \cite{lucio2022curved}. Conceptually, it shows that absolute algebras appear every time one considers the linear dual of a conilpotent coalgebra. For example, taking the \textit{linear dual of the Bar construction}, which occurs in many instances in the literature: it is a key in ingredient in J. Lurie's proof of that formal moduli problems are encoded by dg Lie algebras \cite{Lurie11} and in the subsequent generalization of his result by D. Calaque, R. Campos and J. Nuiten in \cite{CCN19}.

\medskip

This duality square can in favorable cases be made compatible with the respective model structures of each of these categories. Then, this square becomes a square of Quillen adjunctions. This allows us to have, for the first time, a \textit{homotopical understanding} of the linear duality functor $(-)^*$ between types of coalgebras and types of algebras over a cofibrant dg operad $\mathcal{P}$.

\medskip

\begin{theoremintro}[Theorem \ref{thm: equivalence infini cat cog et alg}]
Let $\mathcal{P}$ be a cofibrant dg operad. There is an equivalence of $\infty$-categories

\[
\begin{tikzcd}[column sep=5pc,row sep=3pc]
         \mathsf{dg}~\PP\text{-}\mathsf{coalg}^{\mathsf{f.d},\mp}~\left[\mathsf{Q.iso}^{-1}\right] \arrow[r, shift left=1.1ex, "(-)^\ast"{name=A}]
         &\mathsf{dg}~\PP \text{-}\mathsf{alg}^{\mathsf{f.d},\pm}~\left[\mathsf{Q.iso}^{-1}\right]^\mathsf{op}~, \arrow[l, shift left=.75ex, "(-)^\circ"{name=B}] \arrow[phantom, from=A, to=B, , "\dashv" rotate=-90]
\end{tikzcd}
\]

between the $\infty$-category of dg $\PP$-algebras with degree-wise finite dimensional bounded below (resp. bounded above) homology and the $\infty$-category of $\PP$-coalgebras with degree-wise finite dimensional bounded above (resp. bounded below) homology. 
\end{theoremintro}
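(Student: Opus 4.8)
The plan is to deduce the equivalence from the left vertical adjunction of the duality square of \cref{thm: magical square}, namely the generalized Sweedler dual $(-)^\circ$ and the linear dual $(-)^*$, which form a Quillen adjunction $(-)^\circ \dashv (-)^*$ between $\left(\mathsf{dg}~\PP\text{-}\mathsf{alg}\right)^{\mathsf{op}}$ and $\mathsf{dg}~\PP\text{-}\mathsf{coalg}$. Deriving it in the usual way (cofibrant replacement for $\mathbb{L}(-)^\circ$, fibrant replacement for $\mathbb{R}(-)^*$) and localizing at the respective weak equivalences produces an adjunction of $\infty$-categories $\mathbb{L}(-)^\circ \dashv \mathbb{R}(-)^*$, and the theorem amounts to showing that this adjunction restricts to an equivalence on the full sub-$\infty$-categories spanned by the objects whose homology is degree-wise finite dimensional. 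A first useful observation is that linear duality $(-)^* = \mathrm{Hom}_\kk(-,\kk)$ is \emph{exact}, so it preserves quasi-isomorphisms of underlying complexes and commutes with homology through the natural isomorphism $H_\bullet(V^*) \cong H_\bullet(V)^*$.

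The technical core is a finiteness statement at the level of dg $\kk$-modules: if a complex $V$ has degree-wise finite dimensional homology, then the canonical evaluation map $V \to V^{**}$ is a quasi-isomorphism. This follows from exactness of duality, which yields $H_\bullet(V^{**}) \cong H_\bullet(V)^{**}$ naturally, together with the fact that the double-duality map of the degree-wise finite dimensional graded space $H_\bullet(V)$ is an isomorphism in each degree. The second ingredient is the comparison between the Sweedler dual and the plain linear dual: on a degree-wise finite dimensional object the inclusion $A^\circ \hookrightarrow A^*$ is an equality, so that both duality functors become mutually inverse strict involutions, and the unit and counit of the adjunction reduce there to the double-duality maps relating $A$ with $(A^\circ)^*$ and $C$ with $(C^*)^\circ$.

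To combine these two inputs I would represent every finite-homology object by a degree-wise finite dimensional model. Since $\PP$ is cofibrant, I can invoke the homotopy transfer theorem \cite{LodayVallette12}: transferring the structure of an algebra $A$ onto its homology $H_\bullet(A)$ along a choice of contraction produces a weakly equivalent homotopy-$\PP$-algebra whose underlying module is $H_\bullet(A)$, hence degree-wise finite dimensional whenever $H_\bullet(A)$ is; the corresponding statement for coalgebras furnishes finite models on the other side. On such strict models $(-)^\circ$ coincides with $(-)^*$, so the derived unit and counit are exactly the evaluation maps of the finiteness lemma and are therefore quasi-isomorphisms. This simultaneously shows that $\mathbb{L}(-)^\circ$ and $\mathbb{R}(-)^*$ preserve the classes of degree-wise finite dimensional homology objects and that the derived unit and counit are equivalences there, which gives the equivalence of $\infty$-categories.

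The hard part will be two-fold. First, I must control the derived Sweedler dual $\mathbb{L}(-)^\circ$ well enough to know that it is computed by the plain linear dual on finite-homology objects: since $A^\circ$ is only a subcoalgebra of $A^*$ in general, the coincidence at the strictly finite dimensional level has to be propagated along the weak equivalence furnished by the transfer theorem, which forces the finite dimensional models to be simultaneously adapted to computing both $\mathbb{L}(-)^\circ$ and $\mathbb{R}(-)^*$. Second, on the coalgebra side the weak equivalences are those of the model structure transferred along the complete Bar--Cobar adjunction rather than the naive quasi-isomorphisms, so I will need to check that the fibrant replacement used to compute $\mathbb{R}(-)^*$ preserves the finiteness of homology and that the counit remains an equivalence in this finer class; here the commutativity of the square of \cref{thm: magical square} is the decisive tool, since it lets me transport the delicate coalgebraic statement across the horizontal Bar--Cobar equivalences into the better-understood algebraic one over $\PP^{\hspace{1pt}\ac}$.
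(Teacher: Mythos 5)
Your skeleton matches the paper's: derive the Quillen adjunction of \cref{lemma: Sweedler dual is a Quillen adjunction}, reduce to the homology $\mathrm{H}_*(A)$ via the Homotopy Transfer Theorem, and exploit the coincidence of $(-)^\circ$ with $(-)^*$ on degree-wise finite dimensional objects together with double duality. But the crucial step is exactly the one you flag and then dispose of incorrectly: you cannot simply "propagate the coincidence along the weak equivalence furnished by the transfer theorem". The functor $(-)^\circ$ is a \emph{right} Quillen functor out of $\left(\mathsf{dg}~\PP\text{-}\mathsf{alg}\right)^{\mathsf{op}}$, so by Ken Brown's lemma it preserves only those quasi-isomorphisms whose source and target are \emph{cofibrant} algebras; the transferred model $\mathrm{H}_*(A)$ is not cofibrant, so applying $(-)^\circ$ to the resolution map $\Omega_\iota \mathrm{B}_\iota \mathrm{H}_*(A) \qi \mathrm{H}_*(A)$ gives no information, and the derived unit at $\mathrm{H}_*(A)$ is \emph{not} the evaluation map $\mathrm{H}_*(A) \to \mathrm{H}_*(A)^{**}$ --- it is the map $\Omega_\iota \mathrm{B}_\iota \mathrm{H}_*(A) \to \left(\left(\Omega_\iota \mathrm{B}_\iota \mathrm{H}_*(A)\right)^\circ\right)^*$. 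Identifying $\left(\Omega_\iota \mathrm{B}_\iota \mathrm{H}_*(A)\right)^\circ$ is the whole content of the paper's \cref{Prop: comparaison homotopique}: one uses the commutativity of the homotopical duality square (\cref{thm: homotopical magical square}) to rewrite it as $\widehat{\mathrm{B}}_\iota \left(\mathrm{B}_\iota \mathrm{H}_*(A)\right)^*$, then \cref{prop: finite dual commutes} (using finite dimensionality of $\mathrm{H}_*(A)$) to rewrite it as $\widehat{\mathrm{B}}_\iota \widehat{\Omega}_\iota \left(\mathrm{H}_*(A)\right)^*$, and finally the fact that the complete Bar--Cobar adjunction $\widehat{\Omega}_\iota \dashv \widehat{\mathrm{B}}_\iota$ is a Quillen equivalence (with every coalgebra cofibrant) to conclude that this is weakly equivalent to $\left(\mathrm{H}_*(A)\right)^*$. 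This detour through absolute algebras is not optional bookkeeping; it is the missing argument in your plan.

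A secondary error: your "second hard part" misidentifies the model structure. In \cref{thm: model structure on P-cog} the weak equivalences of dg $\PP$-coalgebras \emph{are} the naive quasi-isomorphisms, and every object is cofibrant, so the left Quillen functor $(-)^*$ needs no replacement at all; the finer weak equivalences transferred along $\widehat{\Omega}_\iota \dashv \widehat{\mathrm{B}}_\iota$ live on the auxiliary category of complete curved $\C$-algebras, which does not appear in the statement of the theorem. So the difficulty you locate on the coalgebra side is not there; the real difficulty, on both sides, is the computation of the derived Sweedler dual described above (and its mirror image for coalgebras, which the paper handles by the same argument using the Homotopy Transfer Theorem for dg $\Omega\C$-coalgebras).
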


The above two theorems play a key role in \cite{lucio2022integration}, where we develop the integration theory of curved absolute $\mathcal{L}_\infty$-algebras. There, they allow us to prove that curved absolute $\mathcal{L}_\infty$-algebras provide us with rational models, which are dual to Sullivan's models, and to develop our approach to derived algebraic geometry using curved absolute $\mathcal{L}_\infty$-algebras.

\medskip

In Section 3, our goal is to establish several theoretical results about algebras over a general cooperad $\C$, and in particular, to compare these algebras to algebras over the linear dual operad $\C^*$. This amounts to comparing the absolute and the classical versions of the same algebraic structure. In general, we construct an adjunction between algebras over $\C$ and algebras over $\C^*$, where the right adjoint restricts the structural morphism of an absolute algebra to finite sums. We show that, under mild conditions on $\C$, this restriction functor is fully faithful. It would be natural to wonder then if absolute algebras can be obtained as some kind of completion of classical algebras. We explain why this is not the case in general: the basis example of the $I$-adic completion of augmented commutative algebras allows us to show that absolute commutative algebras differ from $I$-adically complete ones. This follows from the fact that the $I$-adic completion is not, in general, idempotent. 

\medskip

In Section 4, we work out examples and explain the theory in particular cases of interest. The first example is that of \textit{contramodules}. Contramodules over coassociative coalgebras where first introduced by S. Eilenberg and J. C. Moore in \cite{eilenbergmoore65} but later somewhat forgotten until they were extensively studied by L. Positselski, see for instance \cite{positselski2021contramodules}. Since a coassociative coalgebra is a cooperad concentrated in arity one, we show that contramodules are a particular example of absolute algebras. Engulfing this theory provides us with illuminating examples and counterexamples that shed a light into what is to be expected of this new type of algebraic structures. After, we treat \textit{in extenso} the cases of dg absolute associative algebras and dg absolute Lie algebras. 

\medskip

Finally, we apply this new framework to Lie theory. In \cite{campos2020lie}, the authors proved the following theorem: two nilpotent Lie algebras are isomorphic as Lie algebras if and only if their universal enveloping algebras are isomorphic as associative algebras. This result followed from a more general statement: two homotopy complete dg Lie algebras have quasi-isomorphic universal enveloping algebras if and only if they are quasi-isomorphic. The key element in the proof of this previous statement is, in turn, a comparison result between the deformation complexes of $\mathcal{C}_\infty$ and $\mathcal{A}_\infty$-coalgebras in general. In this last section, we show how to reinterpret this result on deformation complexes in the more general context of complete absolute Lie algebras and their universal enveloping absolute algebras. 

\begin{theoremintro}[Theorem \ref{thm: iso envelopantes Lie absolues} and Theorem \ref{thm: isos envelopantes absolues de L infinies}]\label{thm C intro}
Let $\kk$ be a field of characteristic zero and let $\mathfrak{g}$ and $\mathfrak{h}$ be two complete graded absolute Lie algebras. They are isomorphic as complete graded absolute Lie algebras if and only if their universal enveloping absolute algebras are isomorphic.
\end{theoremintro}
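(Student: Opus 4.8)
The forward implication is immediate functoriality: the universal enveloping absolute algebra is a functor, so an isomorphism of complete graded absolute Lie algebras (resp. of complete minimal absolute $\mathcal{L}_\infty$-algebras) induces an isomorphism of their universal enveloping absolute associative (resp. absolute $\mathcal{A}_\infty$) algebras. All the content lies in the converse, and the plan is to reinterpret the deformation-theoretic argument of \cite{campos2020lie} through the duality square of Theorem~A. First I would use linear duality to transport the statement into the world of conilpotent coalgebras, where the operadic Bar--Cobar machinery applies directly: a complete graded absolute Lie algebra is the linear dual of a conilpotent cocommutative coalgebra, a complete absolute associative algebra is the linear dual of a conilpotent coassociative coalgebra, and the completeness/finiteness hypotheses guarantee that these dualities are the honest equivalences supplied by the square. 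Under this dictionary the universal enveloping functor $\mathrm{U}$ corresponds to restricting a $\mathcal{C}_\infty$-coalgebra (resp. cocommutative coalgebra) to its underlying $\mathcal{A}_\infty$-coalgebra (resp. coassociative coalgebra); equivalently, the Bar construction $\mathrm{B}\,\mathrm{U}\g$ is identified with the Chevalley--Eilenberg coalgebra $\mathrm{CE}(\g)$ seen through the forgetful functor along the canonical map from the Koszul dual cooperad of $\mathcal{L}ie$ to that of $\mathcal{A}ss$.

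Second, I would encode isomorphisms by deformation complexes. Isomorphisms of the absolute Lie (resp. $\mathcal{L}_\infty$) algebra $\g$ are controlled by a Harrison-type, i.e. $\mathcal{C}_\infty$, convolution complex, while isomorphisms of $\mathrm{U}\g$ are controlled by the Hochschild-type, i.e. $\mathcal{A}_\infty$, convolution complex, and the forgetful functor above induces the canonical inclusion of the former into the latter. The key characteristic-zero input is the Hodge, or $\lambda$-, decomposition of the Hochschild complex via the Eulerian idempotents: it exhibits the Harrison complex as a natural direct summand, hence a retract, of the Hochschild complex. This splitting is exactly what lets one project an isomorphism that a priori exists only after applying $\mathrm{U}$ back onto the commutative/Lie piece: from an isomorphism $\mathrm{U}\g \cong \mathrm{U}\mathfrak{h}$ one extracts, through the retraction, a compatible isomorphism of the Chevalley--Eilenberg coalgebras as $\mathcal{C}_\infty$-objects, and dually an isomorphism $\g \cong \mathfrak{h}$.

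Third, I would make the extraction effective. An isomorphism of enveloping algebras gives, via the complete Bar construction of the duality square, a weak equivalence of the underlying $\mathcal{A}_\infty$-coalgebras; the Eulerian retraction, combined with an inductive argument along the canonical filtration provided by completeness (the analogue of the lower central series), upgrades this to a weak equivalence of $\mathcal{C}_\infty$-coalgebras, the obstruction to extending each partial isomorphism vanishing in the Harrison complex precisely because it vanishes in the Hochschild complex and the two are related by the characteristic-zero retraction. Here the graded (resp. minimal) hypothesis is what rigidifies the outcome: a quasi-isomorphism between minimal objects is an isomorphism, so the weak equivalence produced is genuinely an isomorphism. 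Completeness ensures the sequence of partial isomorphisms converges, and the absolute framework is essential at this point, since the resulting infinite sums have well-defined values \emph{by definition}, so the limit is an honest isomorphism of absolute structures rather than a merely formal one.

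The main obstacle I anticipate is the compatibility of the Eulerian/Hodge retraction with the complete, absolute setting: one must verify that the idempotent decomposition of the Hochschild complex respects the completeness filtrations and the infinite sums of the absolute structure, so that the retraction is well defined on the nose and commutes with the inductive limits built above. Establishing this compatibility, rather than the formal deformation theory, which is standard once the convolution complexes are in place, is where the new absolute technology of the earlier sections and the identification of $\mathrm{U}$ with a forgetful functor along a map of cooperads do the real work.
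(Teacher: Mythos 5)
Your overall skeleton --- pass to the coalgebra side, identify $\widehat{\mathfrak{U}}$ with the restriction functor $\mathrm{Res}_\varphi$ along $\Omega(\mathcal{S}kew^*)$, then rigidify using minimality --- matches the paper, but two load-bearing steps are wrong or misplaced. First, your opening reduction via linear duality does not work: you assert that a complete graded absolute Lie algebra \emph{is} the linear dual of a conilpotent cocommutative coalgebra and that ``completeness/finiteness hypotheses'' make this duality an equivalence, but the theorem has no finiteness hypotheses, linear duality is not essentially surjective onto complete absolute algebras (a complete absolute algebra with countably infinite dimensional underlying space is never a linear dual), and the equivalence of Theorem \ref{thm: equivalence infini cat cog et alg} requires degree-wise finite dimensional homology. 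The paper never uses the duality square of Theorem \ref{thm: magical square} here; the correct vehicle is the complete Bar--Cobar adjunction $\widehat{\Omega}_\iota \dashv \widehat{\mathrm{B}}_\iota$, a Quillen equivalence with no finiteness assumptions, arranged into the commuting square of Quillen adjunctions of Proposition \ref{Cor: commutative of the square of absolute Lie} whose vertical arrows are $\mathrm{Res}_\varphi \dashv \mathrm{Coind}_\varphi$ and $\widehat{\mathfrak{U}} \dashv \mathrm{Skew}$. You do invoke the complete Bar construction in your third step, which is the right move, but the duality framing in your first step should be discarded.

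The bigger gap is the core of the converse. You propose to re-prove the $\mathcal{C}_\infty$/$\mathcal{A}_\infty$ comparison by an Eulerian-idempotent retraction of the Harrison complex inside the Hochschild complex, run as an obstruction-theoretic induction carried out \emph{inside} the complete absolute setting, and you identify the compatibility of that retraction with completeness and infinite sums as ``the real work.'' The paper does none of this: it quotes \cite[Theorem 4.27]{campos2020lie} (Theorem \ref{thm: cited thm about C infinity}) as a black box --- two $\mathcal{C}_\infty$-coalgebras are connected by a zig-zag of quasi-isomorphisms if and only if their underlying $\mathcal{A}_\infty$-coalgebras are --- and then merely transports zig-zags of weak equivalences through the Quillen equivalences (Theorem \ref{thm: inclusion infini cat des Lie absolues}). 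The whole point of the design is that the deformation theory happens only on the coalgebra side, where it is already proven; no interaction between Eulerian idempotents and absolute structures is ever needed. If you insist on re-proving that input, your sketch underestimates it: the difficulty in \cite{campos2020lie} is precisely that an isomorphism of enveloping algebras need not respect the cocommutative structure, and projecting an $\mathcal{A}_\infty$-morphism by the Harrison-summand retraction does not produce a $\mathcal{C}_\infty$-morphism, so the ``obstruction vanishes in Harrison because it vanishes in Hochschild'' argument does not close as stated. Finally, a smaller but genuine omission: in the transferred model structure on complete absolute Lie (or $\mathcal{L}_\infty$) algebras, weak equivalences are by definition maps inverted by $\widehat{\mathrm{B}}_\iota$, not quasi-isomorphisms; the paper needs Proposition \ref{thm: weak equiv inclues dans les quasi-isos}, proved via the augmentation $\varepsilon$ of $\Omega\mathcal{L}ie^*$ and a second transferred structure, before minimality can turn the resulting zig-zag into an isomorphism. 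Your proposal silently conflates weak equivalence with quasi-isomorphism at exactly this point.
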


This result follows from a more general statement concerning complete dg absolute Lie algebras. By transporting via Koszul duality the main technical result of \cite[Theorem 4.27]{campos2020lie}, we immediately get that two such algebras are linked by a zig-zag of weak equivalences if and only if their universal enveloping constructions are. Since these weak equivalences are in particular quasi-isomorphisms, the above result follows when the differential is zero. As we explain in the last section of this paper, the result about homotopy complete Lie algebras (and thus about nilpotent Lie algebras) then follows by the fact that a dg Lie algebra is homotopy complete if and only if is homotopically fully faithfully included into complete absolute dg Lie algebras. 

\medskip

This absolute framework is more general, as for instance complete graded absolute Lie algebras include nilpotent graded Lie algebras without degree restrictions. In this particular case, we compute explicitly the universal enveloping absolute algebra, which is given by the completed tensor algebra modulo the standard relation. These methods also allow us to develop an analogue of Theorem \ref{thm C intro} for minimal $\mathcal{L}_\infty$-algebras using $\infty$-isomorphisms, where we show that if their universal enveloping abslute $\mathcal{A}_\infty$-algebras are weakly equivalent, then there exists an $\infty$-isomorphism between them. In particular, this applies to nilpotent (minimal) $\mathcal{L}_\infty$-algebras in the sense of \cite{Getzler09}, which are examples of absolute (minimal) $\mathcal{L}_\infty$-algebras. 
\medskip

\subsection*{Acknowledgments}
I would like to thank my former PhD. advisor Bruno Vallette for the numerious discussion we had and for his careful readings of this paper. I would also like to thank Damien Calaque, Geoffroy Horel, Brice Le Grignou, Johan Leray, Joost Nuiten, L. Positseski, Maxime Ramzi and Friedrich Wagemann for interesting discussions, as well as Dan Petersen for stimulating correspondence which led to Theorem 3.15.

\medskip

I would like to acknowledge the warm hospitality of Svenska KullagerFabriken and its employees in Göteborg, which provided great working conditions in order to finish this paper. This paper was written during my PhD. thesis at the Université Sorbonne Paris Nord, I would like to thank its great mathematical community. Finally, I would like to express my gratitude to the referee of this paper for all the comments, remarks and corrections that helped improve this paper, in particular for suggesting a simple proof of Proposition 2.15.

\medskip

\subsection*{Conventions}
Let $\Bbbk$ be a ground field of characteristic $0$. The ground category is the symmetric monoidal category $\left(\mathsf{pdg}\textsf{-}\mathsf{mod}, \otimes, \Bbbk\right)$ of pre-differential graded (pdg) $\Bbbk$-modules: these are graded modules $V$ endowed with a degree $-1$ endomorphism $d_V$. We work with the \textit{homological} degree conventions. The tensor product $\otimes$ of pdg modules is given by the graded module
\[
(A \otimes B)_n \coloneqq \displaystyle \bigoplus_{p+q=n}~A_p \otimes~ B_q ~,
\]
together with the pre-differential
\[
d_{A \otimes B} (a \otimes b) \coloneqq d_A(a) \otimes b + (-1)^{|a|}\hspace{1pt} a \otimes d_B(b)~.
\]
The isomorphism $\tau_{A,B}: A \otimes B \longrightarrow B \otimes A$ is given by the Koszul sign rule $\tau (a \otimes b) \coloneqq (-1)^{|a|.|b|} \hspace{1pt} b\otimes a$ on homogeneous elements. This symmetric monoidal category is \textit{closed}, meaning the tensor $\otimes$ of pdg modules admits a right adjoint. It is given by the graded module $\mathrm{hom}(A,B)$ of graded maps between two pdg modules $A$ and $B$, together with the pre-differential
\[
\partial(f) \coloneqq d_B \circ f -(-1)^{|f|}f \circ d_A~,
\]
where $f:A \longrightarrow B$ is a graded map of degree $|f|$. The suspension of a $V$ is denoted by $sV$, given by $(sV)_{p} \coloneqq V_{p-1}~.$ 

\medskip

A \textit{differential graded (dg) module} is a pre-differential graded module $V$ such that $d_V^2 = 0$. The category of dg modules is a full subcategory of pdg modules, preserved by the tensor product and the internal hom. A (p)dg module $V$ is \textit{degree-wise finite dimensional} if every $V_n$ is a finite dimensional $\Bbbk$-module. It is \textit{bounded above} if there exists an $m \in \mathbb{Z}$ such that $V_n = 0$ for $n \geq m$ and it is \textit{bounded below} if there exists an $m \in \mathbb{Z}$ such that $V_n = 0$ for $n \leq m$.

\medskip

A pdg $\mathbb{S}$-module  $M$ is a collection $\{M(n)\}_{n \in \mathbb{N}}$ of pdg $\kk[\mathbb{S}_n]$-modules, where $\mathbb{S}_n$ stands for the symmetric group of permutations of $n$ elements. This category is denoted by $\mathsf{pdg}~\mathbb{S}\textsf{-}\mathsf{mod}$. We denote $(\mathsf{pdg}~\mathbb{S}\textsf{-}\mathsf{mod}, \circ, \I)$ the monoidal category of pre-differential graded $\mathbb{S}$-modules endowed with the composition product $\circ$. Again, a dg $\mathbb{S}$-module is a pdg $\mathbb{S}$-module whose pre-differential squares to zero; they form a full subcategory of pdg $\mathbb{S}$-modules. We refer to \cite{LodayVallette12} for most of the notations, and to \cite{mathez} for a more detailed explanation of this framework and of the recollections. 

\vspace{1.5pc}
 
\section{Recollections}

\vspace{1.5pc}

The goal of this section is to briefly recall the recent developments in operadic calculus made in \cite{grignoulejay18}. Their main idea is encode \textit{non-necessarily conilpotent} types of coalgebras using operads, and relate them via Koszul duality with \textit{algebras over cooperads}. These algebras over cooperads define new types of algebraic structures which we will call "absolute algebras" in the present article. For a more thorough exposition, see also \cite[Chapter 1]{mathez}.

\medskip 

\subsection{Operads and curved cooperads}
The classical version of Koszul duality for operads, developed in \cite{GinzburgKapranov94,GetzlerJones94}, relates augmented operads with conilpotent coaugmented cooperads. See \cite[Chapter 6 and 7]{LodayVallette12} for a comprehensive account. A more general version of this duality, developed in \cite{HirshMilles12, grignou2019}, relates non-necessarily augmented operads with conilpotent curved cooperads.

\begin{Definition}[dg operad]\label{def: operad}
A \textit{dg operad} $\mathcal{P}$ amounts to the data $(\PP, \gamma_\PP, \eta, d_\PP)$ of a monoid in $(\mathsf{dg}~\mathbb{S}\textsf{-}\mathsf{mod}, \circ, \I)$.   
\end{Definition}

\begin{Definition}[augmented dg operad]
An \textit{augmented dg operad} $\PP$ amounts to the data of a dg operad $(\mathcal{P},\gamma,\eta)$ equipped with a morphism of dg operads $\nu: \PP \longrightarrow \I$ such that $\nu \circ \eta = \mathrm{id}.$
\end{Definition}

We denote by $\overline{\mathcal{P}}$ the kernel of the augmentation map $\nu$, which has the structure of a non-unital dg operad. 

\begin{Remark}
Many interesting operads \textit{do not} admit an augmentation. For instance, $u\mathcal{A}ss$, the operad which encodes \textit{unital} associative algebras, or $u\mathcal{C}om$, the operad which encodes \textit{unital} commutative algebras. In general, \textit{unital} types of structures are encoded by operads which do not admit augmentations. 
\end{Remark}

\begin{Definition}[pdg cooperad]\label{def: cooperad}
A \textit{pdg cooperad} $\C$ amounts to the data $(\C,\Delta,\epsilon,d_\C)$ of a comonoid in the monoidal category $(\mathsf{pdg}~\mathbb{S}\textsf{-}\mathsf{mod},\allowbreak \circ, \I)~.$ 
\end{Definition}

\begin{Definition}[coaugmented pdg cooperad]
A \textit{coaugmented pdg cooperad} $\C$ amounts to the data of a pdg cooperad $(\C, \Delta, \epsilon)$ equipped together with a morphism of pdg cooperads $\mu: \I \longrightarrow \C$ such that $\epsilon \circ \mu = \mathrm{id}$. 
\end{Definition}

We denote by $\overline{\C}$ the kernel of the counit map, which has the structure of a non-counital pdg cooperad when $\C$ admits a coaugmentation. 

\medskip

Curved cooperads are particular examples of pdg cooperads, endowed with a \textit{curvature} that controls how far the pre-differential is from being a differential.

\begin{Definition}[curved cooperad]\label{def curved cooperad}
A \textit{curved cooperad} $\C$ amounts to the data $(\C,\Delta,\epsilon,d_\C,\Theta_\C)$ of a pdg cooperad $(\C,\Delta,\epsilon,d_\C)$ and a morphism of pdg $\mathbb{S}$-modules $\Theta_\C: (\C,d_\C) \longrightarrow (\I,0)$ of degree $-2$ called the \textit{curvature}, such that the following diagram commutes: 
\[
\begin{tikzcd}[column sep=8.5pc,row sep=3pc]
\C \arrow[r,"\Delta_{(1)}"] \arrow[rrd,"d_\C^2", bend right =10]
&\C \circ_{(1)} \C \arrow[r,"(\mathrm{id}~ \circ_{(1)} ~ \Theta_\C)~-~(\Theta_\C~ \circ_{(1)}~ \mathrm{id})~"] 
&(\C \circ_{(1)} \I) \oplus (\I \circ_{(1)} \C) \cong  \C \oplus \C  \arrow[d,"+"]\\
&
&\C~,
\end{tikzcd}
\]
where $+$ is given by $+(\mu,\nu) \coloneqq \mu + \nu$ and where $\Delta_{(1)}$ denotes the partial decomposition of the cooperad $\C$, that is, the decompositions which only involve two non-trivial elements.
\end{Definition}

The \textit{coradical filtration} of a coaugmented cooperad (possibly pdg or curved) is the increasing filtration defined using iterations of the partial decompositions $\Delta_{(1)}$. Since it is coaugmented, one can remove trivial decompositions from the partial decomposition maps. An element is in the $\omega$-term of this filtration if its image by any possible composition of $\omega$ (non-trivial) partial decomposition maps is zero. A cooperad is said to be \textit{conilpotent} if any arbitrary iteration of these (non-trivial) partial decompositions maps of the cooperad ends up being trivial, or equivalently, if the coradical filtration is exhaustive. For a precise definition, see \cite[Section 1.3]{lucio2022curved} or \cite[Chapter 1, Section 4]{mathez}.

\medskip

Koszul duality relates dg operads with conilpotent coaugmented curved cooperads. For a dg operad $\mathcal{P}$ and a coaugmented conilpotent curved cooperad $\C$, the fact that they are Koszul dual is encoded by the existence of a curved twisting morphism, which needs to satisfy some extra homological properties, see \cite[Section 4]{HirshMilles12}. 

\medskip

A \textit{curved twisting morphism} $\alpha: \C \longrightarrow \mathcal{P}$ is the data of a degree $-1$ map of graded $\mathbb{S}$-modules between the coaugmentation ideal $\overline{\C}$ and $\mathcal{P}$ which satisfies the following equation: 

\[
\partial(\alpha) + \gamma_{(1)} \circ (\alpha \otimes \alpha) \circ \Delta_{(1)} = \Theta_{\mathcal{H}om}~,
\]
\vspace{0.1pc}

where $\Delta_{(1)}$ is the partial decomposition map of $\C$, $\gamma_{(1)}$ the partial composition map of $\mathcal{P}$, and 
$\Theta_{\mathcal{H}om}$ is the map given by composing the unit of $\mathcal{P}$ and the curvature of $\C$. The set of curved twisting morphisms is denoted by $\mathrm{Tw}(\C,\PP)$. The above equation can be interpreted as a Maurer--Cartan equation in a convolution curved pre-Lie algebra, see \cite[Section 6]{lucio2022curved} for more details. 

\medskip

Following \cite{grignou2019}, there is an operadic bar-cobar adjunction, 
\[
\begin{tikzcd}[column sep=5pc,row sep=3pc]
            \mathsf{curv}~\mathsf{Coop}^{\mathsf{conil}} \arrow[r, shift left=1.1ex, "\Omega"{name=F}] &\mathsf{dg}~\mathsf{Op}~.  \arrow[l, shift left=.75ex, "\mathrm{B}"{name=U}]
            \arrow[phantom, from=F, to=U, , "\dashv" rotate=-90]
\end{tikzcd}
\]

between the category of coaugmented conilpotent curved cooperads and the category of dg operads, which represents and corepresents curved twisting morphisms in the following sense: for any coaugmented conilpotent curved cooperad $\C$ and any dg operad $\mathcal{P}$, there are natural isomorphisms: 
\[
\mathrm{Hom}_{\mathsf{dg}~\mathsf{Op}}(\Omega\C,\PP) \cong \mathrm{Tw}(\C,\PP) \cong \mathrm{Hom}_{\mathsf{curv}~\mathsf{Coop}^{\mathsf{conil}}}(\C,\mathrm{B}\PP)~.
\]

This gives two \textit{universal curved twisting morphisms}, $\iota: \C \longrightarrow \Omega\C$ and $\pi: \mathrm{B}\PP \longrightarrow \PP$, induced by the identity maps. We refer to \cite[Section 4]{grignou2019} for more details on these constructions and results.

\begin{Remark}
When one restricts to \textit{augmented} dg operads, the curvature is zero on the cooperad side, and one obtains an operadic Koszul duality between augmented dg operads and conilpotent  coaugmented \textit{dg} cooperads, which is essentially equivalent to the theory developed in \cite{GinzburgKapranov94, GetzlerJones94}. 
\end{Remark}

\subsection{Classical bar-cobar adjunction between algebras and conilpotent coalgebras}
The main point of the operadic Koszul duality explained in the previous subsection is that it allows us to relate types of algebras encoded by operads and types of \textit{conilpotent} coalgebras encoded by cooperads. 

\medskip

Let us recall some constructions. To any dg (resp. pdg) $\mathbb{S}$-module one can associate an endofunctor in the category of dg (resp. pdg) modules via the \textit{Schur realization functor}:
\[
\begin{tikzcd}[column sep=4pc,row sep=0.5pc]
\mathscr{S} : \mathsf{(p)dg}~\smod \arrow[r]
&\mathsf{End}(\mathsf{(p)dg}~\mathsf{mod}) \\
M \arrow[r,mapsto]
&\mathscr{S}(M)(-) \coloneqq \displaystyle \bigoplus_{n \geq 0} M(n) \otimes_{\mathbb{S}_n} (-)^{\otimes n}~.
\end{tikzcd}
\]

The realization functor $\mathscr{S}(-)$ is strong monoidal. Thus, for any dg operad $\mathcal{P}$ its Schur functor $\mathscr{S}(\mathcal{P})$ is a monad in dg modules and for any pdg cooperad $\C$ its Schur functor $\mathscr{S}(\mathcal{C})$ is a comonad in pdg modules.

\begin{Definition}[dg $\PP$-algebra]
Let $\PP$ be an operad. A dg $\PP$-\textit{algebra} $B$ amounts to the data $(B, \gamma_B, d_B)$ of an algebra over the monad $\mathscr{S}(\PP)$. 
\end{Definition}

\begin{Definition}[pdg $\C$-coalgebra]\label{def: C-coalgebra}
Let $\C$ be a pdg cooperad. A pdg $\C$-\textit{coalgebra} $D$ amounts to the data $(D,\Delta_D;d_D)$ of a coalgebra over the comonad $\mathscr{S}(\C)$. 
\end{Definition}

When $\C$ is a \textit{curved} cooperad, one furthermore asks that the coalgebra structure on $D$ is compatible with the curvature of the cooperad, which leads to the definition of a \textit{curved} $\C$-coalgebra. 

\begin{Definition}[curved $\C$-coalgebra]
Let $\C$ be a curved cooperad. A pdg coalgebra $D$ is said to be \textit{curved} if the following diagram commutes
\[
\begin{tikzcd}[column sep=3pc,row sep=3pc]
D  \arrow[r,"\Delta_D "] \arrow[rd,"-d_D^2",swap]
&\mathscr{S}(\C)(D) \arrow[d,"\mathscr{S}(\Theta_\C)(\mathrm{id})"]\\
&D \cong \mathscr{S}(\I)(D)~.
\end{tikzcd}
\]
\end{Definition}

The data of a curved twisting morphism $\alpha: \C \longrightarrow \PP$ between a conilpotent coaugmented curved cooperad $\C$ and a dg operad $\PP$ induces a bar-cobar adjunction relative to $\alpha$
\[
\begin{tikzcd}[column sep=5pc,row sep=3pc]
          \mathsf{curv}~\mathcal{C}\text{-}\mathsf{coalg} \arrow[r, shift left=1.1ex, "\Omega_{\alpha}"{name=F}] & \mathsf{dg}~\mathcal{P}\text{-}\mathsf{alg}, \arrow[l, shift left=.75ex, "\mathrm{B}_{\alpha}"{name=U}]
            \arrow[phantom, from=F, to=U, , "\dashv" rotate=-90]
\end{tikzcd}
\]
between the category of dg $\PP$-algebras and the category of curved $\C$-coalgebras. 

\medskip

This bar-cobar adjunction has interesting homotopical properties. Usually, one considers the homotopy theory of dg $\PP$-algebras up to quasi-isomorphism, that is, the model category structure on dg $\PP$-algebras where fibrations are given by epimorphisms and weak equivalences are given by quasi-isomorphisms. Using the above adjunction, it can be transferred onto the category of curved $\C$-coalgebras and, under certain hypothesis on the curved twisting morphism $\alpha$, this Quillen adjunction becomes a Quillen equivalence. It is in particular the case for the universal curved twisting morphism $\iota: \C \longrightarrow \Omega\C$. This allows one to study the homotopy theory of dg $\PP$-algebras purely in terms of the Koszul dual coalgebras. See \cite[Section 6 and 7]{grignou2019}. 

\subsection{Complete bar-cobar adjunction between coalgebras and absolute algebras}
A dg operad $\PP$ also encodes types of \textit{coalgebras}, which are \textit{non-necessarily conilpotent}. The goal of this subsection is to explain how, given a curved twisting morphism $\alpha: \C \longrightarrow \PP$, one can also construct a \textit{another bar-cobar adjunction} between these coalgebras encoded by $\PP$ and a \textit{algebras encoded by the cooperad} $\C$. We follow mainly \cite{grignoulejay18} for these constructions. 

\subsubsection{Algebras over a cooperad}
There is a \textit{dual Schur realization functor} which is given by
\[
\begin{tikzcd}[column sep=4pc,row sep=0.5pc]
\widehat{\mathscr{S}}^c : \mathsf{(p)dg}~\smod^{\mathsf{op}} \arrow[r]
&\mathsf{End}(\mathsf{(p)dg}~\mathsf{mod}) \\
M \arrow[r,mapsto]
&\widehat{\mathscr{S}}^c(M)(-) \coloneqq \displaystyle \prod_{n \geq 0} \mathrm{Hom}_{\mathbb{S}_n}(M(n),(-)^{\otimes n})~.
\end{tikzcd}
\]

\begin{lemma}[{\cite[Corollary~3.4]{grignoulejay18}}]\label{lemma: Schur lax contravariant functor}
The dual Schur realization functor $\widehat{\mathscr{S}}^c(-)$ can be endowed with a lax monoidal structure, that is, there exists a natural transformation 
\[
\varphi_{M,N}: \widehat{\mathscr{S}}^c(M) \circ \widehat{\mathscr{S}}^c(N) \longrightarrow \widehat{\mathscr{S}}^c(M \circ N)~.
\]
which satisfies associativity and unitality compatibility conditions with respect to the monoidal structures. Furthermore, $\varphi_{M,N}$ is a degree-wise monomorphism for all pdg $\mathbb{S}$-modules $M,N$.
\end{lemma}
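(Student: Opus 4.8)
The plan is to build, naturally in the pdg module $V$, a comparison map
\[
\varphi_{M,N}(V): \widehat{\mathscr{S}}^c(M)\bigl(\widehat{\mathscr{S}}^c(N)(V)\bigr) \longrightarrow \widehat{\mathscr{S}}^c(M \circ N)(V)~,
\]
of endofunctors of $\pdgmod$, by gluing together two elementary natural transformations of the base symmetric monoidal category $(\pdgmod, \otimes, \kk)$. The first is the lax structure map of the internal hom,
\[
\mathrm{Hom}(A,X) \otimes \mathrm{Hom}(B,Y) \longrightarrow \mathrm{Hom}(A \otimes B,\, X \otimes Y)~,\qquad f \otimes g \longmapsto \bigl(a \otimes b \mapsto (-1)^{|g||a|}\, f(a) \otimes g(b)\bigr)~,
\]
and the second is the canonical comparison between a tensor product of products and the product of tensor products,
\[
\Bigl(\prod_{i} A_i\Bigr) \otimes \Bigl(\prod_{j} B_j\Bigr) \longrightarrow \prod_{i,j} A_i \otimes B_j~.
\]
Over the field $\kk$ both of these are degree-wise monomorphisms: the second because any element lies in a finite sum of elementary tensors whose bicomponents are faithfully recorded on the right, and the first by the same argument after choosing a linearly independent family in one of the two factors. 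This is the conceptual heart of the proof, since it is precisely the failure of these maps to be isomorphisms that makes $\widehat{\mathscr{S}}^c$ only lax, as opposed to the strong monoidal $\mathscr{S}$.

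Iterating these maps then produces $\varphi_{M,N}$. Writing $\widehat{\mathscr{S}}^c(N)(V) = \prod_m \mathrm{Hom}_{\mathbb{S}_m}(N(m), V^{\otimes m})$, one first pushes the $n$-fold tensor power of this product inside using the second comparison map $n-1$ times, landing in $\prod_{m_1, \dots, m_n} \bigotimes_{s=1}^n \mathrm{Hom}_{\mathbb{S}_{m_s}}(N(m_s), V^{\otimes m_s})$; applying the first comparison map $n-1$ times sends each factor into $\mathrm{Hom}\bigl(N(m_1) \otimes \cdots \otimes N(m_n),\, V^{\otimes(m_1 + \cdots + m_n)}\bigr)$. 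Applying $\mathrm{Hom}_{\mathbb{S}_n}(M(n),-)$, taking the product over $n$, and reindexing by $k = m_1 + \cdots + m_n$, one recognizes the summands of
\[
(M \circ N)(k) = \bigoplus_{n \geq 0} M(n) \otimes_{\mathbb{S}_n} \Bigl(\,\bigoplus_{k_1 + \cdots + k_n = k} \mathrm{Ind}_{\mathbb{S}_{k_1} \times \cdots \times \mathbb{S}_{k_n}}^{\mathbb{S}_k}\bigl(N(k_1) \otimes \cdots \otimes N(k_n)\bigr)\Bigr)~,
\]
using that $\kk$ has characteristic zero so that invariants agree with coinvariants. The bookkeeping to carry out here is to check that the symmetric group actions (that of $\mathbb{S}_n$ permuting the $n$ factors, and of each $\mathbb{S}_{m_s}$ inside the $s$-th factor, assembling into the wreath product acting on $V^{\otimes k}$) and all the Koszul signs are compatible, so that the map descends to $\prod_k \mathrm{Hom}_{\mathbb{S}_k}((M \circ N)(k), V^{\otimes k}) = \widehat{\mathscr{S}}^c(M \circ N)(V)$. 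Since $\varphi_{M,N}$ is by construction a composite of the two elementary comparison maps together with the functors $\mathrm{Hom}_{\mathbb{S}_n}(M(n),-)$ and (infinite) products, all of which preserve degree-wise monomorphisms over a field, the map $\varphi_{M,N}$ is itself a degree-wise monomorphism, which gives the final assertion.

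It remains to verify the associativity and unitality coherence axioms, and I expect this to be the main obstacle. I would reduce it entirely to the coherence of the two elementary comparison maps in $(\pdgmod, \otimes, \kk)$: the associativity constraint of $\circ$ on $\smod$ matches, on realizations, the two ways of iterating the comparison maps over a triple composite $M \circ N \circ P$, and naturality together with the associativity of $\otimes$ forces these to agree; unitality follows because $\widehat{\mathscr{S}}^c$ sends the unit of the composition product to the identity endofunctor, on which the comparison maps restrict to identities. The genuine difficulty is purely combinatorial, namely organizing this diagram chase while keeping the wreath-product equivariance and the Koszul signs under control, exactly as in \cite[Corollary~3.4]{grignoulejay18}.
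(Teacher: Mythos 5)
Your proposal is correct and follows essentially the same route as the source this lemma is quoted from: the paper itself gives no proof (it recalls the statement from \cite[Corollary 3.4]{grignoulejay18}, with the construction of the monomorphism spelled out in \cite[Chapter 1, Section 5]{mathez}), and that construction assembles $\varphi_{M,N}$ exactly as you do, from the product-versus-tensor comparison map and the lax structure map of the internal hom, both degree-wise injective over a field, with $\mathrm{Hom}_{\mathbb{S}_n}(M(n),-)$, invariants and products preserving monomorphisms. Two minor remarks: the appeal to characteristic zero is not actually needed, since maps out of the $\mathbb{S}_n$-coinvariants defining $M \circ N$ are the same as $\mathbb{S}_n$-equivariant maps into a module with trivial action in any characteristic; and the coherence axioms, which you only sketch, do reduce by naturality to the coherence of the two elementary comparison maps, exactly as you indicate.
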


\begin{Remark}
The construction of this monomorphism is also explained in \cite[Chapter 1, Section 5]{mathez}.
\end{Remark}

This allows us to construct from any pdg cooperad a monad in the category of pdg modules.

\begin{Definition}[pdg $\C$-algebra]\label{def pdg C algebra}
A \textit{pdg} $\mathcal{C}$\textit{-algebra} $A$ amounts to the data $(A,\gamma_A,d_A)$ of an algebra over the monad $\widehat{\mathscr{S}}^c(\mathcal{C})$. This data is equivalent to the data of a map
\[
\gamma_A: \prod_{n \geq 0} \mathrm{Hom}_{\mathbb{S}_n}(\C(n),A^{\otimes n}) \longrightarrow A~,
\]

such that the following diagram commutes: 
\[
\begin{tikzcd}[column sep=4pc,row sep=3.5pc]
\widehat{\mathscr{S}}^c(\C) \circ \widehat{\mathscr{S}}^c(\C)(A) \arrow[d, "\widehat{\mathscr{S}}^c(\mathrm{id}_\C)(\gamma_A)",swap] \arrow[r,"\varphi_{\C,\C}(A)"]
&\widehat{\mathscr{S}}^c(\C \circ \C)(A) \arrow[r,"\widehat{\mathscr{S}}^c(\Delta)"]
&\widehat{\mathscr{S}}^c(\C)(A) \arrow[d,"\gamma_A "] \\
\widehat{\mathscr{S}}^c(\C)(A) \arrow[rr,"\gamma_A "]
&
&A~.
\end{tikzcd}
\]
\end{Definition}

\begin{Remark}
The notion of an algebra over a cooperad defines a new type of algebraic structures. The reason is that the structural map
\[
\gamma_A: \displaystyle \prod_{n \geq 0} \mathrm{Hom}_{\mathbb{S}_n}(\mathcal{C}(n),A^{\otimes n}) \longrightarrow A 
\]
gives, for any infinite series of operations in $\C$ labeled with elements of $A$, a well-defined image in $A$. Thus algebras over a cooperad are endowed with infinite summation of structural operations \textit{by definition}, without presupposing any underlying topology. 
\end{Remark}

\subsubsection{The canonical filtration on algebras over conilpotent cooperads} The notion of an algebra over a cooperad admits a further description in the case where the cooperad is \textit{conilpotent}. For the rest of this paragraph, let $\C$ be a conilpotent pdg cooperad. Each term of the coradical filtration $\mathscr{R}_\omega \C$ defines a pdg sub-cooperad, and there is a short exact sequence of pdg $\mathbb{S}$-modules 
\[
\begin{tikzcd}
0 \arrow[r]
&\mathscr{R}_\omega \C \arrow[r,"\iota_\omega",hook]
&\C \arrow[r,"\pi_\omega"]
&\C / \mathscr{R}_\omega \C \arrow[r]
&0~.
\end{tikzcd}
\]

\begin{Definition}[Canonical filtration on a pdg $\mathcal{C}$-algebra]\label{def canonical filtration}
Let $\C$ be a conilpotent pdg cooperad and let $A$ be a pdg $\C$-algebra. The \textit{canonical filtration} of $A$ is the decreasing filtration of given by 
\[ 
\mathrm{W}_\omega A \coloneqq \mathrm{Im}\left(\gamma_A \circ \widehat{\mathscr{S}}^c(\pi_\omega)(\mathrm{id}_A): \widehat{\mathscr{S}}^c(\C / \mathscr{R}_\omega \C)(A) \longrightarrow A \right)
\]
where $\mathscr{R}_\omega \C$ denotes the $\omega$-th term of the coradical filtration, for all $\omega \geq 0~.$ Notice that we have 
\[
A = \mathrm{W}_0 A \supseteq \mathrm{W}_1 A \supseteq \mathrm{W}_2 A \supseteq \cdots \supseteq \mathrm{W}_\omega A \supseteq \cdots.
\]
\end{Definition}
	
Each step $\mathrm{W}_k A$ of the canonical filtration of a pdg $\C$-algebra $A$ is an ideal, meaning the quotient $A/\mathrm{W}_k A$ has a canonical pdg $\C$-algebra structure induced by the original pdg $\C$-algebra structure of $A$. 

\begin{Definition}[Completion of a pdg $\C$-algebra]
Let $A$ be a pdg $\C$-algebra. Its \textit{completion} is given by 
\[ 
\widehat{A} \coloneqq \lim_{\omega} A/\mathrm{W}_\omega A~,
\]
where the limit is taken in the category of pdg $\C$-algebras.
\end{Definition}

It comes equipped with a canonical morphism of pdg $\C$-algebras $\varphi_A: A \longrightarrow \widehat{A}~.$ 

\begin{Proposition}[{\cite[Proposition~4.24]{grignoulejay18}}]\label{Prop: varphi is an epi}
Let $A$ be a pdg $\C$-algebra. The canonical morphism 
\[
\varphi_A: A \longrightarrow \widehat{A}
\]
is an epimorphism.
\end{Proposition}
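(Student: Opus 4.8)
The plan is to prove the epimorphism property directly from its universal characterization: given two morphisms of pdg $\C$-algebras $g,h \colon \widehat{A} \to C$ with $g \circ \varphi_A = h \circ \varphi_A$, I would show that $g = h$. Since the forgetful functor from pdg $\C$-algebras to pdg modules is monadic, it creates limits, so the underlying pdg module of $\widehat{A}$ is the plain inverse limit $\lim_\omega A/\mathrm{W}_\omega A$; in particular the canonical projections $p_\omega \colon \widehat{A} \to A/\mathrm{W}_\omega A$ are jointly injective morphisms of $\C$-algebras satisfying $p_\omega \circ \varphi_A = \mathrm{proj}_\omega$. The heart of the argument will be to realize every element $\widehat{a} \in \widehat{A}$ as the value of the structure map $\gamma_{\widehat{A}}$ on a single cooperation whose labels all lie in the image of $\varphi_A$; once this is achieved, the morphism property of $g$ and $h$ combined with $g\varphi_A = h\varphi_A$ will force $g(\widehat{a}) = h(\widehat{a})$.

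To build such a cooperation, I would represent $\widehat{a}$ by a coherent system $(\bar{a}_\omega)_\omega$, choose lifts $a_\omega \in A$ with $a_0 = 0$, and set $b_i \coloneqq a_{i+1} - a_i \in \mathrm{W}_i A$, so that $a_\omega = \sum_{i < \omega} b_i$. By the definition of the canonical filtration, each $b_i$ can be written as $b_i = \gamma_A\!\left(\widehat{\mathscr{S}}^c(\pi_i)(\mathrm{id}_A)(\xi_i)\right)$; write $\tilde{\xi}_i \in \widehat{\mathscr{S}}^c(\C)(A)$ for the corresponding family of maps $\C(n) \to A^{\otimes n}$, each of which vanishes on $\mathscr{R}_i\C(n)$. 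The decisive point is a convergence statement: since $\C$ is conilpotent, its coradical filtration is exhaustive, so every fixed cooperation $c \in \C(n)$ lies in some $\mathscr{R}_{\omega_0}\C(n)$, whence $\tilde{\xi}_i(n)(c) = 0$ for all $i \geq \omega_0$. Thus the sum $\tilde{\Xi} \coloneqq \sum_{i \geq 0} \tilde{\xi}_i$ is finite on each cooperation and defines a genuine element of $\widehat{\mathscr{S}}^c(\C)(A)$, and $\Xi \coloneqq \widehat{\mathscr{S}}^c(\C)(\varphi_A)(\tilde{\Xi}) \in \widehat{\mathscr{S}}^c(\C)(\widehat{A})$ has all of its labels in the image of $\varphi_A$.

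It then remains to verify the identity $\gamma_{\widehat{A}}(\Xi) = \widehat{a}$, which I would check componentwise against the jointly injective projections $p_\omega$. Splitting $\tilde{\Xi} = \tilde{\Xi}^{<\omega} + \tilde{\Xi}^{\geq \omega}$ into the finite sum of the first $\omega$ terms and the tail, linearity of $\gamma_{\widehat{A}}$ together with the morphism property of $\varphi_A$ gives $\gamma_{\widehat{A}}(\Xi) = \varphi_A(a_\omega) + \gamma_{\widehat{A}}\!\left(\widehat{\mathscr{S}}^c(\C)(\varphi_A)(\tilde{\Xi}^{\geq\omega})\right)$; since every $\tilde{\xi}_i$ with $i \geq \omega$ factors through $\C/\mathscr{R}_\omega\C$, the second term lies in $\mathrm{W}_\omega\widehat{A}$. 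Applying $p_\omega$, using that it is a filtered morphism of $\C$-algebras and that $\mathrm{W}_\omega(A/\mathrm{W}_\omega A) = \mathrm{proj}_\omega(\mathrm{W}_\omega A) = 0$, I would obtain $p_\omega(\gamma_{\widehat{A}}(\Xi)) = \mathrm{proj}_\omega(a_\omega) = \bar{a}_\omega = p_\omega(\widehat{a})$ for every $\omega$, so that joint injectivity forces $\gamma_{\widehat{A}}(\Xi) = \widehat{a}$. Finally, for $g,h$ as above, the morphism property would yield $g(\widehat{a}) = \gamma_C\!\left(\widehat{\mathscr{S}}^c(\C)(g\varphi_A)(\tilde{\Xi})\right) = \gamma_C\!\left(\widehat{\mathscr{S}}^c(\C)(h\varphi_A)(\tilde{\Xi})\right) = h(\widehat{a})$, giving $g=h$.

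I expect the main obstacle to be making the two infinite-sum manipulations rigorous, namely that $\tilde{\Xi} = \sum_i \tilde{\xi}_i$ genuinely lands in $\widehat{\mathscr{S}}^c(\C)(A)$ and that $\gamma_{\widehat{A}}$ interacts correctly with the truncation $\tilde{\Xi}^{<\omega} + \tilde{\Xi}^{\geq\omega}$; both rest entirely on the conilpotence of $\C$, which ensures that each cooperation meets only finitely many of the pieces $\tilde{\xi}_i$ and thereby turns the a priori infinite sums into locally finite ones. The auxiliary fact $\mathrm{W}_\omega(A/\mathrm{W}_\omega A) = 0$ should be routine, following from the observation that $\widehat{\mathscr{S}}^c(\C)(-)$ preserves surjections over a field of characteristic zero, so that the canonical filtration of a quotient $\C$-algebra is the image of the canonical filtration.
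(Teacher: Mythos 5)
The paper itself gives no proof of this proposition: it is imported verbatim from \cite[Proposition 4.24]{grignoulejay18}, so the only internal point of comparison is the Remark that follows it ("nothing needs to be added" upon completion). Your argument is correct, and it is precisely a rigorous implementation of that remark, along the same lines as the source: conilpotence of $\C$ makes the series $\tilde{\Xi}=\sum_{i}\tilde{\xi}_i$ locally finite, hence a genuine element of $\widehat{\mathscr{S}}^c(\C)(A)$, and evaluating the structure map on it recovers the given element of $\widehat{A}$. All the steps check out: the monadic forgetful functor does create limits, morphisms of $\C$-algebras do preserve the canonical filtration, and your auxiliary fact $\mathrm{W}_\omega(A/\mathrm{W}_\omega A)=0$ does follow from exactness of $\widehat{\mathscr{S}}^c(\C)(-)$ on degree-wise surjections, which holds over a field of characteristic $0$ by semisimplicity of $\kk[\mathbb{S}_n]$. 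One simplification is worth noting: your detour through $\widehat{A}$ is unnecessary. Since $\varphi_A$ is a morphism of $\C$-algebras, $\gamma_{\widehat{A}}(\Xi)=\varphi_A\bigl(\gamma_A(\tilde{\Xi})\bigr)$, so it suffices to check that $a\coloneqq\gamma_A(\tilde{\Xi})\in A$ satisfies $\mathrm{proj}_\omega(a)=\bar{a}_\omega$ for all $\omega$; this is immediate from your decomposition $\gamma_A(\tilde{\Xi})=a_\omega+\gamma_A(\tilde{\Xi}^{\geq\omega})$ with $\gamma_A(\tilde{\Xi}^{\geq\omega})\in\mathrm{W}_\omega A$. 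That version proves outright that $\varphi_A$ is degree-wise surjective (hence an epimorphism, the forgetful functor being faithful), and it dispenses with the filtration $\mathrm{W}_\omega\widehat{A}$, the notion of filtered morphism, and the auxiliary fact altogether. The only inputs you leave implicit are exactly those the paper's definition of $\widehat{A}$ already presupposes, namely that each $A/\mathrm{W}_\omega A$ is a pdg $\C$-algebra and the projections are morphisms; granting these, your telescoping, lifting, and homogeneity choices are routine, as you say.
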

 
\begin{Remark}\label{Remark: varphi is an epimorphism}
Conceptually, this comes from the fact that any pdg $\mathcal{C}$-algebra already carries a meaningful notion of infinite summation. Thus, "nothing needs to be added" when one applies the completion functor. On the other hand, the topology induced by the canonical filtration of a pdg $\mathcal{C}$-algebra might not be Hausdorff. Meaning that the canonical morphism $\varphi_B$ might not be a monomorphism. Therefore the action of the completion functor is to force the canonical topology to be Hausdorff. 
\end{Remark}

\begin{Definition}[Complete pdg $\mathcal{C}$-algebra]
The pdg $\C$-algebra $A$ is said to be \textit{complete} if the morphism $\varphi_A$ is an isomorphism.
\end{Definition}

\begin{Example}
Any \textit{free} pdg $\C$-algebra is complete, see \cite[Section 4]{grignoulejay18}. 
\end{Example}

\begin{Proposition}\label{prop: complete C algebras are a reflexiv subcat}
Let $\C$ be a conilpotent pdg cooperad. The category of complete pdg $\C$-algebras forms a reflective subcategory of the category of pdg $\C$-algebras, where the reflector is given by the completion functor.
\end{Proposition}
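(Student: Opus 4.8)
The plan is to realize the completion functor $\widehat{(-)}$ as a left adjoint to the fully faithful inclusion $i$ of complete pdg $\C$-algebras into all pdg $\C$-algebras, with unit given by the canonical morphisms $\varphi_A \colon A \to \widehat{A}$. Since the complete algebras form a \emph{full} subcategory, producing such an adjunction is exactly the assertion that the subcategory is reflective with reflector $\widehat{(-)}$.

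First I would check that $\widehat{(-)}$ is genuinely a functor and that $\varphi$ is natural. The key point is that the canonical filtration is preserved by morphisms: if $f \colon A \to B$ is a morphism of pdg $\C$-algebras, then $f(\mathrm{W}_\omega A) \subseteq \mathrm{W}_\omega B$ for every $\omega$. This is immediate from \cref{def canonical filtration}, since $\mathrm{W}_\omega A$ is the image of $\gamma_A \circ \widehat{\mathscr{S}}^c(\pi_\omega)(\id_A)$; naturality of $\widehat{\mathscr{S}}^c(\pi_\omega)$ together with the identity $f \circ \gamma_A = \gamma_B \circ \widehat{\mathscr{S}}^c(\id_\C)(f)$ expressing that $f$ is a $\C$-algebra morphism yields the inclusion. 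Consequently $f$ descends to compatible maps $A/\mathrm{W}_\omega A \to B/\mathrm{W}_\omega B$, and passing to the limit defines $\widehat{f} \colon \widehat{A} \to \widehat{B}$ with $\widehat{f} \circ \varphi_A = \varphi_B \circ f$, so that $\widehat{(-)}$ is a functor and $\varphi$ a natural transformation.

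The heart of the argument, and the step I expect to be the main obstacle, is showing that the reflector actually lands in the subcategory, i.e. that $\widehat{A}$ is always complete, equivalently that the completion is idempotent. Here one must compute the \emph{intrinsic} canonical filtration of $\widehat{A}$ and compare it with the filtration coming from the defining limit presentation. Concretely, I would show that $\mathrm{W}_\omega \widehat{A}$ coincides with the kernel of the canonical projection $\widehat{A} \twoheadrightarrow A/\mathrm{W}_\omega A$, so that the induced maps $\widehat{A}/\mathrm{W}_\omega \widehat{A} \to A/\mathrm{W}_\omega A$ are isomorphisms compatible with the structure maps of the tower; taking the limit then gives that $\varphi_{\widehat{A}}$ is an isomorphism. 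The delicate point is that $\mathrm{W}_\omega$ is defined through the algebra structure, as an image of $\gamma$, so one has to verify that the structural map $\gamma_{\widehat{A}}$ on the limit interacts with the filtration exactly as expected. I anticipate that this uses that each truncation $A/\mathrm{W}_\omega A$ is complete with a filtration stabilizing to zero, and that the degree-wise monomorphism property of $\varphi_{M,N}$ from \cref{lemma: Schur lax contravariant functor} controls the behaviour of $\widehat{\mathscr{S}}^c$ along the tower.

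Granting idempotence, the universal property is formal. Given a complete pdg $\C$-algebra $B$ and a morphism $f \colon A \to B$, naturality of $\varphi$ produces $\widehat{f} \colon \widehat{A} \to \widehat{B}$, and since $B$ is complete $\varphi_B$ is invertible; setting $\widetilde{f} \coloneqq \varphi_B^{-1} \circ \widehat{f}$ gives $\widetilde{f} \circ \varphi_A = \varphi_B^{-1} \circ \varphi_B \circ f = f$, which proves existence of the factorization. For uniqueness, if $g, g' \colon \widehat{A} \to B$ both restrict to $f$ along $\varphi_A$, then $g = g'$ because $\varphi_A$ is an epimorphism by \cref{Prop: varphi is an epi}. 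This yields a bijection $\operatorname{Hom}(\widehat{A}, B) \cong \operatorname{Hom}(A, i(B))$ natural in both variables, hence the adjunction $\widehat{(-)} \dashv i$ and the claimed reflectivity.
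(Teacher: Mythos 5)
The paper itself contains no proof of this proposition: it sits in the recollections section and is stated without argument (the result is in effect imported from \cite{grignoulejay18}), so there is no in-paper proof to compare yours against line by line. Judged on its own, your plan is the natural and correct one, and two of its three steps are essentially complete as written: functoriality of the completion and naturality of $\varphi$ do follow from the identity $f \circ \gamma_A = \gamma_B \circ \widehat{\mathscr{S}}^c(\id_\C)(f)$ exactly as you say, and, \emph{granting} idempotence, your derivation of the universal property is correct --- existence via $\varphi_B^{-1}\circ \widehat{f}$, uniqueness from \cref{Prop: varphi is an epi}, naturality being routine.

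The unresolved point is the one you flag yourself: completeness of $\widehat{A}$. Your intermediate claim, that $\mathrm{W}_\omega \widehat{A}$ is the kernel of $p_\omega \colon \widehat{A} \to A/\mathrm{W}_\omega A$, is the right one, but it is asserted rather than proved, and of the two tools you anticipate, only the first is on target: the vanishing $\mathrm{W}_\omega\bigl(A/\mathrm{W}_\omega A\bigr) = 0$ of the filtration on the truncations is indeed the key ingredient (and itself needs proof), whereas the monomorphy of $\varphi_{M,N}$ from \cref{lemma: Schur lax contravariant functor} plays no role here. What actually makes everything work is the surjective form of \cref{Prop: varphi is an epi} --- $\varphi_A$ is a degree-wise surjection, which is how the paper uses it elsewhere (it writes $\varphi_A \colon A \twoheadrightarrow \widehat{A}$) --- combined with the following lifting observation: since $\kk$ has characteristic zero, every $\kk[\mathbb{S}_n]$-module is projective, so post-composition with a surjection $q\colon A \twoheadrightarrow B$ induces a surjection $\mathrm{Hom}_{\mathbb{S}_n}\bigl((\C/\mathscr{R}_\omega\C)(n), A^{\otimes n}\bigr) \twoheadrightarrow \mathrm{Hom}_{\mathbb{S}_n}\bigl((\C/\mathscr{R}_\omega\C)(n), B^{\otimes n}\bigr)$. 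Consequently any filtration-preserving surjection $q$ of pdg $\C$-algebras satisfies $\mathrm{W}_\omega B = q(\mathrm{W}_\omega A)$. Applied to the quotient map $A \twoheadrightarrow A/\mathrm{W}_\omega A$ this gives $\mathrm{W}_\omega\bigl(A/\mathrm{W}_\omega A\bigr)=0$; applied to $\varphi_A$ it gives $\mathrm{W}_\omega\widehat{A} = \varphi_A(\mathrm{W}_\omega A) = \ker p_\omega$, which is your claim. (A slightly more economical finish: since $\varphi_{\widehat{A}}$ is automatically surjective by \cref{Prop: varphi is an epi}, completeness of $\widehat{A}$ reduces to $\bigcap_\omega \mathrm{W}_\omega\widehat{A} = 0$, and the inclusion $\mathrm{W}_\omega\widehat{A} \subseteq \ker p_\omega$ --- which only needs the truncation vanishing plus filtration-preservation of $p_\omega$ --- already gives this, as $\bigcap_\omega \ker p_\omega = 0$ in the limit.) With this ingredient supplied, your argument closes and proves the proposition.
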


\begin{Remark}
Contrary to $\mathcal{C}$-coalgebras, which are always conilpotent (meaning they are cocomplete with respect to the canonical filtration induced by the coradical filtration of the cooperad $\C$), there are examples of $\mathcal{C}$-algebras which are not complete. See the counterexample given in \cite[Section 4.5]{grignoulejay18}.
\end{Remark}

\subsubsection{Curved algebras over curved cooperads} When one considers algebras over a curved cooperad, it is natural to restrict to the full subcategory of such algebras which are compatible with the curvature of the cooperad.

\begin{Definition}[Curved $\C$-algebra]\label{def curved alg over a coop}
Let $\C$ be a curved cooperad and let $A$ be a pdg $\C$-algebra. It is a \textit{curved} $\C$\textit{-algebra} if the following diagram commutes: 
\[
\begin{tikzcd}[column sep=4pc,row sep=3pc]
A \cong \widehat{\mathscr{S}}^c(\I)(A) \arrow[r,"\widehat{\mathscr{S}}^c(\Theta_\C)(\mathrm{id}) "] \arrow[rd,"d_A^2",swap]
&\widehat{\mathscr{S}}^c(\C)(A) \arrow[d,"\gamma_A"]\\
&A ~.
\end{tikzcd}
\]
\end{Definition}

The category of curved $\mathcal{C}$-algebras is a full sub-category of the category of pdg $\mathcal{C}$-algebras, and the inclusion functor admits a reflector.

\begin{Proposition}[{\cite[Theorem 7.5]{grignoulejay18}}]\label{Prop: curved reflective subcategory}
Let $\C$ be a curved cooperad. The category of curved $\C$-algebras is a reflective sub-category of the category of pdg $\C$-algebras. It is thus presentable.
\end{Proposition}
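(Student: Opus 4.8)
The plan is to realize the full subcategory of curved $\C$-algebras inside $\mathsf{pdg}~\C\text{-}\mathsf{alg}$ as the locus where a single natural transformation vanishes, and then to invoke the reflection theorem for locally presentable categories. Concretely, \cref{def curved alg over a coop} says that a pdg $\C$-algebra $A$ is curved precisely when the \emph{curvature defect}
\[
\theta_A \coloneqq \gamma_A \circ \widehat{\mathscr{S}}^c(\Theta_\C)(\mathrm{id}_A) + d_A^2 \colon A \longrightarrow A
\]
is zero. Since $\Theta_\C \colon \C \to \I$ is fixed, $\widehat{\mathscr{S}}^c(\Theta_\C)$ is a natural transformation of endofunctors of pdg modules, while the structure map $\gamma$ and the squared differential $d^2$ are natural with respect to morphisms of pdg $\C$-algebras. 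Hence $\theta$ is a natural family of degree $-2$ module maps, and curved $\C$-algebras form exactly the full subcategory cut out by $\theta = 0$.

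The first step is to show that $\mathsf{pdg}~\C\text{-}\mathsf{alg}$ is locally presentable. It is the category of algebras over the monad $T \coloneqq \widehat{\mathscr{S}}^c(\C)$ on the locally presentable category of pdg modules, so it suffices to check that $T$ is accessible. This is the one genuinely delicate point, because $T(V) = \prod_{n \geq 0} \mathrm{Hom}_{\mathbb{S}_n}(\C(n), V^{\otimes n})$ involves an infinite product and is therefore \emph{not} finitary. The key observation is that the product is only \emph{countable}: the tensor powers $(-)^{\otimes n}$ preserve filtered colimits, each $\mathrm{Hom}_{\mathbb{S}_n}(\C(n),-)$ preserves $\lambda$-filtered colimits once $\lambda$ exceeds the presentability rank of $\C(n)$, and a countable product commutes with $\lambda$-filtered colimits as soon as $\lambda$ is uncountable. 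Choosing a single uncountable regular $\lambda$ bounding all these ranks makes $T$ a $\lambda$-accessible monad. Consequently $\mathsf{pdg}~\C\text{-}\mathsf{alg}$ is locally presentable, and the forgetful functor to pdg modules creates all limits and all $\lambda$-filtered colimits.

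Next I would check that the subcategory of curved $\C$-algebras is closed in $\mathsf{pdg}~\C\text{-}\mathsf{alg}$ under limits and under $\lambda$-filtered colimits. Both arguments are identical and rest only on the naturality of $\theta$. Given a small limit $\lim_i A_i$ of curved algebras, its underlying module is the limit of the underlying modules and the projections $p_i$ are jointly monic; naturality gives $p_i \circ \theta_{\lim A} = \theta_{A_i} \circ p_i = 0$, whence $\theta_{\lim A} = 0$. Dually, for a $\lambda$-filtered colimit $\colim_i A_i$ — which by $\lambda$-accessibility is computed on underlying modules, so its cocone maps $\iota_i$ are jointly epic — naturality gives $\theta_{\colim A} \circ \iota_i = \iota_i \circ \theta_{A_i} = 0$, whence $\theta_{\colim A} = 0$. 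Thus the subcategory is closed under limits and under $\lambda$-filtered colimits.

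The conclusion now follows from the reflection theorem for locally presentable categories: a full subcategory of a locally presentable category that is closed under limits and under $\lambda$-filtered colimits for some regular $\lambda$ is reflective and is itself locally presentable. Applied to curved $\C$-algebras inside $\mathsf{pdg}~\C\text{-}\mathsf{alg}$, this yields both the reflector and the presentability asserted in the statement. The \emph{main obstacle} is precisely the accessibility of the infinite-product monad $T$ in the first step; everything afterwards is formal. As an aside, the reflector admits an explicit description as the quotient $A \mapsto A/J_A$ by the smallest differential $\C$-ideal $J_A$ containing the image of $\theta_A$: naturality forces $\theta_{A/J_A} = 0$, and any morphism from $A$ to a curved algebra annihilates $\theta_A$ and hence factors through this quotient — though making rigorous sense of such quotients of algebras over a monad requires exactly the care that the abstract argument sidesteps.
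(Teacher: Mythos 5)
There is nothing in the paper to compare your argument against: the author does not prove this proposition, but imports it verbatim from \cite[Theorem 7.5]{grignoulejay18}. Judged on its own terms, your proof is correct, and the two genuinely load-bearing points are exactly the ones you isolate. First, accessibility of the monad $\widehat{\mathscr{S}}^c(\C)$: it is not finitary, but it is a countable product (over arities, and over degrees inside each graded hom) of functors $\mathrm{Hom}_{\mathbb{S}_n}(\C(n),(-)^{\otimes n})$, each of which preserves $\lambda$-filtered colimits once $\lambda$ exceeds the presentability rank of $\C(n)$; since $\lambda$-filtered colimits commute with $\lambda$-small limits of modules, any uncountable regular $\lambda$ dominating all these ranks makes the monad $\lambda$-accessible, so $\mathsf{pdg}~\C\text{-}\mathsf{alg}$ is locally presentable and its limits and $\lambda$-filtered colimits are created on underlying pdg modules. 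Second, naturality of the curvature defect $\theta_A = \gamma_A \circ \widehat{\mathscr{S}}^c(\Theta_\C)(\mathrm{id}_A) + d_A^2$: this holds because $\widehat{\mathscr{S}}^c(\Theta_\C)(\mathrm{id})$ is natural in the module variable, algebra morphisms intertwine the structure maps, and pdg morphisms commute with the predifferentials; joint monicity of limit projections and joint epicity of $\lambda$-filtered colimit cocones then give the two closure properties, and the reflection theorem of Adámek--Rosický yields both the reflector and local presentability. Two small refinements you may want to record. Since $\theta_A$ has degree $-2$, to speak literally of a natural transformation you should target the double desuspension of the forgetful functor; equivalently, curved $\C$-algebras are the equifier of $\theta$ and $0$ between accessible functors, which gives accessibility of the subcategory directly and lets you conclude via the adjoint functor theorem rather than the reflection theorem -- the same mathematics, but it sidesteps any worry about which form of the reflection theorem one is invoking. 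Finally, your closing aside about realizing the reflector as the quotient by the ideal generated by $\mathrm{im}(\theta_A)$ is the right concrete picture, but, as you say yourself, quotients by ideals of algebras over the completed monad are precisely where care is needed in the absolute setting, so keeping that description outside the proof proper was the correct choice.
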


Notice that the condition of being \textit{conilpotent} for curved cooperads is the same as for pdg cooperads, hence for a conilpotent curved cooperad $\C$, one can consider the full sub-category of complete curved $\C$-algebras. By composing the two reflectors, it is also a reflective sub-category of the category of pdg $\C$-algebras, which is presentable. Thus it is presentable as well.

\begin{Remark}
Any map $g: \mathcal{C} \longrightarrow \mathcal{D}$ induces a morphism $\widehat{\mathscr{S}}^c(g): \widehat{\mathscr{S}}^c(\mathcal{D}) \longrightarrow \widehat{\mathscr{S}}^c(\mathcal{C})$ of monads, which in turn produces an adjunction $\mathrm{Ind}_g \dashv \mathrm{Res}_g$ between pdg $\C$-algebras and pdg $\mathcal{D}$-algebras. One can check that this adjunction can be restricted to an adjunction between complete curved $\C$-algebras and complete curved $\mathcal{D}$-algebras.
\end{Remark}

\subsubsection{Coalgebras over operads}
Even though that, for an operad $\PP$, its dual Schur functor $\widehat{\mathscr{S}}^c(\mathcal{P})$ fails to be a comonad, one can still define a notion of a coalgebra over an operad. 

\begin{Definition}[dg $\mathcal{P}$-coalgebra]
A \textit{dg} $\mathcal{P}$\textit{-coalgebra} $C$ amounts to the data $(C, \Delta_C, d_C)$ of a dg module $(C,d_C)$ endowed with a structural map
\[
\Delta_C: C \longrightarrow \displaystyle \prod_{n \geq 0} \mathrm{Hom}_{\mathbb{S}_n}(\mathcal{P}(n),C^{\otimes n})~,
\]
such that the following diagram commutes 
\[
\begin{tikzcd}[column sep=4.5pc,row sep=3pc]
C \arrow[r,"\Delta_C"] \arrow[d,"\Delta_C",swap] 
&\widehat{\mathscr{S}}^c(\PP)(C) \arrow[r,"\widehat{\mathscr{S}}^c(\mathrm{id})(\Delta_C)"]
&\widehat{\mathscr{S}}^c(\PP)(C) \circ \widehat{\mathscr{S}}^c(\PP)(C) \arrow[d,"\varphi_{\PP,\PP}(C)"] \\
\widehat{\mathscr{S}}^c(\PP)(C) \arrow[rr,"\widehat{\mathscr{S}}^c(\gamma_\PP)(\mathrm{id})"]
&
&\widehat{\mathscr{S}}^c(\PP \circ \PP)(C)~.
\end{tikzcd}
\]
\end{Definition}

\begin{Remark}
The data of a $\mathcal{P}$-coalgebra $C$ is equivalent to the data of a morphism of dg operads $\mathcal{P} \longrightarrow \mathrm{Coend}_C$, where $\mathrm{Coend}_C$ is stands for the coendomorphisms operad of $C$, given by the dg $\mathbb{S}$-module
\[
\mathrm{Coend}_C(n) \coloneqq \mathrm{Hom}\left(C, C^{\otimes n} \right)~,
\]

where the operad structure is given by the standard composition of morphisms.
\end{Remark}

\begin{theorem}[{\cite[Theorem 2.7.11]{anelcofree2014}}]
Let $\PP$ be a dg operad. The category of dg $\PP$-coalgebras is comonadic. There exists a comonad $(\mathscr{C}(\PP), \omega, \zeta)$ in the category of dg modules such that the category of $\mathscr{C}(\PP)$-coalgebras is equivalent to the category of dg $\PP$-coalgebras.
\end{theorem}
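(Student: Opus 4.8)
The plan is to verify the crude form of Beck's comonadicity theorem for the forgetful functor $U \colon \mathsf{dg}~\PP\text{-}\mathsf{coalg} \to \dgmod$, which sends a $\PP$-coalgebra $(C, \Delta_C, d_C)$ to its underlying dg module. In its dual (comonadic) crude form, Beck's theorem asserts that $U$ is comonadic provided that (i) $U$ admits a right adjoint, (ii) $U$ reflects isomorphisms, and (iii) the category $\mathsf{dg}~\PP\text{-}\mathsf{coalg}$ has equalizers of coreflexive pairs and $U$ preserves them. Granting these three points, $U$ is comonadic, the associated comonad on $\dgmod$ is $\mathscr{C}(\PP) \coloneqq U \circ R$ where $R$ denotes the right adjoint, its counit $\zeta$ is the counit of the adjunction $U \dashv R$ and its comultiplication $\omega$ is induced by the unit, and the comparison functor exhibits $\mathsf{dg}~\PP\text{-}\mathsf{coalg}$ as equivalent to $\mathscr{C}(\PP)\text{-}\mathsf{coalg}$.

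The existence of the right adjoint $R$, the cofree $\PP$-coalgebra functor, is the crux of the argument. First I would observe that $U$ preserves all colimits: colimits of $\PP$-coalgebras are computed in $\dgmod$, because the structural map $\Delta_C \colon C \to \widehat{\mathscr{S}}^c(\PP)(C)$ points \emph{out} of the underlying object, so a colimit cocone in $\dgmod$ inherits a unique compatible coalgebra structure. Next I would argue that $\mathsf{dg}~\PP\text{-}\mathsf{coalg}$ is locally presentable: the endofunctor $\widehat{\mathscr{S}}^c(\PP)$ is accessible, since each finite tensor power $(-)^{\otimes n}$ preserves filtered colimits, each $\mathrm{Hom}_{\mathbb{S}_n}(\PP(n), -)$ preserves $\kappa$-filtered colimits for $\kappa$ large enough that $\PP(n)$ is $\kappa$-presentable, and a countable product of accessible functors is accessible; and the category of coalgebras over an accessible endofunctor on a locally presentable category is again locally presentable. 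A colimit-preserving functor between locally presentable categories admits a right adjoint, which produces $R$.

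For condition (ii), if $f \colon C \to D$ is a morphism of $\PP$-coalgebras whose underlying map $Uf$ is invertible, a direct diagram chase using the naturality of $\Delta$ shows that the inverse in $\dgmod$ is again a coalgebra morphism, so $U$ reflects isomorphisms. For condition (iii), the key point is that $\widehat{\mathscr{S}}^c(\PP)$ preserves equalizers: over a field of characteristic zero each tensor power $(-)^{\otimes n}$ is exact and hence preserves equalizers, $\mathrm{Hom}_{\mathbb{S}_n}(\PP(n), -)$ preserves all limits, and products preserve limits. Consequently, given a coreflexive pair of coalgebra morphisms, its equalizer $E$ computed in $\dgmod$ carries a unique map $\Delta_E \colon E \to \widehat{\mathscr{S}}^c(\PP)(E)$ making it into a $\PP$-coalgebra and realizing the equalizer in $\mathsf{dg}~\PP\text{-}\mathsf{coalg}$; thus $U$ creates, and in particular preserves, such equalizers.

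The main obstacle is the construction of the cofree coalgebra functor $R$. While the adjoint functor theorem delivers its existence abstractly, it gives no explicit formula; the content of \cite{anelcofree2014} is precisely an explicit description, obtained by realizing the cofree $\PP$-coalgebra on a dg module $V$ as the largest sub-coalgebra sitting inside $\widehat{\mathscr{S}}^c(\PP)(V)$, a Sweedler-type construction, which is forced upon us since $\widehat{\mathscr{S}}^c(\PP)$ itself fails to be a comonad. Verifying accessibility of $\widehat{\mathscr{S}}^c(\PP)$ with enough care to control the presentability rank of the infinite product, and checking that the sub-coalgebra construction is functorial and genuinely right adjoint to $U$, are the points that require the most attention.
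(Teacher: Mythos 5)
The paper offers no proof of this statement: it is quoted with a citation to Anel's work, where comonadicity is established by explicitly constructing the cofree $\PP$-coalgebra (the pullback description that the paper records as Theorem \ref{thm: existence of the cofree P cog}). Your strategy --- crude comonadicity plus the adjoint functor theorem for locally presentable categories --- is a genuinely different, more abstract route that could in principle prove the bare statement, but as written it has two real gaps. The first is the claim that ``each tensor power $(-)^{\otimes n}$ is exact and hence preserves equalizers.'' Exactness over a field gives preservation of kernels, i.e.\ of equalizers in a \emph{single} tensor slot, but the diagonal functor $X \mapsto X^{\otimes n}$ does not preserve equalizers of arbitrary pairs: for the two coordinate projections $f,g \colon \kk^2 \to \kk$, the equalizer $E = \mathrm{eq}(f,g)$ is the line spanned by $e_1+e_2$, whereas $\mathrm{eq}(f^{\otimes 2},g^{\otimes 2}) = \ker(f\otimes f - g \otimes g)$ is three-dimensional, so $E^{\otimes 2} \subsetneq \mathrm{eq}(f^{\otimes 2},g^{\otimes 2})$. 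What is true, and is all you need since Beck's crude condition only involves coreflexive pairs, is that tensor powers preserve \emph{coreflexive} equalizers; but this requires using the common retraction $r$ (with $rf = rg = \mathrm{id}$): applying $r$ in all tensor slots except the $i$-th to the equation $f^{\otimes n}(u) = g^{\otimes n}(u)$ shows $u$ lies in $X^{\otimes i} \otimes E \otimes X^{\otimes n-1-i}$ for each $i$, and the intersection of these subspaces is $E^{\otimes n}$. Your counterexample-prone justification must be replaced by this argument, or the whole verification of condition (iii) collapses.

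The second gap is in the local presentability step: you invoke the lemma that coalgebras over an accessible endofunctor form a locally presentable category, applied to $\widehat{\mathscr{S}}^c(\PP)$. But $\mathsf{dg}~\PP\text{-}\mathsf{coalg}$ is \emph{not} the category of endofunctor-coalgebras for $\widehat{\mathscr{S}}^c(\PP)$; precisely because $\widehat{\mathscr{S}}^c(\PP)$ is not a comonad, a $\PP$-coalgebra is an endofunctor-coalgebra whose structure map additionally satisfies the associativity (and counitality) constraint expressed through $\varphi_{\PP,\PP}$ and $\widehat{\mathscr{S}}^c(\gamma_\PP)$. You must further argue that this full subcategory is accessible --- for instance, as an equifier of natural transformations between accessible functors, which is accessible by the Makkai--Par\'e limit theorems --- and that it is closed under colimits of endofunctor-coalgebras, whence locally presentable; only then does the adjoint functor theorem apply to the correct category. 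With these two repairs your argument does prove the statement as quoted. Note, though, that it produces $\mathscr{C}(\PP)$ only abstractly, while the paper's subsequent constructions (the monomorphism $p_1$ of Theorem \ref{thm: existence of the cofree P cog}, the Sweedler dual, the complete Bar construction) rely on the explicit pullback description supplied by the cited source, which your route does not recover.
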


In particular, this entails the existence of a cofree dg $\PP$-coalgebra. While in the general setting of \cite{anelcofree2014}, the construction of the comonad $\mathscr{C}(\PP)$ is given by an infinite recursion, the construction of $\mathscr{C}(\PP)$ in the category of dg modules stops at the first step. 

\begin{theorem}[{\cite[Theorem 3.3.1]{anelcofree2014}}]\label{thm: existence of the cofree P cog}
The endofunctor $\mathscr{C}(\PP)$ is given by the pullback
\[
\begin{tikzcd}[column sep=3pc,row sep=3pc]
\mathscr{C}(\PP) \arrow[r,"p_2"] \arrow[d,"p_1",swap,rightarrowtail] \arrow[dr, phantom, "\ulcorner", very near start]
&\widehat{\mathscr{S}}^c(\PP) \circ \widehat{\mathscr{S}}^c(\PP) \arrow[d,"\varphi_{\PP,\PP}",rightarrowtail] \\
\widehat{\mathscr{S}}^c(\PP) \arrow[r,"\widehat{\mathscr{S}}^c(\gamma)"]
&\widehat{\mathscr{S}}^c(\PP \circ \PP)
\end{tikzcd}
\]
in the category of endofunctors. Notice that $p_1$ is a degree-wise monomorphism since $\varphi_{\PP,\PP}$ is a degree-wise monomorphism. The structural map of the comonad  
\[
\omega: \mathscr{C}(\PP) \longrightarrow \mathscr{C}(\PP) \circ \mathscr{C}(\PP)
\]
is induced by the map $p_2$ in the previous pullback. The counit of the comonad $\mathscr{C}(\PP)$ is given by
\[
\begin{tikzcd}[column sep=4pc,row sep=0.5pc]
\xi: \mathscr{C}(\PP) \arrow[r,"p_1"] 
&\widehat{\mathscr{S}}^c(\PP) \arrow[r,"\widehat{\mathscr{S}}^c(\eta)"]
&\mathrm{Id}~.
\end{tikzcd}
\]
\end{theorem}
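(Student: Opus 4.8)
The plan is to verify that the pullback object, which I denote $\mathscr{C}(\PP)(V)$, is the underlying object of the cofree dg $\PP$-coalgebra on a dg module $V$. By the comonadicity theorem recalled just above, the forgetful functor $\mathsf{dg}~\PP\text{-}\mathsf{coalg} \to \mathsf{dg}~\mathsf{mod}$ admits a right adjoint and the induced comonad is $\mathscr{C}(\PP)$; since right adjoints are unique, it suffices to identify this right adjoint with the pullback. Concretely an element of $\mathscr{C}(\PP)(V)$ is a pair $(x,y)$ with $x \in \widehat{\mathscr{S}}^c(\PP)(V)$ and $y \in \widehat{\mathscr{S}}^c(\PP)\circ\widehat{\mathscr{S}}^c(\PP)(V)$ subject to $\widehat{\mathscr{S}}^c(\gamma)(x) = \varphi_{\PP,\PP}(V)(y)$; I read $x$ as a one-step decomposition, $y$ as a two-step decomposition, and the defining relation of the pullback as the first instance of coassociativity.

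First I would put a $\PP$-coalgebra structure on $\mathscr{C}(\PP)(V)$. Since $y$ lives in $\widehat{\mathscr{S}}^c(\PP)\bigl(\widehat{\mathscr{S}}^c(\PP)(V)\bigr)$, the crucial point is that it factors through the sub-object $\widehat{\mathscr{S}}^c(\PP)(p_1)\colon \widehat{\mathscr{S}}^c(\PP)(\mathscr{C}(\PP)(V)) \hookrightarrow \widehat{\mathscr{S}}^c(\PP)\bigl(\widehat{\mathscr{S}}^c(\PP)(V)\bigr)$; granting this, the factorization is the structure map $\Delta$, and by construction $p_1 \circ \Delta = p_2$, which is precisely the comultiplication $\omega$ induced by $p_2$ in the statement. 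Establishing this factorization, and the coassociativity of $\Delta$, is where I would use the associativity of $\gamma_\PP$ together with the coherence of the lax monoidal structure $\varphi$ of \cref{lemma: Schur lax contravariant functor}; the hypothesis that every $\varphi_{M,N}(V)$ is a degree-wise monomorphism, together with the fact that $p_1$ is a monomorphism, is what makes the factorization unique and lets me check all identities after postcomposing with these monomorphisms.

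Next I would define the counit as the endofunctor map $\xi\colon \mathscr{C}(\PP) \xrightarrow{p_1} \widehat{\mathscr{S}}^c(\PP) \xrightarrow{\widehat{\mathscr{S}}^c(\eta)} \widehat{\mathscr{S}}^c(\I) \cong \mathrm{Id}$ and verify the universal property. Given a dg $\PP$-coalgebra $(C,\Delta_C)$ and a dg-linear map $f\colon C \to V$, I would produce the lift $\tilde f\colon C \to \mathscr{C}(\PP)(V)$ whose first component is $\widehat{\mathscr{S}}^c(\PP)(f)\circ\Delta_C$ and whose second component is the two-fold iterate of $\Delta_C$ transported along $f$; the content is that this pair lands in the pullback exactly because $\Delta_C$ is coassociative, and that $\tilde f$ is then a morphism of $\PP$-coalgebras lifting $f$. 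Uniqueness is immediate: since $p_1$ is a monomorphism and $\xi$ factors through it, any lift is determined by its first component, which is forced by compatibility with the structure maps.

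The hard part will be the factorization in the second paragraph, which is also what encodes the assertion that the construction stops at the first step. A priori the cofree coalgebra is a transfinite limit of a terminal tower recording $\omega$-fold coassociativity for every $\omega$, and one must show that imposing only the first-order pullback relation forces all higher compatibilities. The mechanism I would exploit is that $\widehat{\mathscr{S}}^c$ turns the defining colimit of $\PP \circ \PP$ into a limit and that all the comparison maps $\varphi$ are monomorphisms, so higher coassociativity constraints are detected inside $\widehat{\mathscr{S}}^c(\PP\circ\PP)(V)$ and are already implied by the single pullback square. Everything else then reduces to diagram chases that are forced by these monomorphism properties.
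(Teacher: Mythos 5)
Your overall architecture is sound, and it matches that of the cited source (note that the paper itself gives no proof of this statement: it quotes it from \cite[Theorem 3.3.1]{anelcofree2014}): equip the pullback with a coalgebra structure by factoring $p_2$ through $\widehat{\mathscr{S}}^c(\PP)(p_1)$, take $\xi = \widehat{\mathscr{S}}^c(\eta)\circ p_1$ as counit, verify couniversality, and conclude by uniqueness of right adjoints together with the comonadicity theorem recalled just before the statement. Your verification that the candidate lift $\tilde f$ lands in the pullback (via the coassociativity axiom of $C$ and naturality) and your uniqueness argument are fine, modulo the typo that the factorization identity should read $\widehat{\mathscr{S}}^c(\mathrm{id}_\PP)(p_1)\circ\Delta = p_2$ rather than $p_1\circ\Delta=p_2$.

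The genuine gap is the step you yourself flag as hard, and the mechanism you propose cannot close it. Monomorphy of $\varphi_{\PP,\PP}$ and of $p_1$ yields only the \emph{uniqueness} of a factorization of $p_2$ through the subobject $\widehat{\mathscr{S}}^c(\PP)(\mathscr{C}(\PP)(V)) \rightarrowtail \widehat{\mathscr{S}}^c(\PP)\bigl(\widehat{\mathscr{S}}^c(\PP)(V)\bigr)$, and lets you check identities after postcomposition; it produces no element of that subobject, i.e.\ no \emph{existence}. Existence is precisely the assertion that the inner factors of a representative two-step decomposition are themselves representative, and this is where all the content of the theorem sits: as the paper notes immediately after the statement, in the general setting of \cite{anelcofree2014} the comonad must be built by an infinite recursion of such pullbacks, and the theorem is exactly the claim that over dg $\kk$-modules the recursion stabilizes at the first step. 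A purely formal argument of the kind you sketch ("$\widehat{\mathscr{S}}^c$ turns colimits into limits and the $\varphi$'s are mono") uses nothing that distinguishes these two situations beyond the mono hypothesis, and mono-ness alone is not enough. What is actually needed is the Sweedler--Block--Leroux representative-element lemma (see \cite{Sweedler69,blockleroux}, and \cite[Section 3.1]{anelcofree2014} for the operadic case): write the two-step decomposition $y$ as a finite sum with linearly independent inner factors, use dual functionals --- which exist because $\kk$ is a field --- to express each inner factor as a partial evaluation of the formally coassociative three-step decomposition of $x$, and deduce that its own decomposition is again a finite sum, hence that it lies in $\mathscr{C}(\PP)(V)$. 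This dual-basis argument is the irreducible linear-algebraic input; without it your outline establishes only that \emph{if} the pullback carries the required coalgebra structure then it is the cofree one, not that the structure map exists.
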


\begin{Remark}
Let $\PP$ be a dg operad. For any dg module $V$, the cofree dg $\PP$-coalgebra on $V$ is given by $\mathscr{C}(\PP)(V)$.
\end{Remark}

\begin{Remark}
The subspace $\mathscr{C}(\PP)(V)$ of $\widehat{\mathscr{S}}^c(\PP)(V)$ admits an explicit description in terms of \textit{representative functions}. See \cite[Section 3.1]{anelcofree2014} or \cite{blockleroux} for the original reference about representative functions in the case of coassociative and cocommutative coalgebras.
\end{Remark}

\begin{Remark}
Any morphism $f: \PP \longrightarrow \mathcal{Q}$ induces a morphism of comonads $\mathscr{C}(f): \mathscr{C}(\mathcal{Q})  \longrightarrow \mathscr{C}(\mathcal{P})$, which in turn produces an adjunction $\mathrm{Res}_f \dashv \mathrm{Coind}_f$ between dg $\mathcal{P}$-coalgebras and dg $\mathcal{Q}$-coalgebras.
\end{Remark}

\subsubsection{Complete bar-cobar adjunctions and model structures}
Here, we explain the construction of the complete bar-cobar adjunction relative to a curved twisting morphism made in \cite[Section 8 to 11]{grignoulejay18}. Beware that we use different notations than those used in \textit{loc.cit.} We fix $\mathcal{P}$ a dg operad, $\mathcal{C}$ a conilpotent curved cooperad, and $\alpha: \mathcal{C} \longrightarrow \mathcal{P}$ a curved twisting morphism. 

\begin{Notation}\label{not: le dirac}
Let $f: X \longrightarrow Y$ be a map of degree $0$ and $g: X \longrightarrow Y$ be a map of degree $p$ between graded modules $X,Y$. We denote
\[
\diracComb_n(f,g) \coloneqq \sum_{i=0}^n f^{\otimes i-1} \otimes g \otimes f^{\otimes n-i} : X^{\otimes n} \longrightarrow Y^{\otimes n}
\]
the resulting $\mathbb{S}_n$-equivariant map of degree $p$. Let $M$ be a graded $\mathbb{S}$-module. It induces a map of degree $p$
\[
\prod_{n \geq 0} \mathrm{Hom}_{\mathbb{S}_n}(M(n),X^{\otimes n}) \longrightarrow \prod_{n \geq 0} \mathrm{Hom}_{\mathbb{S}_n}(M(n),Y^{\otimes n})
\]
by applying $\mathrm{Hom}(\mathrm{id},\diracComb_n(f,g))$ at each arity. By a slight abuse of notation, this map will be denoted by $\widehat{\mathscr{S}}^c(\mathrm{id})(\diracComb(f,g))~.$
\end{Notation} 

\begin{Definition}[Complete bar construction relative to $\alpha$]\label{def: complete bar}
Let $(A,\gamma_A,d_A)$ be a curved $\C$-algebra. The \textit{complete bar construction relative to} $\alpha$ of $A$ is given by
\[
\widehat{\mathrm{B}}_{\alpha} A \coloneqq (\mathscr{C}(\PP)(A), d_{\mathrm{bar}} \coloneqq d_1 + d_2)~,
\]
where $\mathscr{C}(\PP)(A)$ denotes the cofree graded $\PP$-coalgebra generated by $A$. The differential $d_{\mathrm{bar}}$ is given by the sum of two terms $d_1$ and $d_2$. The term $d_1$ is given by
\[
d_1 = \mathscr{C}(d_\PP)(\mathrm{id}) +  \mathscr{C}(\mathrm{id})(\diracComb(\mathrm{id},d_A))~.
\]
The term $d_2$ is given by the unique coderivation extending  
\[
\begin{tikzcd}[column sep=4pc,row sep=1pc]
\mathscr{C}(\PP)(A) \arrow[r,"p_1 (A)",rightarrowtail]
&\widehat{\mathscr{S}}^c(\PP)(A)\arrow[r,"\widehat{\mathscr{S}}^c(\alpha)(\mathrm{id})"]
&\widehat{\mathscr{S}}^c(\C)(A)  \arrow[r,"\gamma_A "]
&A~.
\end{tikzcd}
\]
\end{Definition}

\begin{Proposition}[{\cite[Proposition 9.2]{grignoulejay18}}]
For any curved $\C$-algebra $A$, the complete bar construction $\widehat{\mathrm{B}}_{\alpha}A$ forms a dg $\PP$-coalgebra, and it defines a functor
\[
\widehat{\mathrm{B}}_{\alpha}: \mathsf{curv}~\C\text{-}\mathsf{alg} \longrightarrow \mathsf{dg}~\PP\text{-}\mathsf{coalg}~.
\]
\end{Proposition}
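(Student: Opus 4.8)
The plan is to realise $\widehat{\mathrm{B}}_\alpha A$ as the cofree graded $\PP$-coalgebra on $A$ equipped with a square-zero coderivation of degree $-1$, and then to check functoriality. By \cref{thm: existence of the cofree P cog}, the underlying graded object $\mathscr{C}(\PP)(A)$ is the cofree graded $\PP$-coalgebra cogenerated by $A$, so it already carries a canonical graded $\PP$-coalgebra structure, which I would keep fixed throughout. It then remains to prove that $d_{\mathrm{bar}} = d_1 + d_2$ is a coderivation of degree $-1$ and that $d_{\mathrm{bar}}^2 = 0$. The key leverage is the universal property of $\mathscr{C}(\PP)(-)$: a coderivation is uniquely determined by its corestriction along the projection to cogenerators $\xi \colon \mathscr{C}(\PP)(A) \to A$ of \cref{thm: existence of the cofree P cog}, so both the coderivation property and the vanishing of the square can be verified at the level of cogenerators.

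First I would check that $d_{\mathrm{bar}}$ is a degree $-1$ coderivation. The summand $d_2$ is a coderivation by its very construction, being the unique one whose corestriction is $\gamma_A \circ \widehat{\mathscr{S}}^c(\alpha)(\mathrm{id}) \circ p_1(A)$; as $\alpha$ has degree $-1$ while $p_1$ and $\gamma_A$ have degree $0$, it has degree $-1$. The summand $d_1 = \mathscr{C}(d_\PP)(\mathrm{id}) + \mathscr{C}(\mathrm{id})(\diracComb(\mathrm{id}, d_A))$ is the coderivation functorially induced by the internal differentials: $\mathscr{C}(d_\PP)(\mathrm{id})$ comes from the operadic derivation $d_\PP$, and $\mathscr{C}(\mathrm{id})(\diracComb(\mathrm{id}, d_A))$ is the coderivation corestricting to $d_A \circ \xi$. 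Both have degree $-1$, hence so does $d_{\mathrm{bar}}$.

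The heart of the argument is $d_{\mathrm{bar}}^2 = 0$. As $d_{\mathrm{bar}}$ is odd, $d_{\mathrm{bar}}^2 = \tfrac{1}{2}[d_{\mathrm{bar}}, d_{\mathrm{bar}}]$ is again a coderivation, now of degree $-2$, so by the universal property it suffices to show $\xi \circ d_{\mathrm{bar}}^2 = 0$. Expanding $d_{\mathrm{bar}}^2 = d_1^2 + (d_1 d_2 + d_2 d_1) + d_2^2$ and corestricting along $\xi$, I would identify three families of contributions. Using $\xi \circ d_1 = d_A \circ \xi$ (the $d_\PP$-part dies under $\xi$ since $\widehat{\mathscr{S}}^c(\eta) \circ \widehat{\mathscr{S}}^c(d_\PP) = \widehat{\mathscr{S}}^c(d_\PP \circ \eta) = 0$, and the cross-terms of $\diracComb(\mathrm{id}, d_A)$ cancel by the Koszul sign rule, leaving $\diracComb(\mathrm{id}, d_A^2)$), the internal term gives $\xi \circ d_1^2 = d_A^2 \circ \xi$, which by the curved $\C$-algebra axiom of \cref{def curved alg over a coop}, $d_A^2 = -\gamma_A \circ \widehat{\mathscr{S}}^c(\Theta_\C)(\mathrm{id})$, is, up to the ambient sign conventions, the realisation of the curvature term $\eta \circ \Theta_\C$. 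The mixed term $\xi \circ (d_1 d_2 + d_2 d_1)$ produces $\gamma_A$ precomposed with the realisation of $d_\PP \circ \alpha + \alpha \circ d_\C$ (the $d_A$-contributions cancelling because $\gamma_A$ is a chain map), and $\xi \circ d_2^2$ produces $\gamma_A$ precomposed with the realisation of the convolution square $\gamma_\PP \circ (\alpha \circ \alpha) \circ \Delta$. Summing, $\xi \circ d_{\mathrm{bar}}^2$ equals $\gamma_A$ precomposed with $\widehat{\mathscr{S}}^c(-)(\mathrm{id})$ applied to the left-hand side of the curved Maurer--Cartan equation satisfied by $\alpha$; since $\alpha$ is a curved twisting morphism, this vanishes, whence $d_{\mathrm{bar}}^2 = 0$. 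The hard part will be exactly this step: reconstructing $d_{\mathrm{bar}}$ from its corestriction, computing $\xi \circ d_{\mathrm{bar}}^2$ with correct Koszul signs, and matching the three families term by term with the three summands of the curved twisting equation, so that in particular the curvature produced by $d_A^2$ cancels the $\eta \circ \Theta_\C$ term.

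Finally, for functoriality, a morphism $f \colon A \to A'$ of curved $\C$-algebras induces the morphism of cofree graded $\PP$-coalgebras $\mathscr{C}(\mathrm{id})(f)$. I would check it is a chain map: compatibility with $d_1$ follows from functoriality together with $f \circ d_A = d_{A'} \circ f$, and compatibility with $d_2$ follows from $f$ being a morphism of $\C$-algebras, i.e. $f \circ \gamma_A = \gamma_{A'} \circ \widehat{\mathscr{S}}^c(\mathrm{id})(f)$, which makes the defining corestrictions of the two bar differentials agree after composing with $f$. Hence $\mathscr{C}(\mathrm{id})(f)$ is a morphism of dg $\PP$-coalgebras and $\widehat{\mathrm{B}}_\alpha$ is a functor.
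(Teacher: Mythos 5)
This proposition is stated in the paper as a recollection from \cite[Section 8]{grignoulejay18} and carries no proof there, so there is nothing internal to compare against; judged on its own merits, your argument is the standard one and is sound in structure. Your reduction of everything to corestrictions along $\xi$ is legitimate precisely because the paper's \cref{def: complete Bar} already presupposes the unique-extension property of coderivations on the cofree object $\mathscr{C}(\PP)(A)$ of \cref{thm: existence of the cofree P cog} (this property is genuinely nontrivial in the non-conilpotent setting --- it is part of what \cite{grignoulejay18} and \cite{anelcofree2014} establish --- so you should flag that you are importing it rather than proving it). The three-way matching you describe is the right mechanism: $\xi \circ d_1^2$ produces $d_A^2 \circ \xi$, which by \cref{def curved alg over a coop} equals $-\gamma_A \circ \widehat{\mathscr{S}}^c(\Theta_\C)(\mathrm{id}) \circ \xi$, and this is exactly what absorbs the curvature term $\eta \circ \Theta_\C$ sitting on the right-hand side of the curved Maurer--Cartan equation for $\alpha$, while the mixed terms and $d_2^2$ produce the $\partial(\alpha)$ and convolution-square summands; this cancellation pattern, rather than a literal vanishing of each family, is what makes $\widehat{\mathrm{B}}_\alpha A$ a genuine dg (not merely pdg) $\PP$-coalgebra, and you identified it correctly. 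The functoriality step via $\mathscr{C}(\mathrm{id})(f)$ is also correct, since a morphism of curved $\C$-algebras commutes with both the pre-differentials and the structure maps, which is all that the two corestrictions defining $d_1$ and $d_2$ see.
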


\begin{Definition}[Complete cobar construction relative to $\alpha$]\label{def: complete cobar}
Let $(C,\delta_C,d_C)$ be a dg $\PP$-coalgebra. The \textit{complete cobar construction relative to} $\alpha$ of $C$ is given by
\[
\widehat{\Omega}_\alpha C \coloneqq (\widehat{\mathscr{S}}^c(\C)(C), d_{\mathrm{cobar}} \coloneqq d_1 - d_2)~,
\]
where $\widehat{\mathscr{S}}^c(\C)(C)$ denotes the free complete pdg $\C$-algebra generated by $C$. The differential $d_{\mathrm{cobar}}$ is given by the difference of two terms $d_1$ and $d_2$. The term $d_1$ is given by
\[
d_1 = -\widehat{\mathscr{S}}^c(d_\C)(\mathrm{id}) + \widehat{\mathscr{S}}^c(\mathrm{id})(\diracComb(\mathrm{id},d_C))~.
\]
The term $d_2$ is given by the unique derivation extending  
\[
\begin{tikzcd}[column sep=4pc,row sep=1pc]
C \arrow[r," \Delta_C"]
&\widehat{\mathscr{S}}^c(\PP)(C) \arrow[r,"\widehat{\mathscr{S}}^c(\alpha)(\mathrm{id})"]
&\widehat{\mathscr{S}}^c(\C)(C)~.
\end{tikzcd} 
\]
\end{Definition}

\begin{Proposition}[{\cite[Proposition 9.1]{grignoulejay18}}]
For any dg $\PP$-coalgebra $C$, the complete cobar construction $\widehat{\Omega}_{\alpha}C$ forms a complete curved $\C$-algebra, and it defines a functor
\[
\widehat{\Omega}_\alpha: \mathsf{dg}~\PP\text{-}\mathsf{coalg} \longrightarrow \mathsf{curv}~\C\text{-}\mathsf{alg}^{\mathsf{comp}}~.
\]
\end{Proposition}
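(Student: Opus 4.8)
The plan is to verify three things in turn: that $d_{\mathrm{cobar}} = d_1 - d_2$ is a well-defined derivation of degree $-1$, so that $\widehat{\Omega}_\alpha C$ is at least a pdg $\C$-algebra; that it satisfies the curved condition of \cref{def curved alg over a coop}; and that the underlying $\C$-algebra is complete. Functoriality will then follow formally from naturality.

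First I would check that $d_2$ is well defined. The underlying graded object $\widehat{\mathscr{S}}^c(\C)(C)$ is the free complete $\C$-algebra on $C$, so by the universal property of free objects a derivation is uniquely determined by its restriction to the generators $C$, and any degree $k$ map out of $C$ extends uniquely to a degree $k$ derivation. The composite $C \xrightarrow{\Delta_C} \widehat{\mathscr{S}}^c(\PP)(C) \xrightarrow{\widehat{\mathscr{S}}^c(\alpha)(\mathrm{id})} \widehat{\mathscr{S}}^c(\C)(C)$ has degree $-1$, since $\Delta_C$ is of degree $0$ and $\alpha$ of degree $-1$, so $d_2$ exists and is unique. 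The two summands of $d_1$ are derivations of degree $-1$ for the same reason: $\widehat{\mathscr{S}}^c(d_\C)(\mathrm{id})$ is induced by the coderivation $d_\C$ through the contravariant functoriality of $\widehat{\mathscr{S}}^c$, and $\widehat{\mathscr{S}}^c(\mathrm{id})(\diracComb(\mathrm{id}, d_C))$ is, following \cref{not: le dirac}, the derivation extending $d_C$ across tensor factors. Hence $d_{\mathrm{cobar}}$ is a degree $-1$ derivation.

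The heart of the argument is the computation of $d_{\mathrm{cobar}}^2$ and the verification that it equals $-\gamma_A \circ \widehat{\mathscr{S}}^c(\Theta_\C)(\mathrm{id})$, which is precisely the commutativity of the diagram in \cref{def curved alg over a coop}. Since $d_{\mathrm{cobar}}$ is an odd derivation, $d_{\mathrm{cobar}}^2$ is a derivation of degree $-2$; the curvature term on the right is likewise a degree $-2$ derivation, as $\Theta_\C$ is supported in arity one and thus acts through unary co-operations compatibly with the Leibniz rule. It therefore suffices to compare the two sides on the generating module $C$. Expanding $d_{\mathrm{cobar}}^2 = d_1^2 - d_1 d_2 - d_2 d_1 + d_2^2$ and restricting to $C$, I expect the internal term $d_1^2$ to reduce to $\widehat{\mathscr{S}}^c(d_\C^2)(\mathrm{id})$ --- rewritten through $\Theta_\C$ by the curved cooperad relation of \cref{def curved cooperad}, the $d_C$-contribution vanishing since $d_C^2 = 0$ --- while the mixed terms $-d_1 d_2 - d_2 d_1$ together with $d_2^2$ should reassemble, via the coassociativity of $\Delta_C$ and the lax monoidal map $\varphi_{\C,\C}$ of \cref{lemma: Schur lax contravariant functor}, into the image under $\widehat{\mathscr{S}}^c(-)(\mathrm{id})$ of the convolution square of $\alpha$ and of $d_\PP\circ\alpha + \alpha\circ d_\C$. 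The curved Maurer--Cartan equation characterizing the curved twisting morphism $\alpha$ (see \cite{grignou2019, lucio2022curved}) then produces exactly the desired identity on $C$, and hence everywhere.

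Completeness is then immediate: as a graded $\C$-algebra $\widehat{\Omega}_\alpha C$ is free, hence complete (free pdg $\C$-algebras being complete), and adjoining the derivation $d_{\mathrm{cobar}}$ leaves the underlying $\C$-algebra unchanged. For functoriality, a morphism $f\colon C \to D$ of dg $\PP$-coalgebras induces $\widehat{\mathscr{S}}^c(\mathrm{id}_\C)(f)$ on free complete $\C$-algebras, which commutes with $d_1$ by naturality (using $f d_C = d_D f$) and with $d_2$ because $f$ is a coalgebra morphism, i.e. $\Delta_D \circ f = \widehat{\mathscr{S}}^c(\PP)(f) \circ \Delta_C$; comparing on generators shows it is a morphism of curved $\C$-algebras. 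The main obstacle is the computation in the previous paragraph: the sign bookkeeping, the handling of the contravariant, product-valued, merely lax monoidal functor $\widehat{\mathscr{S}}^c$, and the fact that $\Delta_C$ takes values in an infinite product all make it delicate to confirm that the mixed terms recombine into the convolution square of $\alpha$ and that the curvature term is genuinely a derivation, which is what legitimizes reducing the comparison to the generators.
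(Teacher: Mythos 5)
The paper itself offers no proof of this proposition --- it is recalled from \cite[Section 8]{grignoulejay18} --- so the comparison is with the standard argument of that reference, and your overall architecture matches it: well-definedness of $d_{\mathrm{cobar}}$ by extension from generators, the identity $d_{\mathrm{cobar}}^2 = -\gamma \circ \widehat{\mathscr{S}}^c(\Theta_\C)(\mathrm{id})$ obtained from the curved twisting morphism equation, completeness for free from the underlying graded algebra being free, and functoriality checked on generators. The completeness and functoriality paragraphs are correct as written, as is the identification of which ingredients (curved cooperad relation, Maurer--Cartan equation for $\alpha$, coassociativity of $\Delta_C$, the lax structure $\varphi_{\C,\C}$) must enter the central computation.

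The gap is in the step that legitimizes comparing the two sides of the curvature identity on generators. You claim that $d_{\mathrm{cobar}}^2$ is a derivation because $d_{\mathrm{cobar}}$ is an odd derivation, and that the curvature operator $\gamma \circ \widehat{\mathscr{S}}^c(\Theta_\C)(\mathrm{id})$ is a derivation because ``$\Theta_\C$ is supported in arity one.'' Neither claim holds in the strict, $\C$-linear sense. The summand $\widehat{\mathscr{S}}^c(d_\C)(\mathrm{id})$ of $d_1$ is \emph{not} a strict derivation of the free $\C$-algebra: since $d_\C$ is a coderivation, naturality of $\varphi_{\C,\C}$ gives (up to signs) a Leibniz rule in which $d_\C$ also hits the cooperad labels,
\[
d_{\mathrm{cobar}} \circ \gamma \;=\; \gamma \circ \Bigl( \pm\,\widehat{\mathscr{S}}^c(d_\C)(\mathrm{id}) \,+\, \widehat{\mathscr{S}}^c(\mathrm{id})\bigl(\diracComb(\mathrm{id}, d_{\mathrm{cobar}})\bigr)\Bigr)~,
\]
which is precisely the statement that $\widehat{\Omega}_\alpha C$ is a pdg $\C$-algebra. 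Squaring, $d_{\mathrm{cobar}}^2$ obeys the analogous rule twisted by $d_\C^2$, which is nonzero because $\C$ is curved; so it is not a strict derivation, and neither is the curvature operator (on a general pdg $\C$-algebra the latter satisfies no Leibniz rule at all). What must actually be proven is that on the free algebra the curvature operator satisfies the \emph{same} $d_\C^2$-twisted rule; this is where the relation of \cref{def curved cooperad}, expressing $d_\C^2$ through insertions of $\Theta_\C$, together with coassociativity of $\Delta_\C$ and naturality of $\varphi_{\C,\C}$, is consumed. Once that is established, the difference of the two operators is a strict derivation vanishing on generators, hence zero, and your reduction to $C$ becomes valid. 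So the strategy is salvageable, but the sentence you use to justify it is false as stated, and the missing verification is not a formality: it is of essentially the same order as the main computation you leave as an expectation.
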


\begin{Definition}[Curved twisting morphism relative to $\alpha$]
Let $(C,\delta_C,d_C)$ be a dg $\PP$-coalgebra and let $(A,\gamma_A,d_A)$ be a curved $\C$-algebra. A graded morphism 
\[
\nu: C \longrightarrow A
\]
is said to be a \textit{curved twisting morphism relative to} $\alpha$ if it satisfies the following equation 
\[
\gamma_A \cdot \widehat{\mathscr{S}}^c(\alpha)(\nu) \cdot \Delta_C + \partial(\nu) = 0~,
\]
where $\partial$ is the pre-differential on the graded module $\mathrm{hom}(C,A)$. The set of curved twisting morphisms relative to $\alpha$ are denoted by $\mathrm{Tw}^{\alpha}(C,A)$.
\end{Definition}

\begin{Remark}
The set of curved twisting morphisms relative to $\alpha$ can be encoded as the set of Maurer--Cartan elements in a convolution curved absolute $\mathcal{L}_\infty$-algebra. For more on this, see \cite[Section 4]{lucio2022integration}.
\end{Remark}

\begin{Proposition}[{\cite[Section 8]{grignoulejay18}}]
There are bijections
\[
\mathrm{Hom}_{\mathsf{curv}~\C\text{-}\mathsf{alg}^{\mathsf{comp}}}\left(\widehat{\Omega}_{\alpha}C, A\right) \cong  \mathrm{Tw}^{\alpha}(C,A)  \cong \mathrm{Hom}_{\mathsf{dg}~\PP\text{-}\mathsf{coalg}} \left(C, \widehat{\mathrm{B}}_{\alpha}A \right)~,
\]

which are natural in $C$ and $A$. 
\end{Proposition}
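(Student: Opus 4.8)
The plan is to establish the two displayed bijections separately, each via the relevant universal property, and to observe that both send a morphism to the same underlying graded map $\nu$, so that naturality follows formally.

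For the bijection on the left, I would use that the underlying graded $\C$-algebra of $\widehat{\Omega}_\alpha C$ is the free complete $\C$-algebra $\widehat{\mathscr{S}}^c(\C)(C)$ on the graded module $C$, which is complete since free pdg $\C$-algebras are (as recorded above). By the free–forgetful adjunction, a morphism of $\C$-algebras $f \colon \widehat{\mathscr{S}}^c(\C)(C) \to A$ is freely determined by its restriction $\nu \coloneqq f \circ \iota_C \colon C \to A$ to the generators. It then remains to identify, among all graded maps $\nu$, those for which the extension $f_\nu$ commutes with the differentials, hence is a morphism of curved $\C$-algebras. The key observation is that $d_A \circ f_\nu - f_\nu \circ d_{\mathrm{cobar}}$ is an $f_\nu$-derivation out of a free complete $\C$-algebra, hence is determined by its restriction to $C$; consequently it vanishes everywhere if and only if it vanishes on generators. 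Restricting and using that $d_{\mathrm{cobar}} = d_1 - d_2$ contributes on generators the internal differential $d_C$ (from $d_1$) and the composite $\widehat{\mathscr{S}}^c(\alpha)(\mathrm{id}) \circ \Delta_C$ (from $d_2$), this equation becomes exactly $\gamma_A \cdot \widehat{\mathscr{S}}^c(\alpha)(\nu) \cdot \Delta_C + \partial(\nu) = 0$, that is, $\nu \in \mathrm{Tw}^{\alpha}(C,A)$.

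The bijection on the right is dual. Here the underlying graded $\PP$-coalgebra of $\widehat{\mathrm{B}}_\alpha A$ is the cofree $\PP$-coalgebra $\mathscr{C}(\PP)(A)$ on $A$, whose universal property (from the pullback description recalled above) states that a $\PP$-coalgebra morphism $g \colon C \to \mathscr{C}(\PP)(A)$ is freely determined by its corestriction $\nu \coloneqq \xi \circ g \colon C \to A$ to the cogenerators. Dually to the previous paragraph, $g_\nu$ is a morphism of dg $\PP$-coalgebras if and only if $d_{\mathrm{bar}} \circ g_\nu - g_\nu \circ d_C$, which is a $g_\nu$-coderivation into a cofree coalgebra, vanishes; and such coderivations are detected by their corestriction to cogenerators. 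Corestricting and unpacking $d_{\mathrm{bar}} = d_1 + d_2$ yields once more the same twisting morphism equation, so that $\mathrm{Hom}_{\mathsf{dg}~\PP\text{-}\mathsf{coalg}}(C, \widehat{\mathrm{B}}_\alpha A) \cong \mathrm{Tw}^{\alpha}(C,A)$.

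Finally, both bijections are given by the assignments $f \mapsto \nu$ and $g \mapsto \nu$ (restriction to generators, respectively corestriction to cogenerators), with inverses the universal extensions; naturality in $C$ and in $A$ is then immediate from the functoriality of the free and cofree constructions and of $\mathrm{Tw}^{\alpha}$. I expect the main obstacle to be the two ``determined on (co)generators'' reduction steps: in the complete, dual-Schur setting one must verify carefully that $f$-derivations out of free complete $\C$-algebras, respectively coderivations into cofree $\PP$-coalgebras, are genuinely detected on (co)generators, which rests on the monomorphism property of the lax structure map $\varphi$ and on the pullback presentation of $\mathscr{C}(\PP)$. Keeping track of the Koszul signs and of the curvature contributions appearing in these restrictions is the other delicate point.
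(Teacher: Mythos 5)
The paper does not actually prove this proposition: it is recalled, with citation, from \cite[Section 8]{grignoulejay18}, so there is no internal proof to measure your attempt against. That said, your argument is the standard one, and in outline it is the argument of the cited source (itself the complete/curved analogue of the classical bar--cobar adjunction proof): identify graded $\C$-algebra morphisms out of $\widehat{\Omega}_\alpha C$ with graded maps $C \to A$ via freeness of $\widehat{\mathscr{S}}^c(\C)(C)$, identify graded $\PP$-coalgebra morphisms into $\widehat{\mathrm{B}}_\alpha A$ with graded maps $C \to A$ via the couniversal property of $\mathscr{C}(\PP)(A)$, and check that in both cases commutation with the differentials, tested on (co)generators, is exactly the equation $\gamma_A \cdot \widehat{\mathscr{S}}^c(\alpha)(\nu) \cdot \Delta_C + \partial(\nu) = 0$. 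Two points are worth making explicit. First, your silent reduction of the Hom-set in $\mathsf{curv}~\C\text{-}\mathsf{alg}^{\mathsf{comp}}$ to pdg $\C$-algebra morphisms commuting with pre-differentials is legitimate precisely because curvedness and completeness are \emph{properties} cutting out full (reflective) subcategories of pdg $\C$-algebras, as recalled in the paper; this deserves a sentence in a finished proof. Second, the two ``detected on (co)generators'' lemmas you flag as the main obstacle are indeed the technical heart, but note that the paper's own definitions of the bar and cobar differentials already invoke ``the unique coderivation (resp.\ derivation) extending'' a given map, so existence and uniqueness of such extensions is presupposed by the very statement; what a self-contained proof must add is the mild generalization from (co)derivations to $f_\nu$-relative (co)derivations, the coderivation case resting on the pullback presentation of $\mathscr{C}(\PP)$ and on $\varphi_{\PP,\PP}$ being a monomorphism, exactly as you indicate, together with the sign bookkeeping coming from $d_{\mathrm{cobar}} = d_1 - d_2$ versus $d_{\mathrm{bar}} = d_1 + d_2$. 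With those details supplied, your proposal is a correct proof.
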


In particular, we get an adjunction 
\[
\begin{tikzcd}[column sep=5pc,row sep=3pc]
            \mathsf{dg}~\PP\text{-}\mathsf{coalg} \arrow[r, shift left=1.1ex, "\widehat{\Omega}_{\alpha}"{name=F}] & \mathsf{curv}~\C\text{-}\mathsf{alg}^{\mathsf{comp}} \arrow[l, shift left=.75ex, "\widehat{\mathrm{B}}_{\alpha}"{name=U}]
            \arrow[phantom, from=F, to=U, , "\dashv" rotate=-90]
\end{tikzcd}
\]
between the category of dg $\PP$-coalgebras and the category of complete curved $\C$-algebras. We call this adjunction the \textit{complete bar-cobar adjunction} relative to $\alpha$.   
 
\medskip

In some case, one can promote this adjunction into a Quillen adjunction. But first, one needs to put a model category structure on the category of dg $\PP$-coalgebras. 

\begin{theorem}[{\cite[Section 9]{grignoulejay18}}]\label{thm: model structure on P-cog}
Let $\mathcal{P}$ be a cofibrant dg operad. There is a model category structure on the category of dg $\mathcal{P}$-coalgebras, left-transferred along the cofree-forgetful adjunction
\[
\begin{tikzcd}[column sep=7pc,row sep=3pc]
\mathsf{dg}\textsf{-}\mathsf{mod} \arrow[r, shift left=1.1ex, "\mathscr{C}(\mathcal{P})(-)"{name=F}]      
&\mathsf{dg}~\PP\textsf{-}\mathsf{coalg}~, \arrow[l, shift left=.75ex, "U"{name=U}]
\arrow[phantom, from=F, to=U, , "\dashv" rotate=90]
\end{tikzcd}
\]
where 
\begin{enumerate}
\item the class of weak equivalences is given by quasi-isomorphisms,
\item the class of cofibrations is given by degree-wise monomorphisms,
\item the class of fibrations is given by right lifting property with respect to acyclic cofibrations.
\end{enumerate}
\end{theorem}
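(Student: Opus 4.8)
The plan is to exhibit the desired structure as a \emph{left-induced} (left-transferred) model structure along the forgetful functor $U \colon \mathsf{dg}~\PP\textsf{-}\mathsf{coalg} \to \mathsf{dg}\textsf{-}\mathsf{mod}$, which is the left adjoint of the displayed cofree--forgetful adjunction $U \dashv \mathscr{C}(\PP)(-)$. First I would put on $\mathsf{dg}\textsf{-}\mathsf{mod}$ its standard model structure: since $\kk$ is a field, there is a combinatorial model structure whose weak equivalences are the quasi-isomorphisms, whose cofibrations are all degree-wise monomorphisms and whose fibrations are all degree-wise epimorphisms, with every object both fibrant and cofibrant. I then \emph{define} the three classes on $\mathsf{dg}~\PP\textsf{-}\mathsf{coalg}$ exactly as in the statement, by $\mathcal{W} \coloneqq U^{-1}(\text{quasi-isos})$, $\mathcal{C} \coloneqq U^{-1}(\text{monos})$, and the fibrations as the maps with the right lifting property against $\mathcal{C} \cap \mathcal{W}$. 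The two-out-of-three and retract axioms, together with one half of each lifting axiom, are then automatic, since $\mathcal{W}$ and $\mathcal{C}$ are reflected by the faithful functor $U$.

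Next I would set up the accessible-category machinery that reduces existence to a single condition. The category $\mathsf{dg}~\PP\textsf{-}\mathsf{coalg}$ is locally presentable: by the comonadicity theorem of Anel together with \cref{thm: existence of the cofree P cog}, it is the category of coalgebras over the accessible comonad $\mathscr{C}(\PP)$ on the locally presentable category $\mathsf{dg}\textsf{-}\mathsf{mod}$, accessibility of $\mathscr{C}(\PP)$ following from its description by representative functions, which exhibits $\mathscr{C}(\PP)(V)$ as a filtered union of finite-type pieces. Moreover $U$, being a left adjoint, preserves filtered colimits and is therefore accessible. With $\mathsf{dg}\textsf{-}\mathsf{mod}$ combinatorial, I am now in a position to invoke the existence criterion for left-induced model structures of Hess--Kedziorek--Riehl--Shipley: the structure described above exists if and only if the \emph{acyclicity condition} holds, namely that every morphism of $\mathsf{dg}~\PP\textsf{-}\mathsf{coalg}$ having the right lifting property against all cofibrations is a weak equivalence.

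Verifying the acyclicity condition is the heart of the argument, and it is here that the cofibrancy of $\PP$ is indispensable. The key homotopical input is that, for $\PP$ cofibrant, the cofree coalgebra functor $\mathscr{C}(\PP)(-)$ preserves quasi-isomorphisms. This is the delicate point: although the ambient functor $\widehat{\mathscr{S}}^c(\PP)(-) = \prod_n \mathrm{Hom}_{\mathbb{S}_n}(\PP(n),(-)^{\otimes n})$ visibly preserves quasi-isomorphisms over a field of characteristic zero --- products and $\mathbb{S}_n$-invariants are exact and $(-)^{\otimes n}$ satisfies Künneth --- the comonad $\mathscr{C}(\PP)$ is only the \emph{subfunctor} of $\widehat{\mathscr{S}}^c(\PP) \circ \widehat{\mathscr{S}}^c(\PP)$ cut out by the pullback of \cref{thm: existence of the cofree P cog} along the degree-wise monomorphism $\varphi_{\PP,\PP}$, and such pullbacks need not be homotopy pullbacks; cofibrancy of $\PP$ is precisely what guarantees that this one is, so that $\mathscr{C}(\PP)$ remains homotopically meaningful. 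Granting this, I would apply $\mathscr{C}(\PP)$ to a path object of $\mathsf{dg}\textsf{-}\mathsf{mod}$ to obtain functorial path objects on $\mathsf{dg}~\PP\textsf{-}\mathsf{coalg}$, and then run the usual argument: a map $f \colon C \to D$ with the right lifting property against all cofibrations admits a section $s$ (lift against the cofibration $0 \to D$, every object being cofibrant), and the path object produces a coalgebra homotopy $sf \simeq \mathrm{id}$; applying $U$ turns this into a chain homotopy, so that $Uf$ is a deformation retract of underlying complexes and in particular a quasi-isomorphism, i.e.\ $f \in \mathcal{W}$.

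It then remains only to assemble the axioms. The factorizations come from accessibility: both $\mathcal{C}$ and $\mathcal{C} \cap \mathcal{W}$ are preimages under the accessible functor $U$ of accessible (cofibrantly generated) weak factorization systems on $\mathsf{dg}\textsf{-}\mathsf{mod}$, hence are themselves accessibly generated, yielding functorial factorizations as a $\mathcal{C}$-map followed by a fibration and as an acyclic $\mathcal{C}$-map followed by a fibration; the acyclicity condition of the previous paragraph is exactly what upgrades the former into the required cofibration-then-acyclic-fibration factorization, by identifying the maps with the right lifting property against all cofibrations with the acyclic fibrations. The genuinely hard step throughout is the acyclicity condition, and within it the preservation of quasi-isomorphisms by $\mathscr{C}(\PP)$, which is where the cofibrancy hypothesis on $\PP$ does all the work.
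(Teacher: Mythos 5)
A preliminary remark: the paper never proves \cref{thm: model structure on P-cog} --- it is recalled verbatim from \cite{grignoulejay18} --- so your proposal has to be measured against the proof given there. Your scaffolding is the right one and matches that reference: local presentability of $\mathsf{dg}~\PP\text{-}\mathsf{coalg}$, accessibility of the adjunction, and the reduction of the entire theorem, via the criterion of \cite{lefttransfer}, to the acyclicity condition that every map of $\PP$-coalgebras with the right lifting property against all degree-wise monomorphisms is a quasi-isomorphism. The problem is that your verification of acyclicity --- the only step carrying real mathematical content --- does not hold up, for two independent reasons.

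First, the pivotal claim that $\mathscr{C}(\PP)$ preserves quasi-isomorphisms when $\PP$ is cofibrant is asserted, not proved. You say cofibrancy ``guarantees'' that the pullback of \cref{thm: existence of the cofree P cog} is a homotopy pullback, but cofibrancy of $\PP$ is a lifting property against acyclic fibrations of dg \emph{operads}, and no mechanism is offered by which it would turn a pullback taken along the monomorphism $\varphi_{\PP,\PP}$ (which is not a fibration of any kind) into a homotopy pullback. Worse, the claim is circular as used: once the theorem is established, $\mathscr{C}(\PP)$ is right Quillen and every object of $\mathsf{dg}\text{-}\mathsf{mod}$ is fibrant, so preservation of quasi-isomorphisms is a \emph{consequence} of the theorem by Ken Brown's lemma; you offer no independent access to it. The Remark following the theorem in the paper provides the acid test: for $\PP = u\mathcal{C}om$ the statement is false, yet every line of your proof except this one unproven sentence applies verbatim to $u\mathcal{C}om$. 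Any correct proof must break visibly for non-cofibrant operads; yours breaks only inside an assertion you have not established, which means that assertion carries the whole theorem.

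Second, even granting that claim, your homotopical argument is the wrong dual. For a \emph{left}-transferred structure the dual of Quillen's argument runs through \emph{cylinder} objects: one needs factorizations $C \sqcup C \rightarrowtail \mathrm{Cyl}(C) \qi C$ in $\PP$-coalgebras, and the homotopy $sf \simeq \mathrm{id}_C$ is produced by lifting the cofibration $C \sqcup C \rightarrowtail \mathrm{Cyl}(C)$ against $f$ itself, which is exactly what the hypothesis on $f$ permits. Your path objects cannot play this role: $\mathscr{C}(\PP)(\mathrm{Path}(UC))$ is a path object for the cofree coalgebra on $UC$, not for $C$, and extracting a right homotopy $C \to P$ would require a lifting property for $P \to C \times C$, which at this stage is not known to be a fibration (no map is); nor can you repair this by pulling back along $C \times C \to \mathscr{C}(\PP)(UC) \times \mathscr{C}(\PP)(UC)$, since limits of $\PP$-coalgebras are not computed on underlying complexes ($U$ is the left adjoint). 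The way cofibrancy is genuinely used is to build cylinders: $N_*(\Delta^1)$ is a coalgebra over a $\Sigma$-cofibrant $E_\infty$-operad $\mathcal{E} \qi u\mathcal{C}om$; the Hadamard projection $\PP \otimes_{\mathrm{H}} \mathcal{E} \twoheadrightarrow \PP$ is an acyclic fibration of dg operads, so cofibrancy of $\PP$ yields a section $\PP \to \PP \otimes_{\mathrm{H}} \mathcal{E}$, along which $C \otimes N_*(\Delta^1)$ becomes a natural $\PP$-coalgebra and $C \oplus C \rightarrowtail C \otimes N_*(\Delta^1) \qi C$ a functorial good cylinder; acyclicity then follows by the section-plus-homotopy argument you intended. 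This route also makes transparent why $u\mathcal{C}om$ fails: there is no cocommutative coalgebra interval, because in a counital cocommutative dg coalgebra two distinct grouplike cycles are never homologous (dualize a finite-dimensional dg subcoalgebra containing the witnessing data and use that idempotents in a commutative dg algebra are cycles).
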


\begin{Remark}
These results do not hold without the assumption that $\PP$ is a \textit{cofibrant} dg operad. Indeed, one can show that the category of dg $ u\mathcal{C}om$-coalgebras does not admit a model structure where weak equivalences are given by quasi-isomorphisms and where cofibrations are given by degree-wise monomorphisms, where $u\mathcal{C}om$ encodes (co)unital (co)commutative (co)algebras. 
\end{Remark}

Model category structures on coalgebras over dg operads behave well with respect to quasi-isomorphisms at the operadic level.

\begin{theorem}[{\cite[Section 9]{grignoulejay18}}]
Let $f: \mathcal{P} \qi \mathcal{Q}$ be a quasi-isomorphism of cofibrant dg operads. The induced adjunction 
\[
\begin{tikzcd}[column sep=7pc,row sep=3pc]
\mathsf{dg}~\mathcal{P}\textsf{-}\mathsf{coalg} \arrow[r, shift left=1.1ex, "\mathrm{Coind}_{f}"{name=F}]      
&\mathsf{dg}~\mathcal{Q}\textsf{-}\mathsf{coalg}~, \arrow[l, shift left=.75ex, "\mathrm{Res}_{f}"{name=U}]
\arrow[phantom, from=F, to=U, , "\dashv" rotate=90]
\end{tikzcd}
\]

is a Quillen equivalence.
\end{theorem}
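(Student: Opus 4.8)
\textbf{The plan} is to identify the left adjoint, check that it is left Quillen essentially for free, and then reduce the whole Quillen equivalence to a single homotopical comparison of cofree coalgebras.

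First I would make the adjunction explicit. Since $\widehat{\mathscr{S}}^c(-)$, and hence $\mathscr{C}(-)$, are contravariant in the operad, the quasi-isomorphism $f\colon \PP \qi \Q$ induces a morphism of comonads $\mathscr{C}(\Q) \to \mathscr{C}(\PP)$, and $\mathrm{Res}_f$ is the corestriction of coalgebras along it; thus $\mathrm{Res}_f$ is the \emph{left} adjoint and $\mathrm{Coind}_f$ its right adjoint. The structural point is that $\mathrm{Res}_f$ leaves the underlying dg module unchanged, i.e. $U_\PP \circ \mathrm{Res}_f = U_\Q$ for the forgetful functors $U_\PP, U_\Q$. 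Because both model structures are left-transferred along these forgetful functors (\cref{thm: model structure on P-cog}), weak equivalences (quasi-isomorphisms) and cofibrations (degreewise monomorphisms) are \emph{created} by $U_\PP$ and $U_\Q$; hence $\mathrm{Res}_f$ preserves \emph{and reflects} both classes, and in particular is left Quillen. I would also record two facts to be used below: every coalgebra is cofibrant (the map $0 \hookrightarrow C$ is a degreewise monomorphism), so cofibrant replacement is the identity; and every cofree coalgebra $\mathscr{C}(\PP)(V)$ is fibrant, since $U_\PP$ is left Quillen, so its right adjoint $\mathscr{C}(\PP)$ is right Quillen, and every dg module is fibrant over a field.

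Next I would set up the conclusion criterion. Because $\mathrm{Res}_f$ reflects weak equivalences, the induced functor $\overline{\mathrm{Res}_f}$ on homotopy categories is conservative: indeed a morphism becomes invertible in $\mathrm{Ho}$ exactly when it is a weak equivalence, and $\mathrm{Res}_f$ detects these. For a conservative left adjoint it suffices to prove that the derived counit $\mathbb{L}\mathrm{Res}_f\,\mathbb{R}\mathrm{Coind}_f \Rightarrow \mathrm{id}$ is invertible: the triangle identities then force the derived unit to be invertible as well, yielding the Quillen equivalence. Since all objects are cofibrant and $\mathrm{Res}_f$ preserves weak equivalences, this reduces to showing that for every \emph{fibrant} $\PP$-coalgebra $Z$ the ordinary counit $\mathrm{Res}_f \mathrm{Coind}_f Z \to Z$ is a quasi-isomorphism.

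The heart of the matter is the following comparison of cofree coalgebras, which is exactly where the cofibrancy of $\PP$ and $\Q$ enters. \emph{Key Lemma:} for every dg module $V$ the canonical map $\mathscr{C}(\Q)(V) \to \mathscr{C}(\PP)(V)$ induced by $f$ is a quasi-isomorphism. To prove it I would argue arity by arity: cofibrancy makes each $\PP(n),\Q(n)$ a cofibrant $\mathbb{S}_n$-module with $f(n)$ a quasi-isomorphism, and since $\mathrm{char}(\kk)=0$ the group algebra $\kk[\mathbb{S}_n]$ is semisimple, so $\mathrm{Hom}_{\mathbb{S}_n}(-,V^{\otimes n})$ is exact and sends $f(n)$ to a quasi-isomorphism $\mathrm{Hom}_{\mathbb{S}_n}(\Q(n),V^{\otimes n}) \to \mathrm{Hom}_{\mathbb{S}_n}(\PP(n),V^{\otimes n})$. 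Products over $n$ preserve quasi-isomorphisms over a field, so $\widehat{\mathscr{S}}^c(\Q)(V)\to \widehat{\mathscr{S}}^c(\PP)(V)$ is a quasi-isomorphism, and likewise for $\widehat{\mathscr{S}}^c(\Q\circ\Q)$ and for $\widehat{\mathscr{S}}^c(\Q)\circ\widehat{\mathscr{S}}^c(\Q)$. Finally, $\mathscr{C}(\PP)(V)$ and $\mathscr{C}(\Q)(V)$ are the pullbacks of \cref{thm: existence of the cofree P cog} along the degreewise monomorphisms $\varphi$ of \cref{lemma: Schur lax contravariant functor}, so a five-lemma comparison of the two pullback squares (using that one leg of each is a monomorphism) upgrades these quasi-isomorphisms to a quasi-isomorphism on the pullbacks.

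To finish, I would feed the Key Lemma into the counit. Taking right adjoints in $U_\PP\circ\mathrm{Res}_f = U_\Q$ gives $\mathrm{Coind}_f\circ\mathscr{C}(\PP) = \mathscr{C}(\Q)$, so the counit at a cofree coalgebra $\mathscr{C}(\PP)(V)$ has underlying map precisely the Key Lemma comparison $\mathscr{C}(\Q)(V)\to\mathscr{C}(\PP)(V)$, hence is a quasi-isomorphism; as cofree coalgebras are fibrant, this already verifies the derived counit on them. The remaining step is to propagate this from cofree coalgebras to an arbitrary fibrant $Z$, which I would attempt by resolving $Z$ through cofree coalgebras via the comonadic bar resolution $\mathscr{C}(\PP)^{\bullet+1}(Z)$ and comparing the counit level by level. \textbf{This propagation, together with the Key Lemma itself, is where I expect the real difficulty to lie.} The Key Lemma genuinely requires the cofibrancy hypothesis — it is what makes the dual Schur functor homotopically meaningful, consistent with the remark that $u\mathcal{C}om$-coalgebras admit no such model structure — while controlling fibrant objects, and the convergence of the cofree resolution, in a left-transferred model structure is the delicate homotopical point; the right adjoint $\mathrm{Coind}_f$ does not commute with the realization of the resolution, so this last reduction has to be handled with care rather than by a formal interchange.
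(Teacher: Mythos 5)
The paper itself gives no proof of this statement --- it is quoted from \cite[Section 9]{grignoulejay18} --- so your attempt has to stand on its own, and its formal scaffolding does: $\mathrm{Res}_f$ is indeed the left adjoint, it preserves and reflects weak equivalences and cofibrations because both classes are created by the forgetful functors, every coalgebra is cofibrant, cofree coalgebras are fibrant, and conservativity of the derived left adjoint correctly reduces the Quillen equivalence to showing that the counit $\mathrm{Res}_f\,\mathrm{Coind}_f\,Z \to Z$ is a quasi-isomorphism for every fibrant $Z$. The first genuine gap is the proof of your Key Lemma. In dg modules, monomorphisms are \emph{cofibrations}, and strict pullbacks are homotopy invariant along \emph{fibrations} (degreewise epimorphisms); the true statement involving monomorphisms is the dual one, about pushouts, and does not apply here. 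Concretely, pulling back $A \to C$ along the monomorphism $0 \rightarrowtail C$ yields $\ker(A \to C)$, and kernels are not quasi-isomorphism invariant: take $A = \kk$ in degree $0$ included into the acyclic complex $C = (\kk \xrightarrow{\operatorname{id}} \kk)$, then replace $C$ by $0$; the pullback jumps from $0$ to $\kk$ while all three corners move by quasi-isomorphisms through monomorphism legs. Equivalently, the defining sequence
\[
0 \longrightarrow \mathscr{C}(\mathcal{P})(V) \longrightarrow \widehat{\mathscr{S}}^c(\mathcal{P})(V) \oplus \left(\widehat{\mathscr{S}}^c(\mathcal{P}) \circ \widehat{\mathscr{S}}^c(\mathcal{P})\right)(V) \longrightarrow \widehat{\mathscr{S}}^c(\mathcal{P} \circ \mathcal{P})(V)
\]
is only left exact, so the five lemma cannot be run without homotopy control of the image of the right-hand map --- which is exactly as non-invariant as the pullback one is trying to control. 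Note also that your corner-wise argument never actually uses cofibrancy of the operads (characteristic-zero semisimplicity suffices for the corners); this is a warning sign that the real content --- homotopy invariance of Anel's subobject of representative elements in the operad variable --- has not been touched. That invariance is precisely the hard technical core of \cite[Section 9]{grignoulejay18}, and it requires an argument specific to this pullback, not a formal one.

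The second gap is the propagation from cofree coalgebras to arbitrary fibrant $Z$, which you flag but do not carry out, and which is not a deferrable detail. In categories of coalgebras over a comonad, limits --- in particular the equalizer computing $\mathrm{Coind}_f$ and the totalization of the cosimplicial resolution $\mathscr{C}(\mathcal{P})^{\bullet+1}(Z)$ --- are \emph{not} computed on underlying dg modules (only colimits are), and fibrant objects of a left-transferred model structure admit no explicit description; so there is no handle with which to run a levelwise comparison of counits and then commute $\mathrm{Res}_f\,\mathrm{Coind}_f$ past a homotopy limit. As written, your proposal is a correct reduction of the theorem to two statements: one is given a proof that fails, and the other is given none.
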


When this model category structure exists, it can be transferred along the complete bar-cobar adjunction relative to $\alpha$. 

\begin{theorem}[{\cite[Section 10]{grignoulejay18}}]
Let $\mathcal{P}$ be a cofibrant dg operad and let $\alpha: \C \longrightarrow \mathcal{P}$ be a curved twisting morphism. There is a model structure on the category of complete $\mathcal{C}$-algebras, right-transferred along the complete bar-cobar adjunction
\[
\begin{tikzcd}[column sep=7pc,row sep=3pc]
            \mathsf{dg}~\mathcal{P}\textsf{-}\mathsf{coalg} \arrow[r, shift left=1.1ex, "\widehat{\Omega}_{\alpha}"{name=F}] &\mathsf{curv}~\mathcal{C}\textsf{-}\mathsf{alg}^{\mathsf{comp}}~, \arrow[l, shift left=.75ex, "\widehat{\text{B}}_{\alpha}"{name=U}]
            \arrow[phantom, from=F, to=U, , "\dashv" rotate=-90]
\end{tikzcd}
\]
where 
\begin{enumerate}
\item the class of weak equivalences is given by morphisms $f$ such that $\widehat{\text{B}}_\alpha(f)$ is a quasi-isomorphism,
\item the class of fibrations is given by degree-wise epimorphisms,
\item  and the class of cofibrations is given by left lifting property with respect to acyclic fibrations.
\end{enumerate}
\end{theorem}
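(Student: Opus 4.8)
The plan is to realize this as a \emph{right-induced} (right-transferred) model structure and to verify the hypotheses of a transfer theorem. Since the category of complete curved $\C$-algebras is presentable, it is in particular bicomplete, so all the (co)limits needed for the factorizations and for the small object argument are available. The key observation is that the weak equivalences of dg $\PP$-coalgebras are by definition the quasi-isomorphisms, that is, they are created by the forgetful functor $U \colon \mathsf{dg}~\PP\text{-}\mathsf{coalg} \to \dgmod$ into the \emph{cofibrantly generated} model category of dg modules. Hence, rather than transferring along $\widehat{\mathrm{B}}_\alpha$ alone, I would transfer the projective model structure on $\dgmod$ along the composite right adjoint $U \circ \widehat{\mathrm{B}}_\alpha$, whose left adjoint is $\widehat{\Omega}_\alpha \circ \mathscr{C}(\PP)(-)$. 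Taking generating cofibrations $I$ and generating acyclic cofibrations $J$ of $\dgmod$, the candidate generating (acyclic) cofibrations upstairs are then $\widehat{\Omega}_\alpha(\mathscr{C}(\PP)(I))$ and $\widehat{\Omega}_\alpha(\mathscr{C}(\PP)(J))$, whose domains are small since the target is presentable.

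First, I would identify the fibrations. By construction of the transfer, $f$ is a fibration if and only if $U\widehat{\mathrm{B}}_\alpha(f)$ is a degree-wise epimorphism, and I claim this is equivalent to $f$ itself being a degree-wise epimorphism. This uses in an essential way that $\kk$ is a field: since every surjection of graded $\kk$-modules splits, each functor $\mathrm{Hom}_{\mathbb{S}_n}(\PP(n),-)$ preserves degree-wise epimorphisms, and so does the product over $n$ and the representative-functions subfunctor $\mathscr{C}(\PP) \subseteq \widehat{\mathscr{S}}^c(\PP)$; thus $f$ epi implies $\widehat{\mathrm{B}}_\alpha(f) = \mathscr{C}(\PP)(f)$ epi. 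The converse follows from the naturality of the canonical projection $\xi \colon \mathscr{C}(\PP)(A) \to A$ onto the cogenerators. This establishes clause $(2)$; clause $(1)$ is the definition of the transferred weak equivalences, two-out-of-three and closure under retracts are inherited from $\dgmod$ through $U\widehat{\mathrm{B}}_\alpha$, and clause $(3)$ is the definition of the cofibrations by the lifting property.

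It remains to produce the two factorizations and the lifting of acyclic cofibrations against fibrations, both of which follow from the small object argument once the \emph{acyclicity condition} is verified: every relative $\widehat{\Omega}_\alpha(\mathscr{C}(\PP)(J))$-cell complex is a weak equivalence. This is the step I expect to be the main obstacle. I would prove it using the weight filtration on the complete Cobar construction induced by the coradical filtration of $\C$: the twisting half $d_2$ of $d_{\mathrm{cobar}}$ strictly raises this filtration, so on the associated graded only the internal differential $d_1$ survives and the relevant maps become, arity by arity, suspensions and products of the underlying quasi-isomorphisms coming from $J$. A comparison on associated graded pieces, followed by a convergent spectral sequence argument, then upgrades this to a genuine quasi-isomorphism after applying $\widehat{\mathrm{B}}_\alpha$; here completeness of the $\C$-algebras is precisely what guarantees that the filtration is complete and exhaustive, so that the spectral sequence converges and the filtered quasi-isomorphisms are honest weak equivalences. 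Granting this acyclicity condition, the transfer theorem produces the cofibrantly generated model structure, and the explicit description of the fibrations obtained in the previous step identifies them with the degree-wise epimorphisms, as required.
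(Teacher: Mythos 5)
This statement is only recalled in the paper from \cite[Section 10]{grignoulejay18} (the paper gives no proof of its own), so your attempt has to be judged on correctness, and it has a structural gap that breaks it at the first step: the adjunction your entire strategy rests on does not exist. You propose to transfer along ``the composite right adjoint $U \circ \widehat{\mathrm{B}}_\alpha$, whose left adjoint is $\widehat{\Omega}_\alpha \circ \mathscr{C}(\PP)(-)$''. But $\mathscr{C}(\PP)(-)$ is the \emph{cofree} $\PP$-coalgebra functor: it is the \emph{right} adjoint in the forgetful--cofree adjunction $U \dashv \mathscr{C}(\PP)(-)$ of \cref{thm: existence of the cofree P cog}, so it classifies coalgebra morphisms \emph{into} it, not out of it. Concretely,
\[
\mathrm{Hom}_{\mathsf{curv}~\C\text{-}\mathsf{alg}^{\mathsf{comp}}}\left(\widehat{\Omega}_\alpha \mathscr{C}(\PP)(V), A\right) \;\cong\; \mathrm{Hom}_{\mathsf{dg}~\PP\text{-}\mathsf{coalg}}\left(\mathscr{C}(\PP)(V), \widehat{\mathrm{B}}_\alpha A\right)~,
\]
and morphisms \emph{out of} a cofree coalgebra are in no natural bijection with $\mathrm{Hom}_{\dgmod}(V, U\widehat{\mathrm{B}}_\alpha A)$; such a bijection would require $\mathscr{C}(\PP)(V)$ to be a \emph{free} object on $V$, which it is not (the composite of the right adjoint $\mathscr{C}(\PP)(-)$ followed by the left adjoint $\widehat{\Omega}_\alpha$ need not preserve colimits, hence is not a left adjoint of anything in general). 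Consequently the generating sets $\widehat{\Omega}_\alpha(\mathscr{C}(\PP)(I))$ and $\widehat{\Omega}_\alpha(\mathscr{C}(\PP)(J))$, the smallness claim, the small object argument, and the acyclicity verification built on these cells are all attached to an adjunction that is not there.

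The gap is not repaired by transferring along the honest right adjoint $\widehat{\mathrm{B}}_\alpha$ instead. In that case the transferred fibrations are, by definition, the maps $f$ such that $\widehat{\mathrm{B}}_\alpha(f)$ is a fibration of dg $\PP$-coalgebras; since the model structure of \cref{thm: model structure on P-cog} is \emph{left}-transferred, its fibrations are characterized only by a lifting property and are not the degree-wise epimorphisms, so clause $(2)$ would remain entirely unproved. Your identification argument addresses a different condition ($\widehat{\mathrm{B}}_\alpha(f)$ being a degree-wise epimorphism), and even it has a hole: the implication ``$\widehat{\mathrm{B}}_\alpha(f)$ epi $\Rightarrow$ $f$ epi'' needs the counit $\mathscr{C}(\PP)(A) \to A$ to be degree-wise surjective for $A$ an arbitrary complete curved $\C$-algebra, and naturality alone does not give this --- the triangle identity splits the counit only over objects that underlie $\PP$-\emph{coalgebras}, which $A$ is not. (Transferring along the genuinely monadic free--forgetful adjunction $\widehat{\mathscr{S}}^c(\C)(-) \dashv U$ would not work either: it creates quasi-isomorphisms as weak equivalences, which is a strictly larger class than the bar-equivalences of clause $(1)$, cf.\ \cref{thm: weak equiv inclues dans les quasi-isos}.) This is exactly why the theorem is not an instance of the standard transfer theorem: in \cite{grignoulejay18} the fibrations are \emph{declared} to be the degree-wise epimorphisms, the weak equivalences are created by $\widehat{\mathrm{B}}_\alpha$, and the lifting and factorization axioms for this specific pair of classes are verified directly by filtration arguments, dual to Vallette's treatment of conilpotent coalgebras. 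Your filtration/spectral-sequence sketch is the right kind of ingredient for such a direct verification, but as written it is applied to cell complexes generated by a non-existent adjunction, so it cannot be salvaged without rebuilding the proof along those direct lines.
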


In the case where $\PP$ is the cobar construction $\Omega \C$ of $\mathcal{C}$ in the sense of \cite{grignou2021}, this adjunction can be promoted to a Quillen equivalence.

\begin{theorem}[{\cite[Section 11]{grignoulejay18}}]
The complete bar-cobar adjunction relative to the universal curved twisting morphism $\iota: \C \longrightarrow \Omega \C$
\[
\begin{tikzcd}[column sep=7pc,row sep=3pc]
            \mathsf{dg}~\Omega\mathcal{C}\textsf{-}\mathsf{coalg} \arrow[r, shift left=1.1ex, "\widehat{\Omega}_{\iota}"{name=F}] 
            &\mathsf{curv}~\mathcal{C}\textsf{-}\mathsf{alg}^{\mathsf{comp}}~. \arrow[l, shift left=.75ex, "\widehat{\text{B}}_{\iota}"{name=U}]
            \arrow[phantom, from=F, to=U, , "\dashv" rotate=-90]
\end{tikzcd}
\]
is a Quillen equivalence. 
\end{theorem}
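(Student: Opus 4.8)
The plan is to show that the two transferred model structures make $\widehat{\Omega}_\iota \dashv \widehat{\mathrm{B}}_\iota$ into a Quillen equivalence by reducing the whole statement to a single acyclicity property of the unit. First I would record the formal features of the two model structures. On $\mathsf{dg}~\Omega\C\text{-}\mathsf{coalg}$ the cofibrations are the degree-wise monomorphisms and the initial object is $0$, so every dg $\Omega\C$-coalgebra is cofibrant; dually, on $\mathsf{curv}~\C\text{-}\mathsf{alg}^{\mathsf{comp}}$ the fibrations are the degree-wise epimorphisms and the terminal object is $0$, so every complete curved $\C$-algebra is fibrant. By construction of the right-transferred structure the pair $\widehat{\Omega}_\iota \dashv \widehat{\mathrm{B}}_\iota$ is a Quillen adjunction, and by definition a morphism $f$ of complete curved $\C$-algebras is a weak equivalence if and only if $\widehat{\mathrm{B}}_\iota(f)$ is a quasi-isomorphism; in particular $\widehat{\mathrm{B}}_\iota$ reflects weak equivalences.

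Granting this, the adjunction is a Quillen equivalence as soon as the unit $\eta_C : C \to \widehat{\mathrm{B}}_\iota \widehat{\Omega}_\iota C$ is a quasi-isomorphism for every dg $\Omega\C$-coalgebra $C$. Indeed, since all objects are bifibrant the derived unit coincides with $\eta$; and because $\widehat{\mathrm{B}}_\iota$ is conservative on homotopy categories, a unit that is a weak equivalence already forces the derived adjunction to be an equivalence. Concretely, the triangle identity $\widehat{\mathrm{B}}_\iota(\varepsilon_A) \circ \eta_{\widehat{\mathrm{B}}_\iota A} = \mathrm{id}$ together with two-out-of-three shows that $\widehat{\mathrm{B}}_\iota(\varepsilon_A)$ is a quasi-isomorphism, whence the counit $\varepsilon_A$ is a weak equivalence by definition. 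So the entire statement rests on the \emph{bar-cobar resolution} property: the unit is a quasi-isomorphism.

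To prove this I would argue by filtration and comparison, adapting the classical proof of the bar-cobar resolution for the universal twisting morphism. The underlying graded object of $\widehat{\mathrm{B}}_\iota \widehat{\Omega}_\iota C$ is $\mathscr{C}(\Omega\C)\big(\widehat{\mathscr{S}}^c(\C)(C)\big)$, and I would equip it with the complete exhaustive filtration induced by the coradical filtration $\mathscr{R}_\bullet \C$ of the conilpotent cooperad $\C$, together with the syzygy filtration of the cobar construction $\Omega\C$, arranged so that $\eta_C$ is filtered and places $C$ in the bottom layer. On the associated graded the curvature term drops out (it lives in higher filtration) and only the twisting part of the differential coming from $\iota$ survives, so the complex becomes $\widehat{\mathscr{S}}^c(-)(C)$ applied to the Koszul-type twisted composite product of the universal twisting morphism $\iota : \C \to \Omega\C$. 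That twisted composite product is acyclic, this being the classical acyclicity underlying the bar-cobar resolution, see \cite{LodayVallette12, grignou2019}. Since $\kk$ has characteristic zero, each functor $\mathrm{Hom}_{\mathbb{S}_n}(-,C^{\otimes n})$ and the infinite product $\prod_n$ are exact, so acyclicity is preserved after applying $\widehat{\mathscr{S}}^c$, and on the associated graded $\eta_C$ is a quasi-isomorphism identifying the homology with the copy of $C$.

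The main obstacle is the passage from the associated graded back to the total complex. In the classical conilpotent theory the filtrations are bounded below and the spectral sequence converges on the nose, whereas here $C$ is an arbitrary, possibly non-conilpotent, $\Omega\C$-coalgebra and the algebra side is built from infinite products, so convergence is not automatic. This is exactly where the completeness hypothesis enters: because $\C$ is conilpotent and $\widehat{\Omega}_\iota C$ is a complete $\C$-algebra, the filtration on $\widehat{\mathrm{B}}_\iota \widehat{\Omega}_\iota C$ is complete and exhaustive, and the comparison lemma for complete filtered complexes of \cite{grignoulejay18} upgrades the associated-graded quasi-isomorphism to a genuine one. Controlling this convergence, and checking that $\eta_C$ is genuinely filtered with the expected associated graded, is the technical heart of the argument; once it is in place, the reductions of the first two paragraphs deliver the Quillen equivalence.
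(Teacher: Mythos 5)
First, a point of reference: the paper you are working from does not prove this statement at all. It is recalled verbatim, with attribution, from \cite[Section 11]{grignoulejay18}; the only proof to compare against is the one in \emph{loc.\ cit.} Your first two paragraphs are correct and match the standard opening of that argument: every dg $\Omega\C$-coalgebra is cofibrant, every complete curved $\C$-algebra is fibrant, weak equivalences on the algebra side are by definition created by $\widehat{\mathrm{B}}_\iota$, and the triangle identity $\widehat{\mathrm{B}}_\iota(\varepsilon_A)\circ\eta_{\widehat{\mathrm{B}}_\iota A}=\mathrm{id}$ plus two-out-of-three reduces the whole theorem to the single claim that the unit $\eta_C\colon C \to \widehat{\mathrm{B}}_\iota\widehat{\Omega}_\iota C$ is a quasi-isomorphism for every dg $\Omega\C$-coalgebra $C$. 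No objection there.

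The gap is in your third paragraph, and it is not primarily the convergence issue you flag. The underlying graded object of $\widehat{\mathrm{B}}_\iota\widehat{\Omega}_\iota C$ is $\mathscr{C}(\Omega\C)\bigl(\widehat{\mathscr{S}}^c(\C)(C)\bigr)$, and this is \emph{not} a Schur-type realization of the composite $\mathbb{S}$-module $\Omega\C\circ\C$, for two reasons visible in the paper itself. First, by Lemma \ref{lemma: Schur lax contravariant functor} the dual Schur functor is only \emph{lax} monoidal: the comparison map $\varphi_{\Omega\C,\C}\colon \widehat{\mathscr{S}}^c(\Omega\C)\circ\widehat{\mathscr{S}}^c(\C) \rightarrowtail \widehat{\mathscr{S}}^c(\Omega\C\circ\C)$ is a degree-wise monomorphism which is not an isomorphism in general, so the two-layer composite is already not $\widehat{\mathscr{S}}^c$ of anything. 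Second, the outer layer of the bar construction is not even $\widehat{\mathscr{S}}^c(\Omega\C)$ but the cofree-coalgebra subfunctor $\mathscr{C}(\Omega\C)\rightarrowtail\widehat{\mathscr{S}}^c(\Omega\C)$ of Theorem \ref{thm: existence of the cofree P cog}, cut out by a pullback (representative functions). Consequently the associated graded of your filtration is not ``$\widehat{\mathscr{S}}^c(-)(C)$ applied to the twisted composite product $\Omega\C\circ_\iota\C$'', and the classical acyclicity of that Koszul complex cannot be imported by exactness of $\mathrm{Hom}_{\mathbb{S}_n}(-,C^{\otimes n})$ and of products: exactness only helps once the complex in question is identified as the image of an acyclic $\mathbb{S}$-module under $\widehat{\mathscr{S}}^c$, which is exactly the identification that fails. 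At best the bar-cobar composite embeds as a subcomplex of $\widehat{\mathscr{S}}^c(\Omega\C\circ_\iota\C)(C)$, and one would then have to prove that a contracting homotopy of the ambient complex preserves this subcomplex; handling this interplay between representative functions, infinite products and the twisted differential is the actual technical content of \cite[Section 11]{grignoulejay18}. So your reduction is sound, but the step you propose to fill the heart of the proof with is unjustified as stated, and the difficulty is structural (misidentified $E_0$-page), not merely one of spectral sequence convergence.
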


\begin{Remark}
The model category structure on the category of complete curved $\mathcal{C}$-algebras transferred using the complete bar-cobar adjunction relative to the curved twisting morphism $\iota: \mathcal{C} \longrightarrow \Omega \mathcal{C}$ is called the \textit{canonical model structure}.
\end{Remark}

Furthermore, bar-cobar adjunctions are "functorial" in the following sense:

\begin{Proposition}[{\cite[Lemma 9.8]{grignoulejay18}}]\label{prop: compatibility conditions}
Let $\mathcal{P}$ and $\mathcal{Q}$ be two cofibrant dg operads, and let $\mathcal{C}$ and $\mathcal{D}$ be conilpotent curved cooperads and 

\begin{enumerate}
\item let $\alpha: \C \longrightarrow \PP$ and $\beta: \mathcal{D} \longrightarrow \mathcal{Q}$ be two curved twisting morphisms, 

\item let $f: \PP \longrightarrow \mathcal{Q}$ be a morphism of dg operads and let $g: \mathcal{C} \longrightarrow \mathcal{D}$ be a morphism of conilpotent curved cooperads, 
\end{enumerate}

such that the following diagram commutes 
\[
\begin{tikzcd}[column sep=4pc,row sep=4pc]
\C \arrow[r,"\alpha"] \arrow[d,"g",swap] 
&\mathcal{P} \arrow[d,"f"]\\
\mathcal{D} \arrow[r,"\beta"]
&\mathcal{Q}~.
\end{tikzcd}
\]

The following square 
\[
\begin{tikzcd}[column sep=5pc,row sep=5pc]
\mathsf{dg}~\mathcal{P}\text{-}\mathsf{coalg} \arrow[r,"\widehat{\Omega}_\alpha"{name=B},shift left=1.1ex] \arrow[d,"\mathrm{Coind}_f "{name=SD},shift left=1.1ex ]
&\mathsf{curv}~\mathcal{C}\text{-}\mathsf{alg}^{\mathsf{comp}} \arrow[d,"\mathrm{Res}_g"{name=LDC},shift left=1.1ex ] \arrow[l,"\widehat{\mathrm{B}}_\alpha"{name=C},,shift left=1.1ex]  \\
\mathsf{dg}~\mathcal{Q}\text{-}\mathsf{coalg} \arrow[r,"\widehat{\Omega}_\beta "{name=CC},shift left=1.1ex]  \arrow[u,"\mathrm{Res}_f"{name=LD},shift left=1.1ex ]
&\mathsf{curv}~\mathcal{D}\text{-}\mathsf{alg}^{\mathsf{comp}} \arrow[l,"\widehat{\mathrm{B}}_\beta"{name=CB},shift left=1.1ex] \arrow[u,"\mathrm{Ind}_g"{name=TD},shift left=1.1ex] \arrow[phantom, from=SD, to=LD, , "\dashv" rotate=0] \arrow[phantom, from=C, to=B, , "\dashv" rotate=-90]\arrow[phantom, from=TD, to=LDC, , "\dashv" rotate=0] \arrow[phantom, from=CC, to=CB, , "\dashv" rotate=-90]
\end{tikzcd}
\] 

of Quillen adjunctions commutes. 
\end{Proposition}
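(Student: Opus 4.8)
The plan is to establish the commutation at the level of the underlying adjunctions and to note that compatibility with the model structures is then free: all the functors involved are Quillen by the theorems recalled above, so a natural isomorphism between composites of adjoints is automatically an isomorphism of the induced functors on homotopy categories. Concretely, the commutation of the square is the assertion that the two composite functors from the bottom-left corner $\mathsf{dg}~\Q\text{-}\mathsf{coalg}$ to the top-right corner $\mathsf{curv}~\C\text{-}\mathsf{alg}^{\mathsf{comp}}$ agree, i.e. that there is a natural isomorphism
\[
\widehat{\Omega}_\alpha \circ \mathrm{Res}_f \;\cong\; \mathrm{Ind}_g \circ \widehat{\Omega}_\beta .
\]
Passing to right adjoints then yields the mate isomorphism $\mathrm{Coind}_f \circ \widehat{\mathrm{B}}_\alpha \cong \widehat{\mathrm{B}}_\beta \circ \mathrm{Res}_g$, which is exactly the statement that the two composites of right adjoints from the top-right corner to the bottom-left corner coincide, in the spirit of \cref{thm: magical square}.

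By the Yoneda lemma it suffices to produce bijections $\mathrm{Hom}(\widehat{\Omega}_\alpha \mathrm{Res}_f D, A) \cong \mathrm{Hom}(\mathrm{Ind}_g \widehat{\Omega}_\beta D, A)$, natural in the dg $\Q$-coalgebra $D$ and the complete curved $\C$-algebra $A$. Applying the complete Bar--Cobar bijection for $\alpha$ on the left and the adjunction $\mathrm{Ind}_g \dashv \mathrm{Res}_g$ followed by the complete Bar--Cobar bijection for $\beta$ on the right, both sides are identified with sets of relative curved twisting morphisms:
\[
\mathrm{Hom}(\widehat{\Omega}_\alpha \mathrm{Res}_f D, A) \cong \mathrm{Tw}^{\alpha}(\mathrm{Res}_f D, A), \qquad \mathrm{Hom}(\mathrm{Ind}_g \widehat{\Omega}_\beta D, A) \cong \mathrm{Tw}^{\beta}(D, \mathrm{Res}_g A).
\]
Since neither $\mathrm{Res}_f$ nor $\mathrm{Res}_g$ alters underlying graded modules, both $\mathrm{Tw}^{\alpha}(\mathrm{Res}_f D, A)$ and $\mathrm{Tw}^{\beta}(D, \mathrm{Res}_g A)$ sit inside the same space of degree-zero maps between the underlying modules of $D$ and $A$, and the entire problem reduces to checking that they carve out the same subset.

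To see this I would substitute the explicit structure maps into the two curved Maurer--Cartan equations. The $\PP$-coalgebra structure of $\mathrm{Res}_f D$ is $\widehat{\mathscr{S}}^c(f)(\mathrm{id}) \circ \Delta_D$ and the $\D$-algebra structure of $\mathrm{Res}_g A$ is $\gamma_A \circ \widehat{\mathscr{S}}^c(g)(\mathrm{id})$. Feeding these into the defining equation $\gamma \cdot \widehat{\mathscr{S}}^c({-})(\nu) \cdot \Delta + \partial(\nu) = 0$, the quadratic twisting terms on the two sides become, arity by arity, the very same expression assembled from $\Delta_D$, the insertions $\nu^{\otimes n}$, and $\gamma_A$, the only difference being that one is reparametrised by $f(n) \circ \alpha(n)$ and the other by $\beta(n) \circ g(n)$ in the cooperadic variable; here one uses the contravariance of $\widehat{\mathscr{S}}^c$ in the $\mathbb{S}$-module slot. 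The hypothesis $\beta \circ g = f \circ \alpha$ forces $f(n) \circ \alpha(n) = \beta(n) \circ g(n)$ for all $n$, so the two quadratic terms coincide. The linear terms $\partial(\nu)$ and the curvature contributions match as well, because $\mathrm{Res}_f$ and $\mathrm{Res}_g$ leave the differentials untouched and $f$, $g$ are compatible with the curvatures of $\alpha$ and $\beta$. Hence a degree-zero map is an $\alpha$-twisting morphism $\mathrm{Res}_f D \to A$ precisely when it is a $\beta$-twisting morphism $D \to \mathrm{Res}_g A$, and this identification is manifestly natural in $D$ and $A$.

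The main obstacle is purely computational and lives entirely in this last step: one must track the contravariance of $\widehat{\mathscr{S}}^c$ in the cooperad, the $\diracComb$-type insertions of the differentials, and the Koszul signs and curvature terms of the curved convolution equation carefully enough that the equality of the two twisting terms holds on the nose rather than merely schematically. Once the identification of twisting morphisms is established naturally in both variables, Yoneda delivers $\widehat{\Omega}_\alpha \circ \mathrm{Res}_f \cong \mathrm{Ind}_g \circ \widehat{\Omega}_\beta$, passing to right adjoints gives $\mathrm{Coind}_f \circ \widehat{\mathrm{B}}_\alpha \cong \widehat{\mathrm{B}}_\beta \circ \mathrm{Res}_g$, and the commutation of the square of Quillen adjunctions follows.
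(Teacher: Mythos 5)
First, a point of comparison: the paper itself contains no proof of \cref{prop: compatibility conditions} --- it is quoted from \cite[Lemma 9.8]{grignoulejay18}, with only the adjacent remark explaining where the adjunctions $\mathrm{Res}_f \dashv \mathrm{Coind}_f$ and $\mathrm{Ind}_g \dashv \mathrm{Res}_g$ come from, so there is no in-paper argument to measure you against. On its own merits, your proof of the commutation is correct and is the natural one: both composites of left adjoints out of $\mathsf{dg}~\Q\text{-}\mathsf{coalg}$ corepresent sets of relative twisting morphisms, and since $\mathrm{Res}_f$ and $\mathrm{Res}_g$ only precompose structure maps without changing underlying modules or differentials, the two defining equations have twisting terms $\gamma_A \cdot \widehat{\mathscr{S}}^c(f\alpha)(\nu) \cdot \Delta_D$ and $\gamma_A \cdot \widehat{\mathscr{S}}^c(\beta g)(\nu) \cdot \Delta_D$, which agree on the nose because $f \circ \alpha = \beta \circ g$ and $\widehat{\mathscr{S}}^c$ is contravariantly functorial in its $\mathbb{S}$-module slot; Yoneda and uniqueness of right adjoints then give both isomorphisms, and naturality is immediate precisely because the identification of the two sets of twisting morphisms is the identity on underlying maps, as you say. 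One input you should make explicit: applying the Bar--Cobar bijection for $\beta$ to the pair $(D,\mathrm{Res}_g A)$ requires knowing that $\mathrm{Res}_g$ carries complete curved $\C$-algebras to complete curved $\D$-algebras, i.e.\ the restriction of $\mathrm{Ind}_g \dashv \mathrm{Res}_g$ to complete objects asserted in the paper's remark.

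The genuine gap is your opening claim that the model-categorical content is ``free'' because ``all the functors involved are Quillen by the theorems recalled above.'' That is not what the recollections give. The recalled theorem about $\mathrm{Res}_f \dashv \mathrm{Coind}_f$ concerns a \emph{quasi-isomorphism} of cofibrant operads and asserts a Quillen equivalence; here $f$ is arbitrary, and --- more seriously --- no recalled statement says anything about $\mathrm{Ind}_g \dashv \mathrm{Res}_g$ being a Quillen adjunction for the transferred model structures on complete curved algebras, whose weak equivalences are created by $\widehat{\mathrm{B}}_\alpha$, resp.\ $\widehat{\mathrm{B}}_\beta$, rather than being quasi-isomorphisms. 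The left vertical adjunction is easy: $\mathrm{Res}_f$ is the identity on underlying complexes, hence preserves degree-wise monomorphisms and quasi-isomorphisms, so it is left Quillen (this uses that both coalgebra categories carry the model structure of \cref{thm: model structure on P-cog}, available since $\PP$ and $\Q$ are cofibrant). For the right vertical adjunction an argument is genuinely needed, and the cleanest one runs through the commutation you prove afterwards: $\mathrm{Res}_g$ visibly preserves fibrations (degree-wise epimorphisms); given a weak equivalence $h$ of complete curved $\C$-algebras, one has $\widehat{\mathrm{B}}_\beta(\mathrm{Res}_g h) \cong \mathrm{Coind}_f(\widehat{\mathrm{B}}_\alpha h)$, every complete curved algebra is fibrant, the right Quillen functor $\widehat{\mathrm{B}}_\alpha$ lands in fibrant coalgebras, and so the right Quillen functor $\mathrm{Coind}_f$ preserves the weak equivalence $\widehat{\mathrm{B}}_\alpha h$ by Ken Brown's lemma; hence $\mathrm{Res}_g h$ is a weak equivalence and $\mathrm{Res}_g$ is right Quillen. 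So the Quillen assertion is true and does follow from your commutation isomorphism (which is a purely $1$-categorical statement, so there is no circularity), but it is a consequence to be derived in that order, not an input available from the recollections; as written, your proof leaves this part of the statement unestablished.
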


\vspace{1.5pc}

\section{Duality squares}

\vspace{1.5pc}

Let us fix a dg operad $\mathcal{P}$ and a conilpotent curved cooperad $\mathcal{C}$, together with a curved twisting morphism $\alpha: \mathcal{C} \longrightarrow \mathcal{P}$. The goal of this section is to construct two duality adjunctions that interrelate the "classical" bar-cobar constructions relative to $\alpha$ with the complete bar-cobar constructions relative to $\alpha$. This will allow us to understand the linear duality functor from a homotopy theoretical point of view.

\begin{Notation}
We use $B$ for a generic dg $\mathcal{P}$-algebra, $C$ for a generic dg $\mathcal{P}$-coalgebra, $A$ for a generic curved $\C$-algebra and $D$ for a generic curved $\C$-coalgebra.
\end{Notation}

\subsection{Sweedler functor}
The linear dual of a coalgebra over a given operad is naturally an algebra over the same operad. 

\begin{lemma}
The linear dual defines a functor
\[
\begin{tikzcd}[column sep=4pc,row sep=0pc]
\left(\mathsf{dg}~\mathcal{P}\text{-}\mathsf{coalg}\right)^{\mathsf{op}} \arrow[r,"(-)^*"] 
&\mathsf{dg}~\mathcal{P}\text{-}\mathsf{alg}~.
\end{tikzcd}
\]
\end{lemma}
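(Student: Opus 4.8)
The plan is to endow the linear dual $C^{*}=\mathrm{Hom}(C,\kk)$, equipped with its dual differential, with the structure of a dg $\PP$-algebra obtained by dualizing the structural map $\Delta_{C}$ of the coalgebra. The starting observation is that the two Schur realizations are interchanged by linear duality: I would first build, for every dg module $V$, a natural transformation
\[
\kappa_{\PP}(V)\colon \mathscr{S}(\PP)(V^{*})\longrightarrow \bigl(\widehat{\mathscr{S}}^{c}(\PP)(V)\bigr)^{*}.
\]
This map is assembled from two elementary canonical comparisons: the evaluation map $\bigoplus_{n} X_{n}^{*}\to\bigl(\prod_{n}X_{n}\bigr)^{*}$ (pairing a \emph{finite} family of functionals against an arbitrary family), and the lax comparison $(V^{*})^{\otimes n}\to (V^{\otimes n})^{*}$ of linear duality, both passed to $\mathbb{S}_{n}$-(co)invariants. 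Explicitly, $\kappa_{\PP}(C)$ sends $\mu\otimes f_{1}\otimes\cdots\otimes f_{n}$, with $\mu\in\PP(n)$ and $f_{i}\in C^{*}$, to the functional $g\mapsto (f_{1}\otimes\cdots\otimes f_{n})\bigl(g_{n}(\mu)\bigr)$, where $g_{n}$ denotes the arity-$n$ component of $g\in\widehat{\mathscr{S}}^{c}(\PP)(C)$. The desired structural map is then the composite
\[
\gamma_{C^{*}}\colon \mathscr{S}(\PP)(C^{*})\xrightarrow{\ \kappa_{\PP}(C)\ }\bigl(\widehat{\mathscr{S}}^{c}(\PP)(C)\bigr)^{*}\xrightarrow{\ (\Delta_{C})^{*}\ }C^{*}.
\]

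Next I would check that $\gamma_{C^{*}}$ satisfies the axioms of an algebra over the monad $\mathscr{S}(\PP)$. Unitality is a straightforward verification involving the operadic unit $\eta$. The heart of the proof is associativity, i.e.\ the commutativity of the square comparing $\gamma_{C^{*}}\circ\mathscr{S}(\gamma_{\PP})(\mathrm{id})$ with the iterated application of $\gamma_{C^{*}}$. I would deduce it by dualizing the coassociativity square defining a dg $\PP$-coalgebra, which involves both the composite through $\widehat{\mathscr{S}}^{c}(\PP)\circ\widehat{\mathscr{S}}^{c}(\PP)$ and the lax monoidal map $\varphi_{\PP,\PP}$ of \cref{lemma: Schur lax contravariant functor}. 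The bridge between the dualized square and the algebra axiom is a single compatibility statement for the family $\kappa_{\PP}$: it intertwines the strong monoidal structure of $\mathscr{S}$ with the lax monoidal structure $\varphi$ of $\widehat{\mathscr{S}}^{c}$, in the sense that the evident square relating $\mathscr{S}(\PP\circ\PP)(V^{*})$, $\bigl(\widehat{\mathscr{S}}^{c}(\PP\circ\PP)(V)\bigr)^{*}$ and the two-fold composites commutes.

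This last compatibility is the main obstacle. Proving it amounts to showing that the linear dual of $\varphi_{\PP,\PP}$ agrees, under the canonical maps $\bigoplus\to(\prod)^{*}$ and $(V^{*})^{\otimes n}\to(V^{\otimes n})^{*}$, with the structure isomorphism $\mathscr{S}(\PP)\circ\mathscr{S}(\PP)\cong\mathscr{S}(\PP\circ\PP)$ governing the monad; the only genuine work is to track the Koszul signs and the $\mathbb{S}_{n}$-equivariance coming from the symmetric monoidal structure through the composition product. Granting this, associativity of $\gamma_{C^{*}}$ is formal. Finally, functoriality and compatibility with differentials are immediate: a morphism $f\colon C\to C'$ of dg $\PP$-coalgebras dualizes to $f^{*}\colon (C')^{*}\to C^{*}$, which is a morphism of dg $\PP$-algebras by the naturality of $\kappa_{\PP}$ together with the identity $f^{*}\circ(\Delta_{C'})^{*}=(\Delta_{C})^{*}\circ\bigl(\widehat{\mathscr{S}}^{c}(\PP)(f)\bigr)^{*}$; and $\gamma_{C^{*}}$ is a chain map because both $\kappa_{\PP}(C)$ and $(\Delta_{C})^{*}$ are, the latter since $\Delta_{C}$ commutes with differentials.
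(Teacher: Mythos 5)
Your proposal is correct, and the resulting structure map on $C^*$ agrees with the paper's, but your verification is packaged genuinely differently. The paper never works at the level of the monad $\mathscr{S}(\PP)$: it evaluates the coalgebra structure at each operation $\mu \in \PP(n)$ to obtain arity-wise decomposition maps $\Delta_\mu \colon C \to C^{\otimes n}$, dualizes each one through the inclusion $i \colon (C^*)^{\otimes n} \rightarrowtail (C^{\otimes n})^*$, and checks that the assignment $\mu \mapsto (\Delta_\mu)^* \circ i$ defines a morphism of dg operads $\PP \to \mathrm{End}_{C^*}$ (implicitly using the equivalent description of a $\PP$-coalgebra structure as a morphism $\PP \to \mathrm{Coend}_C$ recorded in the remark following the definition). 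This keeps every object finite-dimensional in flavour --- only honest tensor powers appear, and the compatibility to check is ordinary operadic composition. You instead work directly from the monadic definitions: you build the global comparison $\kappa_\PP(V) \colon \mathscr{S}(\PP)(V^*) \to \bigl(\widehat{\mathscr{S}}^c(\PP)(V)\bigr)^*$, set $\gamma_{C^*} = (\Delta_C)^* \circ \kappa_\PP(C)$, and deduce the algebra axioms by dualizing the coassociativity square, the key lemma being that $\kappa$ intertwines the strong monoidal structure of $\mathscr{S}$ with the lax structure $\varphi_{\PP,\PP}$ of \cref{lemma: Schur lax contravariant functor}. What you pay for this is the bookkeeping with infinite products and the evaluation map $\bigoplus_n X_n^* \to \bigl(\prod_n X_n\bigr)^*$; what you gain is a proof that uses only the paper's official definition of a $\PP$-coalgebra (no appeal to $\mathrm{Coend}_C$), and the transformation $\kappa$ you isolate is precisely the kind of duality comparison that recurs later in the paper, e.g.\ in the proofs of \cref{prop: natural mono for Sweedler dual} and \cref{prop: natural mono for topo dual}. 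Both routes leave the same sign/equivariance verification implicit, so neither is more complete than the other on that point.
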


\begin{proof}
There is a canonical morphism of operads $\mathrm{Coend}_C \longrightarrow \mathrm{End}_{C^*}$ which sends $f: C \longrightarrow C^{\otimes n}$ to the map 
\[
\begin{tikzcd}[column sep=4pc,row sep=0pc]
(C^*)^{\otimes n} \arrow[r,rightarrowtail] 
&(C^{\otimes n})^* \arrow[r,"f^* "] 
&C^*~. 
\end{tikzcd}
\]
Thus any dg $\mathcal{P}$-coalgebra structure $\Gamma_C: \mathcal{P} \longrightarrow \mathrm{Coend}_C$ induces a dg $\mathcal{P}$-algebra structure on $C^*$ by composing this structural morphism with the previous map. 
\end{proof}

\begin{Proposition}\label{prop: adjoint à droite}
The linear duality functor 
\[
\begin{tikzcd}[column sep=4pc,row sep=0pc]
\left(\mathsf{dg}~\mathcal{P}\text{-}\mathsf{coalg}\right)^{\mathsf{op}} \arrow[r,"(-)^*"] 
&\mathsf{dg}~\mathcal{P}\text{-}\mathsf{alg}~.
\end{tikzcd}
\]
admits a left adjoint.
\end{Proposition}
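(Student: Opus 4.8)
The plan is to construct the left adjoint explicitly as a "Sweedler dual" functor, generalizing Sweedler's finite dual for coassociative coalgebras to the operadic setting. Given a dg $\mathcal{P}$-algebra $B$, the naive candidate would be to equip $B^*$ with a $\mathcal{P}$-coalgebra structure, but the linear dual of a $\mathcal{P}$-algebra does \emph{not} in general carry a $\mathcal{P}$-coalgebra structure: the required map $B^* \longrightarrow \prod_{n} \mathrm{Hom}_{\mathbb{S}_n}(\mathcal{P}(n),(B^*)^{\otimes n})$ would have to land in the tensor powers $(B^*)^{\otimes n} \subseteq (B^{\otimes n})^*$, and an arbitrary functional need not decompose finitely. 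The correct construction is therefore to take the \emph{largest} sub-dg-module $B^\circ \subseteq B^*$ on which such a decomposition map is well-defined and corestricts to the honest tensor powers.

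Concretely, I would first dualize the structure map $\gamma_B \colon \mathscr{S}(\mathcal{P})(B) \longrightarrow B$. For each $\mu \in \mathcal{P}(n)$ this gives operations $\gamma_\mu \colon B^{\otimes n} \longrightarrow B$, hence dual maps $\gamma_\mu^* \colon B^* \longrightarrow (B^{\otimes n})^*$. I then define
\[
B^\circ \coloneqq \left\{ \, \varphi \in B^* \ \middle| \ \gamma_\mu^*(\varphi) \in (B^*)^{\otimes n} \subseteq (B^{\otimes n})^* \text{ for all } n \text{ and all } \mu \in \mathcal{P}(n) \, \right\}~,
\]
the subspace of functionals whose duals decompose as finite sums of elementary tensors in each arity. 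One checks that $B^\circ$ is stable under $d_B^*$, so it is a sub-dg-module, and that the restricted maps assemble into a structural morphism $\Delta_{B^\circ} \colon B^\circ \longrightarrow \prod_n \mathrm{Hom}_{\mathbb{S}_n}(\mathcal{P}(n),(B^\circ)^{\otimes n})$ satisfying the coassociativity diagram defining a dg $\mathcal{P}$-coalgebra; this is exactly dual to the monad-algebra axiom for $\gamma_B$. The assignment $B \mapsto B^\circ$ is then functorial, giving a functor $(-)^\circ \colon \mathsf{dg}~\mathcal{P}\text{-}\mathsf{alg} \longrightarrow \left(\mathsf{dg}~\mathcal{P}\text{-}\mathsf{coalg}\right)^{\mathsf{op}}$.

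Finally, I would establish the adjunction by exhibiting a natural bijection
\[
\mathrm{Hom}_{\mathsf{dg}~\mathcal{P}\text{-}\mathsf{alg}}\!\left(B, C^*\right) \cong \mathrm{Hom}_{\mathsf{dg}~\mathcal{P}\text{-}\mathsf{coalg}}\!\left(C, B^\circ\right)~.
\]
The key point is the universal property of $B^\circ$: any algebra map $f \colon B \longrightarrow C^*$ corresponds, via the tensor–hom adjunction, to a pairing $B \otimes C \longrightarrow \kk$, equivalently to a linear map $g \colon C \longrightarrow B^*$; compatibility of $f$ with the $\mathcal{P}$-algebra structures forces the image of $g$ to land inside the finite-decomposition subspace $B^\circ$ and to be a coalgebra morphism, and conversely. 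The naturality is routine. The main obstacle is verifying that $B^\circ$ is genuinely closed under the dual structure maps and that $\Delta_{B^\circ}$ corestricts to the tensor powers $(B^\circ)^{\otimes n}$ rather than merely to $(B^{\otimes n})^*$—that is, that the "finiteness" condition defining $B^\circ$ is self-reproducing under iterated decomposition, which is what the coassociativity of $\mathcal{P}$ and the strong monoidality of $\mathscr{S}(-)$ ultimately guarantee. This is the operadic analogue of the classical fact that Sweedler's finite dual of an algebra is again a coalgebra, and handling the $\mathbb{S}_n$-equivariance and the arity-wise products uniformly is the technical heart of the argument.
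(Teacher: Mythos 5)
Your proposal takes a genuinely different route from the paper. The paper never constructs $B^\circ$ by hand: it forms the square of functors over the base categories of (p)dg modules, observes that both vertical forgetful adjunctions are monadic (the coalgebra side being the opposite of the comonadic cofree adjunction of Theorem \ref{thm: existence of the cofree P cog}), that all four categories are complete and cocomplete, and that $(-)^* \cdot \mathrm{U}^{\mathsf{op}} \cong \mathrm{U} \cdot (-)^*$, and then invokes the Adjoint Lifting Theorem \cite{AdjointLifting} to produce the left adjoint abstractly; an explicit formula only appears a posteriori, as the equalizer
\[
\mathrm{Eq}\Bigl(\mathscr{C}(\mathcal{P})(B^*) \rightrightarrows \mathscr{C}(\mathcal{P})\bigl((\mathscr{S}(\mathcal{P})(B))^*\bigr)\Bigr)
\]
of cofree dg $\mathcal{P}$-coalgebras. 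What the paper's argument buys is uniformity and economy: the identical three-line argument is reused verbatim for the topological dual in Proposition \ref{prop: adjoint à gauche dual topo}, and no pointwise linear algebra is needed. What your argument buys is an explicit, element-level description of $B^\circ$ as an operadic finite dual, information the paper only recovers later and in weaker form (Proposition \ref{prop: natural mono for Sweedler dual}). Your verification of the adjunction bijection via transposition of pairings is also correct and is the honest operadic analogue of \cite{Sweedler69}.

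There is, however, one weak point you should repair before calling this a proof. The step you yourself identify as the technical heart --- that the finiteness condition is self-reproducing, i.e. that $\gamma_\mu^*(\varphi)$ lands in $(B^\circ)^{\otimes n}$ and not merely in $(B^*)^{\otimes n}$ --- is asserted, and the reason you offer (``coassociativity of $\mathcal{P}$ and the strong monoidality of $\mathscr{S}(-)$'') is not where the content lies; strong monoidality of $\mathscr{S}$ plays no role here. The actual mechanism is the algebra axiom for partial compositions together with a linear-independence argument: for $\nu \in \mathcal{P}(m)$, the axiom identifies $\gamma_{\mu\circ_k\nu}^*(\varphi)$ with $\gamma_\mu^*(\varphi)$ precomposed with $\gamma_\nu$ in the $k$-th slot; writing $\gamma_\mu^*(\varphi) = \sum_i \psi_1^{(i)} \otimes \cdots \otimes \psi_n^{(i)}$ with the complementary $(n-1)$-fold tensors linearly independent and pairing against elements of $B^{\otimes (n-1)}$ dual to them, one deduces $\gamma_\nu^*(\psi_k^{(i)}) \in (B^*)^{\otimes m}$ for every $\nu$, hence $\psi_k^{(i)} \in B^\circ$; finally one needs the standard minimal-decomposition fact that a tensor whose $k$-th factors can be taken in a subspace $V_k'$ for each $k$ separately actually lies in $V_1' \otimes \cdots \otimes V_n'$. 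This is precisely the operadic generalization of Sweedler's fundamental lemma on the finite dual; it does go through, but it is a genuine argument, and it is exactly the work that the paper's appeal to the Adjoint Lifting Theorem is designed to avoid. Until it is written out, your $B^\circ$ is not yet known to be a dg $\mathcal{P}$-coalgebra, and the proposed adjunction bijection has no well-defined codomain.
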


\begin{proof}
Consider the following square of functors
\[
\begin{tikzcd}[column sep=4pc,row sep=4pc]
\left(\mathsf{dg}~\mathcal{P}\text{-}\mathsf{coalg}\right)^{\mathsf{op}} \arrow[r,"(-)^*"]  \arrow[d,"\mathrm{U}^\mathsf{op}"{name=SD},shift left=1.1ex ]
&\mathsf{dg}~\mathcal{P}\text{-}\mathsf{alg} \arrow[d,"\mathrm{U}"{name=LDC},shift left=1.1ex ] \\
\mathsf{dg}~\mathsf{mod}^{\mathsf{op}} \arrow[r,"(-)^*"{name=CC},,shift left=1.1ex] \arrow[u,"\left(\mathscr{C}(\mathcal{P})(-)\right)^\mathsf{op}"{name=LD},shift left=1.1ex ] \arrow[phantom, from=SD, to=LD, , "\dashv" rotate=0]
&\mathsf{dg}~\mathsf{mod}~,  \arrow[l,"(-)^*"{name=CB},shift left=1.1ex] \arrow[u,"\mathscr{S}(\mathcal{P})(-)"{name=TD},shift left=1.1ex] \arrow[phantom, from=TD, to=LDC, , "\dashv" rotate=0] \arrow[phantom, from=CC, to=CB, , "\dashv" rotate=90]
\end{tikzcd}
\] 
where $\mathscr{S}(\mathcal{P})(-)$ is the free dg $\mathcal{P}$-algebra functor and where $\mathscr{C}(\mathcal{P})(-)$ is the cofree dg $\mathcal{P}$-coalgebra functor given by Theorem \ref{thm: existence of the cofree P cog}. The left hand side adjunction is monadic, since it is the opposite of a comonadic adjunction. All categories involved are complete and cocomplete. We also have that $(-)^* \cdot \mathrm{U}^\mathsf{op} \cong \mathrm{U} \cdot (-)^*~.$ Thus we can apply the Adjoint Lifting Theorem \cite[Theorem 2]{AdjointLifting}, which concludes the proof. 
\end{proof}

\begin{Definition}[Sweedler dual]\label{def: Sweedler dual functor}
The \textit{Sweedler duality functor}
\[
\begin{tikzcd}[column sep=4pc,row sep=0pc]
\mathsf{dg}~\mathcal{P}\text{-}\mathsf{alg} \arrow[r,"(-)^\circ"] 
&\left(\mathsf{dg}~\mathcal{P}\text{-}\mathsf{coalg}\right)^{\mathsf{op}}
\end{tikzcd}
\]
is defined as the functor left adjoint of the linear dual functor. 
\end{Definition}

\begin{Remark}
The proof of the Adjoint Lifting Theorem \cite[Theorem 2]{AdjointLifting} gives an explicit construction of this left adjoint. Let $(B,\gamma_B,d_B)$ be a dg $\mathcal{P}$-algebra. The Sweedler dual dg $\mathcal{P}$-coalgebra $B^\circ$ is given by the following equalizer: 
\[
\begin{tikzcd}[column sep=4pc,row sep=4pc]
\mathrm{Eq}\Bigg(\mathscr{C}(\mathcal{P})(B^*) \arrow[r,"(\gamma_B)^*",shift right=1.1ex,swap]  \arrow[r,"\varrho"{name=SD},shift left=1.1ex ]
&\mathscr{C}(\mathcal{P})\left((\mathscr{S}(\mathcal{P})(B))^*\right) \Bigg)~,
\end{tikzcd}
\]
where $\varrho$ is an arrow constructed using the comonadic structure of $\mathscr{C}(\mathcal{P})$ and the canonical inclusion of a dg module into its double linear dual. In particular, for any dg module $V$, we have an isomorphism:
\[
\left(\mathscr{S}(\mathcal{P})(V) \right)^\circ \cong \mathscr{C}(\mathcal{P})(V^*)~.
\]
\end{Remark}

\begin{Proposition}\label{prop: natural mono for Sweedler dual}
There is a natural monomorphism
\[
\epsilon: \mathrm{U}^{\mathsf{op}} \cdot (-)^\circ \rightarrowtail (-)^* \cdot \mathrm{U}~,
\]
which implies that the Sweedler dual is a sub-dg module of the linear dual functor. 
\end{Proposition}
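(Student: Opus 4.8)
The plan is to construct the natural transformation $\epsilon$ explicitly from the equalizer description of the Sweedler dual recalled in the preceding remark, and then to verify its injectivity. Recall that for a dg $\PP$-algebra $(B,\gamma_B,d_B)$, the underlying dg $\PP$-coalgebra $\mathrm{U}^{\mathsf{op}}(B^\circ)$ is defined as the equalizer of the two maps $(\gamma_B)^*$ and $\varrho$ from $\mathscr{C}(\PP)(B^*)$ to $\mathscr{C}(\PP)\left((\mathscr{S}(\PP)(B))^*\right)$ in the category of dg modules. In particular there is a canonical monomorphism $\mathrm{U}^{\mathsf{op}}(B^\circ) \rightarrowtail \mathscr{C}(\PP)(B^*)$ coming from the universal property of the equalizer, and by \cref{thm: existence of the cofree P cog} the cofree coalgebra $\mathscr{C}(\PP)(B^*)$ itself sits inside $\widehat{\mathscr{S}}^c(\PP)(B^*)$ via the degree-wise monomorphism $p_1(B^*)$. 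Composing these, the first step is to produce the candidate map $\epsilon_B$ as the composite $\mathrm{U}^{\mathsf{op}}(B^\circ) \rightarrowtail \mathscr{C}(\PP)(B^*) \rightarrowtail \widehat{\mathscr{S}}^c(\PP)(B^*)$ and then land it in $(-)^* \cdot \mathrm{U}(B) = B^*$.

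More precisely, I would reinterpret the target $(-)^* \cdot \mathrm{U}$ correctly: the linear dual of the underlying dg module of $B$ is simply $B^*$, so the natural transformation $\epsilon$ should assert that $B^\circ$, as a dg module, is a submodule of $B^*$. This suggests that the relevant arrow is not the inclusion into $\mathscr{C}(\PP)(B^*)$ above, but rather the counit-type projection: the cofree coalgebra $\mathscr{C}(\PP)(B^*)$ carries its counit $\xi \colon \mathscr{C}(\PP)(B^*) \to B^*$ described in \cref{thm: existence of the cofree P cog}, given by $\widehat{\mathscr{S}}^c(\eta)(-) \circ p_1$. Thus the second step is to define $\epsilon_B$ as the restriction of this counit to the equalizer subobject $\mathrm{U}^{\mathsf{op}}(B^\circ) \hookrightarrow \mathscr{C}(\PP)(B^*)$, and to check naturality in $B$, which follows from naturality of all the constituent maps (the equalizer is functorial, $p_1$ is a natural transformation of endofunctors, and $\eta$ is the operadic unit).

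The heart of the argument, and the step I expect to be the main obstacle, is proving that this composite $\epsilon_B$ is a \emph{monomorphism}. Injectivity should follow from the fact that an element of the cofree coalgebra $\mathscr{C}(\PP)(B^*) \subseteq \widehat{\mathscr{S}}^c(\PP)(B^*) = \prod_n \mathrm{Hom}_{\mathbb{S}_n}(\PP(n),(B^*)^{\otimes n})$ that lies in the equalizer is determined by its arity-one component, i.e.\ by its image under the counit $\xi$. The key point is that the equalizer condition expressing compatibility with $\gamma_B$ forces the higher-arity components of a representative function to be recovered from the arity-one component via the algebra structure map $\gamma_B$; concretely, an equalizer element is a functional on $B$ together with a coherent system of ``iterated'' functionals, and this coherence rigidifies the whole tuple once its lowest piece is fixed. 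I would make this precise by unwinding the two maps $(\gamma_B)^*$ and $\varrho$ being equalized: their difference annihilates exactly those tuples whose components are compatible with the multiplication, so if $\xi$ sends such a tuple to zero, then by descending induction on arity (using that $p_1$ and $\varphi_{\PP,\PP}$ are degree-wise monomorphisms) every component vanishes. The verification that the Sweedler dual is closed under the differential and the coalgebra structure is routine and already packaged in the construction of $B^\circ$; what remains is this injectivity on underlying dg modules, which I would prove arity by arity as sketched.
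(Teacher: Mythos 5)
Your proof is correct in substance, but it takes a genuinely different route from the paper's. The paper never opens up the equalizer defining $B^\circ$: it constructs $\epsilon$ explicitly on free algebras, as the composite $\mathscr{C}(\PP)(V^*)\xrightarrow{p_1}\widehat{\mathscr{S}}^c(\PP)(V^*)\rightarrowtail(\mathscr{S}(\PP)(V))^*$, and then extends it to an arbitrary algebra $B$ by writing $B$ as a split coequalizer of free algebras and using that both $\mathrm{U}^{\mathsf{op}}\cdot(-)^\circ$ and $(-)^*\cdot\mathrm{U}$ carry this presentation to split equalizers of dg modules, so that the free-level monomorphism induces a monomorphism between the equalizers. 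You instead define $\epsilon_B$ uniformly for every $B$ as the comonad counit $\xi$ restricted along the equalizer inclusion into $\mathscr{C}(\PP)(B^*)$, and prove injectivity by showing that an element of the equalizer is rigidified by its image under $\xi$. Your route buys a concrete pointwise description of $\epsilon_B$ (and shows that a point of $B^\circ$ is literally a functional on $B$ with a property, all higher components being determined by it); the paper's route buys independence from the explicit form of $\varrho$, which is precisely the piece your sketch leaves open. To close your argument you need two things. First, the formula $\varrho=\mathscr{C}(\PP)(\epsilon^{\mathrm{free}}_B)\circ\omega_{B^*}$, where $\omega$ is the comultiplication of the comonad $\mathscr{C}(\PP)$ and $\epsilon^{\mathrm{free}}_B\colon\mathscr{C}(\PP)(B^*)\rightarrowtail(\mathscr{S}(\PP)(B))^*$ is the free-level comparison; this must be extracted from the proof of the Adjoint Lifting Theorem, exactly as the paper does for the topological dual in \cref{prop: natural mono for topo dual}. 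Granting it, no descending induction on arity is needed: post-composing the equalized pair with the counit $\xi_{(\mathscr{S}(\PP)(B))^*}$ and using naturality of $\xi$ together with the counit axiom $\xi\circ\omega=\mathrm{id}$ turns the equalizer condition into the single identity $\epsilon^{\mathrm{free}}_B(x)=(\gamma_B)^*\left(\xi_{B^*}(x)\right)$, so $\xi_{B^*}(x)=0$ forces $x=0$ because $\epsilon^{\mathrm{free}}_B$ is a monomorphism (it is $p_1$ followed by the evident arity-wise embedding of $(B^*)^{\otimes n}$ into $(B^{\otimes n})^*$); this is your "coherence rigidifies the tuple" claim, made precise in one stroke for all arities at once. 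Second, you silently identify $\mathrm{U}^{\mathsf{op}}(B^\circ)$ with the dg-module-level equalizer of the two maps; since equalizers of dg $\PP$-coalgebras need not be computed on underlying dg modules, you should record that this particular pair is $\mathrm{U}$-split (a splitting is given by $\mathscr{C}(\PP)((\eta_B)^*)$, with $\eta_B\colon B\to\mathscr{S}(\PP)(B)$ the unit of the free algebra), again just as the paper argues in the topological-dual case.
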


\begin{proof}
Let $V$ be a dg module, there is a monomorphism
\[
\begin{tikzcd}[column sep=4pc,row sep=4pc]
\epsilon_V: \left(\mathscr{S}(\mathcal{P})(V) \right)^\circ \cong \mathscr{C}(\mathcal{P})(V^*) \arrow[r,rightarrowtail,"(p_1)_{V^*}"]
&\widehat{\mathscr{S}}^c(\mathcal{P})(V^*) \arrow[r,rightarrowtail]
&\left(\mathscr{S}(\mathcal{P})(V) \right)^*~,
\end{tikzcd}
\]
where the monomorphism $p_1$ is given by Theorem \ref{thm: existence of the cofree P cog} and the second monomorphism is induced by the natural monomorphism $(V^*)^{\otimes n} \rightarrowtail (V^{\otimes n})^*$, hence the proposition is true on free dg $\mathcal{P}$-algebras. Any dg $\mathcal{P}$-algebra can be written as a $\mathrm{U}$-split coequalizer of free dg $\mathcal{P}$-algebras. Both $\mathrm{U}^{\mathsf{op}} \cdot (-)^\circ$ and $(-)^* \cdot \mathrm{U}$ send these split coequalizers to split equalizers of dg modules, therefore the monomorphism $\epsilon$ extends to all dg $\mathcal{P}$-algebras.
\end{proof}

\begin{Proposition}\label{prop: iso de Beck-Chevalley}
Let $B$ be a dg $\mathcal{P}$-algebra which is degree-wise finite dimensional and bounded above or bounded below. There exists a canonical dg $\mathcal{P}$-coalgebra on $B^*$ and furthermore the natural map 
\[
\epsilon: \mathrm{U}^{\mathsf{op}} \cdot B^\circ \rightarrowtail B^* \cdot \mathrm{U}~
\]
is an isomorphism of dg modules.
\end{Proposition}

\begin{proof}
Under those hypothesis, the natural inclusion $(B^*)^{\otimes n} \rightarrowtail (B^{\otimes n})^*$ is an isomorphism, hence the natural map $\mathrm{Coend}_B \longrightarrow \mathrm{End}_{B^*}$ is an isomorphism. Therefore, the linear dual $B^*$ admits a dg $\mathcal{P}$-coalgebra structure and it can be checked that it satisfies the universal property of the Sweedler dual, hence the map of Proposition \ref{prop: natural mono for Sweedler dual} is indeed an isomorphism. 
\end{proof}

\begin{Remark}[Beck--Chevalley condition]\label{Rmk: Beck-Chevalley}
The adjunction $(-)^\circ \dashv (-)^*$ restricts to an anti-equivalence of categories between the category of total finite dimensional dg $\mathcal{P}$-algebra and the category of total finite dimensional dg $\mathcal{P}$-coalgebras, and more generally, between degree-wise finite dimensional bounded-above (resp. bounded-below) algebras and bounded-below (resp. bounded-above) coalgebras. 
\end{Remark}

\begin{Example}
Consider the operad $\mathcal{A}ss$ which encodes dg associative algebras as its algebras and dg coassociative coalgebras as its coalgebras. Then the adjunction 
\[
\begin{tikzcd}[column sep=7pc,row sep=3pc]
\mathsf{dg}~\mathcal{A}ss\text{-}\mathsf{alg}  \arrow[r, shift left=1.1ex, "(-)^\circ"{name=F}] 
& \left(\mathsf{dg}~\mathcal{A}ss\text{-}\mathsf{coalg}\right)^{\mathsf{op}} ~. \arrow[l, shift left=.75ex, "(-)^*"{name=U}] \arrow[phantom, from=F, to=U, , "\dashv" rotate=-90]
\end{tikzcd}
\]

coincides with the original Sweedler adjunction constructed in \cite{Sweedler69}.
\end{Example}

\subsection{Topological dual functor}
Let's turn to the other side of the Koszul duality, where $\mathcal{C}$ is a conilpotent curved cooperad.

\begin{lemma}\label{lemma: dual lineaire est absolute}
The linear duality defines a functor
\[
\begin{tikzcd}[column sep=4pc,row sep=0pc]
\left(\mathsf{pdg}~\mathcal{C}\text{-}\mathsf{coalg}\right)^{\mathsf{op}} \arrow[r,"(-)^*"] 
&\mathsf{pdg}~\mathcal{C}\text{-}\mathsf{alg}^{\mathsf{comp}}
\end{tikzcd}
\]
from the category of pdg $\mathcal{C}$-coalgebras to the category of complete pdg $\mathcal{C}$-algebras.
\end{lemma}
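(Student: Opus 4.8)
The plan is to equip the linear dual $D^*$ with a pdg $\mathcal{C}$-algebra structure obtained by dualizing the structure map of $D$, and then to establish completeness as a separate consequence of the conilpotence of $\mathcal{C}$.

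\emph{Construction of the structure map.} I first record the elementary duality relating the two Schur functors. For a graded $\mathbb{S}$-module $M$ and a pdg module $V$, the universal property of coinvariants gives a natural isomorphism $(M(n)\otimes_{\mathbb{S}_n}V^{\otimes n})^*\cong \mathrm{Hom}_{\mathbb{S}_n}(M(n),(V^{\otimes n})^*)$, and since linear duality exchanges direct sums and products this assembles into $\mathscr{S}(M)(V)^*\cong \prod_{n\geq 0}\mathrm{Hom}_{\mathbb{S}_n}(M(n),(V^{\otimes n})^*)$. Precomposing at each arity with the canonical monomorphism $(V^*)^{\otimes n}\rightarrowtail (V^{\otimes n})^*$ produces a natural transformation
\[
\theta_M(V)\colon \widehat{\mathscr{S}}^c(M)(V^*)\longrightarrow \mathscr{S}(M)(V)^*,
\]
which is a degree-wise monomorphism and commutes with the transpose pre-differentials. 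Taking $M=\mathcal{C}$ and $V=D$, I then define the structure map of $D^*$ as the composite
\[
\gamma_{D^*}\colon \widehat{\mathscr{S}}^c(\mathcal{C})(D^*)\xrightarrow{\ \theta_{\mathcal{C}}(D)\ }\mathscr{S}(\mathcal{C})(D)^*\xrightarrow{\ (\Delta_D)^*\ } D^*.
\]
The target product $\prod_{n\geq 0}\mathrm{Hom}_{\mathbb{S}_n}(\mathcal{C}(n),(D^*)^{\otimes n})$ appears precisely as the dual of the direct sum defining $\mathscr{S}(\mathcal{C})(D)$, which is the conceptual reason an absolute structure arises.

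\emph{Verification of the algebra axiom.} To check that $\gamma_{D^*}$ satisfies the associativity square of \cref{def pdg C algebra}, the clean way to proceed is to observe that the monad multiplication of $\widehat{\mathscr{S}}^c(\mathcal{C})$ and the comonad comultiplication of $\mathscr{S}(\mathcal{C})$ are both built from the \emph{same} cooperadic decomposition $\Delta_{\mathcal{C}}\colon\mathcal{C}\to\mathcal{C}\circ\mathcal{C}$; the only difference is that the former uses the lax structure $\varphi_{\mathcal{C},\mathcal{C}}$ of \cref{lemma: Schur lax contravariant functor} while the latter uses the strong monoidal isomorphism of $\mathscr{S}$. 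Hence it suffices to prove the single compatibility statement that $\theta$ intertwines $\varphi_{\mathcal{C},\mathcal{C}}$ with the dual of this strong monoidal isomorphism; granting this, the $\widehat{\mathscr{S}}^c(\mathcal{C})$-algebra square for $\gamma_{D^*}$ is obtained by applying $(-)^*$ to the $\mathscr{S}(\mathcal{C})$-coalgebra square of $\Delta_D$, and the unit axiom dualizes the counit axiom. I expect this monoidal compatibility of $\theta$ to be the main obstacle, since it requires unwinding the explicit formula for $\varphi$ recalled from \cite[Chapter~1, Section~5]{mathez}; everything else is a formal diagram chase using the naturality of $\theta$ and of $(-)^*$.

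\emph{Completeness.} By \cref{Prop: varphi is an epi} the canonical map $\varphi_{D^*}\colon D^*\to\widehat{D^*}$ is automatically an epimorphism, so it remains only to show it is a monomorphism, i.e. that the canonical filtration is Hausdorff: $\bigcap_{\omega}\mathrm{W}_\omega D^*=0$. Set $\mathrm{F}_\omega D\coloneqq \ker\!\big(D\xrightarrow{\Delta_D}\mathscr{S}(\mathcal{C})(D)\to\mathscr{S}(\mathcal{C}/\mathscr{R}_\omega\mathcal{C})(D)\big)$. This is an increasing filtration which is \emph{exhaustive}: for any $d$, the element $\Delta_D(d)$ has finitely many nonzero arity components involving finitely many elements of $\mathcal{C}$, each of which lies in some $\mathscr{R}_\omega\mathcal{C}$ by conilpotence of $\mathcal{C}$, so $d\in\mathrm{F}_\omega D$ for $\omega$ large; here both the conilpotence of $\mathcal{C}$ and the fact that $\Delta_D$ lands in the direct sum are used. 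Using the naturality of $\theta$ with respect to $\pi_\omega\colon\mathcal{C}\to\mathcal{C}/\mathscr{R}_\omega\mathcal{C}$, one identifies every element of $\mathrm{W}_\omega D^*$ as a functional factoring through $\mathscr{S}(\pi_\omega)(\mathrm{id})\circ\Delta_D$, whence $\mathrm{W}_\omega D^*\subseteq(\mathrm{F}_\omega D)^{\perp}$. Therefore
\[
\bigcap_{\omega}\mathrm{W}_\omega D^*\subseteq\bigcap_{\omega}(\mathrm{F}_\omega D)^{\perp}=\Big(\bigcup_{\omega}\mathrm{F}_\omega D\Big)^{\perp}=D^{\perp}=0,
\]
so $\varphi_{D^*}$ is an isomorphism and $D^*$ is complete. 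Finally, a morphism $f\colon D\to E$ of pdg $\mathcal{C}$-coalgebras dualizes to $f^*\colon E^*\to D^*$, and the naturality of $\theta$ and of $(-)^*$ shows that $f^*$ is a morphism of pdg $\mathcal{C}$-algebras; this yields the desired contravariant functor.
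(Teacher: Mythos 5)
Your construction of the structure map on $D^*$ is exactly the paper's: both define $\gamma_{D^*}$ as the canonical degree-wise monomorphism $\widehat{\mathscr{S}}^c(\mathcal{C})(D^*)\rightarrowtail \left(\mathscr{S}(\mathcal{C})(D)\right)^*$ followed by $(\Delta_D)^*$, and both leave the verification of the associativity axiom at the level of an outline (the paper says ``one can check''; your reduction of that check to the compatibility of $\theta$ with the lax structure $\varphi_{\mathcal{C},\mathcal{C}}$ of Lemma \ref{lemma: Schur lax contravariant functor} is a fair account of what it amounts to). Where you genuinely diverge is the completeness step. The paper dualizes the exhaustive coradical filtration: $D\cong\colim_\omega \mathrm{F}_\omega D$ gives $D^*\cong\lim_\omega(\mathrm{F}_\omega D)^*$, and then one identifies $(\mathrm{F}_\omega D)^*\cong D^*/\mathrm{W}_\omega D^*$, i.e.\ the \emph{equality} $\mathrm{W}_\omega D^*=(\mathrm{F}_\omega D)^\perp$, so that $D^*\cong\lim_\omega D^*/\mathrm{W}_\omega D^*$ directly. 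You instead prove only the easy inclusion $\mathrm{W}_\omega D^*\subseteq(\mathrm{F}_\omega D)^\perp$, deduce that the canonical filtration on $D^*$ is Hausdorff (so $\varphi_{D^*}$ is injective), and outsource surjectivity of $\varphi_{D^*}$ to Proposition \ref{Prop: varphi is an epi}. This is a valid trade: you avoid the harder reverse inclusion $(\mathrm{F}_\omega D)^\perp\subseteq \mathrm{W}_\omega D^*$, which is what the paper's identification $(\mathrm{F}_\omega D)^*\cong D^*/\mathrm{W}_\omega D^*$ really requires, at the cost of invoking a nontrivial cited result; the paper's route is self-contained on this point and yields the finer statement that the completion of $D^*$ is computed by the duals of the $\mathrm{F}_\omega D$. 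One caveat you should make explicit: your argument needs ``epimorphism'' in Proposition \ref{Prop: varphi is an epi} to mean degree-wise surjection of underlying modules --- which is indeed what is meant there (the paper later writes $\varphi_A\colon A\twoheadrightarrow\lim_\omega A/\mathrm{W}_\omega A$) --- since a mere categorical epimorphism combined with your monomorphism would not force an isomorphism in a category of algebras over a monad.
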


\begin{proof}
Let $(D,\Delta_D, d_D)$ be a pdg $\mathcal{C}$-coalgebra, where 
\[
\Delta_D: D \longrightarrow \bigoplus_{n \geq 0} \mathcal{C}(n) \otimes_{\mathbb{S}_n} D^{\otimes n} 
\]
is the structural morphism. By applying linear duality, we get a map
\[
\begin{tikzcd}[column sep=3.5pc,row sep=0pc]
\gamma_{D^*}: \displaystyle \prod_{n \geq 0} \mathrm{Hom}_{\mathbb{S}_n}\left(\mathcal{C}(n), (D^*)^{\otimes n} \right) \arrow[r,rightarrowtail]
&\displaystyle \prod_{n \geq 0} \mathrm{Hom}_{\mathbb{S}_n}\left(\mathcal{C}(n), (D^{\otimes n})^* \right) \arrow[r,"(\Delta_D)^* "]
&D^*~.
\end{tikzcd}
\]
One can check that it defines a pdg $\mathcal{C}$-algebra structure on $D^*$. Furthermore, let 
\[
\mathrm{F}_\omega D \coloneqq \mathrm{Ker}\left(\Delta_D^\omega: D \longrightarrow \bigoplus_{n \geq 0} \mathcal{C}/\mathscr{R}_\omega\mathcal{C}(n) \otimes_{\mathbb{S}_n} D^{\otimes n} \right)
\] 
be the canonical coradical filtration on $D$ induced by the coradical filtration on $\mathcal{C}$. Since $\mathcal{C}$ is conilpotent, the coradical filtration of any dg $\mathcal{C}$-coalgebra is exhaustive, therefore 
\[
D \cong \colim_{\omega}\mathrm{F}_\omega D~,
\]
which in turn implies that
\[
D^* \cong \lim_{\omega} ~ (\mathrm{F}_\omega D)^*~.
\]
One can check that $(\mathrm{F}_\omega D)^* \cong D^*/\mathrm{W}_\omega D^*$, therefore the image of the linear duality functor $(-)^*$ lies in the sub-category of complete pdg $\mathcal{C}$-coalgebras.
\end{proof}

\begin{Example}
Lemma \ref{lemma: dual lineaire est absolute} provides us with a wealth of examples of absolute algebras. Indeed, it shows that every time one takes the linear dual of some type of conilpotent coalgebras, one gets some type of absolute algebras.

\medskip

For instance, if one considers the linear dual of the global sections $\mathcal{O}_G(G)^*$, where $\mathcal{O}_G$ is the structural sheaf of a (possibly pro)-unipotent algebraic group $G$, then one gets an absolute associative algebra with a compatible coalgebraic structure. 
\end{Example}

\begin{Proposition}\label{prop: adjoint à gauche dual topo}
The linear duality functor
\[
\begin{tikzcd}[column sep=4pc,row sep=0pc]
\left(\mathsf{pdg}~\mathcal{C}\text{-}\mathsf{coalg}\right)^{\mathsf{op}} \arrow[r,"(-)^*"] 
&\mathsf{pdg}~\mathcal{C}\text{-}\mathsf{alg}^{\mathsf{comp}}
\end{tikzcd}
\]
admits a left adjoint. 
\end{Proposition}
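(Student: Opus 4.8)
The plan is to prove the existence of a left adjoint in exactly the same way as was done for the Sweedler dual functor in \Cref{prop: adjoint à droite}, namely by invoking the Adjoint Lifting Theorem. The idea is to exhibit the linear duality functor as living on top of a square of functors over the ground category of pdg modules, where the bottom edge is the ordinary linear duality adjunction on pdg modules, and where the two vertical forgetful functors are monadic (resp. comonadic).

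First I would set up the square of functors
\[
\begin{tikzcd}[column sep=4pc,row sep=4pc]
\left(\mathsf{pdg}~\mathcal{C}\text{-}\mathsf{coalg}\right)^{\mathsf{op}} \arrow[r,"(-)^*"]  \arrow[d,"\mathrm{U}^\mathsf{op}",shift left=1.1ex ]
&\mathsf{pdg}~\mathcal{C}\text{-}\mathsf{alg}^{\mathsf{comp}} \arrow[d,"\mathrm{U}",shift left=1.1ex ] \\
\mathsf{pdg}~\mathsf{mod}^{\mathsf{op}} \arrow[r,"(-)^*",shift left=1.1ex] \arrow[u,shift left=1.1ex]
&\mathsf{pdg}~\mathsf{mod}~, \arrow[l,"(-)^*",shift left=1.1ex] \arrow[u,shift left=1.1ex]
\end{tikzcd}
\]
where the right vertical forgetful functor $\mathrm{U}$ is monadic with left adjoint the free complete pdg $\mathcal{C}$-algebra functor $\widehat{\mathscr{S}}^c(\mathcal{C})(-)$, and the left vertical functor $\mathrm{U}^{\mathsf{op}}$ is the opposite of the comonadic cofree–forgetful adjunction for pdg $\mathcal{C}$-coalgebras, hence itself monadic in the opposite category. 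The key compatibility to verify is that the square of \emph{right adjoints} commutes up to natural isomorphism, i.e.\ that $(-)^* \cdot \mathrm{U}^{\mathsf{op}} \cong \mathrm{U} \cdot (-)^*$; but this is immediate, since both composites send a pdg $\mathcal{C}$-coalgebra to the underlying linear dual pdg module of its underlying pdg module, the $\mathcal{C}$-algebra structure on $D^*$ constructed in \Cref{lemma: dual lineaire est absolute} having exactly the linear dual as its underlying module.

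With this square in place, I would check the remaining hypotheses of the Adjoint Lifting Theorem \cite[Theorem 2]{AdjointLifting}: the top-left and bottom-left categories are the opposites of presentable categories of coalgebras, hence complete and cocomplete; the right-hand categories are complete and cocomplete (the category of complete pdg $\mathcal{C}$-algebras is presentable, being a reflective subcategory of the presentable category of pdg $\mathcal{C}$-algebras, as recalled above); and the left vertical arrow is monadic as noted. The theorem then produces the desired left adjoint to $(-)^*$, completing the proof.

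The main obstacle I anticipate is purely bookkeeping rather than conceptual: one must be careful that the relevant forgetful functor on the coalgebra side really is comonadic (so that its opposite is monadic), and that the completeness of the $\mathcal{C}$-algebra category—which is the target of linear duality by \Cref{lemma: dual lineaire est absolute}—does not obstruct cocompleteness, since we work with the reflective subcategory of complete algebras rather than all pdg $\mathcal{C}$-algebras. Once these categorical properties are confirmed, the existence of the left adjoint is a formal consequence, exactly parallel to the Sweedler case, and no explicit construction of the adjoint is needed for the statement itself.
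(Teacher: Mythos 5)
Your proposal is correct and follows essentially the same route as the paper's own proof: the same square of functors sitting over the linear duality adjunction on pdg modules, the same monadicity observations (the coalgebra side being monadic as the opposite of a comonadic cofree--forgetful adjunction, the algebra side via the free complete algebra functor $\widehat{\mathscr{S}}^c(\mathcal{C})(-)$), and the same appeal to the Adjoint Lifting Theorem. If anything, you are slightly more explicit than the paper in verifying the hypotheses --- the commutation $(-)^* \cdot \mathrm{U}^{\mathsf{op}} \cong \mathrm{U} \cdot (-)^*$ and the (co)completeness of the categories involved, via presentability of the reflective subcategory of complete algebras --- but the argument is the same.
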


\begin{proof}
We consider the following square of functors
\[
\begin{tikzcd}[column sep=4pc,row sep=4pc]
\left(\mathsf{pdg}~\mathcal{C}\text{-}\mathsf{coalg}\right)^{\mathsf{op}} \arrow[r,"(-)^*"]  \arrow[d,"\mathrm{U}^\mathsf{op}"{name=SD},shift left=1.1ex ]
&\mathsf{pdg}~\mathcal{C}\text{-}\mathsf{alg}^{\mathsf{comp}} \arrow[d,"\mathrm{U}"{name=LDC},shift left=1.1ex ] \\
\mathsf{pdg}~\mathsf{mod}^{\mathsf{op}} \arrow[r,"(-)^*"{name=CC},,shift left=1.1ex] \arrow[u,"\left(\mathscr{S}(\mathcal{C})(-)\right)^\mathsf{op}"{name=LD},shift left=1.1ex ] \arrow[phantom, from=SD, to=LD, , "\dashv" rotate=0]
&\mathsf{pdg}~\mathsf{mod}~,  \arrow[l,"(-)^*"{name=CB},shift left=1.1ex] \arrow[u,"\widehat{\mathscr{S}}^c(\mathcal{C})(-)"{name=TD},shift left=1.1ex] \arrow[phantom, from=TD, to=LDC, , "\dashv" rotate=0] \arrow[phantom, from=CC, to=CB, , "\dashv" rotate=90]
\end{tikzcd}
\] 
where $\mathscr{S}(\mathcal{C})(-)$ is the cofree pdg $\mathcal{C}$-coalgebra functor and where $\widehat{\mathscr{S}}^c(\mathcal{C})(-)$ is the free pdg $\mathcal{C}$-algebra functor, which is always complete. Again, vertical adjunctions are monadic and it is clear that $(-)^* \cdot \mathrm{U}^\mathsf{op} \cong \mathrm{U} \cdot (-)^*~.$ Thus we can apply the Adjoint Lifting Theorem \cite[Theorem 2]{AdjointLifting}, which concludes the proof. 
\end{proof}

\begin{Definition}[Topological dual functor]\label{def: topological dual functor}
The \textit{topological dual functor} 
\[
\begin{tikzcd}[column sep=4pc,row sep=0pc]
\mathsf{pdg}~\mathcal{C}\text{-}\mathsf{alg}^{\mathsf{comp}} \arrow[r,"(-)^\vee"]
&\left(\mathsf{pdg}~\mathcal{C}\text{-}\mathsf{coalg}\right)^{\mathsf{op}}
\end{tikzcd}
\]
is defined as the functor left adjoint to the linear dual functor.
\end{Definition}

\begin{Example}
Let us explain the name of this functor via an example. Let us consider a particular case of this adjunction 
\[
\begin{tikzcd}[column sep=7pc,row sep=3pc]
           \mathsf{abs}~\mathsf{assoc}\text{-}\mathsf{alg}^{\mathsf{comp}} \arrow[r,"(-)^\vee "{name=F}, shift left=1.1ex] 
           &\left(\mathsf{coassoc}\text{-}\mathsf{coalg}^{\mathsf{conil}}\right)^{\mathsf{op}}~, \arrow[l, shift left=.75ex, "(-)^*"{name=U}]
            \arrow[phantom, from=F, to=U, , "\dashv" rotate=-90]
\end{tikzcd}
\]
between conilpotent non-counital coassociative coalgebras and complete absolute associative algebras. See Subsection \ref{subsection: absolute associative} for more details about complete absolute associative algebras. Let $A$ be a \textit{finitely generated} complete absolute associative algebra, meaning $A/W_1A$ is finite dimensional over $\kk$. Then, for any $\omega \geq 1$, $A/W_\omega A$ is a finite dimensional nilpotent $\kk$-algebra and the topological dual of $A$ is given by 
\[
A^\vee = \colim_{\omega} \left(A/W_\omega A\right)^*~.
\]
In fact, this adjunction is an equivalence between finitely cogenerated conilpotent non-counital coassociative coalgebras and finitely generated complete absolute associative algebras.
\end{Example}

\begin{Remark}\label{Rmk: formule pour le dual topologique}
Given a complete pdg $\mathcal{C}$-algebra $(A, \gamma_A, d_A)$, its topological dual $A^\vee$ is given by the following equalizer:
\[
\begin{tikzcd}[column sep=4pc,row sep=4pc]
\mathrm{Eq}\Bigg(\mathscr{S}(\mathcal{C})(A^*) \arrow[r,"(\gamma_A)^*",shift right=1.1ex,swap]  \arrow[r,"\varrho"{name=SD},shift left=1.1ex ]
&\mathscr{S}(\mathcal{C})\left((\widehat{\mathscr{S}}^c(\mathcal{C})(A))^*\right) \Bigg)~,
\end{tikzcd}
\]
where $\varrho$ is an arrow constructed using the comonadic structure of $\mathscr{S}(\mathcal{C})(-)$ and the canonical inclusion of a pdg module into its double linear dual. In particular, for any pdg module $V$, we have an isomorphism: 
\[
\left(\widehat{\mathscr{S}}^c(\mathcal{C})(V)\right)^\vee \cong \mathscr{S}(\mathcal{C})(V^*)~.
\]
\end{Remark}

\begin{Proposition}\label{prop: natural mono for topo dual}
There is a natural monomorphism
\[
\epsilon: \mathrm{U}^{\mathsf{op}} \cdot (-)^\vee \rightarrowtail (-)^* \cdot \mathrm{U}~,
\]
which implies that the topological dual is a sub-pdg module of the linear dual functor. 
\end{Proposition}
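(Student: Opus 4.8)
The plan is to mirror the proof of Proposition~\ref{prop: natural mono for Sweedler dual}: construct $\epsilon$ first on free complete pdg $\C$-algebras, check that it is a monomorphism there, and then extend it to arbitrary complete pdg $\C$-algebras through their canonical monadic presentation. For the free objects, reading off the commuting square of Proposition~\ref{prop: adjoint à gauche dual topo} (equivalently, composing the adjunction $(-)^\vee\dashv(-)^*$ and the free–forgetful adjunction $\widehat{\mathscr{S}}^c(\C)\dashv\mathrm{U}$ with the cofree–forgetful adjunction on coalgebras), one identifies the topological dual of the free complete pdg $\C$-algebra on a pdg module $V$ with the cofree pdg $\C$-coalgebra on $V^*$:
\[
\left(\widehat{\mathscr{S}}^c(\C)(V)\right)^\vee \cong \mathscr{S}(\C)(V^*) = \bigoplus_{n \geq 0}\C(n)\otimes_{\mathbb{S}_n}(V^*)^{\otimes n}.
\]
On such objects I would define $\epsilon_V$ as the composite of the arity-wise maps
\[
\C(n)\otimes_{\mathbb{S}_n}(V^*)^{\otimes n}\longrightarrow \mathrm{Hom}_{\mathbb{S}_n}(\C(n),V^{\otimes n})^*,
\]
adjoint to the evaluation pairing $(c\otimes\psi,f)\mapsto\psi(f(c))$, with the canonical inclusion of the direct sum $\bigoplus_n\mathrm{Hom}_{\mathbb{S}_n}(\C(n),V^{\otimes n})^*$ into the linear dual of the product $\big(\prod_n\mathrm{Hom}_{\mathbb{S}_n}(\C(n),V^{\otimes n})\big)^* = \big(\widehat{\mathscr{S}}^c(\C)(V)\big)^*$.

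The heart of the matter, and the main difference with the Sweedler case, is checking that $\epsilon_V$ is a monomorphism. Because the evaluation pairing is diagonal with respect to the arity grading, this reduces to injectivity in each arity. Here I would use that $\kk$ has characteristic zero, so that $\kk[\mathbb{S}_n]$ is semisimple and the functor $\C(n)\otimes_{\mathbb{S}_n}(-)$ is exact; hence the monomorphism $(V^*)^{\otimes n}\rightarrowtail(V^{\otimes n})^*$ produces a monomorphism $\C(n)\otimes_{\mathbb{S}_n}(V^*)^{\otimes n}\rightarrowtail\C(n)\otimes_{\mathbb{S}_n}(V^{\otimes n})^*$, and a nonzero class, written with linearly independent cooperations in $\C(n)$, can be detected against a suitably averaged $\mathbb{S}_n$-equivariant map $\C(n)\to V^{\otimes n}$. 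Since the inclusion of a direct sum of duals into the dual of the corresponding product is always a monomorphism, $\epsilon_V$ is a monomorphism. This plays the role of the monomorphism $p_1$ in Proposition~\ref{prop: natural mono for Sweedler dual}; the genuinely new subtlety is that the cofree coalgebra is a \emph{direct sum} whereas the dual of the free algebra is the dual of a \emph{product}, so one must pass through the direct-sum-into-product-dual inclusion rather than a term-wise comparison as in \textit{loc.cit.}

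Finally I would extend $\epsilon$ to all complete pdg $\C$-algebras. Every such $A$ is the coequalizer of a reflexive pair of free complete pdg $\C$-algebras, namely its canonical monadic presentation, which is split after applying the forgetful functor $\mathrm{U}$. The functor $\mathrm{U}^{\mathsf{op}}\cdot(-)^\vee$ carries this coequalizer to an equalizer of pdg modules, since $(-)^\vee$ is a left adjoint by Proposition~\ref{prop: adjoint à gauche dual topo} and the forgetful functor on pdg $\C$-coalgebras is a right adjoint; dually, $(-)^*\cdot\mathrm{U}$ carries it to an equalizer, as $\mathrm{U}$ preserves the $\mathrm{U}$-split coequalizer and linear duality turns split coequalizers into split equalizers. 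As $\epsilon$ is already a monomorphism on the two free terms, naturality yields a morphism of equalizer diagrams, and comparing $\epsilon_A$ with the monomorphism from the equalizer into the free term shows that $\epsilon_A$ is itself a monomorphism. This produces the desired natural monomorphism $\mathrm{U}^{\mathsf{op}}\cdot(-)^\vee\rightarrowtail(-)^*\cdot\mathrm{U}$, exhibiting the topological dual as a sub-pdg module of the linear dual.
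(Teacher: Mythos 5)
Your proposal has the same skeleton as the paper's proof (identify $\bigl(\widehat{\mathscr{S}}^c(\C)(V)\bigr)^\vee \cong \mathscr{S}(\C)(V^*)$, prove the statement on free objects, then extend along the canonical $\mathrm{U}$-split presentation), and your $\epsilon_V$ is the same map as the paper's, just described directly via the evaluation pairing instead of as a composite through double duals. Where you genuinely diverge is the key step, injectivity of $\epsilon_V$ on free complete algebras. The paper never checks this by hand: it invokes the explicit formula from the Adjoint Lifting Theorem, observes that the equalizer defining $(-)^\vee$ is $\mathrm{U}$-split with splitting $\mathscr{S}(\iota_V^*)$, deduces that the map $\rho$ of that equalizer (which factors through $\mathscr{S}(\mathrm{id})(\epsilon_V)$) is a split monomorphism, and concludes that $\epsilon_V$ is a monomorphism because the functor $\mathscr{S}(\mathrm{id})(-)$ is faithful and reflects monomorphisms. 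You instead reduce to each arity and prove nondegeneracy of the pairing concretely: exactness of $\C(n)\otimes_{\mathbb{S}_n}(-)$ coming from semisimplicity of $\kk[\mathbb{S}_n]$, detection of a nonzero class by an averaged $\mathbb{S}_n$-equivariant (rank-one) map, and then composition with the always-injective canonical map from $\bigoplus_n \mathrm{Hom}_{\mathbb{S}_n}(\C(n),V^{\otimes n})^*$ into $\bigl(\prod_n \mathrm{Hom}_{\mathbb{S}_n}(\C(n),V^{\otimes n})\bigr)^*$. This argument is valid, more transparent about why injectivity holds, and correctly isolates the direct-sum-versus-product subtlety; its cost is an essential use of characteristic zero (the averaging and the identification of invariants with coinvariants fail in positive characteristic), whereas the paper's categorical argument does not depend on it — harmless here, since the paper assumes $\mathrm{char}(\kk)=0$ throughout.

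One justification in your final paragraph is wrong and must be repaired. You assert that $\mathrm{U}^{\mathsf{op}}\cdot(-)^\vee$ carries the presentation coequalizer to an equalizer of pdg modules because ``the forgetful functor on pdg $\C$-coalgebras is a right adjoint.'' It is not: the forgetful functor from pdg $\C$-coalgebras to pdg modules is a \emph{left} adjoint (its right adjoint is the cofree coalgebra functor $\mathscr{S}(\C)(-)$), so it has no reason to preserve general equalizers. The correct argument, which is the one the paper uses, is by absoluteness: the presentation is $\mathrm{U}$-split; since $(-)^\vee$ sends free algebras $\widehat{\mathscr{S}}^c(\C)(V)$ to $\mathscr{S}(\C)(V^*)$, applying $(-)^\vee$ to the presentation yields a pair of coalgebra maps whose underlying pair of module maps is split (dualize the splitting maps and apply $\mathscr{S}(\C)$, which preserves split equalizers because they are absolute); and a comonadic forgetful functor creates equalizers of $\mathrm{U}$-split pairs. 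Hence both $\mathrm{U}^{\mathsf{op}}\cdot(-)^\vee$ and $(-)^*\cdot\mathrm{U}$ send the presentation to split equalizers of pdg modules, and your comparison-of-equalizers conclusion then goes through exactly as you wrote it.
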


\begin{proof}
This map is given by the mate of the natural isomorphism
\[
\epsilon^{\dagger}: \mathscr{S}(\mathcal{C}) \cdot (-)^* \longrightarrow (-)^\vee \cdot \widehat{\mathscr{S}}^c(\mathcal{C})
\]
which makes the square of adjunctions in the proof of Proposition \ref{prop: adjoint à gauche dual topo} commute. It suffices to check that $\epsilon$ is a monomorphism on free complete pdg $\mathcal{C}$-algebras. Indeed, since any pdg $\mathcal{C}$-algebra can be written as a $\mathrm{U}$-split coequalizer of free pdg $\mathcal{C}$-algebras and since both composites $\mathrm{U}^{\mathsf{op}} \cdot (-)^\vee$ and $(-)^* \cdot \mathrm{U}$ send these split coequalizers to split equalizers of pdg modules, then the map will be a monomorphism for all pdg $\C$-algebras. 

\medskip

Furthermore, we may assume that the conilpotent curved cooperad $\C$ is free as a graded $\mathbb{S}$-module, meaning there is an isomorphism $\C(n) \cong \C^{\mathrm{gr}}(n) \otimes \Bbbk[\mathbb{S}_n]$ of graded $\mathbb{S}_n$-modules for all $n \geq 0$. This is because over a characteristic zero field, there is always a free graded $\mathbb{S}$-module $M$ together with a monomorphism $\phi_\C: \C \rightarrowtail M$, by the fact that every graded $\mathbb{S}$-module is arity-wise projective. And since the following square 
\[
\begin{tikzcd}[column sep=3pc,row sep=3pc]
\mathscr{S}(\mathcal{C})(V^*) \arrow[d,"\mathscr{S}(\phi_\C)(\mathrm{id})",rightarrowtail,swap] \arrow[r]
&\left(\widehat{\mathscr{S}}^c(\mathcal{C})(V)\right)^* \arrow[d,"\left(\widehat{\mathscr{S}}^c(\phi_\C)(V)\right)^* ",rightarrowtail] \\
\mathscr{S}(M)(V^*) \arrow[r]
&\left(\widehat{\mathscr{S}}^c(M)(V)\right)^*
\end{tikzcd}
\]
commutes, if the bottom horizontal map is a monomorphism so is the top horizontal map. 

\medskip

So let's assume that $\C$ is free as a graded $\mathbb{S}$-module, the map $\epsilon_\C$ can be written down as the following composition
\[
\begin{tikzcd}[column sep=2.5pc,row sep=3pc]
\displaystyle \bigoplus_{n \geq 0} \C^{\mathrm{gr}}(n) \otimes_{\kk} (V^*)^{\otimes n} \arrow[r,"\epsilon_1"]
&\displaystyle \bigoplus_{n \geq 0} \mathrm{Hom}_{\kk}\left( \C^{\mathrm{gr}}(n),V^{\otimes n}\right)^* \arrow[r,"\epsilon_2"]
&\displaystyle \left(\prod_{n \geq 0} \mathrm{Hom}_{\kk}\left(\C^{\mathrm{gr}}(n),V^{\otimes n}\right)\right)^*~, 
\end{tikzcd}
\]
where the first map $\epsilon_1$ is obtained as the adjoint of the following composition of evaluation maps 
\[
\begin{tikzcd}[column sep=2.5pc,row sep=3pc]
\displaystyle \C^{\mathrm{gr}}(n) \otimes_{\kk} (V^*)^{\otimes n} \otimes_{\kk} \mathrm{Hom}_{\kk}\left( \C^{\mathrm{gr}}(n),V^{\otimes n}\right)^* \arrow[r]
&(V^*)^{\otimes n} \otimes_{\kk} V^{\otimes n}\arrow[r]
&\kk ~, 
\end{tikzcd}
\]
along the tensor-hom adjunction and where the second map $\epsilon_2$ is the canonical inclusion of the direct sum of linear duals to the linear dual of the product. The map $\epsilon_2$ is a monomorphism since any linear functional which is sent to zero must it self be zero, as it only acts in one component of the infinite product. Let us fix some $n\geq 0$ and let us take a linear basis for $\C^{\mathrm{gr}}(n)$, writing it as $\C^{\mathrm{gr}}(n) \cong \bigoplus_{i \in I} \kk$, then the map $\epsilon_1$ in the arity $n$ component is given by 
\[
\begin{tikzcd}[column sep=2.5pc,row sep=3pc]
\displaystyle \bigoplus_{i \in I} (V^*)^{\otimes n} \arrow[r]
&\displaystyle \prod_{i \in I} \left(V^{\otimes n}\right)^*
\end{tikzcd} 
\]
which is itself the composite of the canonical inclusion of the direct sum into the product and of the tensor product of the linear duals into the linear duals of the tensor products, which are both monomorphisms. Hence $\epsilon_1$ is also a monomorphism and the result follows. 
\end{proof}

\begin{Proposition}\label{prop: restriction aux curved}
The adjunction 
\[
\begin{tikzcd}[column sep=7pc,row sep=3pc]
           \mathsf{pdg}~\mathcal{C}\text{-}\mathsf{alg}^{\mathsf{comp}} \arrow[r,"(-)^\vee "{name=F}, shift left=1.1ex] 
           &\left(\mathsf{pdg}~\mathcal{C}\text{-}\mathsf{coalg}\right)^{\mathsf{op}}~, \arrow[l, shift left=.75ex, "(-)^*"{name=U}]
            \arrow[phantom, from=F, to=U, , "\dashv" rotate=-90]
\end{tikzcd}
\]
restricts to an adjunction 
\[
\begin{tikzcd}[column sep=7pc,row sep=3pc]
           \mathsf{curv}~\mathcal{C}\text{-}\mathsf{alg}^{\mathsf{comp}} \arrow[r,"(-)^\vee "{name=F}, shift left=1.1ex] 
           &\left(\mathsf{curv}~\mathcal{C}\text{-}\mathsf{coalg}\right)^{\mathsf{op}}~. \arrow[l, shift left=.75ex, "(-)^*"{name=U}]
            \arrow[phantom, from=F, to=U, , "\dashv" rotate=-90]
\end{tikzcd}
\]
\end{Proposition}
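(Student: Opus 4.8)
The plan is to exploit that $\mathsf{curv}~\C\text{-}\mathsf{alg}^{\mathsf{comp}}$ and $\mathsf{curv}~\C\text{-}\mathsf{coalg}$ are \emph{full} subcategories of $\mathsf{pdg}~\C\text{-}\mathsf{alg}^{\mathsf{comp}}$ and $\mathsf{pdg}~\C\text{-}\mathsf{coalg}$. Hence, to restrict the adjunction of Proposition~\ref{prop: adjoint à gauche dual topo} it is enough to check that the right adjoint $(-)^*$ maps curved $\C$-coalgebras to curved $\C$-algebras, and that the left adjoint $(-)^\vee$ maps curved $\C$-algebras to curved $\C$-coalgebras: once both functors preserve the defining curvature condition, fullness makes the adjunction bijection restrict verbatim. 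So the whole statement reduces to two preservation checks.

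For the right adjoint, I would simply dualize the defining diagram of a curved $\C$-coalgebra $D$, that is, the identity $d_D^2 = \mathscr{S}(\Theta_\C)(\id) \circ \Delta_D$. By Lemma~\ref{lemma: dual lineaire est absolute}, the structure map $\gamma_{D^*}$ factors as $(\Delta_D)^*$ precomposed with the canonical monomorphism $\widehat{\mathscr{S}}^c(\C)(D^*) \rightarrowtail (\mathscr{S}(\C)(D))^* \cong \prod_{n} \mathrm{Hom}_{\mathbb{S}_n}(\C(n),(D^{\otimes n})^*)$. The key compatibility is that this monomorphism intertwines $\widehat{\mathscr{S}}^c(\Theta_\C)(\id)$ with $(\mathscr{S}(\Theta_\C)(\id))^*$, which holds because both realization functors are built arity-wise and $\Theta_\C$ acts on the $\C(n)$-slot. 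Granting this, one computes
\[
\gamma_{D^*} \circ \widehat{\mathscr{S}}^c(\Theta_\C)(\id) = (\Delta_D)^* \circ (\mathscr{S}(\Theta_\C)(\id))^* = \big(\mathscr{S}(\Theta_\C)(\id)\circ \Delta_D\big)^* = (d_D^2)^*,
\]
and the sign mismatch between the curved-algebra axiom ($-d^2$) and the curved-coalgebra axiom ($+d^2$) is reconciled by the convention on the dual differential, which yields $(d_D^2)^* = -d_{D^*}^2$. Thus $D^*$ is a curved $\C$-algebra.

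The main obstacle is the left adjoint $(-)^\vee$, since it is only given implicitly, as the equalizer of Remark~\ref{Rmk: formule pour le dual topologique}, so that the coalgebra structure $\Delta_{A^\vee}$ is not available in closed form. My plan is to first settle the free case and then propagate. On a free complete $\C$-algebra $A = \widehat{\mathscr{S}}^c(\C)(V)$ one has $A^\vee \cong \mathscr{S}(\C)(V^*)$, the cofree pdg $\C$-coalgebra; the curved-cooperad axiom of Definition~\ref{def curved cooperad} is exactly what makes both the free algebra $A$ curved and its dual cofree coalgebra $A^\vee$ curved, so the claim holds on free objects by inspection. For a general curved complete $\C$-algebra $A$, I would use the canonical presentation of $A$ as a split coequalizer of free complete $\C$-algebras; since $(-)^\vee$ is a left adjoint it carries this colimit to the corresponding limit of cofree curved $\C$-coalgebras, and the curvature identity, being a closed condition cut out by a monomorphism, descends to this limit. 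The natural monomorphism $\epsilon_A \colon A^\vee \rightarrowtail A^*$ of Proposition~\ref{prop: natural mono for topo dual} is what guarantees that $d_{A^\vee}$ is the restriction of the dual differential and that $\epsilon$ reflects the curvature equation.

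I expect the only genuinely delicate point to be this last descent step: one must check that the split coequalizer presentation can be taken inside the \emph{curved} world rather than merely the pdg one, and that curvedness is stable under the limit produced by $(-)^\vee$. Both follow, as in the proof of Proposition~\ref{prop: natural mono for topo dual}, from the faithfulness of the realization functor $\mathscr{S}(\C)(-)$ reflecting monomorphisms, together with the fact that the curvature term $\Theta_\C$ is natural and hence commutes with the structure maps of the (co)equalizer. Assembling these gives that $(-)^\vee$ lands in $\mathsf{curv}~\C\text{-}\mathsf{coalg}$, completing the restriction of the adjunction.
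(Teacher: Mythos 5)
Your reduction of the statement to two preservation checks via fullness, and your argument for the right adjoint $(-)^*$, are correct and essentially identical to the paper's proof: dualize the curvature identity of $D$, use that the canonical monomorphism $\widehat{\mathscr{S}}^c(\C)(D^*) \rightarrowtail \left(\mathscr{S}(\C)(D)\right)^*$ intertwines $\widehat{\mathscr{S}}^c(\Theta_\C)(\id)$ with $\left(\mathscr{S}(\Theta_\C)(\id)\right)^*$, and conclude with $(d_D^2)^* = -d_{D^*}^2$. The gap is in your treatment of the left adjoint $(-)^\vee$. Your free-case-plus-descent strategy rests on the claim that free complete pdg $\C$-algebras are curved, and this is false in general: on $\widehat{\mathscr{S}}^c(\C)(V)$ the curvature axiom would equate $-d^2$, which contains the contribution of $d_V^2$ propagated into every slot, with $\gamma \circ \widehat{\mathscr{S}}^c(\Theta_\C)(\id)$, which is built solely from $\Theta_\C$ and the decomposition of $\C$ and never sees $d_V$; so the axiom fails whenever $d_V^2 \neq 0$. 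This is fatal for the descent step: in the canonical split-coequalizer presentation of a curved complete algebra $A$, the free terms are free on the \emph{underlying pdg module} of $A$, and $d_A^2 = -\gamma_A \circ \widehat{\mathscr{S}}^c(\Theta_\C)(\id)$ is nonzero precisely because $A$ is curved. Hence neither those free algebras nor their topological duals (cofree pdg $\C$-coalgebras on $A^*$) are curved objects, so there is no curvedness to descend: your argument exhibits $A^\vee$ as a subobject of a coalgebra that is not curved, and a sub-coalgebra of a non-curved coalgebra has no reason to satisfy the curvature equation. The paper's remark immediately after this proposition, that curved $\C$-coalgebras may fail to be comonadic, is exactly a warning that (co)monadic resolution arguments are unavailable at the curved level.

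The repair is already contained in your closing sentence, and it is the paper's actual proof: work directly with $A$ itself, with no resolution at all, using the natural monomorphism $\epsilon_A \colon A^\vee \rightarrowtail A^*$ of Proposition \ref{prop: natural mono for topo dual}. Dualizing the curvature identity $\gamma_A \circ \widehat{\mathscr{S}}^c(\Theta_\C)(\id) = -d_A^2$ gives a commuting triangle through $\left(\widehat{\mathscr{S}}^c(\C)(A)\right)^*$; the monomorphism $\epsilon_A$ and its image under $\mathscr{S}(\C)(-)$ intertwine $\Delta_{A^\vee}$ with $(\gamma_A)^*$, the maps induced by $\Theta_\C$, and the pre-differentials (so that $d_{A^\vee}^2$ maps to $(-d_A^2)^*$); since all the comparison maps are monomorphisms, one cancels them and obtains $\mathscr{S}(\Theta_\C)(\id) \circ \Delta_{A^\vee} = d_{A^\vee}^2$, i.e.\ $A^\vee$ is curved. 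So you should promote your final observation that $\epsilon_A$ ``reflects the curvature equation'' from a supporting remark to the entire argument, and delete the free case and the split-coequalizer scaffolding, which cannot be made to work inside the curved world.
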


\begin{proof}
Denote $\Theta_\mathcal{C}: \C \longrightarrow \I$ the curvature of $\C$. Let $(D,\Delta_D,d_D)$ be a pdg $\mathcal{C}$-coalgebra. Recall that it is curved if the following diagram commutes
\[
\begin{tikzcd}[column sep=3pc,row sep=3pc]
D \arrow[r,"\Delta_D"] \arrow[rd,"-d_D^2",swap] 
&\mathscr{S}(\mathcal{C})(D) \arrow[d,"\mathscr{S}(\Theta_\mathcal{C})(\mathrm{id})"] \\
&D \cong \mathscr{S}(\I)(D)~.
\end{tikzcd}
\]
Therefore we have 
\[
\begin{tikzcd}[column sep=3pc,row sep=3pc]
\widehat{\mathscr{S}}^c(\mathcal{C})(D^*) \arrow[r,rightarrowtail] 
&\left(\mathscr{S}(\mathcal{C})(D)\right)^* \arrow[r,"(\Delta_D)^* "] 
&D^* \\
\widehat{\mathscr{S}}^c(\I)(D^*) \arrow[r,"\cong"] \arrow[u,"\widehat{\mathscr{S}}^c(\Theta_\mathcal{C})(\mathrm{id})"]
&\left(\mathscr{S}(\I)(D)\right)^* \cong D^*~. \arrow[ru,"(-d_D^2)^*",swap] \arrow[u,"(\mathscr{S}(\Theta_\mathcal{C})(\mathrm{id}))^*"]
\end{tikzcd}
\]
The left square commutes by naturality of the inclusion. The right triangle commutes since it is the image of a commutative triangle by the functor $(-)^*$. Thus the big diagram commutes. Finally, one can observe that $(-d_D^2)^* = d_{D^*}^2$. Therefore $D^*$ is indeed a complete curved $\mathcal{C}$-algebra. 

\medskip 

Let $(A, \gamma_A, d_A)$ be a complete curved $\mathcal{C}$-algebra. Consider the following diagram
\[
\begin{tikzcd}[column sep=3pc,row sep=3pc]
\left(\widehat{\mathscr{S}}^c(\mathcal{C})(A)\right)^* \arrow[dr,"\left(\widehat{\mathscr{S}}^c(\Theta_\mathcal{C})(\mathrm{id})\right)^* " {xshift= 0.35pc}]
&
&A^* \arrow[ll,"(\gamma_A)^* ",swap] \\
&\left(\widehat{\mathscr{S}}^c(\I)(A)\right)^* \cong A^* \arrow[ru,"(d_A^2)^* "]
& \\
\mathscr{S}(\C)(A^\vee) \arrow[uu,rightarrowtail] \arrow[dr,"\mathscr{S}(\Theta_\mathcal{C})(\mathrm{id}_{A^\vee})"]
&
&A^\vee  \arrow[ll, "\Delta_{A^\vee}" {xshift= -0.7pc},swap] \arrow[uu,rightarrowtail]  \\
&\mathscr{S}(\I)(A^\vee) \cong A^\vee~, \arrow[ru,"-d_{A^\vee}^2 "]  \arrow[uu,rightarrowtail, crossing over]
\end{tikzcd}
\]
in the category of pdg modules, where the vertical arrows are given by Proposition \ref{prop: natural mono for topo dual}. The top triangle commutes since it is the image of a commuting triangle via the functor $(-)^*$. Each of the vertical faces commutes as well, where $d_{A^\vee}^2$ is simply given by $(d_A)^\vee \circ (d_A)^\vee$. Since every vertical map is a monomorphism, the bottom triangle also commutes and thus $A^\vee$ is indeed a curved $\mathcal{C}$-coalgebra.
\end{proof}

\subsection{The algebraic duality square} Using the two duality adjunctions constructed so far, one can interrelate the "standard" bar-cobar adjunction with the complete bar-cobar adjunction in a square of commuting adjunctions.

\begin{theorem}[Duality square]\label{thm: magical square}
The square of adjunctions 
\[
\begin{tikzcd}[column sep=5pc,row sep=5pc]
\left(\mathsf{dg}~\mathcal{P}\text{-}\mathsf{alg}\right)^{\mathsf{op}} \arrow[r,"\mathrm{B}_\alpha^{\mathsf{op}}"{name=B},shift left=1.1ex] \arrow[d,"(-)^\circ "{name=SD},shift left=1.1ex ]
&\left(\mathsf{curv}~\mathcal{C}\text{-}\mathsf{coalg}\right)^{\mathsf{op}} \arrow[d,"(-)^*"{name=LDC},shift left=1.1ex ] \arrow[l,"\Omega_\alpha^{\mathsf{op}}"{name=C},,shift left=1.1ex]  \\
\mathsf{dg}~\mathcal{P}\text{-}\mathsf{coalg} \arrow[r,"\widehat{\Omega}_\alpha "{name=CC},shift left=1.1ex]  \arrow[u,"(-)^*"{name=LD},shift left=1.1ex ]
&\mathsf{curv}~\mathcal{C}\text{-}\mathsf{alg}^{\mathsf{comp}}~, \arrow[l,"\widehat{\mathrm{B}}_\alpha"{name=CB},shift left=1.1ex] \arrow[u,"(-)^\vee"{name=TD},shift left=1.1ex] \arrow[phantom, from=SD, to=LD, , "\dashv" rotate=0] \arrow[phantom, from=C, to=B, , "\dashv" rotate=-90]\arrow[phantom, from=TD, to=LDC, , "\dashv" rotate=0] \arrow[phantom, from=CC, to=CB, , "\dashv" rotate=-90]
\end{tikzcd}
\] 
commutes in the following sense: right adjoints going from the top right to the bottom left are naturally isomorphic.
\end{theorem}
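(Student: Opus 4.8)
The plan is to establish the theorem through the uniqueness of adjoints. The two composites of right adjoints running from the top right corner to the bottom left corner are $(-)^\circ \circ \Omega_\alpha^{\mathsf{op}}$ (through the top left corner) and $\widehat{\mathrm{B}}_\alpha \circ (-)^*$ (through the bottom right corner); note that each is genuinely a composite of right adjoints, since the downward left column functor $(-)^\circ$ is the opposite of the Sweedler left adjoint, hence a right adjoint. Composing the left adjoints of each side produces respective left adjoints $\mathrm{B}_\alpha^{\mathsf{op}} \circ ((-)^*)^{\mathsf{op}}$ and $(-)^\vee \circ \widehat{\Omega}_\alpha$, so by uniqueness of adjoints it suffices to produce a natural isomorphism between these left adjoints. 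Unravelling the opposite categories, this amounts to a natural isomorphism of curved $\mathcal{C}$-coalgebras
\[
\mathrm{B}_\alpha(C^*) \;\cong\; \bigl(\widehat{\Omega}_\alpha C\bigr)^\vee
\]
for every dg $\mathcal{P}$-coalgebra $C$, where $C^*$ carries the dg $\mathcal{P}$-algebra structure dual to its coalgebra structure. A natural isomorphism of left adjoints transports to one of the right adjoints, which is exactly the asserted commutativity.

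First I would identify the underlying graded objects. By construction $\mathrm{B}_\alpha(C^*)$ is the cofree curved $\mathcal{C}$-coalgebra $\mathscr{S}(\mathcal{C})(C^*) = \bigoplus_{n} \mathcal{C}(n) \otimes_{\mathbb{S}_n} (C^*)^{\otimes n}$ with a twisted differential. On the other side, the complete Cobar $\widehat{\Omega}_\alpha C$ has, as its underlying graded $\mathcal{C}$-algebra, the \emph{free} one $\widehat{\mathscr{S}}^c(\mathcal{C})(C)$; consequently the natural chain-level monomorphism $\epsilon$ of \cref{prop: natural mono for topo dual} realises $(\widehat{\Omega}_\alpha C)^\vee$ as the sub-dg-module with image $\mathscr{S}(\mathcal{C})(C^*) \hookrightarrow (\widehat{\mathscr{S}}^c(\mathcal{C})(C))^*$, exactly as in the free case treated there. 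Both sides therefore share the underlying cofree $\mathcal{C}$-coalgebra $\mathscr{S}(\mathcal{C})(C^*)$, and I would fix this identification once and for all.

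The heart of the proof is then matching the differentials. Since $(\widehat{\Omega}_\alpha C)^\vee$ is a subcomplex of the honest linear dual $(\widehat{\mathscr{S}}^c(\mathcal{C})(C))^*$ via $\epsilon$, its differential is the restriction of the transpose $(d_{\mathrm{cobar}})^*$ of the cobar differential $d_{\mathrm{cobar}} = d_1 - d_2$ of \cref{def: complete Cobar}, and I would dualise each term. The linear part $d_1$, built from $d_\mathcal{C}$ and $d_C$, transposes to the linear part of the Bar differential, built from $d_\mathcal{C}$ and $d_{C^*} = (d_C)^*$; the sign reversal $-d_\mathcal{C} \rightsquigarrow +d_\mathcal{C}$ is absorbed by the Koszul sign produced when transposing a degree $-1$ map. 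For the twisting part, I would use that linear duality exchanges derivations on the free complete $\mathcal{C}$-algebra with coderivations on the cofree $\mathcal{C}$-coalgebra, compatibly with the lax monoidal structure $\varphi$ of $\widehat{\mathscr{S}}^c$ dualising to the strong monoidal structure of $\mathscr{S}$. The derivation $d_2$ is generated by $\widehat{\mathscr{S}}^c(\alpha)(\mathrm{id}) \circ \Delta_C : C \to \widehat{\mathscr{S}}^c(\mathcal{C})(C)$, whose transpose corestricts to $\gamma_{C^*} \circ \mathscr{S}(\alpha)(\mathrm{id}) : \mathscr{S}(\mathcal{C})(C^*) \to C^*$, where $\gamma_{C^*} = (\Delta_C)^*$ is precisely the $\mathcal{P}$-algebra structure obtained from the linear dual of a dg $\mathcal{P}$-coalgebra. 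This is exactly the cogenerating map of the Bar twisting coderivation, and the remaining sign flip $-d_2 \rightsquigarrow +d_2$ is likewise accounted for by the transposition sign. Finally, the curved structures agree because \cref{prop: restriction aux curved} exhibits the curvature of $(\widehat{\Omega}_\alpha C)^\vee$ as the transpose of $\mathscr{S}(\Theta_\mathcal{C})(\mathrm{id})$, matching the curvature of the Bar coalgebra induced by $\Theta_\mathcal{C}$; naturality in $C$ is automatic, as every map involved is natural.

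The step I expect to be the main obstacle is this term-by-term comparison of the twisting differentials: one must verify that transposing the cobar derivation, carried through the monomorphism $\varphi$ and the iterated double-dual inclusions appearing in the proof of \cref{prop: natural mono for topo dual}, reproduces the Bar coderivation \emph{on the nose}, including the conversion of the cobar sign convention $d_1 - d_2$ into the Bar convention $d_1 + d_2$ and all the $\mathbb{S}_n$-equivariance. One could alternatively phrase the comparison in terms of Maurer--Cartan elements, observing that chaining the four adjunctions identifies both $\mathrm{Hom}$-functors with sets of twisting morphisms and reduces the claim to the duality bijection $\mathrm{Tw}_\alpha(D, C^*) \cong \mathrm{Tw}^\alpha(C, D^*)$; but the analytic content, namely that linear duality intertwines the two Maurer--Cartan equations, is the same computation. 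Once this identification of differentials is secured, the natural isomorphism of left adjoints, and hence the commutativity of the duality square, follows at once.
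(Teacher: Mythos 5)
Your proof is correct, and it is genuinely the dual route to the one taken in the paper. The paper constructs the natural isomorphism between the composites of \emph{right} adjoints directly: for a curved $\mathcal{C}$-coalgebra $D$ it identifies $\left(\Omega_\alpha D\right)^\circ \cong \widehat{\mathrm{B}}_\alpha(D^*)$, the key input being the isomorphism $\left(\mathscr{S}(\mathcal{P})(V)\right)^\circ \cong \mathscr{C}(\mathcal{P})(V^*)$ of graded $\mathcal{P}$-coalgebras (the Sweedler dual of a free algebra is the cofree coalgebra on the dual, cf.\ the proof of \cref{prop: natural mono for Sweedler dual}), after which the differentials are matched ``by direct inspection''. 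You instead compare the composites of \emph{left} adjoints, proving $\mathrm{B}_\alpha(C^*) \cong \left(\widehat{\Omega}_\alpha C\right)^\vee$ for a dg $\mathcal{P}$-coalgebra $C$ and then invoking uniqueness of adjoints; note that this intermediate isomorphism is precisely what the paper records only \emph{after} the theorem, in the Remark following it, where it is obtained as the mate of the isomorphism in its proof. So the two arguments are mates of one another. Your key graded identification, $\left(\widehat{\mathscr{S}}^c(\mathcal{C})(V)\right)^\vee \cong \mathscr{S}(\mathcal{C})(V^*)$, is already established in \cref{prop: natural mono for topo dual}, and your use of the monomorphism $\epsilon$ to transport the twisted differential is legitimate because $\widehat{\Omega}_\alpha C$ is free as a graded $\mathcal{C}$-algebra, so the ($\mathrm{U}$-split, hence absolute) equalizer computing its topological dual agrees with the free case and the pre-differential restricts along $\epsilon$. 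What each route buys: yours reuses an already-proved proposition on the absolute-algebra side and delivers the theorem and the paper's Remark in one stroke; the paper's route produces the stated right-adjoint isomorphism without any appeal to uniqueness of adjoints, at the cost of relying on the corresponding Sweedler-dual computation, which involves the subtler comonad $\mathscr{C}(\mathcal{P})$ of \cref{thm: existence of the cofree P cog}. In both arguments the sign-sensitive verification that transposition converts the Cobar convention $d_1 - d_2$ into the Bar convention $d_1 + d_2$ is left as a direct inspection; you at least isolate this explicitly as the delicate step.
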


\begin{proof}
Let $V$ be a graded module. There is an isomorphism
\[
\left(\mathscr{S}(\mathcal{P})(V)\right)^\circ \cong \mathscr{C}(\PP)(V^*)
\]
of graded $\PP$-coalgebras. Let $(D,\Delta_D,d_D)$ be a curved $\mathcal{C}$-coalgebra, let us check the above isomorphism commutes with the differential and thus that we have an isomorphism
\[
\left(\Omega_\alpha D\right)^\circ \cong \widehat{\mathrm{B}}_\alpha(D^*)
\]
of dg $\PP$-coalgebras natural in $D$. For that, let us compute the differential on $\left(\Omega_\alpha D\right)^\circ$. Since its underlying graded $\PP$-coalgebra is cofree, it suffices to check that the projection of this differential onto the cogenerators agrees with the projection of the complete bar construction on $D^*$. This in turn follows from the commutativity of the following diagram 
\[
\begin{tikzcd}[column sep=3pc,row sep=3pc]
\left(\mathscr{S}(\mathcal{P})(D)\right)^* \arrow[r,"(\mathscr{S}(\alpha))^*"] 
&\left(\mathscr{S}(\mathcal{C})(D)\right)^* \arrow[r,"(\Delta_D)^*"]
&D^* \\
\widehat{\mathscr{S}}^c(\mathcal{P})(D^*) \arrow[u,rightarrowtail] \arrow[r,"\widehat{\mathscr{S}}^c(\alpha)"]
&\widehat{\mathscr{S}}^c(\mathcal{C})(D^*) \arrow[u,rightarrowtail]  \arrow[r,"\gamma_{D^*}"]
&D^*  \arrow[u,equal] \\
\mathscr{C}(\PP)(D^*) \arrow[rr] \arrow[u,rightarrowtail]
& 
&D^* \arrow[u,equal]~,
\end{tikzcd}
\]
where the lower smaller square determines the projection of the differential of the complete bar construction and where the big outer square determines the projection of the differential of the Sweedler dual of the cobar construction, since the Sweedler dual is a sub-dg-module of the linear dual by Proposition \ref{prop: natural mono for Sweedler dual}. 
\end{proof}

\begin{Remark}
Let $C$ be a dg $\PP$-coalgebra. There is a natural isomorphism
\[
\mathrm{B}_\alpha C^* \cong \left(\widehat{\Omega}_\alpha C \right)^\vee
\]
of dg $\mathcal{C}$-coalgebras, which follows from the natural isomorphism between their left adjoints given in Theorem \ref{thm: magical square}. 
\end{Remark}

\begin{Proposition}
There is a natural monomorphism
\[
\zeta: \widehat{\Omega}_\alpha \cdot (-)^\circ \longrightarrow  (-)^* \cdot \mathrm{B}_\alpha~.
\]
of complete curved $\C$-algebras.
\end{Proposition}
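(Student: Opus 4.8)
The plan is to construct $\zeta_B$ first at the level of underlying pdg modules, then to upgrade it to a morphism of complete curved $\C$-algebras by exhibiting it as the adjunct of a twisting morphism, and finally to deduce that it is a monomorphism from the faithfulness of the forgetful functor. The main obstacle will be the structural compatibility in the middle step.

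\textbf{Underlying map.} Fix a dg $\PP$-algebra $B$. By Definition \ref{def: complete Cobar}, the complete Cobar construction $\widehat{\Omega}_\alpha(B^\circ)$ has underlying pdg module the free complete $\C$-algebra $\widehat{\mathscr{S}}^c(\C)(B^\circ) = \prod_{n}\mathrm{Hom}_{\mathbb{S}_n}(\C(n),(B^\circ)^{\otimes n})$. On the other side, $\mathrm{B}_\alpha B$ has underlying cofree curved $\C$-coalgebra $\mathscr{S}(\C)(B) = \bigoplus_n \C(n)\otimes_{\mathbb{S}_n} B^{\otimes n}$, so by Lemma \ref{lemma: dual lineaire est absolute} and Proposition \ref{prop: restriction aux curved} its linear dual $(\mathrm{B}_\alpha B)^*$ is a complete curved $\C$-algebra with underlying module $\prod_n \mathrm{Hom}_{\mathbb{S}_n}(\C(n),(B^{\otimes n})^*)$. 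I define the underlying map of $\zeta_B$ as the composite
\[
\prod_n \mathrm{Hom}_{\mathbb{S}_n}(\C(n),(B^\circ)^{\otimes n}) \rightarrowtail \prod_n \mathrm{Hom}_{\mathbb{S}_n}(\C(n),(B^*)^{\otimes n}) \rightarrowtail \prod_n \mathrm{Hom}_{\mathbb{S}_n}(\C(n),(B^{\otimes n})^*),
\]
where the first arrow is induced by the Sweedler monomorphism $\epsilon_B\colon B^\circ \rightarrowtail B^*$ of Proposition \ref{prop: natural mono for Sweedler dual}, and the second by the canonical monomorphisms $(B^*)^{\otimes n}\rightarrowtail (B^{\otimes n})^*$. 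Since $\kk$ is a field, each $(\epsilon_B)^{\otimes n}$ is a monomorphism, and both $\mathrm{Hom}_{\mathbb{S}_n}(\C(n),-)$ and products preserve monomorphisms; hence this underlying map is a monomorphism of pdg modules, natural in $B$.

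\textbf{Structure.} To see that $\zeta_B$ underlies a morphism of complete curved $\C$-algebras, I package it through the complete Bar--Cobar adjunction $\widehat{\Omega}_\alpha \dashv \widehat{\mathrm{B}}_\alpha$. The classical Bar construction carries a universal curved twisting morphism $\pi_\alpha\colon \mathrm{B}_\alpha B \to B$ relative to $\alpha$; dualizing and precomposing with the Sweedler monomorphism yields a graded map $\nu_B \coloneqq \pi_\alpha^{*}\circ \epsilon_B\colon B^\circ \to (\mathrm{B}_\alpha B)^{*}$. The crux is to check that $\nu_B$ is a curved twisting morphism relative to $\alpha$ between the dg $\PP$-coalgebra $B^\circ$ and the complete curved $\C$-algebra $(\mathrm{B}_\alpha B)^{*}$, that is, that it satisfies the defining equation $\gamma_{(\mathrm{B}_\alpha B)^*} \cdot \widehat{\mathscr{S}}^c(\alpha)(\nu_B) \cdot \Delta_{B^\circ} + \partial(\nu_B) = 0$. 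This reduces to dualizing the twisting equation satisfied by $\pi_\alpha$ and using two compatibilities: first, that the Sweedler monomorphism intertwines the $\PP$-coalgebra structure of $B^\circ$ with the linear dual of the $\PP$-algebra structure of $B$; second, that the lax monoidal structure $\varphi$ of $\widehat{\mathscr{S}}^c$ from Lemma \ref{lemma: Schur lax contravariant functor} is compatible, under linear duality and the inclusions $(B^*)^{\otimes n}\rightarrowtail (B^{\otimes n})^*$, with the monoidal structure of $\mathscr{S}$ defining $\gamma_{(\mathrm{B}_\alpha B)^*}$. Granting this, the complete Bar--Cobar adjunction produces a morphism $\zeta_B\colon \widehat{\Omega}_\alpha(B^\circ)\to (\mathrm{B}_\alpha B)^{*}$ of complete curved $\C$-algebras adjoint to $\nu_B$, and unwinding the adjunction identifies its underlying map with the monomorphism of the previous paragraph. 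Naturality of $\zeta$ follows from naturality of $\pi_\alpha$, of $\epsilon$, and of the inclusions.

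\textbf{Monomorphism and obstacle.} Finally, the forgetful functor $\mathrm{U}\colon \mathsf{curv}~\C\text{-}\mathsf{alg}^{\mathsf{comp}}\to \mathsf{pdg}~\mathsf{mod}$ is faithful, being the composite of the fully faithful inclusion into pdg $\C$-algebras with the monadic forgetful functor; a faithful functor reflects monomorphisms, so since $\mathrm{U}(\zeta_B)$ is a monomorphism, $\zeta_B$ is a monomorphism of complete curved $\C$-algebras. I expect the genuine work to be the verification that $\nu_B$ satisfies the curved twisting morphism equation: concretely, the sign- and $\mathbb{S}_n$-equivariance-sensitive check that the dual of the bar differential on $(\mathrm{B}_\alpha B)^*$ matches, through $\zeta_B$, the cobar differential on $\widehat{\Omega}_\alpha(B^\circ)$, where the only delicate point is the matching of the twisting terms built from $\alpha$. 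Equivalently, one may bypass the twisting morphism and verify directly that the underlying monomorphism of the first paragraph is a chain map intertwining the two $\C$-algebra structure maps, which amounts to the same computation.
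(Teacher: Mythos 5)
Your proposal is correct and takes essentially the same route as the paper: the paper's (one-sentence) proof constructs $\zeta$ exactly as you do, by composing the Sweedler monomorphism of Proposition~\ref{prop: natural mono for Sweedler dual} with the canonical dualization inclusions $(B^*)^{\otimes n} \rightarrowtail (B^{\otimes n})^*$ that underlie Proposition~\ref{prop: natural mono for topo dual}. Your extra work—identifying the map as the adjunct of the twisting morphism $\pi_\alpha^* \circ \epsilon_B$ and deducing the monomorphism property from faithfulness of the forgetful functor—only fills in compatibility details that the paper leaves implicit.
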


\begin{proof}
This existence of this natural transformation is given by the mate of the natural isomorphism constructed in Theorem \ref{thm: magical square}. It is easily seen to be a monomorphism since on the level of graded modules, it can be identified with the following composition of natural inclusion 
\[
\widehat{\mathscr{S}}^c(\C)(B^\circ) \rightarrowtail \widehat{\mathscr{S}}^c(\C)(B^*) \rightarrowtail \left(\mathscr{S}(\mathcal{C})(B)\right)^*~,
\]
for any dg $\PP$-algebra $B$. 
\end{proof}

\begin{Proposition}\label{prop: finite dual commutes}
Let $B$ be a dg $\PP$-algebra degree-wise finite dimensional and bounded above or bounded below. The natural map 
\[
\zeta_B: \widehat{\Omega}_\alpha B^\circ \cong \left(\mathrm{B}_\alpha B\right)^*
\]
is an isomorphism of complete curved $\mathcal{C}$-algebras. 
\end{Proposition}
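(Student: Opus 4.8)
The plan is to upgrade the natural monomorphism $\zeta\colon \widehat{\Omega}_\alpha\cdot(-)^\circ \rightarrowtail (-)^*\cdot \mathrm{B}_\alpha$ introduced in the preceding Proposition into an honest isomorphism under the finiteness hypothesis. First I would invoke the Beck--Chevalley observation following \cref{prop: natural mono for Sweedler dual}: since $B$ is degree-wise finite dimensional, the comparison $\epsilon_B\colon B^\circ \xrightarrow{\sim} B^*$ is an isomorphism, so $B^*$ carries a canonical dg $\PP$-coalgebra structure and $\widehat{\Omega}_\alpha(B^\circ)\cong \widehat{\Omega}_\alpha(B^*)$. Evaluating $\zeta$ at $B$ then yields a morphism of complete curved $\mathcal{C}$-algebras $\zeta_B\colon \widehat{\Omega}_\alpha(B^*)\to (\mathrm{B}_\alpha B)^*$, and it suffices to prove that $\zeta_B$ is an isomorphism. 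As the forgetful functor from complete curved $\mathcal{C}$-algebras to pdg modules is conservative, it is enough to check that $\zeta_B$ is an isomorphism of underlying pdg modules; its inverse will then automatically be a morphism of complete curved $\mathcal{C}$-algebras.

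To identify the underlying modules, recall that the complete Cobar construction has underlying pdg module $\widehat{\mathscr{S}}^c(\mathcal{C})(B^*)=\prod_{n\geq 0}\mathrm{Hom}_{\mathbb{S}_n}(\mathcal{C}(n),(B^*)^{\otimes n})$, whereas $\mathrm{B}_\alpha B$ has underlying $\mathcal{C}$-coalgebra $\mathscr{S}(\mathcal{C})(B)=\bigoplus_{n\geq 0}\mathcal{C}(n)\otimes_{\mathbb{S}_n} B^{\otimes n}$. Taking the linear dual and using that, in characteristic zero, the dual of the $\mathbb{S}_n$-coinvariants is the $\mathbb{S}_n$-invariants of the dual, one obtains $(\mathrm{B}_\alpha B)^*\cong \prod_{n\geq 0}\mathrm{Hom}_{\mathbb{S}_n}(\mathcal{C}(n),(B^{\otimes n})^*)$. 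Under these identifications $\zeta_B$ is, arity by arity, the map induced by post-composition with the canonical comparison $i_n\colon (B^*)^{\otimes n}\to (B^{\otimes n})^*$, exactly the map appearing in \cref{prop: natural mono for topo dual}. I am thus reduced to showing that each $i_n$ is a degree-wise isomorphism when $B$ is degree-wise finite dimensional; this is the same finite-dimensional collapse that underlies the Beck--Chevalley Remark, and it identifies the two displayed products.

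The only genuinely non-formal point is therefore the interplay between the product $\prod_n$ governing the Cobar side and the coproduct $\bigoplus_n$ sitting inside the Bar construction before dualizing: the finite-dimensionality hypothesis is precisely what collapses this discrepancy, turning the monomorphism $\zeta_B$ into a surjection and hence into an isomorphism. I expect this to be the main obstacle, in that one must be careful that $i_n$ becomes an isomorphism in each homological degree. Crucially, I do \emph{not} need to reverify that the cobar differential $d_{\mathrm{cobar}}$ matches the dual of the bar differential $d_{\mathrm{bar}}$, nor that the curvatures agree: this is already encoded in the fact that $\zeta$ is a natural transformation valued in complete curved $\mathcal{C}$-algebras, built from \cref{prop: natural mono for Sweedler dual} and \cref{prop: natural mono for topo dual}. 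Once $\zeta_B$ is known to be a bijection of underlying pdg modules it is an isomorphism of complete curved $\mathcal{C}$-algebras, and the naturality of $\zeta$ in $B$ furnishes the naturality of the resulting isomorphism, completing the proof.
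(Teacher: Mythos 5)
Your argument is correct and establishes the proposition, but it is organized differently from the paper's proof, and the comparison is instructive. The paper proceeds directly: for a degree-wise finite dimensional graded module $V$ it exhibits an isomorphism $\left(\mathscr{S}(\mathcal{C})(V)\right)^* \cong \widehat{\mathscr{S}}^c(\mathcal{C})(V^*)$ of complete graded $\mathcal{C}$-algebras --- this is exactly your arity-wise collapse, since dualizing coinvariants into invariants turns $\left(\mathscr{S}(\mathcal{C})(B)\right)^*$ into $\prod_{n}\mathrm{Hom}_{\mathbb{S}_n}\left(\mathcal{C}(n),(B^{\otimes n})^*\right)$ and finiteness identifies $(B^{\otimes n})^*$ with $(B^*)^{\otimes n}$ --- and then verifies by inspection that this isomorphism commutes with the respective pre-differentials, i.e.\ that $d_{\mathrm{cobar}}$ on $\widehat{\Omega}_\alpha B^*$ is dual to $d_{\mathrm{bar}}$ on $\mathrm{B}_\alpha B$. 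You instead evaluate the natural monomorphism $\zeta$ of the preceding proposition at $B$, use the Beck--Chevalley identification $B^\circ \cong B^*$, and reduce to bijectivity of the underlying map together with conservativity of the forgetful functor. What your route buys is that compatibility with differentials and curvature comes for free, since $\zeta_B$ is already a morphism of complete curved $\mathcal{C}$-algebras; this is precisely the step the paper disposes of with ``one can check''. What it costs is self-containedness: you need the explicit arity-wise description of $\zeta_B$ as post-composition with $i_n\colon (B^*)^{\otimes n} \to (B^{\otimes n})^*$, and since the paper only sketches the construction of $\zeta$ (out of \cref{prop: natural mono for Sweedler dual} and \cref{prop: natural mono for topo dual}), a complete write-up would still have to unwind that identification, which is essentially the computation the paper performs. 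One shared caveat: degree-wise finite dimensionality makes each summand $(B_{e_1}\otimes\cdots\otimes B_{e_n})^*\cong (B_{e_1})^*\otimes\cdots\otimes(B_{e_n})^*$ collapse, but for $i_n$ to be an isomorphism in a fixed homological degree one also needs only finitely many multi-degrees to contribute (as happens, e.g., when $B$ is bounded); this point is elided by the paper's proof as well, so it is a caveat on the statement itself rather than a gap specific to your argument.
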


\begin{proof}
It can be directly checked at the level of graded modules that under these finiteness conditions, the map $\zeta_B$ is indeed an isomorphism. This essentially follows from Proposition \ref{prop: iso de Beck-Chevalley} and from the natural isomorphism $(B^{\otimes n})^* \cong (B^*)^{\otimes n}$. 
\end{proof}

\begin{Remark}
The subcategory of dg $\PP$-algebras which satisfy the \textit{Beck-Chevalley condition} with respect to the duality square of Theorem \ref{thm: magical square} contains the subcategory of degree-wise finite dimensional and bounded above or bounded below dg $\PP$-algebras. See also Remark \ref{Rmk: Beck-Chevalley}. 
\end{Remark}

\subsection{Homotopical duality square} The duality square of Theorem \ref{thm: magical square} behaves well with respect to model structures. We now restrict to the case where $\PP = \Omega \C$, where the curved twisting morphism considered is the canonical curved twisting morphism $\iota: \C \longrightarrow \Omega \C$, and where $\C$ is a conilpotent curved cooperad. This is in order to ensure the existence of a model category structure on the category of dg $\PP$-coalgebras, as in Theorem \ref{thm: model structure on P-cog}.

\begin{lemma}\label{lemma: Sweedler dual is a Quillen adjunction}
The adjunction 
\[
\begin{tikzcd}[column sep=7pc,row sep=3pc]
            \mathsf{dg}~\Omega \C\text{-}\mathsf{coalg} \arrow[r, shift left=1.1ex, "(-)^*"{name=F}] &\left(\mathsf{dg}~\Omega \C\text{-}\mathsf{alg}\right)^{\mathsf{op}} ~. \arrow[l, shift left=.75ex, "(-)^\circ"{name=U}]
            \arrow[phantom, from=F, to=U, , "\dashv" rotate=-90]
\end{tikzcd}
\]
is a Quillen adjunction. 
\end{lemma}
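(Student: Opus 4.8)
The plan is to verify the Quillen condition by showing that the \emph{left} adjoint of the displayed adjunction preserves cofibrations and acyclic cofibrations. First I would pin down which functor is the left adjoint. By \cref{def: Sweedler dual functor} one has $(-)^\circ \dashv (-)^*$ as an adjunction between $\mathsf{dg}~\Omega\C\text{-}\mathsf{alg}$ and $\left(\mathsf{dg}~\Omega\C\text{-}\mathsf{coalg}\right)^{\mathsf{op}}$, with $(-)^\circ$ the left adjoint. Passing to opposite categories turns an adjunction $F \dashv G$ into $G^{\mathsf{op}} \dashv F^{\mathsf{op}}$, so here it becomes $(-)^* \dashv (-)^\circ$ with $(-)^*\colon \mathsf{dg}~\Omega\C\text{-}\mathsf{coalg} \longrightarrow \left(\mathsf{dg}~\Omega\C\text{-}\mathsf{alg}\right)^{\mathsf{op}}$ now playing the role of the left adjoint. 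Hence it suffices to show that the linear dual functor $(-)^*$ is left Quillen.

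Next I would record the two relevant model structures and translate them across the opposite. On $\mathsf{dg}~\Omega\C\text{-}\mathsf{coalg}$, using that $\Omega\C$ is a cofibrant dg operad, \cref{thm: model structure on P-cog} tells us that weak equivalences are the quasi-isomorphisms and cofibrations are the degree-wise monomorphisms. On $\mathsf{dg}~\Omega\C\text{-}\mathsf{alg}$ the transferred model structure has weak equivalences the quasi-isomorphisms and fibrations the degree-wise epimorphisms. Dualizing, a cofibration in $\left(\mathsf{dg}~\Omega\C\text{-}\mathsf{alg}\right)^{\mathsf{op}}$ is exactly a degree-wise epimorphism of dg $\Omega\C$-algebras, and an acyclic cofibration there is a degree-wise epimorphism that is moreover a quasi-isomorphism.

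With this dictionary the verification reduces to two elementary facts about linear duality over the field $\kk$. If $f\colon C \to C'$ is a cofibration of dg $\Omega\C$-coalgebras, it is a degree-wise monomorphism, so its linear dual $f^*\colon (C')^* \to C^*$ is a degree-wise epimorphism; this is precisely the condition for $(-)^*(f)$ to be a cofibration in $\left(\mathsf{dg}~\Omega\C\text{-}\mathsf{alg}\right)^{\mathsf{op}}$, so $(-)^*$ preserves cofibrations. For acyclic cofibrations I would additionally use that $(-)^* = \mathrm{Hom}_\kk(-,\kk)$ is an exact functor over a field, hence commutes with homology and in particular sends quasi-isomorphisms to quasi-isomorphisms. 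Combining the two observations, $(-)^*$ carries a degree-wise mono which is a quasi-isomorphism to a degree-wise epi which is a quasi-isomorphism, i.e. an acyclic cofibration in the opposite category. Therefore $(-)^*$ is left Quillen and the adjunction is a Quillen adjunction.

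The computations themselves are routine; the one point demanding care is the bookkeeping with opposite categories, namely correctly identifying $(-)^*$ as the left adjoint and translating (acyclic) cofibrations in $\left(\mathsf{dg}~\Omega\C\text{-}\mathsf{alg}\right)^{\mathsf{op}}$ back into (acyclic) fibrations, that is (acyclic) degree-wise epimorphisms, of dg $\Omega\C$-algebras. I do not expect any genuine obstacle beyond this, since the exactness of $(-)^*$ over a field does all the homotopical work.
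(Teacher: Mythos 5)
Your proposal is correct and takes essentially the same approach as the paper: the paper's proof likewise identifies $(-)^*$ as the left adjoint, notes that it sends degree-wise monomorphisms to degree-wise epimorphisms (hence preserves cofibrations) and that it preserves quasi-isomorphisms, and concludes. Your version merely makes explicit the opposite-category bookkeeping and the identification of the two model structures, which the paper leaves implicit.
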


\begin{proof}
The left adjoint $(-)^*$ sends degree-wise monomorphisms to degree-wise epimorphisms. Thus it preserves cofibrations. It also preserves quasi-isomorphisms. Therefore we have a Quillen adjunction.
\end{proof}

\begin{theorem}[Homotopical properties of the duality square]\label{thm: homotopical magical square}
All the adjunctions in the square 
\[
\begin{tikzcd}[column sep=5pc,row sep=5pc]
\left(\mathsf{dg}~\Omega \C\text{-}\mathsf{alg}\right)^{\mathsf{op}} \arrow[r,"\mathrm{B}_\iota^{\mathsf{op}}"{name=B},shift left=1.1ex] \arrow[d,"(-)^\circ "{name=SD},shift left=1.1ex ]
&\left(\mathsf{curv}~\mathcal{C}\text{-}\mathsf{coalg}\right)^{\mathsf{op}} \arrow[d,"(-)^*"{name=LDC},shift left=1.1ex ] \arrow[l,"\Omega_\iota^{\mathsf{op}}"{name=C},,shift left=1.1ex]  \\
\mathsf{dg}~\Omega \C\text{-}\mathsf{coalg} \arrow[r,"\widehat{\Omega}_\iota "{name=CC},shift left=1.1ex]  \arrow[u,"(-)^*"{name=LD},shift left=1.1ex ]
&\mathsf{curv}~\mathcal{C}\text{-}\mathsf{alg}^{\mathsf{comp}}~, \arrow[l,"\widehat{\mathrm{B}}_\iota"{name=CB},shift left=1.1ex] \arrow[u,"(-)^\vee"{name=TD},shift left=1.1ex] \arrow[phantom, from=SD, to=LD, , "\dashv" rotate=0] \arrow[phantom, from=C, to=B, , "\dashv" rotate=-90]\arrow[phantom, from=TD, to=LDC, , "\dashv" rotate=0] \arrow[phantom, from=CC, to=CB, , "\dashv" rotate=-90]
\end{tikzcd}
\] 
are Quillen adjunctions.
\end{theorem}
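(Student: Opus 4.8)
The plan is to treat the four adjunctions separately, noting that three of them are either already established or formal, so that the entire burden falls on the right-hand vertical adjunction $(-)^\vee \dashv (-)^*$.

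First, the left-hand vertical adjunction $(-)^* \dashv (-)^\circ$ is a Quillen adjunction by \cref{lemma: Sweedler dual is a Quillen adjunction}, and the bottom horizontal adjunction $\widehat{\Omega}_\iota \dashv \widehat{\mathrm{B}}_\iota$ is even a Quillen equivalence by \cite[Section 11]{grignoulejay18}. For the top horizontal adjunction I would argue formally: it is the image under $(-)^{\mathsf{op}}$ of the classical Bar--Cobar adjunction $\Omega_\iota \dashv \mathrm{B}_\iota$ between $\mathsf{curv}~\C\text{-}\mathsf{coalg}$ and $\mathsf{dg}~\Omega\C\text{-}\mathsf{alg}$. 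Since the model structure on $\mathsf{curv}~\C\text{-}\mathsf{coalg}$ is by construction the one transferred along this very adjunction, $\Omega_\iota \dashv \mathrm{B}_\iota$ is Quillen tautologically, and passing to opposite categories turns it into the Quillen adjunction $\mathrm{B}_\iota^{\mathsf{op}} \dashv \Omega_\iota^{\mathsf{op}}$.

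It then remains to prove that the right-hand vertical adjunction is Quillen, and here I would check that the right adjoint $(-)^* \colon (\mathsf{curv}~\C\text{-}\mathsf{coalg})^{\mathsf{op}} \to \mathsf{curv}~\C\text{-}\mathsf{alg}^{\mathsf{comp}}$ preserves fibrations and acyclic fibrations. A fibration of $(\mathsf{curv}~\C\text{-}\mathsf{coalg})^{\mathsf{op}}$ is the opposite of a cofibration $c \colon D \hookrightarrow D'$ of $\mathsf{curv}~\C\text{-}\mathsf{coalg}$, in particular of a degree-wise monomorphism; its image $c^* \colon (D')^* \to D^*$ is then a degree-wise epimorphism, hence a fibration of $\mathsf{curv}~\C\text{-}\mathsf{alg}^{\mathsf{comp}}$. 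For the acyclic case, let $c$ be an acyclic cofibration; $c^*$ is already a fibration by the previous point, so it suffices to see that it is a weak equivalence, i.e. that $\widehat{\mathrm{B}}_\iota(c^*)$ is a quasi-isomorphism. This is where the commutativity of the square enters: by \cref{thm: magical square} there is a natural isomorphism $\widehat{\mathrm{B}}_\iota(c^*) \cong (\Omega_\iota c)^\circ$. Since $\Omega_\iota$ is left Quillen, $\Omega_\iota c$ is an acyclic cofibration of $\mathsf{dg}~\Omega\C\text{-}\mathsf{alg}$, so its opposite is an acyclic fibration; and since $(-)^\circ$ is right Quillen by \cref{lemma: Sweedler dual is a Quillen adjunction}, the morphism $(\Omega_\iota c)^\circ$ is an acyclic fibration of $\mathsf{dg}~\Omega\C\text{-}\mathsf{coalg}$, in particular a quasi-isomorphism. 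Hence $\widehat{\mathrm{B}}_\iota(c^*)$ is a quasi-isomorphism, $c^*$ is a weak equivalence, and therefore an acyclic fibration. This shows $(-)^*$ is right Quillen, so the right-hand vertical adjunction is Quillen.

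The main obstacle I anticipate is precisely this acyclic-fibration step: there is no direct way to see that linear duality preserves the relevant weak equivalences, since the Sweedler dual does not preserve quasi-isomorphisms in general. The device that unlocks it is the natural isomorphism $\widehat{\mathrm{B}}_\iota \circ (-)^* \cong (-)^\circ \circ \Omega_\iota^{\mathsf{op}}$ coming from the commutativity of the square in \cref{thm: magical square}, which rewrites the weak-equivalence condition in $\mathsf{curv}~\C\text{-}\mathsf{alg}^{\mathsf{comp}}$ entirely in terms of the two duality functors already known to be Quillen. A secondary point to pin down is the explicit description of the cofibrations of $\mathsf{curv}~\C\text{-}\mathsf{coalg}$, namely that they are contained in the degree-wise monomorphisms, which is what the fibration-preservation step requires.
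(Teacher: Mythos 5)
Your proposal is correct and follows essentially the same route as the paper: reduce everything to the right-hand vertical adjunction, use that $(-)^*$ turns (degree-wise) monomorphisms into epimorphisms to handle fibrations, and use the natural isomorphism $\widehat{\mathrm{B}}_\iota \circ (-)^* \cong (-)^\circ \circ \Omega_\iota^{\mathsf{op}}$ from \cref{thm: magical square} together with \cref{lemma: Sweedler dual is a Quillen adjunction} and the left Quillen property of $\Omega_\iota$ to handle acyclicity. The only cosmetic difference is that the paper proves the slightly stronger statement that $(-)^*$ preserves \emph{all} weak equivalences of curved $\C$-coalgebras (via Ken Brown's lemma, using that every curved $\C$-coalgebra is cofibrant so that $\Omega_\iota$ lands in cofibrant algebras), whereas you check only the images of acyclic cofibrations, which is the minimal requirement and avoids Ken Brown's lemma altogether.
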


\begin{proof}
The only thing left to check is that the adjunction 
\[
\begin{tikzcd}[column sep=7pc,row sep=3pc]
           \mathsf{curv}~\mathcal{C}\text{-}\mathsf{alg}^{\mathsf{comp}} \arrow[r,"(-)^\vee "{name=F}, shift left=1.1ex] 
           &\left(\mathsf{curv}~\mathcal{C}\text{-}\mathsf{coalg}\right)^{\mathsf{op}}~. \arrow[l, shift left=.75ex, "(-)^*"{name=U}]
            \arrow[phantom, from=F, to=U, , "\dashv" rotate=-90]
\end{tikzcd}
\]
is indeed a Quillen adjunction, where the model structure considered on curved $\C$-coalgebras is obtained by transfer along the adjunction $\Omega_\iota \dashv \mathrm{B}_\iota$ and where the model structure considered on the category of complete curved $\C$-algebras is obtained by transfer along the adjunction $\widehat{\Omega}_\iota \dashv \widehat{\mathrm{B}}_\iota$.

\medskip

Let us check that $(-)^*$ is a right Quillen functor. It sends monomorphisms to epimorphisms, thus preserves fibrations. Since every curved $\mathcal{C}$-coalgebra is cofibrant (fibrant in the opposite category), we are left to show that $(-)^*$ preserves weak equivalences of curved $\C$-coalgebras. Let 
\[
f: D_1 \qi D_2
\]
be a weak equivalence of curved $\C$-coalgebras, that is,
\[
\Omega_\iota(f): \Omega_\iota D_1 \qi \Omega_\iota D_2 
\]
is a quasi-isomorphism of dg $\Omega \C$-algebras, by Lemma \ref{lemma: Sweedler dual is a Quillen adjunction}, we know that the Sweedler dual functor $(-)^\circ$ is a right Quillen functor. Therefore it preserves weak equivalences between fibrant objects (i.e: quasi-isomorphisms between cofibrant dg $\Omega \C$-algebras). Thus 
\[
(\Omega_\iota(f))^\circ: (\Omega_\iota D_1)^\circ \qi (\Omega_\iota D_2)^\circ 
\]
is a quasi-isomorphism of dg $\Omega \C$-coalgebras. Using the commutativity of the square, we get that
\[
\widehat{\mathrm{B}}_\iota(f^*): \widehat{\mathrm{B}}_\iota D_1^* \longrightarrow \widehat{\mathrm{B}}_\iota D_2^*
\]
is also a quasi-isomorphism of dg $\Omega \C$-coalgebras. Therefore $f^*: D_1^* \longrightarrow D_2^*$ is a weak equivalence in the model category of complete curved $\C$-algebras. 
\end{proof}

\begin{Remark}
There are many examples where one ends up considering the linear dual of a bar construction $(\mathrm{B}_\alpha(-))^*$. The above square shows that this construction lands naturally in a category of absolute algebras. 

\medskip

For instance, taking the linear dual of the bar construction is crucial in the study of formal moduli problems. J. Lurie's original approach \cite{Lurie11} involves considering the linear dual of the Chevalley-Eilenberg complex. The linear dual of a bar construction also plays a crucial role in the approach of D. Calaque, R. Campos and J. Nuiten \cite{CCN19} to formal moduli problems over algebras over a dg operad. Using the above duality square to study formal moduli problems will be the subject of a future work. 
\end{Remark}

\medskip

The above duality square allows us to state a precise comparison theorem between the homotopy theory of dg algebras over a cofibrant operad and the homotopy theory of dg coalgebras over the same cofibrant operad. 

\begin{lemma}\label{Prop: comparaison homotopique}
Let $B$ be a dg $\Omega \C$-algebra whose homology is degree-wise finite dimensional and bounded above or bounded below. Then the derived unit of adjunction

\[
\mathbb{R}(\eta_B): B \qi \left(\mathbb{R}(B^\circ)\right)^*
\]
\vspace{0.2pc}

is a quasi-isomorphism. Therefore the functor $(-)^\circ$ is homotopically fully faithful on the full sub-$\infty$-category of degree-wise finite dimensional bounded above dg $\Omega \C$-algebras and on the full sub-$\infty$-category of degree-wise finite dimensional bounded below dg $\Omega \C$-algebras.
\end{lemma}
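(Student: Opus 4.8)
The plan is to reduce the derived statement to an honest quasi-isomorphism at a convenient cofibrant replacement, and then to recognise the resulting unit as a double linear dualization, which is controlled by the finiteness of the homology.

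First, since linear dualization over a field of characteristic zero is exact, the functor $(-)^*$ preserves quasi-isomorphisms, and every dg $\Omega\C$-coalgebra is cofibrant (cofibrations are the degree-wise monomorphisms, by \cref{thm: model structure on P-cog}). Hence $\mathbb{R}(-)^*$ is computed by $(-)^*$ itself, and the derived Sweedler dual $\mathbb{R}(B^\circ)$ is computed by $(QB)^\circ$ for any cofibrant replacement $QB \qi B$. I would take $QB \coloneqq \Omega_\iota \mathrm{B}_\iota B$, which is quasi-free, hence cofibrant, and for which the Bar--Cobar counit $QB \qi B$ is a weak equivalence because $\Omega_\iota \dashv \mathrm{B}_\iota$ is a Quillen equivalence for $\PP = \Omega\C$. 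The derived unit $\mathbb{R}(\eta_B)$ is then represented by the ordinary unit $\eta_{QB} \colon QB \to ((QB)^\circ)^*$, and $QB$ still has degree-wise finite dimensional homology; so it suffices to show that $\eta_{QB}$ is a quasi-isomorphism.

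Next I would factor this unit through the double dual. The Sweedler dual is a subfunctor of the linear dual (\cref{prop: natural mono for Sweedler dual}), so the natural monomorphism $\epsilon_{QB} \colon (QB)^\circ \rightarrowtail (QB)^*$ dualizes to an epimorphism $(\epsilon_{QB})^* \colon (QB)^{**} \twoheadrightarrow ((QB)^\circ)^*$, and a direct inspection of the defining adjunction shows that $\eta_{QB} = (\epsilon_{QB})^* \circ \mathrm{can}_{QB}$, where $\mathrm{can}_{QB} \colon QB \to (QB)^{**}$ is the canonical map into the double dual. By exactness of $(-)^*$ one has $H((QB)^{**}) \cong H(QB)^{**}$, so $\mathrm{can}_{QB}$ is a quasi-isomorphism precisely because $H(QB) \cong H(B)$ is degree-wise finite dimensional. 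By the two-out-of-three property it then remains to prove that $\epsilon_{QB}$ (equivalently its linear dual) is a quasi-isomorphism.

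This last point is the main obstacle, since in general $(QB)^\circ$ is a proper sub-dg-module of $(QB)^*$: the discrepancy comes both from the passage from representative functions $\mathscr{C}(\PP)$ to the full product $\widehat{\mathscr{S}}^c(\PP)$ of \cref{thm: existence of the cofree P cog}, and from the inclusions $(V^*)^{\otimes n} \hookrightarrow (V^{\otimes n})^*$. To control it I would exploit the duality square: writing $D \coloneqq \mathrm{B}_\iota B$, so that $QB = \Omega_\iota D$, \cref{thm: magical square} gives a natural isomorphism $(QB)^\circ \cong \widehat{\mathrm{B}}_\iota(D^*)$ of dg $\PP$-coalgebras, which exhibits $H((QB)^\circ)$ as the homology of a complete Bar construction. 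Combining the natural transformation $\zeta \colon \widehat{\Omega}_\iota (-)^\circ \to (-)^* \mathrm{B}_\iota$ with the complete Bar--Cobar Quillen equivalence $\widehat{\Omega}_\iota \dashv \widehat{\mathrm{B}}_\iota$ and the Bar--Cobar weak equivalence $\mathrm{B}_\iota \Omega_\iota D \qi D$, one identifies $\epsilon_{QB}$ with a comparison map that is forced to be a quasi-isomorphism once the homology is degree-wise finite dimensional; concretely, I expect to reduce the two discrepancies above arity by arity to the finite-dimensional Beck--Chevalley isomorphism, where $\eta$ is an honest isomorphism, and to upgrade this to the quasi-isomorphism statement by a filtration argument using the finiteness of $H(B)$. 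Once $\epsilon_{QB}$ is a quasi-isomorphism, so is $\eta_{QB}$, and hence $\mathbb{R}(\eta_B)$; the homotopical full faithfulness of $(-)^\circ$ on the sub-$\infty$-category of objects with degree-wise finite dimensional homology is then the standard consequence of the derived unit being a natural weak equivalence there.
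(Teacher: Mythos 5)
Your reduction steps are correct as far as they go: every dg $\Omega\C$-coalgebra is cofibrant, $(-)^*$ is exact, so $\mathbb{R}(B^\circ)$ is computed on the cofibrant replacement $QB \coloneqq \Omega_\iota\mathrm{B}_\iota B$; the factorization $\eta_{QB} = (\epsilon_{QB})^* \circ \mathrm{can}_{QB}$ through the double dual is valid; and $\mathrm{can}_{QB}$ is a quasi-isomorphism exactly because $\mathrm{H}_*(QB)\cong\mathrm{H}_*(B)$ is degree-wise finite dimensional. But at that point you have only reformulated the lemma, not proved it: the assertion that $\epsilon_{QB}\colon (QB)^\circ \rightarrowtail (QB)^*$ is a quasi-isomorphism \emph{is} the entire content of the statement, and your treatment of it stays at the level of ``I expect to reduce \dots by a filtration argument.'' Worse, the route you sketch cannot work as stated. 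The finite-dimensionality inputs you want to invoke --- the Beck--Chevalley isomorphism and \cref{prop: finite dual commutes} --- require the \emph{underlying graded module} to be degree-wise finite dimensional, and neither $QB$ nor $D^* = (\mathrm{B}_\iota B)^*$ is: only the homology is finite dimensional. Concretely, the duality square (\cref{thm: magical square}) does give $(QB)^\circ \cong \widehat{\mathrm{B}}_\iota\bigl((\mathrm{B}_\iota B)^*\bigr)$, but the comparison map $\zeta\colon \widehat{\Omega}_\iota(B^\circ) \to (\mathrm{B}_\iota B)^*$ is in general only a monomorphism, so you cannot identify $(\mathrm{B}_\iota B)^*$ with a complete Cobar construction and then quote the complete Bar--Cobar Quillen equivalence; no filtration on the inclusion $(QB)^\circ \hookrightarrow (QB)^*$ with finite-dimensional graded pieces is exhibited, and none is evident.

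The missing idea, which is how the paper makes the hypothesis on \emph{homology} actually usable, is the Homotopy Transfer Theorem: since $\kk$ has characteristic zero, one replaces $B$ by its minimal model $\mathrm{H}_*(B)$, whose underlying complex has zero differential and hence \emph{is} degree-wise finite dimensional, and one gets a direct quasi-isomorphism of cofibrant resolutions $\Omega_\iota\mathrm{B}_\iota\mathrm{H}_*(B) \qi \Omega_\iota\mathrm{B}_\iota B$ induced by the $\infty$-quasi-isomorphism $i_B\colon \mathrm{B}_\iota\mathrm{H}_*(B) \to \mathrm{B}_\iota B$. For this model the two finiteness results do apply on the nose:
\[
\bigl(\Omega_\iota \mathrm{B}_\iota \mathrm{H}_*(B)\bigr)^\circ \cong \widehat{\mathrm{B}}_\iota \bigl(\mathrm{B}_\iota \mathrm{H}_*(B)\bigr)^* \cong \widehat{\mathrm{B}}_\iota \widehat{\Omega}_\iota \bigl(\mathrm{H}_*(B)\bigr)^*~,
\]
by the commutativity of the square of \cref{thm: homotopical magical square} followed by \cref{prop: finite dual commutes}. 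Then the unit $(\mathrm{H}_*(B))^* \qi \widehat{\mathrm{B}}_\iota\widehat{\Omega}_\iota(\mathrm{H}_*(B))^*$ of the complete Bar--Cobar adjunction is a quasi-isomorphism because that adjunction is a Quillen equivalence, and dualizing it (using finite dimensionality once more for the double dual) identifies $\mathrm{H}_*(B) \qi \bigl(\mathbb{R}(\mathrm{H}_*(B)^\circ)\bigr)^*$; two-out-of-three then transports the conclusion back to $B$. You should restructure your argument so that the HTT reduction comes first; without it, the step you labelled as ``the main obstacle'' remains unproved.
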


\begin{proof}
Let $B$ be a dg $\Omega \C$-algebra such that its homology is degree-wise finite dimensional and bounded above. The bounded below case is analogous. Since $\kk$ is a field of characteristic zero, using the Homotopy Transfer Theorem, one can replace $B$ by its homology $\mathrm{H}_*(B)$ in order to compute this derived unit of adjunction. Indeed, there is a direct quasi-isomorphism 
\[
\Omega(i_B): \Omega_\iota \mathrm{B}_\iota \mathrm{H}_*(B) \qi \Omega_\iota \mathrm{B}_\iota B
\]

of dg $\Omega \C$-algebras between their cofibrant resolutions given by the bar-cobar adjunction relative to $\iota$. Here $i_B: \mathrm{B}_\iota \mathrm{H}_*(B) \qi \mathrm{B}_\iota B$ is the $\infty$-quasi-isomorphism given by the choice of a homotopy contraction between $B$ and $\mathrm{H}_*(B)$. Therefore we obtain the following commutative square
\[
\begin{tikzcd}[column sep=4pc,row sep=4pc]
\Omega_\iota \mathrm{B}_\iota \mathrm{H}_*(B) \arrow[d,"\mathbb{R}(\eta_{\mathrm{H}_*(B)})",swap] \arrow[r,"\Omega(i_B)"]
&\Omega_\iota \mathrm{B}_\iota B    \arrow[d,"\mathbb{R}(\eta_B)"]        \\
((\Omega_\iota \mathrm{B}_\iota \mathrm{H}_*(B))^\circ)^* \arrow[r,"((\Omega(i_B))^\circ)^*"]
&((\Omega_\iota \mathrm{B}_\iota B)^\circ)^*~,
\end{tikzcd}
\]

Using 2 out of 3, we conclude that $\mathbb{R}(\eta_B)$ is a quasi-isomorphism of dg $\Omega \C$-algebras if and only if $\mathbb{R}(\eta_{\mathrm{H}_*(B)})$ is. We compute that 
\[
\left(\Omega_\iota \mathrm{B}_\iota \mathrm{H}_*(B)\right)^\circ \cong \widehat{\mathrm{B}}_\iota \left(\mathrm{B}_\iota \mathrm{H}_*(B)\right)^* \cong \widehat{\mathrm{B}}_\iota \widehat{\Omega}_\iota (\mathrm{H}_*(B)^\circ)~,
\]

first using the commutativity of the square of Theorem \ref{thm: homotopical magical square} and then by applying Proposition \ref{prop: finite dual commutes}, where $\mathrm{H}_*(B)^\circ$ coincides with the linear dual since $\mathrm{H}_*(B)$ is degree-wise finite dimensional and bounded above. And the unit of the complete bar-cobar adjunction $\widehat{\Omega}_\iota \dashv \widehat{\mathrm{B}}_\iota$ 
\[
\eta_{\mathrm{H}_*(B)^\circ}: \mathrm{H}_*(B)^\circ \qi \widehat{\mathrm{B}}_\iota \widehat{\Omega}_\iota (\mathrm{H}_*(B)^\circ)
\]
is a quasi-isomorphism of dg $\Omega \C$-coalgebras since this adjunction is a Quillen equivalence. We can apply the linear dual functor to this quasi-isomorphism, resulting in the following commutative square 

\[
\begin{tikzcd}[column sep=3pc,row sep=3pc]
\mathrm{H}_*(B) \arrow[r,"\simeq"] \arrow[rd,"\mathbb{R}(\eta_{\mathrm{H}_*(B)})",swap]
&\left((\mathrm{H}_*(B))^{\circ} \right)^{*}  \\
&\left(\widehat{\mathrm{B}}_\iota \widehat{\Omega}_\iota (\mathrm{H}_*(B)^\circ)\right)^{*}  \arrow[u,"\left(\eta_{\mathrm{H}_*(B)^\circ}\right)^*",swap]~,
\end{tikzcd}
\] 
which allows us to conclude that, by 2 out of 3, the derived unit $\mathbb{R}(\eta_{\mathrm{H}_*(B)})$ is indeed a quasi-isomorphism of dg $\Omega \C$-algebras, where the top arrow is an isomorphism by Proposition \ref{prop: iso de Beck-Chevalley}.
\end{proof}

\begin{theorem}\label{thm: equivalence infini cat cog et alg}
There is an equivalence of $\infty$-categories

\[
\begin{tikzcd}[column sep=5pc,row sep=3pc]
         \mathsf{dg}~\Omega \C\text{-}\mathsf{coalg}^{\mathsf{f.d},\mp}~\left[\mathsf{Q.iso}^{-1}\right] \arrow[r, shift left=1.1ex, "(-)^\ast"{name=A}]
         &\mathsf{dg}~\Omega \C \text{-}\mathsf{alg}^{\mathsf{f.d},\pm}~\left[\mathsf{Q.iso}^{-1}\right]^\mathsf{op}~, \arrow[l, shift left=.75ex, "(-)^\circ"{name=B}] \arrow[phantom, from=A, to=B, , "\dashv" rotate=-90]
\end{tikzcd}
\]

between the $\infty$-category of dg $\Omega \C$-algebras with degree-wise finite dimensional and bounded below (resp. bounded above) homology and the $\infty$-category of $\Omega \C$-coalgebras with degree-wise finite dimensional and bounded above (resp. bounded below) homology.
\end{theorem}

\begin{proof}
By Lemma \ref{Prop: comparaison homotopique}, we know that the Sweedler duality functor $(-)^\circ$ is homotopically fully faithful restricted to dg $\Omega \C$-algebras with degree-wise finite dimensional and bounded below (resp. bounded above) homology. Using the Homotopy Transfer Theorem for dg $\Omega \C$-coalgebras, which is a particular case of the properadic Homotopy Transfer Theorem of \cite{properadic}, we can transfer this coalgebra structure onto the homology. Furthermore, as explained in \cite[Section 12]{grignoulejay18}, the existence of an $\infty$-quasi-isomorphism of $\Omega \C$-coalgebras is equivalent to the existence of a zig-zag of quasi-isomorphisms of $\Omega \C$-coalgebras. 

\medskip

Thus we can reproduce the same arguments as in Lemma \ref{Prop: comparaison homotopique} in order to show that the linear duality functor $(-)^*$ is homotopically fully faithful restricted to dg $\Omega \C$-coalgebras with degree-wise finite dimensional and bounded above (resp. bounded below) homology. It is immediate to see that the adjunction $(-)^* \dashv (-)^\circ$ sends dg $\Omega \C$-coalgebras with degree-wise finite dimensional and bounded above (resp. bounded below) homology to dg $\Omega \C$-algebras with degree-wise finite dimensional and bounded below (resp. bounded above) homology and vice-versa.
\end{proof}

\begin{Remark}
Since any cofibrant dg operad $\mathcal{P}$ is a deformation retract of a dg operad of the form $\Omega \C$, Theorem \ref{thm: equivalence infini cat cog et alg} also holds for any cofibrant dg operad.
\end{Remark}

\vspace{1.5pc}

\section{Some comparison results between absolute and classical algebras}

\vspace{1.5pc}

The goal of this subsection is to establish some comparison results between the absolute and the classical version of the same algebraic structure and to record some useful statements regarding absolute algebras.

\subsection{Restriction functors and their left adjoints}
Let $\C$ be a conilpotent curved cooperad. Its linear dual $\C^*$ is a curved operad in the sense of \cite{lucio2022curved}. We start by constructing an adjunction between pdg $\C$-algebras and pdg $\C^*$-algebras and show that it restricts to curved algebras on both sides. 

\begin{Proposition}\label{Prop: raw abs res adjunction}
There is an adjunction 
\[
\begin{tikzcd}[column sep=7pc,row sep=3pc]
            \mathsf{pdg}~\C\text{-}\mathsf{alg}\arrow[r,"\mathrm{Res}"{name=F}, shift left=1.1ex] 
           &\mathsf{pdg}~\C^*\text{-}\mathsf{alg}~, \arrow[l, shift left=.75ex, "\overline{\mathrm{Abs}}"{name=U}]
            \arrow[phantom, from=F, to=U, , "\dashv" rotate=90]
\end{tikzcd}
\]
between pdg $\C$-algebras and pdg $\C^*$-algebras, where the right adjoint $\mathrm{Res}$ is given by restricting the structure to finite sums of operations.
\end{Proposition}

\begin{proof}
There is a natural inclusion
\[
\iota_{\C}: \mathscr{S}(\C^*)(-) = \bigoplus_{n \geq 0} \left(\C^*(n) \otimes (-)^{\otimes n}\right)_{\mathbb{S}_n} \rightarrowtail \prod_{n \geq 0} \left(\mathrm{Hom}(\C(n),(-)^{\otimes n})\right)^{\mathbb{S}_n} = \widehat{\mathscr{S}}^c(\C)(-)
\]
which is given by the composition of the natural inclusion of the direct sum into the direct product, the natural inclusion $V^* \otimes W \rightarrowtail \mathrm{Hom}(V,W)$ and the norm map from coinvariants to invariants, which over a characteristic zero field is an isomorphism. This natural inclusion can be easily checked to be a morphism of monads, and therefore it induces an adjunction between their respective categories of algebras. The right adjoint $\mathrm{Res}$ is given by restricting the structural morphism along the inclusion $\iota_{\C}$.
\end{proof}

\begin{lemma}
The restriction functor factors as
\[
\mathrm{Res}: \mathsf{curv}~\C\text{-}\mathsf{alg} \longrightarrow \mathsf{curv}~\C^*\text{-}\mathsf{alg}~,
\]
between the subcategories of curved algebras on both sides. 
\end{lemma}

\begin{proof}
Let us show that if $A$ is a curved $\C$-algebra, its restriction $\mathrm{Res}$ is also a curved $\C^*$-algebra. This, in turn, follows from the commutativity of the following diagram 
\[
\begin{tikzcd}[column sep=3pc,row sep=1.5pc]
\widehat{\mathscr{S}}^c(\mathcal{C})(A) \arrow[dr,swap, "\widehat{\mathscr{S}}^c(\Theta_\mathcal{C})(\mathrm{id})" {xshift= 0.35pc}]\arrow[rr,"\gamma_A"  {xshift= -2pc}] 
&
&A \\
&\widehat{\mathscr{S}}^c(\I)(A) \cong A \arrow[ru,"d_A^2"]
& \\
\mathscr{S}(\C^*)(A) \arrow[uu,"\iota_\C"] \arrow[dr,swap,"\mathscr{S}(\Theta_{\mathcal{C}^*})(\mathrm{id})"] \arrow[rr,"\gamma_{\mathrm{Res}(A)}" {xshift= -2pc}] 
&
&A  \arrow[uu,equal]  \\
&\mathscr{S}(\I)(A) \cong A~, \arrow[ru,"d_{A}^2"]  \arrow[uu,equal, crossing over]
\end{tikzcd}
\]
where $\iota_\C$ is the natural inclusion constructed in the proof of Proposition \ref{Prop: raw abs res adjunction}. 
\end{proof}

\begin{Proposition}\label{Prop: restriction and absolution adjunction}
There is an adjunction 
\[
\begin{tikzcd}[column sep=7pc,row sep=3pc]
            \mathsf{curv}~\C\text{-}\mathsf{alg}\arrow[r,"\mathrm{Res}"{name=F}, shift left=1.1ex] 
           &\mathsf{curv}~\C^*\text{-}\mathsf{alg}~, \arrow[l, shift left=.75ex, "\mathrm{Abs}"{name=U}]
            \arrow[phantom, from=F, to=U, , "\dashv" rotate=90]
\end{tikzcd}
\]
between curved $\C$-algebras and curved $\C^*$-algebras. 
\end{Proposition}

\begin{proof}
Since, in each case, curved algebras are a reflective subcategory of pdg algebras (see Proposition \ref{Prop: curved reflective subcategory} and \cite[Section 1]{lucio2022curved}), we can apply the Adjoint Lifting Theorem \cite[Theorem 2]{AdjointLifting} to the following square 
\[
\begin{tikzcd}[column sep=4pc,row sep=4pc]
\mathsf{curv}~\mathcal{C}\text{-}\mathsf{alg} \arrow[r,"\mathrm{Res}"]  \arrow[d,"\mathrm{U}"{name=SD},shift left=1.1ex ]
&\mathsf{curv}~\mathcal{C}^*\text{-}\mathsf{alg} \arrow[d,"\mathrm{U}"{name=LDC},shift left=1.1ex ] \\
\mathsf{pdg}~\mathcal{C}\text{-}\mathsf{alg} \arrow[r,"\mathrm{Res}"{name=CC},,shift left=1.1ex] \arrow[u,"\mathrm{Curv}"{name=LD},shift left=1.1ex ] \arrow[phantom, from=SD, to=LD, , "\dashv" rotate=0]
&\mathsf{pdg}~\mathcal{C}^*\text{-}\mathsf{alg}~,  \arrow[l,"\overline{\mathrm{Abs}}"{name=CB},shift left=1.1ex] \arrow[u,"\mathrm{Curv}"{name=TD},shift left=1.1ex] \arrow[phantom, from=TD, to=LDC, , "\dashv" rotate=0] \arrow[phantom, from=CC, to=CB, , "\dashv" rotate=90]
\end{tikzcd}
\] 
where $\mathrm{Curv}$ denotes the reflector from pdg algebras to curved algebras in each case.
\end{proof}

\begin{Remark}
The construction given in the above proposition entails that given a curved $\C^*$-algebra, the functor $\mathrm{Abs}$ can be computed by first forgetting that it is curved, applying the functor $\overline{\mathrm{Abs}}$ and finally applying the reflector $\mathrm{Curv}$. So, a priori, the underlying pdg modules of $\overline{\mathrm{Abs}}$ and $\mathrm{Abs}$ differ. However, we suspect that, like in Proposition \ref{prop: restriction aux curved}, applying the functor $\overline{\mathrm{Abs}}$ to a curved $\C^*$-algebra gives a pdg $\C$-algebra which is already curved and thus these two functors do in fact coincide. 
\end{Remark}

This adjunction between curved $\C$-algebras and curved $\C^*$-algebras also induces an adjunction between complete curved $\C$-algebras and curved $\C^*$-algebras. 

\begin{Proposition}\label{Prop: restriction and complete absolution adjunction}
There is an adjunction 
\[
\begin{tikzcd}[column sep=7pc,row sep=3pc]
            \mathsf{curv}~\C\text{-}\mathsf{alg}^{\mathsf{comp}} \arrow[r,"\mathrm{Res}"{name=F}, shift left=1.1ex] 
           &\mathsf{curv}~\C^*\text{-}\mathsf{alg}~, \arrow[l, shift left=.75ex, "\mathrm{cAbs}"{name=U}]
            \arrow[phantom, from=F, to=U, , "\dashv" rotate=90]
\end{tikzcd}
\]
between curved $\C$-algebras and curved $\C^*$-algebras. 
\end{Proposition}

\begin{proof}
It suffices to compose the adjunction of Proposition \ref{Prop: restriction and absolution adjunction} with the adjunction between curved $\C$-algebras and complete curved $\C$-algebras, since the later is a reflective subcategory of the former by Proposition \ref{prop: complete C algebras are a reflexiv subcat}. 
\end{proof}

\begin{Remark}
Again, we suspect that the functor $\mathrm{Abs}$ always lands in complete curved $\C$-algebras and hence the functors $\mathrm{Abs}$ and $\mathrm{cAbs}$ (which one may call the absolution and the complete absolution functors) do, in fact, coincide. 
\end{Remark}

\subsection{Morphisms between complete curved $\C$-algebras}
We show that morphisms between complete curved $\C$-algebras are complete characterized as a limit of morphisms between each of the stages of their canonical filtrations. 

\begin{Proposition}
Let $A,A'$ be two complete curved $\C$-algebras. The set of morphisms of $\C$-algebras is given by 
\[
\mathrm{Hom}_{\C\text{-}\mathsf{alg}}(A,A') \cong \lim_{\omega \in \mathbb{N}} \mathrm{Hom}_{\C\text{-}\mathsf{alg}}(A/\mathrm{W}_\omega A,A'/\mathrm{W}_\omega A')~,
\]
where $\mathrm{W}_\omega A$ (resp. $\mathrm{W}_\omega A'$) denotes the $\omega$-stage of the canonical filtration, as defined in Definition \ref{def canonical filtration}. 
\end{Proposition}

\begin{proof}
Since $A'$ is complete, it can be written down as the following limit 
\[
A' \cong \lim_{\omega \in \mathbb{N}} A'/\mathrm{W}_\omega A'~,
\]
in the category of complete curved $\C$-algebras and therefore we have that
\[
\mathrm{Hom}_{\C\text{-}\mathsf{alg}}(A,A') \cong \lim_{\omega \in \mathbb{N}} \mathrm{Hom}_{\C\text{-}\mathsf{alg}}(A,A'/\mathrm{W}_\omega A')~. 
\]
The canonical epimorphism $\pi_\omega: A \twoheadrightarrow A/\mathrm{W}_\omega A$ induces an injection of hom-sets by pre-composing
\[
\mathrm{Hom}_{\C\text{-}\mathsf{alg}}(A/\mathrm{W}_\omega A,A'/\mathrm{W}_\omega A') \hookrightarrow \mathrm{Hom}_{\C\text{-}\mathsf{alg}}(A,A'/\mathrm{W}_\omega A')~,
\]
and therefore, if it is also surjective at the level of hom-sets, we will be able to conclude. And this is indeed the case since any map $f: A \longrightarrow A'/\mathrm{W}_\omega A'$ sends $W_\omega A$ to $0$ since $f(W_\omega A) \subseteq W_\omega A'$, and therefore factors (uniquely) through $A/\mathrm{W}_\omega A$. 
\end{proof}

\begin{Remark}
In fact, the category of complete curved $\C$-algebras can be written as a pseudo-limit of the following diagram of categories 
\[
\cdots \longrightarrow \mathsf{curv}~\C\text{-}\mathsf{alg}^{\leq \omega} \longrightarrow \cdots \longrightarrow \mathsf{curv}~\C\text{-}\mathsf{alg}^{\leq 2} \longrightarrow \mathsf{curv}~\C\text{-}\mathsf{alg}^{\leq 1}~, 
\]
where $\mathsf{curv}~\C\text{-}\mathsf{alg}^{\leq \omega}$ denotes the subcategory of $\omega$-nilpotent complete curved $\C$-algebras, that is, those algebras $A$ which satisfy that $A \cong A/\mathrm{W}_\omega A$. 
\end{Remark}

\subsection{The absolute algebra structure on the homology of a dg absolute algebra}
Recall that when $\C$ is a conilpotent dg cooperad (a conilpotent curved cooperad with zero curvature), the category of curved $\C$-algebras is precisely the category of dg $\C$-algebras, and therefore one can wonder whether the homology of such a dg $\C$-algebra inherits a graded $\C$-algebra structure.

\begin{Proposition}\label{Prop: the homology has an absolute structure}
Let $\C$ be a conilpotent dg cooperad and $A$ a dg $\C$-algebra.

\medskip

\begin{enumerate}
\item The homology $\mathrm{H}_*(A)$ inherits a graded $\mathrm{H}_*(\C)$-algebra structure. 

\medskip

\item Assume $\C$ has a zero differential, meaning $\mathrm{H}_*(\C) \cong \C$. If the algebra $A$ is a complete dg $\C$-algebra, so is $\mathrm{H}_*(A)$.
\end{enumerate}
\end{Proposition}

\begin{proof}
Let us prove the first point. Let 
\[
\gamma_A: \prod_{n \geq 0} \left(\mathrm{Hom}(\C(n),A^{\otimes n})\right)^{\mathbb{S}_n} \longrightarrow A~,
\]
denote the structural map of the dg $\C$-algebra $A$. Since it is a morphism of dg modules, it induces a map 
\[
\gamma_{\mathrm{H}_*(A)}: \mathrm{H}_*\left(\prod_{n \geq 0} \left(\mathrm{Hom}(\C(n),A^{\otimes n})\right)^{\mathbb{S}_n}\right) \cong \prod_{n \geq 0} \left(\mathrm{Hom}(\mathrm{H}_*(\C)(n),(\mathrm{H}_*(A))^{\otimes n})\right)^{\mathbb{S}_n} \longrightarrow \mathrm{H}_*(A)~,
\]
where we are using that infinite products, invariants, hom-spaces and tensor products are all exact over a field of characteristic zero and thus commute with the homology functor. This structural morphism endows $A$ with a graded $\mathrm{H}_*(\C)$-algebra structure. 

\medskip

Now, let us prove the second point. First, we start assuming that $A$ is a weight-nilpotent dg $\C$-algebra, meaning that there exists an integer $\omega_0 \geq 0$ such that $A \cong A/\mathrm{W}_{\omega_0}A$. This is the case if and only if the structural morphism of $A$ factors through 
\[
\gamma_A: \prod_{n \geq 0} \left(\mathrm{Hom}(\mathscr{R}_{\omega_0}\C(n),A^{\otimes n})\right)^{\mathbb{S}_n} \longrightarrow A~,
\]
where $\mathscr{R}_{\omega_0}\C$ is the $\omega_0$-th stage of the coradical filtration of $\C$. This implies that the structural morphism of $\mathrm{H}_*(A)$ also factors as an algebra over $\mathscr{R}_{\omega_0}\C$, and therefore that $\mathrm{H}_*(A)$ is also weight-nilpotent of nilpotency class at most $\omega_0$. So, in particular, $\mathrm{H}_*(A)$ is a complete dg $\C$-algebra. Let us deal with the general case now. If $A$ is a complete dg $\C$-algebra, then 
\[
A \cong \lim_{\omega} A/W_{\omega}A~. 
\]
Since a tower of epimorphims of vector spaces indexed by $\mathbb{N}$ satisfies the Mittag-Leffler condition, taking the limit commutes with taking the homology and therefore we have: 
\[
\mathrm{H}_*(A) \cong \lim_{\omega} \mathrm{H}_*(A/W_{\omega}A)~,
\]
where each $\mathrm{H}_*(A/W_{\omega}A)$ is a complete dg $\C$-algebra. Since complete dg $\C$-algebras are stable under limits, this implies that $\mathrm{H}_*(A)$ is also a complete dg $\C$-algebra.
\end{proof}

\subsection{Algebras over finitely cogenerated dg cooperads}
We show further comparison results in the case where $\C$ is a finitely cogenerated dg cooperad in arities $\geq 2$. In that case, the weight filtration and the arity filtrations are commensurable, and many comparisons become easier. 

\medskip

For the rest of this subsection, we suppose that the cooperad $\C$ is cogenerated by a dg $\mathbb{S}$-module $M$ which is bounded in arities $\geq 2$, bounded in degrees and degree-wise finite dimensional. This essentially means that $\C$ can be written as an equalizer of the cofree conilpotent cooperad on $M$ by some corelations. Observe that this implies that $\C(n)$ is a bounded degree-wise finite dimensional dg module for any $n \geq 0$. It also implies that $\C$ can be written as the following colimit
\[
\tau^{\leq 1}\C \rightarrowtail \tau^{\leq 2}\C \rightarrowtail \cdots \rightarrowtail \tau^{\leq n}\C \rightarrowtail \cdots \rightarrowtail \colim_{n \in \mathbb{N}} \tau^{\leq n}\C \cong \C~,
\] 
where $\tau^{\leq n}\C$ is the suboperad obtained by truncating $\C$ in arities strictly larger that $n$. For any $n \geq 1$, there is a short exact sequence of dg $\mathbb{S}$-modules 
\[
\begin{tikzcd}
0 \arrow[r]
&\tau^{\leq n}\C \arrow[r,"i_n",hook]
&\C \arrow[r,"p_n"]
&\C / \tau^{\leq n}\C \arrow[r]
&0~.
\end{tikzcd}
\]

\begin{Definition}[Arity filtration]\label{def: arity filtration}
Let $A$ be a dg $\C$-algebra. The \textit{arity filtration} of $A$ is the decreasing filtration given by 
\[ 
\mathrm{F}_n A \coloneqq \mathrm{Im}\left(\gamma_A \circ \widehat{\mathscr{S}}^c(p_n)(\mathrm{id}_A): \widehat{\mathscr{S}}^c(\C / \tau^{\leq n}\C )(A) = \prod_{k \geq n} \mathrm{Hom}_{\mathbb{S}_k}(\C(k),A^{\otimes k}) \longrightarrow A \right)
\]
for all $n \geq 1~.$ Notice that we have 
\[
A = \mathrm{F}_1 A \supseteq \mathrm{F}_2 A \supseteq \mathrm{F}_3 A \supseteq \cdots \supseteq \mathrm{F}_n A \supseteq \cdots.
\]
\end{Definition}
	
Each step $\mathrm{F}_n A$ of the arity filtration of a dg $\C$-algebra $A$ is an ideal, and the quotient $A/\mathrm{F}_n A$ has a canonical dg $\C$-algebra structure induced by the one of $A$. This follows from the fact that $A/\mathrm{F}_n A$ fits into the following pushout diagram 
\[
\begin{tikzcd}[column sep=3.5pc,row sep=3.5pc]
\widehat{\mathscr{S}}^c(\C)(A) \arrow[d,"\gamma_A",swap] \arrow[r,"\widehat{\mathscr{S}}^c(i_n)"]  \arrow[dr, phantom,"\ulcorner" rotate=-180, very near end]
&\widehat{\mathscr{S}}^c(\tau^{\leq n}\C)(A) \arrow[d]  \\
A \arrow[r]
&A/\mathrm{F}_n A
\end{tikzcd}
\]
of dg $\C$-algebras. 

\begin{Definition}[Arity-completion]
Let $A$ be a dg $\C$-algebra. Its \textit{arity-completion} is given by 
\[ 
\widehat{(A)}_a \coloneqq \lim_{n} A/\mathrm{F}_n A~,
\]
where the limit is taken in the category of dg $\C$-algebras.
\end{Definition}

Any dg $\C$-algebra $A$ comes equipped with a canonical map $\varepsilon_A: A \longrightarrow \widehat{(A)}_a$ to its arity-completion. It is said to be \textit{arity-complete} if this map is an isomorphism.

\begin{Definition}[Arity-nilpotent dg $\C$-algebras]
Let $A$ be a dg $\C$-algebra. It is \textit{arity-nilpotent} if there exists a natural number $n_0$ such that $A \cong A/\mathrm{F}_n A$. The smallest such number $n_0$ is called the \textit{arity-nilpotency class} of the algebra $A$. 
\end{Definition}

\begin{Proposition}\label{prop: arity and weight topologies coincide}
Let $A$ be a dg $\C$-algebra. The topology induced by the arity filtration coincides with the topology induced by the canonical filtration. In particular, it is arity-complete if and only if it is complete. 
\end{Proposition}

\begin{proof}
For any $n \geq 2$, there is an inclusion $\tau^{\leq n}\C \subseteq \mathscr{R}_{n-1}\C$. Indeed, since $\C$ is cogenerated in arities $\geq 2$, any element in arity $n$ admits at most $n-1$ non-trivial decompositions. For any dg $\C$-algebra $A$, this inclusion gives an inclusion $\mathrm{W}_{n-1}A \subseteq \mathrm{F}_n A$ between the $(n-1)$-th term of the canonical filtration and the $n$-th term of the arity filtration. 

\medskip

The other way around, denote $M$ the dg $\mathbb{S}$-module of cogenerators of $\C$ and let $m$ be the maximal arity of $M$. Since $M$ is bounded in arity, then $m$ is a finite integer. There is an inclusion $\mathscr{R}_{\omega}\C \subseteq \tau^{\leq \omega.m}\C$, as any element in $\C$ which admits $\omega$ non-trivial partial decompositions can be at most of arity $\omega.m$. This inclusion induces an inclusion $\mathrm{F}_{\omega.m}A \subseteq \mathrm{W}_{\omega} A$ for any dg $\C$-algebra $A$. Since both filtrations are commensurable, they induce the same topology on $A$, and by a standard cofinality argument, the completions with respect to each filtration are isomorphic. 
\end{proof}

\begin{Remark}
The \textit{arity filtration} on dg $\C$-algebras can be defined as long as $\C(0) \cong 0$, since this condition guarantees that the arity truncation does define a series of sub-cooperads. However, in general, it will differ from the canonical filtration induced by the weight of the operations in $\C$, even when $\C$ is reduced. For example, if $\C$ is the bar construction on the commutative operad $\mathcal{C}om$ (which encodes absolute $\mathcal{L}_\infty$-algebras), there are infinitely-many operations in weight $1$ and in arbitrarily high arities. See \cite[Section 1]{lucio2022integration} for more details. 
\end{Remark}

\begin{Corollary}\label{cor: dense image in the product}
Let $V$ be a dg module. The image of the canonical inclusion
\[
\iota_{\C}(V): \mathscr{S}(\C^*)(V) = \bigoplus_{n \geq 0} \left(\C^*(n) \otimes V^{\otimes n}\right)_{\mathbb{S}_n} \rightarrowtail \prod_{n \geq 0} \left(\mathrm{Hom}(\C(n),V^{\otimes n})\right)^{\mathbb{S}_n} = \widehat{\mathscr{S}}^c(\C)(V)
\]
defines a dense subset of the free dg $\C$-algebra on $V$. 
\end{Corollary}

\begin{proof}
Notice that since $\C(n)$ is a degree-wise finite dimensional bounded complex and since we are working over a characteristic zero field, the right hand side is isomorphic to 
\[
\prod_{n \geq 0} \left(\mathrm{Hom}(\C(n),V^{\otimes n})\right)^{\mathbb{S}_n} \cong \prod_{n \geq 0} \left(\C^*(n) \otimes V^{\otimes n}\right)_{\mathbb{S}_n}~. 
\]
Then, the result follows directly from Proposition \ref{prop: arity and weight topologies coincide}, since the right hand side can be identified with the arity-completion of the left hand side, and the natural inclusion to the completion has a dense image. 
\end{proof}

\begin{theorem}\label{thm: restriction functor fully faithful}
The restriction functor 
\[
\mathrm{Res}: \mathsf{dg}~\C\text{-}\mathsf{alg}^{\mathsf{comp}} \longrightarrow \mathsf{dg}~\C^*\text{-}\mathsf{alg}
\]
is fully faithful. Therefore dg $\C$-algebras are a reflective subcategory of dg $\C^*$-algebras. 
\end{theorem}

\begin{proof}
It is immediate that it is faithful as the restriction functor does not change the underlying chain complex. Let us show that it is full. Let $A$ and $A'$ be two dg $\C$-algebras. We want to show that the map 
\[
\mathrm{Res}(A,A'): \mathrm{Hom}_{\mathsf{dg}~\C\text{-}\mathsf{alg}^{\mathsf{comp}}}(A,A') \longrightarrow \mathrm{Hom}_{\mathsf{dg}~\C^*\text{-}\mathsf{alg}}(\mathrm{Res}(A),\mathrm{Res}(A')) 
\]
at the level of hom-sets is a bijection. First, let us assume that $A$ is free as a (complete) dg $\C$-algebra, meaning there is an isomorphism
\[
A \cong \widehat{\mathscr{S}}^c(\C)(V)~,
\]
where $V$ is some dg module. Then, on the one hand, we have that 
\[
\mathrm{Hom}_{\mathsf{dg}~\C\text{-}\mathsf{alg}^{\mathsf{comp}}}(A,A') \cong \mathrm{Hom}_{\mathsf{dg}\text{-}\mathsf{mod}}(V,A')~. 
\]
On the other hand, pre-composition by the canonical inclusion 
\[
\iota_{\C}(V): \mathscr{S}(\C^*)(V) = \bigoplus_{n \geq 0} \left(\C^*(n) \otimes V^{\otimes n}\right)_{\mathbb{S}_n} \rightarrowtail \prod_{n \geq 0} \left(\mathrm{Hom}(\C(n),V^{\otimes n})\right)^{\mathbb{S}_n} = \widehat{\mathscr{S}}^c(\C)(V)
\]
induces a map 
\[
\iota_{\C}^*: \mathrm{Hom}_{\mathsf{dg}~\C^*\text{-}\mathsf{alg}}(\mathrm{Res}(A),\mathrm{Res}(A')) \longrightarrow \mathrm{Hom}_{\mathsf{dg}~\C^*\text{-}\mathsf{alg}}(\mathscr{S}(\C^*)(V),\mathrm{Res}(A')) \cong \mathrm{Hom}_{\mathsf{dg}\text{-}\mathsf{mod}}(V,A')~, 
\]
and since by Corollary \ref{cor: dense image in the product} the image of $\iota_{\C}$ is dense, then pre-composing by $\iota_{\C}$ is injective. This, in turn, implies that $\mathrm{Res}(A,A')$ is bijective when $A$ is free. The general case follows from the free case using the following. First, we use that any complete dg $\C$-algebra can be written down as a $\mathrm{U}$-split coequalizer of free dg $\C$-algebras and secondly we use that the restriction functor preserves $\mathrm{U}$-split coequalizers since it commutes with the forgetful functors. 
\end{proof}

\begin{Remark}\label{Rmk: absolute algebras are algebras over an idempotent monad}
A direct consequence of Theorem \ref{thm: restriction functor fully faithful} is that complete dg $\C$-algebras can be identified with the category of algebras over an idempotent monad in the category dg $\C^*$-algebras, which is the monad induced by the adjunction in Proposition \ref{Prop: restriction and complete absolution adjunction}.
\end{Remark}

\subsubsection{Comparison with nilpotent algebras and with completions of classical algebras}\label{subsection: comparison between the filtrations}
It is natural to wonder, particularly in the light of Theorem \ref{thm: restriction functor fully faithful}, whether the notion of nilpotence and of completion that one may define at the level of dg $\C^*$-algebras allows us to recover the category of complete dg $\C$-algebras. We will show that while nilpotent algebras coincide, the "classical" completions does \textit{not} recover absolute algebras. We start by defining the arity filtration on dg $\C^*$-algebras. 

\begin{Definition}[Arity filtration on dg $\C^*$-algebras]\label{def: arity filtration on classical algebras}
Let $B$ be a dg $\C^*$-algebra. The \textit{arity filtration} of $B$ is the decreasing filtration of given by 
\[ 
\tilde{\mathrm{F}}_n B \coloneqq \mathrm{Im}\left(\gamma_B|_{\geq n}: \bigoplus_{k \geq n} \C^*(k) \otimes_{\mathbb{S}_k} V^{\otimes k}\longrightarrow B \right)
\]
for all $n \geq 1~.$ Notice that we have 
\[
B = \tilde{\mathrm{F}}_1 B \supseteq \tilde{\mathrm{F}}_2 B \supseteq \tilde{\mathrm{F}}_3 B \supseteq \cdots \supseteq \tilde{\mathrm{F}}_n B \supseteq \cdots.
\]
\end{Definition}

Each step $\tilde{\mathrm{F}}_n B$ of the arity filtration of a dg $\C^*$-algebra $B$ defines an ideal and the quotient $B/\tilde{\mathrm{F}}_n B$ has a canonical dg $\C^*$-algebra structure induced by that of $B$.

\begin{Definition}[Arity completion of dg $\C^*$-algebras]
Let $B$ be a dg $\C^*$-algebra. Its \textit{arity-completion} is given by the limit
\[ 
\widehat{(B)}_a \coloneqq \lim_{n} B/\tilde{\mathrm{F}}_n B~,
\]
in the category of dg $\C^*$-algebras. 
\end{Definition}

An algebra $B$ is said to be \textit{arity-complete} if the canonical map $B \longrightarrow \widehat{(B)}_a$ is an isomorphism. 

\begin{Example}[$I$-adic completion]\label{Example: I-adic completion}
Let $\mathcal{C}om$ be the commutative operad, which encodes non-unital commutative algebras. For any augmented dg commutative algebra $B$, its augmentation ideal $I$ inherits a dg $\mathcal{C}om$-algebra, and the $I$-adic completion coincides with the arity completion with respect to its dg $\mathcal{C}om$-algebra structure with an added unit, meaning we have
\[
\widehat{B}_I = \left(\lim_{n} I/I^n\right) \oplus \kk ~,
\]
where the limit is precisely the augmentation ideal of $\widehat{B}_I$. 
\end{Example}

\begin{Definition}[Nilpotent dg $\C^*$-algebras]
Let $B$ be a dg $\C^*$-algebra. It is \textit{nilpotent} of there exists an natural number $n_0$ such that $B \cong B/\tilde{\mathrm{F}}_{n_0} B$. The smallest such number $n_0$ is called the \textit{nilpotency class} of the algebra $B$. 
\end{Definition}

\begin{Remark}
Any nilpotent dg $\C^*$-algebra is in particular arity complete.
\end{Remark}

\begin{Proposition}\label{Prop: équivalences entre arity nilpotentes}
The adjunction 
\[
\begin{tikzcd}[column sep=7pc,row sep=3pc]
            \mathsf{dg}~\C\text{-}\mathsf{alg}^{\mathsf{comp}} \arrow[r,"\mathrm{Res}"{name=F}, shift left=1.1ex] 
           &\mathsf{dg}~\C^*\text{-}\mathsf{alg}~, \arrow[l, shift left=.75ex, "\mathrm{cAbs}"{name=U}]
            \arrow[phantom, from=F, to=U, , "\dashv" rotate=90]
\end{tikzcd}
\]

restricts to the identity between the subcategories of arity nilpotent algebras of a fixed nilpotency class on both sides, which are therefore equivalent.  
\end{Proposition}

\begin{proof}
Let $n_0 \geq 1$ be a natural number. A dg $\C$-algebra $A$ is arity-nilpotent of nilpotency class $n_0$ if and only if its structural morphism factors through the submonad 
\[
\prod_{n \geq 0} \left(\mathrm{Hom}\left(\tau^{\leq n_0}\C(n),A^{\otimes n}\right)\right)^{\mathbb{S}_n} \cong \bigoplus_{n \geq 0}^{n_0} \C^*(n) \otimes_{\mathbb{S}_n} A^{\otimes n}~. 
\]
Likewise, a dg $\C^*$-algebra $B$ is arity-nilpotent of nilpotency class $n_0$ if and only if its structural morphism factors through 
\[
\bigoplus_{n \geq 0} \tau^{\leq n_0}\C^*(n) \otimes_{\mathbb{S}_n} B^{\otimes n} \cong \bigoplus_{n \geq 0}^{n_0} \C^*(n) \otimes_{\mathbb{S}_n} B^{\otimes n}~. 
\]
Therefore both subcategories are given as algebras over the same monad. Furthermore, the functor $\mathrm{Res}$ restricted to these subcategories can be identified with the identity functor of the category of algebras over this monad. It follows directly from Theorem \ref{thm: restriction functor fully faithful} that the functor $\mathrm{Abs}$ must also be the identity on arity-nilpotent dg $\C^*$-algebras.
\end{proof}

\begin{Proposition}\label{Prop: arity completion defines a functor}
The arity completion of a dg $\C^*$-algebra admits a canonical complete dg $\C$-algebra structure. It defines a functor
\[
\widehat{(-)}_a: \mathsf{dg}~\C^*\text{-}\mathsf{alg} \longrightarrow \mathsf{dg}~\C\text{-}\mathsf{alg}^{\mathsf{comp}}~.
\]
\end{Proposition}

\begin{proof}
By Proposition \ref{Prop: équivalences entre arity nilpotentes}, the arity completion is given as a limit of complete dg $\C$-algebras, which can be computed in the underlying category of dg modules. Since complete dg $\C$-algebras are stable under limits, the result is again a complete dg $\C$-algebra. Any map between dg $\C^*$-algebras respects the arity filtration and thus induces a map between the respective limits.
\end{proof}

\textbf{Absolute algebras and arity completions.} By Theorem \ref{thm: restriction functor fully faithful}, the category of complete dg $\C$-algebra is given as the category of algebras over the idempotent monad $\mathrm{Res}\mathrm{Abs}$, which lives in the category of dg $\C^*$-algebras. One might be tempted to think that the left adjoint $\mathrm{Abs}$ to the restriction functor $\mathrm{Res}$ is given by the arity-completion functor of dg $\C^*$-algebras of Proposition \ref{Prop: arity completion defines a functor} and thus that complete dg $\C^*$-algebras are equivalent to arity-complete dg $\C^*$-algebras. Turns out that both of these statements are false. In fact, the arity-completion functor is not even idempotent in general. 

\medskip

Let us illustrate this with a concrete example. Let us consider $\C = \mathcal{C}om^*$. Then, as explained in Example \ref{Example: I-adic completion}, dg $\mathcal{C}om$-algebras correspond to augmented dg commutative algebras and the functor in Proposition \ref{Prop: arity completion defines a functor} can be identified with the $I$-adic completion of the augmentation ideal. 

\medskip

The $I$-adic completion functor is not idempotent and therefore can not coincide with the functor $\mathrm{Abs}$. Consider $\kk[x_1,x_2,\cdots]$ the free polynomial algebra in a countable number of variables. Then its $I$-adic completion with respect to its augmentation ideal is given by the algebra of formal power series $\kk[[x_1,x_2,\cdots]]$ in a countable number of variables. This algebra is not $I$-adically complete with respect to its augmentation ideal, as it is explained in \cite[110.7 Noncomplete completion]{stackproject}. However, since any finitely generated $\kk$-algebra is Noetherian, the $I$-adic completion is indeed idempotent on finitely generated dg $\mathcal{C}om$-algebras and thus coincides with the functor $\mathrm{Abs}$ in this particular case. 

\vspace{1.5pc}

\section{Examples: contramodules and absolute associative algebras}

\vspace{1.5pc}

The goal of this section is to illustrate the new notion of an absolute algebras through various examples. First, we show that the theory of \textit{contramodules} is a particular case of the theory of absolute algebras. Then, we explore the notion of absolute associative algebras and absolute Lie algebras and explain how well-known constructions are instances of these structures.

\subsection{Contramodules}
The notion of a \textit{contramodule} over a dg counital coassociative coalgebra appears naturally as the dual definition of a comodule. This notion was introduced in the seminal work of S. Eilenberg and J. C. Moore in \cite{eilenbergmoore65}. After being almost forgotten for a number of years, this notion reemerged in the work of L. Positselski, see \cite{positselski2021contramodules} for a extensive account. In this subsection, we show that contramodules appear as algebras over cooperads concentrated in arity one. This allows us to subsume the theory of contramodules, obtaining many examples of interest of "absolute algebras". To the best of our knowledge, the duality square stated in the previous section provides also a new result in the context of contramodules.

\begin{Definition}[Contramodule]
Let $(C, \Delta, \epsilon, d_C)$ be a dg counital coassociative coalgebra. A \textit{dg} $C$-\textit{contramodule} $M$ is the data $(M,\gamma_M,d_M)$ of a  dg module $(M,d_M)$ together with a morphism of dg modules 
\[
\gamma_M: \mathrm{Hom}(C,M) \longrightarrow M~,
\]

such that the following diagrams commute
\[
\begin{tikzcd}[column sep=3pc,row sep=3pc]
\mathrm{Hom}(C \otimes C, M) \arrow[r,"\Delta^* "] \arrow[d,"\varsigma",swap]
& \mathrm{Hom}(C, M) \arrow[dd,"\gamma_M"] \\
\mathrm{Hom}(C, \mathrm{Hom}(C, M)) \arrow[d,"(\gamma_M)_*",swap]
& \\
\mathrm{Hom}(C, M) \arrow[r,"\gamma_M"]
&M ~,
\end{tikzcd} \quad \quad
\begin{tikzcd}[column sep=3pc,row sep=3pc]
M \cong \mathrm{Hom}(\kk, M)  \arrow[r,"\epsilon^*"] \arrow[rd,"\cong",swap]
&\mathrm{Hom}(C, M) \arrow[d,"\gamma_M"]\\
&M~.
\end{tikzcd}
\]
\end{Definition}

\begin{Remark}
If $(C, \Delta, \epsilon, d_C)$ is a dg counital coassociative coalgebra, one can show that the endofunctor
\[
\mathrm{Hom}(C,-): \mathsf{dg}~\mathsf{mod} \longrightarrow \mathsf{dg}~\mathsf{mod}
\]
admits a monad structure such that dg $C$-contramodules are algebras over this monad. In fact, it admits \textit{two} monad structures, depending on the choice of $\varsigma$ in the above definition. Indeed, there are two isomorphisms
\[
\mathrm{Hom}(C \otimes C, M) \cong \mathrm{Hom}(C, \mathrm{Hom}(C, M)) ~,
\]
depending on the choice of either the left or the right factor in $C \otimes C$. One should speak of \textit{left} or \textit{right} $C$-contramodules in each of these cases. We omit these subtleties when possible.
\end{Remark}

\begin{Example}
Let $C$ be a dg counital coassociative coalgebra and $V$ be a dg module. The \textit{free} dg $C$-contramodule on $V$ is given by $\mathrm{Hom}(C,V)$ with the obvious structure maps. 
\end{Example}

\begin{Example}
Let $C$ be a dg counital coassociative coalgebra and let $M$ be a right dg $C$-comodule. For any dg module $V$, the dg module $\mathrm{Hom}(M,V)$ inherits a canonical left dg $C$-contramodule structure. 
\end{Example}

\begin{Proposition} \leavevmode
\begin{enumerate}
\item The data of a dg counital coassociative coalgebra $(C, \Delta, \epsilon, d_C)$ is equivalent to the data of a dg cooperad $(\mathcal{C},\Delta_{\mathcal{C}},\epsilon, d_{\C})$ whose dg $\mathbb{S}$-module $\mathcal{C}$ is concentrated in arity one.

\medskip

\item In the case above, the definition of a dg $C$-contramodule is equivalent to the definition of a dg $\C$-algebra.
\end{enumerate}
\end{Proposition}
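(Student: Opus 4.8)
The plan is to prove both statements by restricting the relevant monoidal and monadic structures to arity one, where the composition product of $\mathbb{S}$-modules collapses to the tensor product of dg modules. For the first statement, I would begin by observing that the full subcategory of dg $\mathbb{S}$-modules concentrated in arity one is closed under the composition product $\circ$ and contains the monoidal unit $\I$. Concretely, if $M$ and $N$ are concentrated in arity one then only the weight-one summand of the composition product survives, yielding $(M \circ N)(1) \cong M(1) \otimes N(1)$ and $(M \circ N)(n) = 0$ for $n \neq 1$, while $\I(1) = \kk$. Thus $M \mapsto M(1)$ is a strong monoidal equivalence between dg $\mathbb{S}$-modules concentrated in arity one, with $(\circ, \I)$, and dg modules, with $(\otimes, \kk)$. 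Since a dg cooperad concentrated in arity one is by definition a comonoid for $(\circ, \I)$ and a dg counital coassociative coalgebra is a comonoid for $(\otimes, \kk)$, transporting comonoid structures across this equivalence proves the first equivalence, with $\Delta_{\C}$ corresponding to $\Delta$ and the cooperadic counit to $\epsilon$.

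For the second statement, I would first compute the monad $\widehat{\mathscr{S}}^c(\C)$ in this case. Since $\C$ is concentrated in arity one with $\C(1) = C$ and $\mathbb{S}_1$ trivial, the dual Schur functor reduces to $\widehat{\mathscr{S}}^c(\C)(A) = \mathrm{Hom}_{\mathbb{S}_1}(\C(1), A^{\otimes 1}) = \mathrm{Hom}(C, A)$, which is exactly the underlying endofunctor of the contramodule monad. It then remains to match the monad structures. The key point is to identify the lax monoidal comparison map of \cref{lemma: Schur lax contravariant functor}: in arity one it becomes the currying isomorphism $\varphi_{\C,\C}(A) \colon \mathrm{Hom}(C, \mathrm{Hom}(C, A)) \xrightarrow{\cong} \mathrm{Hom}(C \otimes C, A)$, whose inverse is precisely the map $\varsigma$ in the definition of a contramodule. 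Composing with $\widehat{\mathscr{S}}^c(\Delta) = \Delta^*$ recovers the monad multiplication, so the associativity diagram for a dg $\C$-algebra becomes the first (contra-associativity) diagram in the definition of a contramodule, and the monad unit induced by the cooperadic counit $\epsilon$ yields the unitality triangle. Consequently the two presentations define the same monad $\mathrm{Hom}(C, -)$ on dg modules, and their categories of algebras coincide.

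The main obstacle is the careful verification that the lax monoidal structure map $\varphi$ of \cref{lemma: Schur lax contravariant functor}, whose general construction is recalled from \cite{mathez}, genuinely restricts to the currying isomorphism in arity one, with the correct Koszul signs; this is where one must unwind the explicit formula for $\varphi$ rather than rely on formal nonsense, since the identification of $\varsigma$ with $\varphi^{-1}$ underpins the whole correspondence. I would also note that the two choices of isomorphism $\mathrm{Hom}(C \otimes C, M) \cong \mathrm{Hom}(C, \mathrm{Hom}(C, M))$ recorded in the remark above correspond exactly to the left versus right conventions for the composition product, matching the distinction between left and right $C$-contramodules.
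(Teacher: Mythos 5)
Your proposal is correct, and it is exactly the argument the paper has in mind: the paper's own proof reads ``It is a straightforward computation from the definitions,'' and what you have written is a faithful execution of that computation --- identifying arity-one concentrated $\mathbb{S}$-modules with $(\circ,\I)$ as dg modules with $(\otimes,\kk)$, reducing $\widehat{\mathscr{S}}^c(\C)$ to $\mathrm{Hom}(C,-)$, and matching the lax structure map $\varphi_{\C,\C}$ with the currying isomorphism inverse to $\varsigma$ so that the two monads, and hence their categories of algebras, coincide. Your closing remarks on signs and on the left/right convention mirroring the two choices of $\varsigma$ are also consistent with the paper's own remark following the definition of a contramodule.
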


\begin{proof}
It is a straightforward computation from the definitions.
\end{proof}

This equivalence is a great source of examples for algebras over cooperads. Moreover, it is also a great source of counter-examples.

\begin{Example}[Contramodules over formal power series]
Consider the cofree conilpotent coalgebra $\kk[t]^c$ cogenerated by a single element $t$ of degree zero. In this case, one can see that the data of $\kk[t]^c$-contramodule structure on $M$ is equivalent to the data of a structural map
\[
\gamma_M: \kk[[t]] ~\widehat{\otimes}~ M \cong \mathrm{Hom}(\kk[t]^c,M) \longrightarrow M~.
\]
which satisfies unital and associativity conditions with respect to the algebra of formal power series in one variable $\kk[[t]]$. Therefore, for any formal power series
\[
\sum_{n \geq 0} m_n \otimes t^n ~~\quad~~\text{in} ~~\quad~~ \kk[[t]] ~\widehat{\otimes}~ M~,
\]
the structural morphism $\gamma_M$ gives a well-defined image $\gamma_M\left(\sum_{n \geq 0} m_n \otimes t^n \right)$ in $M$, without \textit{presupposing} any topology on $M$.
\end{Example}

Let $C$ be a dg \textit{conilpotent} counital coassociative coalgebra and let $\overline{C}$ be its coaugmentation ideal. Any dg $C$-contramodule $M$ admits a \textit{canonical decreasing filtration} given by 
\[
\mathrm{W}_\omega M \coloneqq \mathrm{Im}\left(\gamma_M^{ \geq \omega}: \mathrm{Hom}\left(\overline{C}/\mathscr{R}_\omega \overline{C}, M\right) \longrightarrow M \right)~,
\]
where $\mathscr{R}_\omega \overline{C}$ denotes the $\omega$-stage of the coradical filtration. One says that $M$ is \textit{complete} if the canonical morphism of dg $C$-contramodules 
\[
\varphi_M: M \twoheadrightarrow \lim_{\omega} M/\mathrm{W}_\omega M
\]
is an isomorphism. Again, this is a particular case of the more general notion of canonical filtration for algebras over conilpotent cooperads given in Definition \ref{def canonical filtration}.

\begin{Counterexample}
Consider again the cofree conilpotent coalgebra $\kk[t]^c$ cogenerated by a single element $t$ of degree zero. There exists a $\kk[t]^c$-contramodule $M$ and a family of elements $\{m_n\}_{n \geq 0}$ in $M$ such that 
\[
\gamma_M\left(m_n \otimes t^n \right) = 0 \quad \text{for all} \quad n\geq 0 \quad \text{and} \quad \gamma_M\left(\sum_{n \geq 0} m_n \otimes t^n \right) \neq 0~.
\]

The contramodule $M$ is constructed as follows. Let $V = \bigoplus_{n \geq 1}\kk.e_n$ be the vector space spanned by a countable basis and let us consider the free $\kk[t]^c$-contramodule on $V$ given by $\kk[[t]] ~\widehat{\otimes}~ V$. We define the following injective endomorphism of this free contramodule 
\[
\sum_{n \geq 1}f_n(t)e_n \mapsto \sum_{n \geq 1}t^n.f_n(t)e_n~,
\]
where $\{f_n(t)\}_{n \geq 0}$ are the coefficients in $\kk[[t]]$, converging to $0$ in the topology of $\kk[[t]]$, of a generic element in this free contramodule. The contramodule $M$ is given by the cokernel of this injective endormophism. The family of elements $\{m_n\}_{n \geq 0}$ is then given by $m_n = \overline{e}_n$, where $\overline{e}_n$ is the image of the basis $e_n$ in this cokernel. Then $t_n. m_n= 0 $ is clearly zero in this cokernel, however $\sum_{n \geq 1} t_n.m_n$ is not zero. This is because the element $\sum_{n \geq 1} e_n$ is not well-defined in $\kk[[t]] ~\widehat{\otimes}~ V$. It follows that the topology induced by the canonical filtration on $M$ is not complete. See \cite[Section 1.5]{positselski2021contramodules} for more details on this construction.
\end{Counterexample}

Let $A$ be a dg unital associative algebra (viewed as a dg operad concentrated in arity one), and let $C$ be a dg conilpotent counital coassociative algebra (viewed as a conilpotent dg cooperad concentrated in arity one). Let $\alpha: C \longrightarrow A$ be a twisting morphism between them. It induces a first adjunction 

\[
\begin{tikzcd}[column sep=7pc,row sep=3pc]
           \mathsf{dg}~C\text{-}\mathsf{comod}_{l} \arrow[r,"A \otimes_{\alpha} - "{name=F}, shift left=1.1ex] 
           &\mathsf{dg}~A\text{-}\mathsf{mod}_{l} ~. \arrow[l, shift left=.75ex, "C \otimes_{\alpha} - "{name=U}]
            \arrow[phantom, from=F, to=U, , "\dashv" rotate=-90]
\end{tikzcd}
\]
\vspace{0.3pc}

between the category of left dg $C$-comodules and the category of left dg $A$-modules, where $\otimes_\alpha$ denotes the twisted tensor product, see \cite[Section 2.1]{LodayVallette12} for an account of this twisted tensor product. One can check that this adjunction is nothing but the classical bar-cobar adjunction relative to $\alpha$ when we view $A$ as a dg operad concentrated in arity one and $C$ as a dg conilpotent cooperad concentrated in arity one. When we endow the category of left dg $A$-modules with the \textit{projective model structure} and the category of left dg $C$-comodules with the transferred model structure, this becomes a Quillen adjunction. 

\begin{Remark}
In fact, when one considers the above adjunction relative to the canonical twisting morphism $\iota: C \longrightarrow \Omega(C)$, L. Positselski showed that it induces a Quillen equivalence even when the coalgebra $C$ is not conilpotent. For these results, see \cite[Section 6.7]{PositselskiTwoKinds}.
\end{Remark}

\begin{lemma}
Let $A$ be a dg unital associative algebra viewed as a dg operad concentrated in arity one. The category of dg $A$-coalgebras is isomorphic to the category of right dg $A$-modules. Furthermore, the cofree right dg $A$-module functor is given by $\mathrm{Hom}(A,-)$.
\end{lemma}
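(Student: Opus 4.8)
The plan is to observe that when the operad $A$ is concentrated in arity one, every infinite product occurring in the relevant definitions collapses to a single factor, so that a dg $A$-coalgebra is nothing but a right module. Since $A(n) = 0$ for $n \neq 1$ and $A(1) = A$, the dual Schur functor reduces to
\[
\widehat{\mathscr{S}}^c(A)(C) = \prod_{n \geq 0} \mathrm{Hom}_{\mathbb{S}_n}(A(n), C^{\otimes n}) \cong \mathrm{Hom}(A, C)~,
\]
the arity one term being the only non-trivial one. Consequently the structural map of a dg $A$-coalgebra is a morphism of dg modules $\Delta_C : C \to \mathrm{Hom}(A, C)$, which by the tensor--hom adjunction is the same datum as a morphism $\rho_C : C \otimes A \to C$, where $\rho_C(c \otimes a) = \Delta_C(c)(a)$.

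First I would translate the two coalgebra axioms into module axioms. In arity one the lax structure map of \cref{lemma: Schur lax contravariant functor} is the currying isomorphism
\[
\varphi_{A,A} : \mathrm{Hom}(A, \mathrm{Hom}(A, C)) \xrightarrow{\ \cong\ } \mathrm{Hom}(A \otimes A, C)~,
\]
and the operadic composition $\gamma_A$ is simply the multiplication $\mu_A : A \otimes A \to A$. Chasing an element $c \in C$ through the coassociativity square in the definition of a dg $\mathcal{P}$-coalgebra then yields the identity $\rho_C(c \otimes ab) = \rho_C(\rho_C(c \otimes a) \otimes b)$, that is $\rho_C \circ (\mathrm{id}_C \otimes \mu_A) = \rho_C \circ (\rho_C \otimes \mathrm{id}_A)$; that this is a \emph{right} action is dictated by the composition order of the coendomorphism operad $\mathrm{Coend}_C$, which is opposite to that of $\mathrm{End}_C$. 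Likewise the counit axiom becomes the unit law $\rho_C \circ (\mathrm{id}_C \otimes \eta) = \mathrm{id}_C$. The adjunction isomorphism $\mathrm{Hom}(C, \mathrm{Hom}(A, C)) \cong \mathrm{Hom}(C \otimes A, C)$ is natural and respects the differentials, and a morphism of dg modules is $A$-colinear precisely when it is $A$-linear; hence the assignment $\Delta_C \leftrightarrow \rho_C$ is a bijection on objects which carries colinear maps to linear maps and back, i.e.\ an isomorphism of categories.

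For the second assertion I would use \cref{thm: existence of the cofree P cog}, which presents the cofree dg $A$-coalgebra functor $\mathscr{C}(A)$ as the pullback of $\widehat{\mathscr{S}}^c(\gamma_A)$ along $\varphi_{A,A}$. As recalled above, in arity one $\varphi_{A,A}$ is not merely a monomorphism but an isomorphism; since the pullback of an isomorphism along any morphism is again an isomorphism, the projection $p_1 : \mathscr{C}(A) \to \widehat{\mathscr{S}}^c(A)$ is an isomorphism of endofunctors. Therefore $\mathscr{C}(A)(V) \cong \widehat{\mathscr{S}}^c(A)(V) \cong \mathrm{Hom}(A, V)$, and transporting this along the isomorphism of categories above identifies the cofree right dg $A$-module on $V$ with $\mathrm{Hom}(A, V)$, equipped with the action $(f \cdot a)(x) = f(ax)$.

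I expect the only delicate point to be the bookkeeping of conventions: one must check that the currying adjunction is set up so that the composition in $\mathrm{Coend}_C$ reverses the order of multiplication, thereby producing a right rather than a left $A$-module, and that no Koszul sign obstructs the matching of differentials. Both are routine once the adjunction isomorphism is fixed, so the content of the lemma resides entirely in the collapse to arity one of the dual Schur functor and of the cofree-coalgebra pullback.
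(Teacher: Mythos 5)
Your proposal is correct and takes the same route the paper intends: the paper's entire proof of this lemma is ``It is a straightforward computation,'' and your argument is exactly that computation written out. The collapse of the dual Schur functor to $\mathrm{Hom}(A,-)$ in arity one, the identification of $\varphi_{A,A}$ with the currying isomorphism (which turns the coassociativity square into the right-action axiom, with the order reversal coming from $\mathrm{Coend}_C$), and the observation that the pullback defining $\mathscr{C}(A)$ degenerates because $\varphi_{A,A}$ is invertible are precisely the details that justify the paper's claim.
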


\begin{proof}
It is a straightforward computation.
\end{proof}

\begin{Remark}
When $A$ is the group algebra $\kk[G]$ for some group $G$, the functor $\mathrm{Hom}(\kk[G],-)$ is usually called the \textit{coinduced representation}.
\end{Remark}

The twisting morphism $\alpha: C \longrightarrow A$ induces a second adjunction 

\[
\begin{tikzcd}[column sep=7pc,row sep=3pc]
           \mathsf{dg}~A\text{-}\mathsf{mod}_{r}\arrow[r," \mathrm{Hom}^{\alpha}(C \text{,}-) "{name=F}, shift left=1.1ex] 
           &\mathsf{dg}~C\text{-}\mathsf{contra}_{r}^{\mathsf{comp}} ~. \arrow[l, shift left=.75ex, "\mathrm{Hom}^{\alpha}(A \text{,}-)"{name=U}]
            \arrow[phantom, from=F, to=U, , "\dashv" rotate=-90]
\end{tikzcd}
\]
\vspace{0.3pc}

between the category of complete right dg $C$-contramodules and the category of right dg $A$-modules, where $\mathrm{Hom}^\alpha(-,-)$ denotes the twisted hom space, see \cite[Section 2.1]{LodayVallette12} for an account of this twisted hom space. Once again, one can check that this adjunction corresponds to the complete bar-cobar adjunction relative to $\alpha$. When we endow the category of right dg $A$-modules with the \textit{injective model structure} and the category of complete right dg $C$-contramodules with the transferred model structure, this becomes a Quillen adjunction. 

\medskip

The two structures defined above are compatible under the following duality square.

\begin{Proposition}
The following square diagram 
\[
\begin{tikzcd}[column sep=5pc,row sep=5pc]
\left(\mathsf{dg}~A\text{-}\mathsf{mod}_{l}\right)^{\mathsf{op}} \arrow[r,"C \otimes_{\alpha} -"{name=B},shift left=1.1ex] \arrow[d,"(-)^\circ "{name=SD},shift left=1.1ex ]
&\left(\mathsf{dg}~C\text{-}\mathsf{comod}_{l}\right)^{\mathsf{op}} \arrow[d,"(-)^*"{name=LDC},shift left=1.1ex ] \arrow[l,"A \otimes_{\alpha} -"{name=C},,shift left=1.1ex]  \\
\mathsf{dg}~A\text{-}\mathsf{mod}_{r} \arrow[r,"\mathrm{Hom}^{\alpha}(C \text{,}-)"{name=CC},shift left=1.1ex]  \arrow[u,"(-)^*"{name=LD},shift left=1.1ex ]
&\mathsf{dg}~C\text{-}\mathsf{contra}_{r}^{\mathsf{comp}} \arrow[l,"\mathrm{Hom}^{\alpha}(A \text{,}-)"{name=CB},shift left=1.1ex] \arrow[u,"(-)^\vee"{name=TD},shift left=1.1ex] \arrow[phantom, from=SD, to=LD, , "\dashv" rotate=0] \arrow[phantom, from=C, to=B, , "\dashv" rotate=-90]\arrow[phantom, from=TD, to=LDC, , "\dashv" rotate=0] \arrow[phantom, from=CC, to=CB, , "\dashv" rotate=-90]
\end{tikzcd}
\] 

is commutative and is made of Quillen adjunctions.
\end{Proposition}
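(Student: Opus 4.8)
The plan is to recognize this square as the instance of the general algebraic duality square of \cref{thm: magical square} obtained by specializing to operads and cooperads concentrated in arity one. By the Proposition identifying dg $C$-contramodules with algebras over an arity-one cooperad, together with the Lemma identifying dg $A$-coalgebras with right dg $A$-modules, we obtain a full dictionary: viewing $A$ as a dg operad and $C$ as a conilpotent curved cooperad, both concentrated in arity one, left dg $A$-modules are dg $A$-algebras, right dg $A$-modules are dg $A$-coalgebras, left dg $C$-comodules are (curved, with vanishing curvature) $C$-coalgebras, and right dg $C$-contramodules are complete curved $C$-algebras. Under this dictionary the Schur functor $\mathscr{S}(A)$ becomes $A \otimes -$ and the dual Schur functor $\widehat{\mathscr{S}}^c(A)$ becomes $\mathrm{Hom}(A,-)$, with the analogous identifications for $C$. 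The two horizontal adjunctions then become precisely the classical Bar--Cobar adjunction $A\otimes_\alpha - \dashv C \otimes_\alpha -$ and the complete Bar--Cobar adjunction $\mathrm{Hom}^\alpha(C,-) \dashv \mathrm{Hom}^\alpha(A,-)$ relative to $\alpha$, as explained above, while the two vertical adjunctions are the Sweedler duality adjunction $(-)^\circ \dashv (-)^*$ and the topological duality adjunction $(-)^\vee \dashv (-)^*$ of \cref{def: Sweedler dual functor} and \cref{def: topological dual functor}.

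First I would settle commutativity. Since $\alpha \colon C \to A$ is a curved twisting morphism between a conilpotent curved cooperad and a dg operad, \cref{thm: magical square} applies verbatim in this arity-one setting and yields the natural isomorphism between the composites of right adjoints running from the top-right to the bottom-left corner. Transporting the statement through the dictionary above gives commutativity of the square with no additional computation.

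It then remains to verify that each of the four adjunctions is Quillen. For the two horizontal Bar--Cobar adjunctions this has already been recorded above: with the projective model structure on left dg $A$-modules and its transfer onto left dg $C$-comodules, and with the injective model structure on right dg $A$-modules and its transfer onto right dg $C$-contramodules, both are Quillen adjunctions. For the two vertical duality adjunctions I would argue exactly as in \cref{lemma: Sweedler dual is a Quillen adjunction} and the proof of \cref{thm: homotopical magical square}: over the field $\kk$, the linear dual functor $(-)^*$ preserves quasi-isomorphisms and interchanges degree-wise monomorphisms and degree-wise epimorphisms. Hence $(-)^*$ is a right Quillen functor for the left column, sending fibrations of $\mathsf{dg}~A\text{-}\mathsf{mod}^{r}$ to fibrations of $\left(\mathsf{dg}~A\text{-}\mathsf{mod}^{l}\right)^{\mathsf{op}}$ while preserving all weak equivalences, and likewise for the right column relating $\mathsf{dg}~C\text{-}\mathsf{contra}^{r}$ and $\left(\mathsf{dg}~C\text{-}\mathsf{comod}^{l}\right)^{\mathsf{op}}$.

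The main obstacle is bookkeeping rather than anything conceptual: one must confirm that the model structures match up correctly across all four corners, keeping careful track of the opposite-category flips, so that the fibrations and weak equivalences against which the vertical linear-duality functors are tested are indeed those of the projective, the injective, and the two transferred module-theoretic model structures. Once these identifications are secured, the Quillen conditions reduce to the elementary exactness of $\mathrm{Hom}(-,\kk)$ over a field, already used throughout the proof of \cref{thm: homotopical magical square}.
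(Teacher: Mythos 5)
Your proposal is correct and follows essentially the same route as the paper: the paper's entire proof is the single sentence that this square is a direct consequence of Theorem \ref{thm: homotopical magical square}, i.e.\ it is the arity-one instance of the homotopical duality square, which is exactly your reduction (your use of Theorem \ref{thm: magical square} for commutativity plus the recorded horizontal Quillen adjunctions just unpacks that citation). The one point to watch is your closing claim that the Quillen conditions ``reduce to the elementary exactness of $\mathrm{Hom}(-,\kk)$'': for the right-hand column the weak equivalences of comodules and of contramodules are the transferred ones (created by $A \otimes_\alpha -$ and by $\mathrm{Hom}^{\alpha}(A,-)$ respectively, hence not quasi-isomorphisms), so preservation of them by $(-)^*$ genuinely requires the indirect argument from the proof of Theorem \ref{thm: homotopical magical square} --- commutativity of the square, the Sweedler column being Quillen, and cofibrancy of all coalgebras --- which you do invoke by reference but which cannot be compressed into an exactness statement about linear duality.
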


\begin{proof}
This is a direct consequence of Theorem \ref{thm: homotopical magical square}.
\end{proof}

One can view the adjunction 
\[
\begin{tikzcd}[column sep=7pc,row sep=3pc]
           \left(\mathsf{dg}~C\text{-}\mathsf{comod}_{l}\right)^{\mathsf{op}} \arrow[r,"(-)^*"{name=F}, shift left=1.1ex] 
           &\mathsf{dg}~C\text{-}\mathsf{contra}_{r}^{\mathsf{comp}} ~. \arrow[l, shift left=.75ex, "(-)^\vee"{name=U}]
            \arrow[phantom, from=F, to=U, , "\dashv" rotate=90]
\end{tikzcd}
\]

as a contravariant version of the \textit{co-contra correspondence} constructed of \cite{PositselskiTwoKinds}. Indeed, this adjunction identifies finitely generated cofree $C$-comodules and finitely generated $C$-contramodules: let $M$ be a dg module which is degree-wise finite dimensional and bounded above or below, we have that 

\[
(C \otimes M)^* \cong \mathrm{Hom}(C,M^*) \quad \text{and} \quad \left(\mathrm{Hom}(C,M) \right)^\vee \cong C \otimes M~.
\]
\vspace{0.2pc}

Furthermore, when both of these categories are endowed with the canonical model structure transferred using the twisting morphism $\iota: C \longrightarrow \Omega C$, then this correspondences becomes a correspondence between the finitely generated fibrant-cofibrant objects of each of these categories. Therefore it gives a contravariant version of the \textit{derived co-contra correspondence}, see again \cite{PositselskiTwoKinds} for more on this. 

\begin{Remark}
The data of a curved cooperad concentrated in arity one is equivalent to the data of a curved counital coassociative coalgebra. This subsection can be generalized \textit{mutatis mutandis} to the curved setting.
\end{Remark}

\subsection{Absolute associative algebras and absolute Lie algebras}\label{subsection: absolute associative}
In this subsection, we explore the notion of algebras over the conilpotent cooperad $\mathcal{A}ss^*$, which we call \textit{absolute associative algebras}. We compare this category with the standard notion of non-unital associative algebras, encoded by the operad $\mathcal{A}ss$. Furthermore, we also introduce \textit{absolute Lie algebras}, encoded by the cooperad $\mathcal{L}ie^*$, and construct the universal enveloping absolute algebra functor. These two examples provide supplementary intuition on the notion of an algebra over a cooperad, in one of its simplest cases. 

\begin{Definition}[dg absolute associative algebra]
A \textit{dg absolute associative algebra} $(A,\gamma_A,d_A)$ is the data of a dg $\mathcal{A}ss^*$-algebra. 
\end{Definition}

Let us unravel this definition. Recall that a dg $\mathcal{A}ss^*$-algebra $(A,\gamma_A,d_A)$ is the data of a dg module $(A,d_A)$ together with a structural morphism of dg modules
\[
\gamma_A: \displaystyle \prod_{n \geq 0} \mathrm{Hom}_{\mathbb{S}_n}(\mathcal{A}ss^*(n), A^{\otimes n}) \longrightarrow A~,
\]
which satisfies the axioms of Definition \ref{def pdg C algebra}. 

\begin{lemma}
There is an isomorphism of dg modules

\[
\displaystyle \prod_{n \geq 0} \mathrm{Hom}_{\mathbb{S}_n}(\mathcal{A}ss^*(n), A^{\otimes n}) \cong \displaystyle \prod_{n \geq 1} A^{\otimes n}~.
\]
\end{lemma}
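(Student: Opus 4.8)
The plan is to reduce the left-hand side to an arity-by-arity computation, using the explicit form of the associative operad, and then to reassemble the product and check compatibility with differentials.

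First I would recall that the associative operad satisfies $\mathcal{A}ss(n) \cong \kk[\mathbb{S}_n]$, the regular representation, for every $n \geq 1$, while in the non-unital convention used here $\mathcal{A}ss(0) = 0$ and $\mathcal{A}ss(1) = \kk$. Each $\mathcal{A}ss(n)$ is finite dimensional, so dualizing in each arity yields $\mathcal{A}ss^*(n) = \mathcal{A}ss(n)^* \cong \kk[\mathbb{S}_n]$, still the regular representation since the latter is self-dual over a field. In particular $\mathcal{A}ss^*(0) = 0$, so the $n = 0$ factor of the product on the left is $\mathrm{Hom}_{\mathbb{S}_0}(0, A^{\otimes 0}) = 0$; this already explains why the right-hand product starts at $n = 1$ and matches the index ranges.

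Next, for each $n \geq 1$ I would produce a natural isomorphism $\mathrm{Hom}_{\mathbb{S}_n}(\mathcal{A}ss^*(n), A^{\otimes n}) \cong A^{\otimes n}$. The direct route uses the standard fact that $\mathrm{Hom}_{\mathbb{S}_n}(\kk[\mathbb{S}_n], M) \cong M$, given by evaluation at the unit of $\mathbb{S}_n$, applied to $M = A^{\otimes n}$; this is manifestly natural in $A$ and requires no choices beyond the identification $\mathcal{A}ss^*(n) \cong \kk[\mathbb{S}_n]$. Alternatively, since $\kk$ has characteristic zero and invariants agree with coinvariants, one has the natural isomorphism $\mathrm{Hom}_{\mathbb{S}_n}(W^*, M) \cong W \otimes_{\mathbb{S}_n} M$ for any finite-dimensional $\mathbb{S}_n$-module $W$; taking $W = \mathcal{A}ss(n)$ recovers precisely the arity-$n$ component $\mathcal{A}ss(n) \otimes_{\mathbb{S}_n} A^{\otimes n} \cong A^{\otimes n}$ of the Schur functor $\mathscr{S}(\mathcal{A}ss)$, i.e. the (reduced) tensor algebra on $A$.

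Finally I would assemble these arity-wise isomorphisms into the claimed isomorphism of graded modules $\prod_{n \geq 0} \mathrm{Hom}_{\mathbb{S}_n}(\mathcal{A}ss^*(n), A^{\otimes n}) \cong \prod_{n \geq 1} A^{\otimes n}$ and verify that it commutes with the differentials. Since $\mathcal{A}ss$ is a discrete operad, $\mathcal{A}ss^*$ carries the zero differential, so the differential on each factor $\mathrm{Hom}_{\mathbb{S}_n}(\mathcal{A}ss^*(n), A^{\otimes n})$ is induced solely by $d_A$ through $A^{\otimes n}$, namely by $\diracComb_n(\mathrm{id}, d_A)$; this is exactly the differential on the right-hand factor $A^{\otimes n}$, so the arity-wise maps are chain maps and their product is an isomorphism of dg modules. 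I do not expect a genuine obstacle here: the only points demanding care are the finite-dimensionality in each arity (needed to dualize and to invoke self-duality of the regular representation), the bookkeeping of the vanishing $n = 0$ term, and the Koszul signs entering the differential on $A^{\otimes n}$, all of which are routine.
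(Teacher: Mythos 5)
Your proof is correct and follows essentially the same route as the paper: the paper's entire proof consists of the observation that $\mathcal{A}ss^*(n)$ is the regular representation $\kk[\mathbb{S}_n]$ for $n \geq 1$ and zero for $n = 0$, leaving the standard identification $\mathrm{Hom}_{\mathbb{S}_n}(\kk[\mathbb{S}_n], A^{\otimes n}) \cong A^{\otimes n}$ and the differential check implicit. Your version simply spells out those implicit steps (self-duality of the regular representation, evaluation at the unit, vanishing of the arity-zero factor, compatibility with $d_A$), all of which are accurate.
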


\begin{proof}
The $\mathbb{S}_n$-module $\mathcal{A}ss^*(n)$ is given by the regular representation of $\mathbb{S}_n$, for $n \geq 1$, and by $0$, when $n=0$.
\end{proof}

\begin{Remark}
This implies that there is a monad structure on the endofunctor 

\[
\prod_{n \geq 1} (-)^{\otimes n}: \mathsf{dg}~\mathsf{mod} \longrightarrow \mathsf{dg}~\mathsf{mod}~,
\]

such that a dg absolute associative algebra is the data of an algebra over this monad.
\end{Remark}

The structural map $\gamma_A$ of an dg absolute associative algebra $A$ is equivalently given by a morphism of dg modules 

\[
\gamma_A: \displaystyle \prod_{n \geq 1} A^{\otimes n} \longrightarrow A~.
\]

It associates to any series 
\[
\displaystyle \sum_{n \geq 1} \sum_{i \in \mathrm{I}_n } a_1^{(i)} \otimes \cdots \otimes a_n^{(i)}~,
\]
where the $a_j$ are elements in $A$ and where $\mathrm{I}_n$ is a finite set, a well-defined element
\[
\gamma_A \left(\displaystyle \sum_{n \geq 1} \sum_{i \in \mathrm{I}_n } a_1^{(i)} \otimes \cdots \otimes a_n^{(i)} \right) \quad \text{in} \quad A~,
\]
without \textbf{presupposing any topology} on the dg module $A$. 

\begin{Remark}
For a general dg absolute associative algebra $(A,\gamma_A,d_A)$, notice that 
\[
\gamma_A \left(\displaystyle \sum_{n \geq 1} \sum_{i \in \mathrm{I}_n } a_1^{(i)} \otimes \cdots \otimes a_n^{(i)} \right) \neq \displaystyle \sum_{n \geq 1} \sum_{i \in \mathrm{I}_n } \gamma_A\left(a_1^{(i)} \otimes \cdots \otimes a_n^{(i)}\right)~, 
\]
as the latter sum is not even well-defined in $A$.
\end{Remark}

\textbf{Differential condition:}
The condition that $\gamma_A$ is a morphism of dg modules can be rewritten as 

\begin{equation*}\label{condition: dg}
\begin{small}
\gamma_A \left(\displaystyle \sum_{n \geq 1} \sum_{i \in \mathrm{I}_n } \sum_{j = 0}^n (-1)^j  a_1^{(i)} \otimes \cdots \otimes d_A\left(a_j^{(i)}\right)\otimes \cdots \otimes a_n^{(i)} \right) = d_A \left(\gamma_A \left(\displaystyle \sum_{n \geq 1} \sum_{i \in \mathrm{I}_n } a_1^{(i)} \otimes \cdots \otimes a_n^{(i)} \right) \right)~,
\end{small}
\end{equation*}

for any series in  $\prod_{n \geq 1} A^{\otimes n}$.

\medskip

\textbf{Associativity condition:}
The condition that the structural morphism $\gamma_A$ defines an algebra over the monad $\prod_{n \geq 1} (-)^{\otimes n}$ can be rewritten as 

\begin{align*}\label{condition: associativité}
\gamma_A \left(\displaystyle \sum_{k \geq 1} \gamma_A \left(\displaystyle \sum_{i_1 \geq 1} \sum_{j_1 \in \mathrm{I}^{(1)}_{i_1} } a_1^{(1,j_1)} \otimes \cdots \otimes a_{i_1}^{(1,j_1)} \right) \otimes \cdots \otimes \gamma_A \left(\displaystyle \sum_{i_k \geq 1} \sum_{j_k \in \mathrm{I}^{(k)}_{i_k} } a_1^{(k,j_k)} \otimes \cdots \otimes a_{i_k}^{(k,j_k)} \right) \right)\\
=\gamma_A \left(\displaystyle \sum_{n \geq 1} \sum_{k \geq 1} \sum_{i_1 + \cdots + i_k = n} \sum_{j_1 \in  \mathrm{I}^{(1)}_{i_1}~, \cdots~, j_k \in \mathrm{I}^{(k)}_{i_k}} a_1^{(1,j_1)} \otimes \cdots \otimes a_{i_k}^{(1,j_1)} \otimes \cdots \otimes a_{n - i_k}^{(k,j_k)} \otimes \cdots \otimes a_{n}^{(k,j_k)} \right)~.
\end{align*}

\begin{Example}
Let $(V,d_V)$ be a dg module. The \textit{free} dg absolute associative algebra on $V$ is given by 
\[
\overline{\mathcal{T}}^{~\wedge}(V) \coloneqq \prod_{n \geq 1} V^{\otimes n}~,
\]

that is, the completed non-unital tensor algebra on $V$. If $V$ is a $\kk$-vector space of dimension $n$, this algebra is the algebra of non-commutative formal power series in $n$ variables without constant terms. 
\end{Example}

\begin{Example}[Convolution absolute associative algebra]\label{Example: convolution absolute algebra}
Let $(D,\Delta,d_D)$ be a non-counital \textit{conilpotent} dg coassociative coalgebra and let $(B,\mu,d_B)$ denote a non-unital dg associative algebra. It is well-known that the dg module of graded morphisms 
\[
\left(\mathrm{hom}(D,B),\partial \right)
\]
admits a \textit{convolution dg associative algebra structure}, where the product of $f:D \longrightarrow B$ and $g: D \longrightarrow B$ is given by 
\[
f \star g \coloneqq \left\{
\begin{tikzcd}[column sep=3.5pc,row sep=0pc]
D \arrow[r,"\Delta"] 
&D \otimes D  \arrow[r,"f ~\otimes~ g"] 
&B  \otimes B \arrow[r,"\mu"] 
&B
\end{tikzcd} 
\right\}~.
\]

This convolution product can be extended into an dg \textit{absolute} associative algebra structure: let $\mu^n$ (resp. $\Delta^n$) denote the $n$-iterated product (resp. coproduct), we define
\[
\gamma_{\mathrm{hom}(D,B)} \coloneqq  \left\{
\begin{tikzcd}[column sep=4pc,row sep=0.5pc]
\displaystyle \prod_{n \geq 1} \mathrm{hom}(D,B)^{\otimes n} \arrow[r]
&\mathrm{hom}(D,B) \\
\displaystyle \sum_{n \geq 1} \sum_{i \in \mathrm{I}_n } f_1^{(i)} \otimes \cdots \otimes f_n^{(i)} \arrow[r,mapsto]
&\displaystyle \sum_{n \geq 1} \sum_{i \in \mathrm{I}_n } \mu^n \circ \left( f_1^{(i)} \otimes \cdots f_n^{(i)} \right) \circ \Delta^n~.
\end{tikzcd} 
\right.
\]

The latter infinite sum is a well-defined linear map since, for any element $d$ in $D$, the sum is finite. Indeed, $D$ is conilpotent, and therefore the image of $d$ under iterated coproducts is eventually zero. One can check that $\gamma_{\mathrm{hom}(D,B)}$ satisfies the axioms of a dg absolute associative algebra. 

\medskip

Convolution algebras are an essential part of the Koszul duality for associative algebras defined by S. Priddy in \cite{Priddy70} since they encode twisting morphisms. For the same reason, they play a role in defining twisted tensor products as done by E. Brown in \cite{Brown}. They also appear in Quantum Algebra, see \cite{Kassel}.
\end{Example}

\begin{Remark}[Eilenberg--Mazur swindle and units]
Suppose that there was a "unital" version of absolute associative algebras, suppose $A$ was one of those and take $a \in A$. If we denote $1$ the unit of $A$, then we can consider the image of the following sum in $A$
\[
\gamma_A \left(\sum_{n \geq 1} \underbrace{1 \otimes \cdots \otimes 1}_{n-1} \otimes a \right) = \gamma_A \left(\sum_{n \geq 1} a \right)~.
\]
Now, using the associativity, we can prove that
\[
\gamma_A \left(\sum_{n \geq 1} a \right) = \gamma_A\left(\gamma_A \left(\sum_{n \geq 1} a \right) \otimes 1\right) = \gamma_A \left(\sum_{n \geq 2} a \otimes 1 \right) = \gamma_A \left(\sum_{n \geq 2} a \right)~. 
\]
Therefore we get that 
\[
\gamma_A \left(\sum_{n \geq 1} a \right) = a + \gamma_A \left(\sum_{n \geq 2} a \right) =a + \gamma_A \left(\sum_{n \geq 1} a \right)~,
\]
which implies that $a = 0$. 
\end{Remark}

Let us now compare \textit{absolute associative algebras} with (non-unital) associative algebras.

\begin{lemma}\label{lemma: morphism of monads}
The inclusion 

\[
\iota: \bigoplus_{n \geq 1} (-)^{\otimes n} \hookrightarrow \prod_{n \geq 1} (-)^{\otimes n}
\]

defines a morphism of monads in the category of dg modules.
\end{lemma}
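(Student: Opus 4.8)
The plan is to verify directly that $\iota$ satisfies the two axioms of a morphism of monads between $T \coloneqq \bigoplus_{n \geq 1}(-)^{\otimes n}$ and $S \coloneqq \prod_{n \geq 1}(-)^{\otimes n}$, namely compatibility with the units and with the multiplications. Naturality of $\iota$ is immediate, since at each dg module $V$ the map $\iota_V$ is the canonical inclusion of the direct sum $\bigoplus_{n \geq 1} V^{\otimes n}$ into the product $\prod_{n \geq 1} V^{\otimes n}$, and this is natural in $V$. For the structures at play, recall that under the identification $\mathcal{A}ss(n) \cong \mathcal{A}ss^*(n) \cong \kk[\mathbb{S}_n]$, the monad $T = \mathscr{S}(\mathcal{A}ss)$ has multiplication induced by the operadic composition of $\mathcal{A}ss$, while $S = \widehat{\mathscr{S}}^c(\mathcal{A}ss^*)$ has multiplication induced by the cooperadic decomposition of $\mathcal{A}ss^*$ through the lax structure $\varphi$ of \cref{lemma: Schur lax contravariant functor}; concretely both are given by flattening a tensor of tensors into a single tensor, as spelled out in the associativity display above. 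The unit compatibility is then transparent: both $\eta_T$ and $\eta_S$ are the insertion of $V \cong V^{\otimes 1}$ as the arity-one component, so $\iota_V \circ \eta_T = \eta_S$ holds by inspection.

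The heart of the argument is the multiplication compatibility, i.e. the commutativity of
\[
\iota_V \circ \mu^{T}_{V} = \mu^{S}_{V} \circ (\iota \ast \iota)_V~,
\]
where the horizontal composite $\iota \ast \iota \colon T \circ T \to S \circ S$ is $\iota_{S(V)} \circ T(\iota_V)$. Starting from an element of $T(T(V)) = \bigoplus_{k \geq 1}\big(\bigoplus_{n \geq 1} V^{\otimes n}\big)^{\otimes k}$, which is a finitely supported family of finite sums of pure tensors, both routes produce the flattening appearing in the associativity display. The essential point is a finiteness observation: in the formula for $\mu^S$, the component of total arity $n$ is a sum over $k \geq 1$ and over compositions $i_1 + \cdots + i_k = n$ with each $i_j \geq 1$; since these force $k \leq n$, only finitely many terms contribute to each arity, so $\mu^S$ is well-defined, and restricted to the image of $\iota \ast \iota$ it coincides term-by-term with $\iota_V \circ \mu^T$. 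Hence the square commutes on all of $T \circ T$.

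The main obstacle is purely the bookkeeping: one must match the combinatorics of the two flattening maps and confirm that the lax comparison $\varphi_{\mathcal{A}ss^*, \mathcal{A}ss^*}$ underlying $\mu^S$ restricts, on finitely supported families, to the concatenation product defining $\mu^T$. Conceptually this is forced, because $\mathcal{A}ss$ and $\mathcal{A}ss^*$ are linear duals with dual (co)composition and self-dual arity pieces $\kk[\mathbb{S}_n]$, so $\iota$ is exactly the comparison $\mathscr{S}(\mathcal{A}ss) \to \widehat{\mathscr{S}}^c(\mathcal{A}ss^*)$ coming from this duality; once the finiteness is in hand there are no convergence subtleties, and the verification reduces to the classical fact that the reduced tensor algebra embeds into the completed one as a subalgebra for the concatenation product.
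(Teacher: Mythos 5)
Your proposal is correct and takes essentially the same approach as the paper: the paper's own proof is the one-line ``It is straightforward to check,'' and your write-up is precisely that check carried out in full --- naturality of the inclusion, unit compatibility, and compatibility of the two flattening multiplications, with the key finiteness point (each $i_j \geq 1$ forces $k \leq n$, so each arity component of $\mu^S$ is a finite sum) correctly identified. Nothing needs to be corrected.
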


\begin{proof}
It is straightforward to check. 
\end{proof}

\begin{Proposition}\label{prop: adjunction absolute envelope}
There is an adjunction 

\[
\begin{tikzcd}[column sep=7pc,row sep=3pc]
            \mathsf{dg}~\mathsf{abs}~\mathsf{assoc}\text{-}\mathsf{alg}\arrow[r,"\mathrm{Res}"{name=F}, shift left=1.1ex] 
           &\mathsf{dg}~\mathsf{assoc}\text{-}\mathsf{alg}~, \arrow[l, shift left=.75ex, "\mathrm{Abs}"{name=U}]
            \arrow[phantom, from=F, to=U, , "\dashv" rotate=90]
\end{tikzcd}
\]

between the category of dg absolute associative algebras and the category of dg associative algebras.
\end{Proposition}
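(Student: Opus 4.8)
The plan is to recognize both categories as categories of algebras over a monad on $\mathsf{dg}~\mathsf{mod}$ and to obtain $\mathrm{Abs}$ as the left adjoint of the restriction functor induced by the monad morphism of \cref{lemma: morphism of monads}. Write $T \coloneqq \bigoplus_{n \geq 1}(-)^{\otimes n}$ and $\widehat{T} \coloneqq \prod_{n \geq 1}(-)^{\otimes n}$ for the two monads, so that dg associative algebras are precisely $T$-algebras and, by the arity computation preceding this statement, dg absolute associative algebras are precisely $\widehat{T}$-algebras. The monad morphism $\iota \colon T \hookrightarrow \widehat{T}$ of \cref{lemma: morphism of monads} then defines $\mathrm{Res}$ by restriction of structure: given a dg absolute associative algebra $(A,\gamma_A,d_A)$, precomposing $\gamma_A$ with $\iota_A$ endows $A$ with a dg associative algebra structure, and this assignment is visibly functorial.

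First I would assemble the commutative square of forgetful functors
\[
\begin{tikzcd}[column sep=5pc,row sep=4pc]
\mathsf{dg}~\mathsf{abs}~\mathsf{assoc}\text{-}\mathsf{alg} \arrow[r,"\mathrm{Res}"] \arrow[d,"U_{\widehat{T}}",swap]
&\mathsf{dg}~\mathsf{assoc}\text{-}\mathsf{alg} \arrow[d,"U_T"] \\
\mathsf{dg}~\mathsf{mod} \arrow[r,"\mathrm{id}",swap]
&\mathsf{dg}~\mathsf{mod}~,
\end{tikzcd}
\]
which commutes on the nose since restriction along $\iota$ leaves the underlying dg module unchanged, i.e. $U_T \circ \mathrm{Res} = U_{\widehat{T}}$. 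Both vertical functors are monadic by definition, their left adjoints being the free $T$-algebra and free $\widehat{T}$-algebra functors, and all four categories are complete and cocomplete. With this square in hand I would apply the Adjoint Lifting Theorem \cite[Theorem 2]{AdjointLifting}, exactly as in the proofs of \cref{prop: adjoint à droite} and \cref{prop: adjoint à gauche dual topo}: the bottom arrow is the identity, which trivially admits a left adjoint, so the theorem lifts it to a left adjoint $\mathrm{Abs}$ of $\mathrm{Res}$, yielding the desired adjunction $\mathrm{Abs} \dashv \mathrm{Res}$. The explicit formula furnished by the proof of \emph{loc. cit.} computes, for a dg associative algebra $(B,b)$, the absolute envelope $\mathrm{Abs}(B)$ as the reflexive coequalizer in $\widehat{T}$-algebras of the pair $\widehat{T}(TB) \rightrightarrows \widehat{T}(B)$ whose two legs are induced respectively by the structure map $b$ and by $\iota$ followed by the multiplication of $\widehat{T}$; the common section witnessing reflexivity is $\widehat{T}(\eta^T_B)$.

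The only substantive point, and the step I expect to carry the weight of the argument, is the existence of the relevant colimits, namely reflexive coequalizers, in the category of dg absolute associative algebras. This is delicate precisely because $\widehat{T}$ involves an infinite product and so does not manifestly preserve the colimits computed in $\mathsf{dg}~\mathsf{mod}$; the forgetful functor $U_{\widehat{T}}$ therefore need not create them in the naive way. This is exactly the content of the cocompleteness of the category of dg absolute associative algebras, which we take as granted (as recorded in the introduction). Everything else is formal bookkeeping around the monad morphism of \cref{lemma: morphism of monads} and the commutativity of the square above.
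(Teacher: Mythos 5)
Your proposal is correct and takes essentially the same approach as the paper: the paper's entire proof is ``This follows from Lemma \ref{lemma: morphism of monads}'', i.e.\ the adjunction induced by the monad morphism $\iota$, and the explicit reflexive-coequalizer formula for $\mathrm{Abs}(B)$ that the paper records immediately after the proposition is exactly the one you derive (the same Adjoint Lifting Theorem citation the paper uses for its duality adjunctions). Your identification of the existence of reflexive coequalizers in $\mathsf{dg}~\mathsf{abs}~\mathsf{assoc}\text{-}\mathsf{alg}$ as the one substantive input is the right diagnosis, and it is covered by the cocompleteness of this category that the paper takes as granted.
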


\begin{proof}
This follows from Lemma \ref{lemma: morphism of monads}.
\end{proof}

Let us describe these functors. Let $(A,\gamma_A,d_A)$ be a dg absolute associative algebra. Its restriction algebra $(\mathrm{Res}(A), \mathrm{Res}(\gamma_A), d_A)$ is given by the dg module $(A,d_A)$ together with the structural map
\[
\begin{tikzcd}
\displaystyle \mathrm{Res}(\gamma_A): \bigoplus_{n \geq 1} A^{\otimes n} \arrow[r,"\iota_A"]
&\displaystyle \prod_{n \geq 1} A^{\otimes n} \arrow[r,"\gamma_A"]
&A~,
\end{tikzcd}
\]
which endows $\mathrm{Res}(A)$ with a dg associative algebra structure. Let $(B,\gamma_B,d_B)$ be a dg associative algebra, the image $\mathrm{Abs}(B)$ is given by the following coequalizer 
\[
\begin{tikzcd}[column sep=4pc,row sep=4pc]
\mathrm{Coeq}\Bigg(\displaystyle \prod_{n \geq 1} \Bigg(\bigoplus_{k \geq 1} B^{\otimes k}\Bigg)^{\otimes n} \arrow[r,"\prod_{n \geq 0} (\gamma_B)^{\otimes n}",shift right=1.1ex,swap]  \arrow[r,"\psi_B"{name=SD},shift left=1.1ex ]
&\prod_{n \geq 1} B^{\otimes n}\Bigg)~,
\end{tikzcd}
\]

in the category of dg modules, where $\psi_B$ is given by
\[
\begin{tikzcd}[column sep=4pc,row sep=4pc]
\displaystyle \prod_{n \geq 1} \left(\bigoplus_{k \geq 1} B^{\otimes k}\right)^{\otimes n} \arrow[r,"\prod_{n \geq 0} (\iota_B)^{\otimes n}"]
&\displaystyle\prod_{n \geq 1} \left(\prod_{k \geq 1} B^{\otimes k}\right)^{\otimes n} \arrow[r, "\gamma_{\prod_{k \geq 1} B^{\otimes k}}"]
&\displaystyle \prod_{n \geq 1} B^{\otimes n}~.
\end{tikzcd}
\]

\textbf{Completeness:}
Any dg absolute associative algebra $(A,\gamma_A,d_A)$ comes equipped with a canonical decreasing filtration given by 
\[
\mathrm{W}_\omega A \coloneqq \mathrm{Im}\left(\gamma_A^{\geq \omega}: \prod_{n \geq \omega + 1} A^{\otimes n} \longrightarrow A \right)~.
\]

This filtration is the same filtration defined in Definition \ref{def canonical filtration} since the cooperad $\mathcal{A}ss^*$ is a binary cooperad. A dg absolute associative algebra $(A,\gamma_A,d_A)$ is therefore said to be \textit{complete} if the canonical epimorphism
\[
\varphi_A: A \twoheadrightarrow \lim_{\omega} A/\mathrm{W}_\omega A
\]

is an isomorphism of dg absolute associative algebras. Composing the adjunction of Proposition \ref{prop: adjunction absolute envelope} with the completion functor produces an adjunction 
\[
\begin{tikzcd}[column sep=7pc,row sep=3pc]
            \mathsf{dg}~\mathsf{abs}~\mathsf{assoc}\text{-}\mathsf{alg}^{\mathsf{comp}}\arrow[r,"\mathrm{Res}"{name=F}, shift left=1.1ex] 
           &\mathsf{dg}~\mathsf{assoc}\text{-}\mathsf{alg}~, \arrow[l, shift left=.75ex, "\mathrm{cAbs}"{name=U}]
            \arrow[phantom, from=F, to=U, , "\dashv" rotate=90]
\end{tikzcd}
\]
between complete dg absolute associative algebras and dg associative algebras. By applying Theorem \ref{thm: restriction functor fully faithful}, we get that the restriction functor $\mathrm{Res}$ here is fully faithful. Thus complete dg absolute associative algebras are a reflective subcategory of dg associative algebras.

\begin{Remark}
Let $(A,\gamma_A,d_A)$ be a complete dg absolute associative algebra, then  
\[
\gamma_A \left(\displaystyle \sum_{n \geq 1} \sum_{i \in \mathrm{I}_n } a_1^{(i)} \otimes \cdots \otimes a_n^{(i)} \right) = \displaystyle \sum_{n \geq 1} \sum_{i \in \mathrm{I}_n } \gamma_A\left(a_1^{(i)} \otimes \cdots \otimes a_n^{(i)}\right)~, 
\]
using the completeness of $A$. 
\end{Remark}

\textbf{Completion of associative algebras.} Similarly, when $(B,\gamma_B,d_B)$ is a dg associative algebra, one has a canonical decreasing filtration as well given by
\[
\mathrm{F}_\omega B \coloneqq \mathrm{Im}\left(\gamma_B^{\geq \omega}: \bigoplus_{n \geq \omega + 1} B^{\otimes n} \longrightarrow B \right)~.
\]

A dg associative algebra $(B,\gamma_B,d_B)$ is said to be \textit{complete} if the canonical morphism 
\[
\lambda_B: B \longrightarrow \widehat{B} = \lim_{\omega} B/ \mathrm{F}_\omega B 
\]
is an isomorphism of dg associative algebras, where the right hand side $\widehat{B}$ is its \textit{completion}.

\medskip

\textbf{Comparison between the two notions of completion.} So, since on the one hand, one has complete dg absolute associative algebras and on the other, one can complete dg associative algebras, it is natural to wonder if these two constructions agree. We show that while both notions of nilpotency agree, these two categories are not equivalent in general.

\begin{Definition}[Nilpotent dg absolute associative algebra]
A dg absolute associative algebra $(A,\gamma_A,d_A)$ is said to be \textit{nilpotent} if there exists $\omega_{0} \geq 1$ such that $\mathrm{W}_{\omega_0} A = 0$. The integer $\omega_0$ is called the \textit{nilpotency degree}.
\end{Definition}

\begin{Remark}
A nilpotent dg absolute associative algebra is in particular complete.
\end{Remark}

\begin{Definition}[Nilpotent dg associative algebra]
A dg associative algebra $(B,\gamma_B,d_B)$ is said to be \textit{nilpotent} if there exists $\omega_{0} \geq 1$ such that $\mathrm{F}_{\omega_0} B = 0$. The integer $\omega_0$ is called the nilpotency degree.
\end{Definition}

\begin{Remark}
This definition coincides with the standard notion of nilpotency for associative algebras present in the literature.
\end{Remark}

\begin{Proposition}\label{prop: les nilpotentes sont des absolues}
The data of a nilpotent dg absolute associative algebra is equivalent to the data of a nilpotent dg associative algebra with the same nilpotency degree.
\end{Proposition}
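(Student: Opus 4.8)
The plan is to prove that the adjunction $\mathrm{Abs}\dashv\mathrm{Res}$ of \cref{prop: adjunction absolute envelope} restricts to an equivalence (in fact an isomorphism) of categories between the full subcategories of nilpotent objects on both sides, and that this equivalence preserves the nilpotency degree. The conceptual reason is simple: the only difference between an absolute and an ordinary associative structure is the use of the product $\prod_{n\geq 1}(-)^{\otimes n}$ instead of the coproduct $\bigoplus_{n\geq 1}(-)^{\otimes n}$ (see \cref{lemma: morphism of monads}), and on the finite truncation $\bigoplus_{n=1}^{\omega_0}(-)^{\otimes n}=\prod_{n=1}^{\omega_0}(-)^{\otimes n}$ these two functors coincide. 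Nilpotency is precisely what forces the structural maps to factor through such a truncation, so I would organize the whole argument around this factorization.

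First I would establish the key reduction for a nilpotent dg absolute associative algebra $(A,\gamma_A,d_A)$ of degree $\omega_0$. Since $\mathrm{W}_{\omega_0}A=0$, the map $\gamma_A$ vanishes on the sub-dg module $\prod_{n\geq\omega_0+1}A^{\otimes n}$; writing an arbitrary series as the sum of its truncation in degrees $\leq\omega_0$ and of its tail, linearity then shows that $\gamma_A=\bar\gamma_A\circ\pi_{\omega_0}$ factors through the projection $\pi_{\omega_0}\colon\prod_{n\geq1}A^{\otimes n}\twoheadrightarrow\bigoplus_{n=1}^{\omega_0}A^{\otimes n}$. Feeding into the associativity axiom of \cref{def pdg C algebra} elements supported in a single tensor degree, one obtains by induction that the restriction of $\gamma_A$ to $A^{\otimes n}$ equals the $n$-fold iterated product $\mu^{(n)}$, where $\mu\coloneqq\gamma_A|_{A^{\otimes 2}}$ is associative. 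Hence $\gamma_A$ is entirely determined by the binary product $\mu$, and under this description one checks directly that $\mathrm{W}_\omega A=\mathrm{F}_\omega\mathrm{Res}(A)$ for every $\omega$. In particular $\mathrm{Res}(A)$ is nilpotent of the same degree $\omega_0$, and since a morphism of dg modules commutes with the $\gamma$'s if and only if it commutes with $\mu$, the functor $\mathrm{Res}$ is fully faithful when restricted to the nilpotent subcategories.

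It then remains to prove essential surjectivity, i.e. that every nilpotent dg associative algebra lies in the image of $\mathrm{Res}$. Given such an algebra $(B,\mu,d_B)$ of degree $\omega_0$, so that $\mu^{(\omega_0+1)}=0$, I would endow the dg module $B$ with the absolute structure $\gamma_B\coloneqq\big(\sum_{n=1}^{\omega_0}\mu^{(n)}\big)\circ\pi_{\omega_0}$. Checking that this is a morphism of dg modules and that $\mathrm{Res}$ returns the original algebra is immediate; the substantive point, which I expect to be the main obstacle, is verifying that $\gamma_B$ satisfies the absolute associativity axiom. This should reduce to the ordinary associativity of $\mu$ together with the vanishing $\mu^{(n)}=0$ for $n>\omega_0$: any composite occurring in the axiom that concatenates tensors to a total length exceeding $\omega_0$ is killed on both sides, either by the truncation $\pi_{\omega_0}$ or because the corresponding iterated product vanishes, so that the axiom collapses to the finite list of associativity relations already satisfied by $\mu$.

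Combining the two previous steps, $\mathrm{Res}$ is fully faithful and essentially surjective from nilpotent dg absolute associative algebras to nilpotent dg associative algebras and preserves the nilpotency degree; equivalently, the assignments $A\mapsto(A,d_A,\gamma_A|_{A^{\otimes 2}})$ and $B\mapsto\big(B,d_B,(\sum_{n}\mu^{(n)})\circ\pi_{\omega_0}\big)$ are mutually inverse isomorphisms of categories. This yields the claimed equivalence of data with matching nilpotency degree.
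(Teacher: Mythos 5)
Your proof is correct and rests on exactly the same idea as the paper's: nilpotency of degree $\omega_0$ forces the structural map to factor through a finite truncation, where $\prod_{n}(-)^{\otimes n}$ and $\bigoplus_{n}(-)^{\otimes n}$ coincide, so the absolute and ordinary structures become the same data. The paper compresses this into a one-line isomorphism of truncated monads, while you unpack it into an explicit isomorphism of categories via $\mathrm{Res}$ (factorization of $\gamma_A$, identification of its components with the iterated binary products, and the converse construction) --- a detailed elaboration of the same argument rather than a different route.
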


\begin{proof}
Let $\omega_0 \geq 1$. A nilpotent dg absolute associative algebra $(A,\gamma_A,d_A)$ with nilpotency degree $\omega_0$ amounts to the data of an algebra over the monad $\prod_{n = 1}^{\omega_0 + 1} (-)^{\otimes n}$. On the other hand, a nilpotent dg associative algebra $(B,\gamma_B,d_B)$ amounts to the data of an algebra over the monad $\bigoplus_{n = 1}^{\omega_0 + 1} (-)^{\otimes n}$. There is an isomorphism of monads 
\[
\prod_{n = 1}^{\omega_0 + 1} (-)^{\otimes n} \cong \bigoplus_{n = 1}^{\omega_0 + 1} (-)^{\otimes n}~,
\]
since the product only involves a finite amount of terms.
\end{proof}

\begin{Remark}\label{Remark: nilpotent Yoneda}
The image of a nilpotent dg associative algebra $(B,\gamma_B,d_B)$ via the complete absolute envelope functor $\mathrm{cAbs}$ is simply given by $(B,\gamma_B,d_B)$ considered as a complete dg absolute associative algebra. This follows from the general case treated in Proposition \ref{Prop: équivalences entre arity nilpotentes}.
\end{Remark}

\begin{Corollary}
The completion of dg associative algebras defines a functor 
\[
\widehat{(-)} : \mathsf{dg}~\mathsf{assoc}\text{-}\mathsf{alg} \longrightarrow\mathsf{dg}~\mathsf{abs}~\mathsf{assoc}\text{-}\mathsf{alg}^{\mathsf{comp}}
\]
from the category of dg associative algebras to the category of complete dg absolute associative algebras. 
\end{Corollary}

\begin{proof}
Follows from the fact that for any dg associative algebra $B$, its completion 
\[
\widehat{B} = \lim_{\omega} B/ \mathrm{F}_\omega B
\]
is given as a limit of nilpotent dg absolute associative algebras, which are complete dg absolute associative algebras, using that complete dg absolute associative algebras are stable under limits. See Proposition \ref{Prop: arity completion defines a functor} for the general case.
\end{proof}

So the completion of a dg associative algebra is always a complete dg absolute associative algebra. However, the inverse is not always true, see Subsection \ref{subsection: comparison between the filtrations} for an explanation of the general case. In terms of filtrations, there is a simple comparison between the two filtrations, but they do not always agree.  

\begin{Proposition}
Let $(A,\gamma_A,d_A)$ be a dg absolute associative algebra. There is a monomorphism of dg modules 
\[
\mathrm{F}_\omega \mathrm{Res}(A) \rightarrowtail \mathrm{W}_\omega A~.
\]

Therefore, if $A$ is a complete dg absolute associative algebra, the topology induced by the canonical filtration on $(\mathrm{Res}(A), \mathrm{Res}(\gamma_A), d_A)$ is separated. 
\end{Proposition}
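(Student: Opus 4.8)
The plan is to recognize that the claimed monomorphism is nothing but an inclusion of sub-dg-modules of $A$, and that the separatedness is an immediate consequence of the completeness hypothesis together with this inclusion. First I would unwind the two filtrations. By definition $\gamma_{\mathrm{Res}(A)}^{\geq \omega}$ is the restriction of $\mathrm{Res}(\gamma_A) = \gamma_A \circ \iota_A$ to the summand $\bigoplus_{n \geq \omega + 1} A^{\otimes n}$, and the inclusion of monads $\iota_A$ sends this summand into the corresponding factor $\prod_{n \geq \omega + 1} A^{\otimes n}$. Hence $\gamma_{\mathrm{Res}(A)}^{\geq \omega}$ factors as
\[
\bigoplus_{n \geq \omega + 1} A^{\otimes n} \hookrightarrow \prod_{n \geq \omega + 1} A^{\otimes n} \xrightarrow{\gamma_A^{\geq \omega}} A~.
\]
Since precomposing a map with another can only shrink its image, we get $\mathrm{F}_\omega \mathrm{Res}(A) = \mathrm{Im}(\gamma_{\mathrm{Res}(A)}^{\geq \omega}) \subseteq \mathrm{Im}(\gamma_A^{\geq \omega}) = \mathrm{W}_\omega A$, and the desired monomorphism is exactly this inclusion of submodules of $A$.

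For the separatedness statement, I would use that the topology induced by a decreasing filtration is separated precisely when the filtration has trivial intersection. The inclusions just established give
\[
\bigcap_{\omega} \mathrm{F}_\omega \mathrm{Res}(A) \subseteq \bigcap_{\omega} \mathrm{W}_\omega A~,
\]
so it suffices to show that the right-hand side vanishes. An element of $A$ lies in $\ker \varphi_A$ if and only if it maps to zero in every quotient $A/\mathrm{W}_\omega A$, that is, $\ker \varphi_A = \bigcap_\omega \mathrm{W}_\omega A$. As $A$ is complete, $\varphi_A$ is an isomorphism and in particular injective, so $\bigcap_\omega \mathrm{W}_\omega A = 0$. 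Combining the two displays yields $\bigcap_\omega \mathrm{F}_\omega \mathrm{Res}(A) = 0$, which is exactly the separatedness of the canonical filtration on $\mathrm{Res}(A)$.

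There is essentially no hard step here; the entire content is the correct bookkeeping identifying $\mathrm{F}_\bullet \mathrm{Res}(A)$, built from finite sums, as sitting inside $\mathrm{W}_\bullet A$, built from series, which is precisely what the monad morphism $\iota$ of Lemma \ref{lemma: morphism of monads} encodes. The only point I would take care to state explicitly is that $\ker \varphi_A = \bigcap_\omega \mathrm{W}_\omega A$, so that completeness of $A$ translates directly into the vanishing of this intersection; once that is in place, the separatedness of $\mathrm{Res}(A)$ follows formally from the nesting of the two filtrations.
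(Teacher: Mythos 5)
Your proposal is correct and follows essentially the same route as the paper: identify $\mathrm{F}_\omega \mathrm{Res}(A)$ (finite sums factoring through the inclusion $\iota_A$) as a submodule of $\mathrm{W}_\omega A$, then use completeness of $A$ to get $\bigcap_\omega \mathrm{W}_\omega A = 0$ and hence separatedness of the nested filtration. Your explicit identification $\ker \varphi_A = \bigcap_\omega \mathrm{W}_\omega A$ is exactly the step the paper leaves implicit, so this is just a more detailed write-up of the same argument.
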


\begin{proof}
It is straightforward to notice that $\mathrm{F}_\omega \mathrm{Res}(A)$ consists of finite sums of products of arity greater than $\omega +1$, hence it is included in $\mathrm{W}_\omega A$. For the second point, notice that if $(A,\gamma_A,d_A)$ is complete, then 

\[
\bigcap_{\omega \geq 0}\mathrm{F}_\omega \mathrm{Res}(A) \hookrightarrow \bigcap_{\omega \geq 0}\mathrm{W}_\omega A = \{0\}~,
\]
hence the canonical morphism of dg associative algebras 
\[
\lambda_{\mathrm{Res}(A)}: \mathrm{Res}(A) \longrightarrow \lim_{\omega} \left(\mathrm{Res}(A)/\mathrm{F}_\omega \mathrm{Res}(A)\right)
\]
is a monomorphism and the topology induced by $\mathrm{F}_\omega \mathrm{Res}(A)$ is separated.
\end{proof}

\begin{Counterexample}
Suppose $(A,\gamma_A,d_A)$ is a complete dg absolute associative algebra, then $\mathrm{Res}(A)$ might not be complete as a dg associative algebra. This is for instance the case of the free absolute associative algebra on a $\kk$-vector space $V \cong \bigoplus_{i \geq 1} \kk.x_i$ with a countable basis. Its restriction algebra $\mathrm{Res}(\prod_{n \geq  1} V^{\otimes n})$ is not complete as a dg associative algebra. Indeed, one can check that the canonical morphism $\lambda_{\prod_{n \geq 1}V^{\otimes n}}$ is not an epimorphism in this case, as formal power series 
\[
\sum_{i \geq 1} x_i^{i+1} = x_1^2 + x_2^3 + x_3^4 + \cdots 
\]
is in $\mathrm{W}_2 A$ but not in $\mathrm{F}_2 \mathrm{Res}(A)$. This is also very similar to what happens in \cite[110.7 Noncomplete completion]{stackproject}.
\end{Counterexample}

\textbf{A duality square for dg associative algebras.} Let us describe explicitly a duality square one can construct in order to relate dg associative algebras with dg absolute associative algebras.

\medskip

In the following, we consider dg associative algebras endowed with the standard model structure where weak equivalences are given by quasi-isomorphisms and fibrations by epimorphisms. We endow conilpotent dg coassociative coalgebras with the transferred model structure via the bar-cobar adjunction constructed by K. Lefèvre-Hasegawa in \cite{LefevreHasegawa03}. We endow \textit{non-necessarily conilpotent} dg coassociative coalgebras with the model structure constructed by E. Getzler and P. Goerss in \cite{GetzlerGoerss99}, where weak equivalences are given by quasi-isomorphisms and cofibrations by monomorphisms. This latter structure can be obtained directly using the coassociative coalgebra structure on the interval object of dg modules and the theory developed in \cite{lefttransfer}. Finally, this structure can be transferred via the complete bar-cobar adjuntion onto dg absolute associative algebras, as it was shown in \cite{grignoulejay18}.

\begin{Proposition}
The square of Quillen adjunctions 
\[
\begin{tikzcd}[column sep=5pc,row sep=5pc]
\left(\mathsf{dg}~\mathsf{assoc}\text{-}\mathsf{alg}\right)^{\mathsf{op}} \arrow[r,"\mathrm{B}^{\mathsf{op}}"{name=B},shift left=1.1ex] \arrow[d,"(-)^\circ "{name=SD},shift left=1.1ex ]
&\left(\mathsf{dg}~\mathsf{coassoc}\text{-}\mathsf{coalg}^{\mathsf{conil}}\right)^{\mathsf{op}} \arrow[d,"(-)^*"{name=LDC},shift left=1.1ex ] \arrow[l,"\Omega^{\mathsf{op}}"{name=C},,shift left=1.1ex]  \\
\mathsf{dg}~\mathsf{coassoc}\text{-}\mathsf{coalg} \arrow[r,"\widehat{\Omega} "{name=CC},shift left=1.1ex]  \arrow[u,"(-)^*"{name=LD},shift left=1.1ex ]
&\mathsf{dg}~\mathsf{abs}~\mathsf{assoc}\text{-}\mathsf{alg}^{\mathsf{comp}}~, \arrow[l,"\widehat{\mathrm{B}}"{name=CB},shift left=1.1ex] \arrow[u,"(-)^\vee"{name=TD},shift left=1.1ex] \arrow[phantom, from=SD, to=LD, , "\dashv" rotate=0] \arrow[phantom, from=C, to=B, , "\dashv" rotate=-90]\arrow[phantom, from=TD, to=LDC, , "\dashv" rotate=0] \arrow[phantom, from=CC, to=CB, , "\dashv" rotate=-90]
\end{tikzcd}
\] 
commutes in the following sense: right adjoints going from the top right to the bottom left are naturally isomorphic.
\end{Proposition}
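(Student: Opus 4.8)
The plan is to recognize this square as the associative-algebra instance of the general duality square of Theorem \ref{thm: magical square}, and then to establish the Quillen property of each of its four adjunctions by hand, since the global hypotheses of Theorem \ref{thm: homotopical magical square} are not literally available here. Concretely, I would take $\mathcal{C}$ to be the Koszul dual cooperad of $\mathcal{A}ss$ — which is conilpotent and, since $\mathcal{A}ss$ is augmented, a genuine dg cooperad rather than a curved one — together with the Koszul twisting morphism $\kappa : \mathcal{C} \longrightarrow \mathcal{A}ss$. Under the standard identifications ($\mathcal{A}ss$-algebras are dg associative algebras, $\mathcal{C}$-coalgebras are conilpotent dg coassociative coalgebras, $\mathcal{A}ss$-coalgebras are the non-necessarily conilpotent dg coassociative coalgebras, and complete $\mathcal{C}$-algebras are complete dg absolute associative algebras) the square in the statement is exactly the square of Theorem \ref{thm: magical square} relative to $\kappa$. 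The commutativity assertion is therefore immediate from that theorem, with no further work.

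It then remains to upgrade the four adjunctions to Quillen adjunctions. The two horizontal ones are handled by citation: the top one is the classical Bar--Cobar adjunction of Lefèvre-Hasegawa \cite{LefevreHasegawa03}, a Quillen equivalence between dg associative algebras and conilpotent dg coassociative coalgebras for the fixed model structures, and the bottom one is the complete Bar--Cobar adjunction of \cite{grignoulejay18}, which is Quillen once the Getzler--Goerss structure on dg coassociative coalgebras \cite{GetzlerGoerss99} has been transferred onto complete dg absolute associative algebras, as recalled just before the statement. For the two vertical linear-duality adjunctions I would argue exactly as in Lemma \ref{lemma: Sweedler dual is a Quillen adjunction} and in the proof of Theorem \ref{thm: homotopical magical square}: over a field linear duality is exact, hence preserves quasi-isomorphisms, and it sends degree-wise monomorphisms to degree-wise epimorphisms. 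Since in both the Lefèvre-Hasegawa and the Getzler--Goerss structures the cofibrations are precisely the degree-wise monomorphisms, so that every coalgebra is cofibrant, this shows directly that the left-hand Sweedler adjunction is Quillen; for the right-hand topological-dual adjunction I would transport the conclusion using the commutativity of the square together with a two-out-of-three argument, following verbatim the final paragraph of the proof of Theorem \ref{thm: homotopical magical square}.

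The main obstacle, and the only place where this proposition is not a formal corollary of the general theory, is that $\mathcal{A}ss$ is \emph{not} a cofibrant dg operad, so Theorem \ref{thm: model structure on P-cog} and hence Theorem \ref{thm: homotopical magical square} do not apply directly. The careful point is to check that replacing the left-transferred model structure on $\mathcal{A}ss$-coalgebras by the Getzler--Goerss structure does not disturb any compatibility used in the proof of Theorem \ref{thm: homotopical magical square}. This reduces to two facts, both of which hold here: first, that the Getzler--Goerss and Lefèvre-Hasegawa structures have monomorphisms as cofibrations, which is exactly what the vertical Quillen argument needs in order to conclude that every object is cofibrant and that linear duality preserves the relevant (co)fibrations; and second, that the transfer of the Getzler--Goerss structure along the complete Bar--Cobar adjunction exists, which is furnished by \cite{grignoulejay18}. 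Once these two points are in place, every step of the proof of Theorem \ref{thm: homotopical magical square} goes through \emph{mutatis mutandis}, and the proposition follows.
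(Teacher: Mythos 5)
Your proposal is correct and follows essentially the same route as the paper: the paper's proof consists precisely of citing Theorem \ref{thm: magical square} applied to the Koszul twisting morphism $\kappa: \mathcal{A}ss^{\ac} \longrightarrow \mathcal{A}ss$ for the commutativity, and then asserting that the Quillen property follows by running the arguments of Lemma \ref{lemma: Sweedler dual is a Quillen adjunction} and Theorem \ref{thm: homotopical magical square} in this particular case. The care you take about the non-cofibrancy of $\mathcal{A}ss$ --- substituting the Getzler--Goerss structure of \cite{GetzlerGoerss99}, whose cofibrations are still the degree-wise monomorphisms, and invoking \cite{grignoulejay18} for the transferred structure on complete absolute algebras --- is exactly the content the paper leaves implicit in the phrase ``applied to this case,'' as set up in the paragraph preceding the statement.
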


\begin{proof}
The existence of this commuting square of adjunctions follows directly from Theorem \ref{thm: magical square} applied to the twisting morphism $\kappa: \mathcal{A}ss^{\ac} \longrightarrow \mathcal{A}ss$.

\medskip

The fact that these adjunctions are Quillen adjunction can be shown using the same arguments as in the proof of Theorem \ref{thm: homotopical magical square} applied to this case.
\end{proof}

\begin{Remark}
This duality square intertwines the classical bar construction of \cite{CartanEilenberg56} and the classical cobar construction of \cite{Adams56}, and the Sweedler dual functor constructed in \cite{Sweedler69} with the theory of dg absolute algebras. The first two have played a seminal role in Algebra and in Algebraic Topology since their introduction. The Sweedler dual functor plays an important role in the theory of Hopf algebras. Notice that it was not known to be a right Quillen functor before. Also notice that this square shows that the linear dual of the classical bar construction for dg associative algebras is \textit{naturally a dg absolute associative algebra}.
\end{Remark}

\begin{Remark}
It was pointed out to us by L. Positselski that the complete bar construction only depends on the underlying associative algebra structure of the absolute associative algebra, as the twisting morphism $\kappa: s\mathcal{A}ss^* \longrightarrow \mathcal{A}ss^*$ only sees the binary product. It therefore coincides with the bar construction of M. Anel and A. Joyal of \cite{AnelJoyal}. Although this complete bar-cobar construction is not known to be a Quillen equivalence, it was used by A. Guan and A. Lazarev in \cite{GuanLazarev} to establish Koszul duality type of equivalences between exotic derived categories of dg $A$ modules and dg $\widehat{\mathrm{B}}(A)$-comodules. 
\end{Remark}

\begin{Remark}
An analogous duality square can be constructed by replacing dg (co)associative (co)algebras with $\mathcal{A}_\infty$-(co)algebras on the left hand side of the duality square. In this case, the complete bar-cobar adjunction is known to be a Quillen equivalence.
\end{Remark}

\textbf{Universal enveloping absolute algebra functor.} As with operads, a morphism of cooperads induces an adjunction between their respective categories of algebras. This allows us to construct the universal enveloping absolute algebra of a dg absolute Lie algebra. 

\begin{Definition}[dg absolute Lie algebra]
A \textit{dg absolute Lie algebra} $(\mathfrak{g},\gamma_{\mathfrak{g}},d_\mathfrak{g})$ is the data of a dg $\mathcal{L}ie^*$-algebra.
\end{Definition}

There is a morphism of operads $\mathcal{S}kew: \mathcal{L}ie \longrightarrow \mathcal{A}ss$ given by the skew-symmetrization of the associative product. By taking the linear dual $\mathcal{S}kew^*: \mathcal{A}ss^* \longrightarrow \mathcal{L}ie^*$, we obtain a morphism of conilpotent cooperads. 

\begin{Proposition}\label{prop: adjunction universal enveloping absolute algebra}
There is an adjunction
\[
\begin{tikzcd}[column sep=7pc,row sep=3pc]
            \mathsf{dg}~\mathsf{abs}~\mathsf{Lie}\text{-}\mathsf{alg}^{\mathsf{comp}} \arrow[r,"\widehat{\mathfrak{U}}"{name=F}, shift left=1.1ex] 
           &\mathsf{dg}~\mathsf{abs}~\mathsf{assoc}\text{-}\mathsf{alg}^{\mathsf{comp}}~, \arrow[l, shift left=.75ex, "\mathrm{Res}_{\mathcal{S}kew^*}"{name=U}]
            \arrow[phantom, from=F, to=U, , "\dashv" rotate=-90]
\end{tikzcd}
\]

between the category of complete dg absolute Lie algebras and the category of complete dg absolute associative algebras. The left adjoint functor $\widehat{\mathfrak{U}}$ is called the universal enveloping absolute algebra functor. 
\end{Proposition}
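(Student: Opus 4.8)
The plan is to realize this adjunction as the restriction--induction adjunction associated to a morphism of cooperads, exactly the mechanism recorded in the Remark following \cref{prop: compatibility conditions}, now applied to the morphism of conilpotent cooperads $\mathcal{S}kew^* \colon \mathcal{A}ss^* \to \mathcal{L}ie^*$ obtained by dualizing skew-symmetrization. So the whole proof is an instance of the general principle ``a morphism of cooperads induces an adjunction between the associated categories of absolute algebras'', and the work is to spell out the two formal steps (monad morphism, then adjunction) and to check that they stay inside the dg world.

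First I would upgrade $\mathcal{S}kew^*$ to a morphism of monads on dg modules. By \cref{lemma: Schur lax contravariant functor}, the dual Schur realization $\widehat{\mathscr{S}}^c$ is lax monoidal from $(\mathsf{pdg}~\smod^{\mathsf{op}}, \circ, \I)$ to endofunctors; since a cooperad is a comonoid in $(\mathsf{pdg}~\smod, \circ, \I)$, equivalently a monoid in the opposite category, $\widehat{\mathscr{S}}^c$ carries $\mathcal{A}ss^*$ and $\mathcal{L}ie^*$ to monads and the comonoid morphism $\mathcal{S}kew^*$ to a morphism of monads
\[
\widehat{\mathscr{S}}^c(\mathcal{S}kew^*) \colon \widehat{\mathscr{S}}^c(\mathcal{L}ie^*) \longrightarrow \widehat{\mathscr{S}}^c(\mathcal{A}ss^*)~.
\]
Verifying the two monad-morphism axioms amounts to a diagram chase using the naturality of the lax structure maps $\varphi_{M,N}$ together with the compatibility of $\mathcal{S}kew^*$ with the decomposition maps and counits of the two cooperads.

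Second, I would produce the adjunction from this morphism of monads. A morphism of monads $T \to S$ on dg modules yields a restriction functor from $S$-algebras to $T$-algebras lying over the identity of dg modules; with $S = \widehat{\mathscr{S}}^c(\mathcal{A}ss^*)$ and $T = \widehat{\mathscr{S}}^c(\mathcal{L}ie^*)$ this is precisely $\mathrm{Res}_{\mathcal{S}kew^*}$, sending a dg absolute associative algebra to its underlying dg absolute Lie algebra. Its left adjoint $\widehat{\mathfrak{U}} \coloneqq \mathrm{Ind}_{\mathcal{S}kew^*}$ then exists by the Adjoint Lifting Theorem \cite[Theorem 2]{AdjointLifting}, exactly as in the constructions of the Sweedler and topological dual functors: the two forgetful functors to dg modules are monadic, all categories in play are complete and cocomplete, and $\mathrm{Res}_{\mathcal{S}kew^*}$ commutes with the forgetful functors, so the left adjoint lifts. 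Since $\mathcal{A}ss^*$ and $\mathcal{L}ie^*$ carry no curvature, both functors preserve the identity $d^2 = 0$, so the construction restricts from pdg to dg absolute algebras and gives the stated adjunction $\widehat{\mathfrak{U}} \dashv \mathrm{Res}_{\mathcal{S}kew^*}$.

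The point that needs genuine care is the left adjoint $\widehat{\mathfrak{U}}$, rather than the purely formal functor $\mathrm{Res}_{\mathcal{S}kew^*}$. Because $\widehat{\mathscr{S}}^c$ is merely lax monoidal and its values involve the infinite products $\prod_{n \geq 1}(-)^{\otimes n}$, there is no naive ``base-change'' formula for $\widehat{\mathfrak{U}}$; instead it is presented by a reflexive coequalizer of free dg absolute associative algebras, and the existence of that coequalizer is what makes the Adjoint Lifting Theorem applicable. This is the main obstacle, and it is resolved by the cocompleteness (indeed presentability) of the category of dg absolute associative algebras, so that the abstract lifting argument goes through with no further input.
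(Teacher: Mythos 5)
Your proposal is correct and follows essentially the same route as the paper: the paper's proof consists precisely of observing that $\mathcal{S}kew^*\colon \mathcal{A}ss^* \to \mathcal{L}ie^*$ induces a morphism of monads $\widehat{\mathscr{S}}^c(\mathcal{S}kew^*)\colon \widehat{\mathscr{S}}^c(\mathcal{L}ie^*) \to \widehat{\mathscr{S}}^c(\mathcal{A}ss^*)$, with the resulting adjunction $\mathrm{Ind} \dashv \mathrm{Res}$ taken as the general principle already recorded in the remark following \cref{prop: compatibility conditions}. Your extra step of justifying the existence of the left adjoint via the Adjoint Lifting Theorem and reflexive coequalizers (cocompleteness of dg absolute associative algebras) is a correct filling-in of what the paper leaves implicit, and is consistent with the paper's later coequalizer formula for $\widehat{\mathfrak{U}}(\mathfrak{g})$.
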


\begin{proof}
The morphism $\mathcal{S}kew^*: \mathcal{A}ss^* \longrightarrow \mathcal{L}ie^*$ induces a natural transformation 

\[
\widehat{\mathscr{S}}^c(\mathcal{S}kew^*): \widehat{\mathscr{S}}^c(\mathcal{L}ie^*) \longrightarrow \widehat{\mathscr{S}}^c(\mathcal{A}ss^*)
\]

between the corresponding monads encoding dg absolute associative algebras and dg absolute Lie algebras.
\end{proof}

\begin{Remark}
Let $(\mathfrak{g},\gamma_{\mathfrak{g}},d_\mathfrak{g})$ be a dg absolute Lie algebra. The universal enveloping absolute algebra $\widehat{\mathfrak{U}}(\mathfrak{g})$ of $\mathfrak{g}$ is given by 
\[
\begin{tikzcd}[column sep=4pc,row sep=4pc]
\mathrm{Coeq}\Bigg(\displaystyle  \overline{\mathcal{T}}^{~\wedge}\bigg(\widehat{\mathcal{L}ie}(\mathfrak{g})\bigg) \arrow[r,"\overline{\mathcal{T}}^{~\wedge}(\gamma_{\mathfrak{g}})",shift right=1.1ex,swap]  \arrow[r,"\psi_\mathfrak{g}"{name=SD},shift left=1.1ex ]
&\overline{\mathcal{T}}^{~\wedge}(\mathfrak{g})\Bigg)~,
\end{tikzcd}
\]

where $\widehat{\mathcal{L}ie}(\mathfrak{g})$ denotes the free completed Lie algebra on $\mathfrak{g}$ and where $\psi_\mathfrak{g}$ is given by the composition
\[
\begin{tikzcd}[column sep=4.5pc,row sep=4pc]
\psi_\mathfrak{g}: \overline{\mathcal{T}}^{~\wedge}(\widehat{\mathcal{L}ie}(\mathfrak{g})) \arrow[r,"\overline{\mathcal{T}}^{~\wedge}(\mathcal{S}kew^*)"]
&\overline{\mathcal{T}}^{~\wedge}(\overline{\mathcal{T}}^{~\wedge}(\mathfrak{g})) \arrow[r, "\gamma_{\overline{\mathcal{T}}^{~\wedge}(\mathfrak{g})}"]
&\overline{\mathcal{T}}^{~\wedge}(\mathfrak{g})~.
\end{tikzcd}
\]

Hence $\widehat{\mathfrak{U}}(\mathfrak{g})$ is quotient of the completed tensor algebra $\overline{\mathcal{T}}^{~\wedge}(\mathfrak{g})$ on $\mathfrak{g}$ where one not only identifies $x \otimes y - (-1)^{|x|} y \otimes x$ with the Lie bracket $[x,y]$, but this identification has also to be done for all formal power series of brackets of elements of $\mathfrak{g}$.
\end{Remark}

\begin{Example}[Convolution absolute Lie algebra]
Recall the \textit{convolution absolute algebra} structure on $\mathrm{hom}(D,B)$ constructed in Example \ref{Example: convolution absolute algebra}, where $D$ is a conilpotent dg coassociative coalgebra and $B$ a dg associative algebra. Its skew-symmetrization is naturally a dg absolute Lie algebra, which encodes twisting morphisms between $D$ and $B$ as its Maurer-Cartan elements.
\end{Example}

\begin{Proposition}\label{prop: carré qui commute}
Let $(\mathfrak{g},\gamma_{\mathfrak{g}},d_\mathfrak{g})$ be a nilpotent dg absolute Lie algebra. There is an isomorphism 

\[
\widehat{\mathfrak{U}}(\mathfrak{g}) \cong \frac{\overline{\mathcal{T}}^{~\wedge}(\mathfrak{g})}{\left(x \otimes y - (-1)^{|x||y|} y \otimes x - [x,y] \right)}
\]
\vspace{0.2pc}

of complete dg absolute associative algebras, where on the right hand side we consider the free absolute associative algebra on $\mathfrak{g}$, modulo the ideal generated by the expression inside the parenthesis.  
\end{Proposition}

\begin{proof}
The following square of adjunctions 
\[
\begin{tikzcd}[column sep=5pc,row sep=5pc]
\mathsf{dg}~\mathsf{assoc}\text{-}\mathsf{alg} \arrow[r,"\mathrm{cAbs}"{name=B},shift left=1.1ex] \arrow[d,"\mathrm{Skew}"{name=SD},shift left=1.1ex ]
&\mathsf{dg}~\mathsf{abs}~\mathsf{assoc}\text{-}\mathsf{alg}^{\mathsf{comp}} \arrow[d,"\mathrm{Skew}"{name=LDC},shift left=1.1ex ] \arrow[l,"\mathrm{Res}"{name=C},,shift left=1.1ex]  \\
\mathsf{dg}~\mathsf{Lie}\text{-}\mathsf{alg} \arrow[r,"\mathrm{cAbs}"{name=CC},shift left=1.1ex]  \arrow[u,"\mathfrak{U}"{name=LD},shift left=1.1ex ]
&\mathsf{dg}~\mathsf{abs}~\mathsf{Lie}\text{-}\mathsf{alg}^{\mathsf{comp}} \arrow[l,"\mathrm{Res}"{name=CB},shift left=1.1ex] \arrow[u,"\widehat{\mathfrak{U}}"{name=TD},shift left=1.1ex] \arrow[phantom, from=SD, to=LD, , "\dashv" rotate=0] \arrow[phantom, from=C, to=B, , "\dashv" rotate=-90]\arrow[phantom, from=TD, to=LDC, , "\dashv" rotate=0] \arrow[phantom, from=CC, to=CB, , "\dashv" rotate=-90]
\end{tikzcd}
\] 

commutes, where $\mathfrak{U}$ is the classical universal enveloping algebra functor. Indeed, one easily checks that 
\[
\mathrm{Res} \cdot \mathrm{Skew} \cong \mathrm{Skew} \cdot \mathrm{Res}~.
\]

This implies that $\widehat{\mathfrak{U}} \cdot \mathrm{cAbs} \cong \mathrm{cAbs} \cdot \mathfrak{U}$. Therefore, by Remark \ref{Remark: nilpotent Yoneda}, we have that 
\[
\widehat{\mathfrak{U}}(\mathfrak{g}) \cong \mathrm{cAbs}\left(\frac{\overline{\mathcal{T}}(\mathfrak{g})}{\left(x \otimes y - (-1)^{|x||y|} y \otimes x - [x,y] \right)} \right) \cong \frac{\overline{\mathcal{T}}^{~\wedge}(\mathfrak{g})}{\left(x \otimes y - (-1)^{|x||y|} y \otimes x - [x,y] \right)}~.
\]
\end{proof}

\begin{Remark}
The theory of ideals of algebras over cooperads is explained in \cite[Sections 4.2 and 4.3]{grignoulejay18}. In particular, for the ideal generated by a subobject, we refer to \cite[Definition 4.8 and Theorem 4.9]{grignoulejay18}.
\end{Remark}

\vspace{1.5pc}

\section{Applications to absolute Lie theory}

\vspace{1.5pc}

In this section, we give a generalization of one the main results obtained by R. Campos, D. Petersen, D. Robert-Nicoud and F. Wierstra in \cite{campos2020lie}, by rephrasing and reinterpreting their constructions with the language of algebras over cooperads, and by using the formalism developed so far. We now consider the dg operads $\Omega \mathcal{L}ie^*$ and $\Omega \mathcal{A}ss^*$ which encode, respectively, \textit{shifted} $\mathcal{C}_\infty$ and \textit{shifted} $\mathcal{A}_\infty$ algebras (and coalgebras). From now on, the adjective \textit{shifted} will be implicit.

\begin{lemma}
There is a morphism of dg operads
\[
\varphi: \Omega \mathcal{A}ss^* \longrightarrow \Omega \mathcal{L}ie^*~,
\]
given by $\varphi \coloneqq \Omega(\mathcal{S}kew^*)$.
\end{lemma}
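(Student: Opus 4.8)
The plan is to obtain $\varphi$ purely from the functoriality of the Cobar construction. Recall from \cite{grignou2021} (see also \cite{LodayVallette12}) that $\Omega(-)$ assembles into a functor from the category of conilpotent dg cooperads to the category of dg operads. Since $\mathcal{A}ss$ and $\mathcal{L}ie$ are augmented operads concentrated in homological degree zero with trivial differential, their arity-wise linear duals $\mathcal{A}ss^*$ and $\mathcal{L}ie^*$ are coaugmented conilpotent dg cooperads with trivial internal differential; in particular no curvature appears, so we are in the ordinary dg setting where this functoriality applies. As recalled just before the statement, $\mathcal{S}kew^* \colon \mathcal{A}ss^* \to \mathcal{L}ie^*$ is a morphism of conilpotent cooperads, so $\varphi \coloneqq \Omega(\mathcal{S}kew^*)$ is defined; the only content of the lemma is to check that the Cobar construction sends such a morphism to a morphism of dg operads.

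Concretely, write $\Omega \mathcal{C} = (\mathcal{T}(s^{-1}\overline{\mathcal{C}}), d_\Omega)$ for the quasi-free operad generated by the desuspension of the coaugmentation coideal $\overline{\mathcal{C}}$, where $d_\Omega$ is the unique operadic derivation extending the infinitesimal decomposition map $\Delta_{(1)}$ of $\mathcal{C}$ (the internal differential contributes nothing here, as both cooperads carry the zero differential). A morphism of cooperads $g \colon \mathcal{C} \to \mathcal{D}$ restricts to a map $\overline{g} \colon \overline{\mathcal{C}} \to \overline{\mathcal{D}}$ of coaugmentation coideals; desuspending and freely extending yields a morphism of free operads $\mathcal{T}(s^{-1}\overline{g}) \colon \mathcal{T}(s^{-1}\overline{\mathcal{C}}) \to \mathcal{T}(s^{-1}\overline{\mathcal{D}})$. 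It then remains only to check that this map intertwines the two Cobar differentials.

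Because the target is quasi-free, this compatibility need only be verified on generators, where it reads $d_\Omega^{\mathcal{D}} \circ s^{-1}\overline{g} = \mathcal{T}(s^{-1}\overline{g}) \circ d_\Omega^{\mathcal{C}}$ on $s^{-1}\overline{\mathcal{C}}$. Unwinding the definitions, this is exactly the assertion that $g$ commutes with the infinitesimal decomposition maps $\Delta_{(1)}$, which is one of the defining axioms of a cooperad morphism. Applying this to $g = \mathcal{S}kew^*$, which is such a morphism, shows that $\mathcal{T}(s^{-1}\mathcal{S}kew^*)$ is a chain map and hence a morphism of dg operads, namely $\varphi$. The verification of differential compatibility is the only point requiring any work, and it is immediate from the cooperad axioms; I do not expect a genuine obstacle here beyond unwinding the constructions.
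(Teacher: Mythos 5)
Your proof is correct and is essentially the paper's own argument: the paper dismisses the lemma with ``It is straightforward from the definition,'' and your write-up is precisely that straightforward verification, namely the functoriality of the Cobar construction applied to the cooperad morphism $\mathcal{S}kew^*$, with differential compatibility checked on generators via the (infinitesimal) decomposition axiom.
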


\begin{proof}
It is straightforward from the definition.
\end{proof}

\begin{Proposition}
The morphism of dg operads $\varphi: \Omega \mathcal{A}ss^* \longrightarrow \Omega \mathcal{L}ie^*$ induces a Quillen adjunction
\[
\begin{tikzcd}[column sep=7pc,row sep=3pc]
           \mathcal{C}_\infty\text{-}\mathsf{coalg} \arrow[r,"\mathrm{Res}_{\varphi}"{name=F}, shift left=1.1ex] 
           &\mathcal{A}_\infty\text{-}\mathsf{coalg}~. \arrow[l, shift left=.75ex, "\mathrm{Coind}_{\varphi}"{name=U}]
            \arrow[phantom, from=F, to=U, , "\dashv" rotate=-90]
\end{tikzcd}
\]

between the model category of $\mathcal{C}_\infty$-coalgebras and the model category of $\mathcal{A}_\infty$-coalgebras.
\end{Proposition}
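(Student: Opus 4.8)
The plan is to deduce the statement from the general machinery recalled in the excerpt, reducing it to a soft preservation argument at the level of underlying dg modules. First I would note that $\Omega\mathcal{A}ss^*$ and $\Omega\mathcal{L}ie^*$, being Cobar constructions of the conilpotent cooperads $\mathcal{A}ss^*$ and $\mathcal{L}ie^*$, are quasi-free and hence \emph{cofibrant} dg operads. Consequently both categories of coalgebras carry the left-transferred model structure of Theorem \ref{thm: model structure on P-cog}, in which the weak equivalences are the quasi-isomorphisms and the cofibrations are the degree-wise monomorphisms, both created by the forgetful functor to dg modules.

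Next I would invoke the Remark following Proposition \ref{prop: compatibility conditions}: the morphism $\varphi$ induces a morphism of comonads $\mathscr{C}(\varphi)\colon \mathscr{C}(\Omega\mathcal{L}ie^*) \longrightarrow \mathscr{C}(\Omega\mathcal{A}ss^*)$ on dg modules, and hence an adjunction $\mathrm{Res}_\varphi \dashv \mathrm{Coind}_\varphi$. Here $\mathrm{Res}_\varphi\colon \mathcal{C}_\infty\text{-}\mathsf{coalg} \to \mathcal{A}_\infty\text{-}\mathsf{coalg}$ is the left adjoint, sending a $\mathcal{C}_\infty$-coalgebra to the $\mathcal{A}_\infty$-coalgebra with the \emph{same} underlying dg module and structure map corestricted along $\mathscr{C}(\varphi)$. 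The crucial observation is that restriction along a comonad morphism does not alter the underlying object, so that $\mathrm{Res}_\varphi$ commutes with the forgetful functors: $U_{\mathcal{A}_\infty} \circ \mathrm{Res}_\varphi = U_{\mathcal{C}_\infty}$. Since the cofibrations and the weak equivalences of both transferred model structures are exactly the degree-wise monomorphisms and the quasi-isomorphisms of underlying dg modules, it follows at once that $\mathrm{Res}_\varphi$ preserves cofibrations and weak equivalences, in particular acyclic cofibrations. Hence $\mathrm{Res}_\varphi$ is a left Quillen functor and the adjunction $\mathrm{Res}_\varphi \dashv \mathrm{Coind}_\varphi$ is a Quillen adjunction.

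The only genuinely delicate point is the bookkeeping of variance: one must confirm that it is $\mathrm{Res}_\varphi$, and not $\mathrm{Coind}_\varphi$, that is the left adjoint landing in $\mathcal{A}_\infty$-coalgebras. This is forced by the contravariance of $\widehat{\mathscr{S}}^c$, which makes the induced comonad morphism run in the direction $\mathscr{C}(\varphi)\colon \mathscr{C}(\Omega\mathcal{L}ie^*) \to \mathscr{C}(\Omega\mathcal{A}ss^*)$, so that restriction of structure goes from $\Omega\mathcal{L}ie^*$-coalgebras to $\Omega\mathcal{A}ss^*$-coalgebras. I would emphasize that, in contrast to the Quillen \emph{equivalence} for quasi-isomorphisms of cofibrant operads recalled earlier, we do not require $\varphi$ to be a quasi-isomorphism (it is not); a bare Quillen adjunction needs nothing beyond the fact that the left adjoint respects the forgetful functors, which is precisely the soft argument above.
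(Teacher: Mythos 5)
Your proof is correct and follows essentially the same route as the paper's: cofibrancy of $\Omega\mathcal{A}ss^*$ and $\Omega\mathcal{L}ie^*$ gives the left-transferred model structures, comonadicity plus the induced morphism of comonads $\mathscr{C}(\varphi)$ gives the adjunction $\mathrm{Res}_\varphi \dashv \mathrm{Coind}_\varphi$, and $\mathrm{Res}_\varphi$ is left Quillen because it leaves the underlying dg module unchanged while cofibrations and weak equivalences are detected there. One small nitpick: ``quasi-free hence cofibrant'' is an overstatement (quasi-freeness alone does not imply cofibrancy of a dg operad), but cobar constructions of conilpotent cooperads are indeed cofibrant, which is all the paper asserts as well.
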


\begin{proof}
Since both dg operads $\Omega \mathcal{A}ss^*$ and $\Omega \mathcal{L}ie^*$ are cofibrant, their respective categories of dg coalgebras admit a left-transferred model structure where weak equivalences are given by quasi-isomorphisms and where cofibrations are given by degree-wise monomorphisms, by Theorem \ref{thm: model structure on P-cog}. Both categories are comonadic by Theorem \ref{thm: existence of the cofree P cog}, and the morphism $\varphi$ induces a morphism between their respective comonads. Thus it induces an adjunction. Finally, it is straightforward to check that $\mathrm{Res}_{\varphi}$ preserves cofibrations and quasi-isomorphisms, since it does not change the underlying chain complex of the $\mathcal{C}_\infty$-coalgebra.
\end{proof}

\begin{theorem}[{\cite[Theorem 4.27]{campos2020lie}}]\label{thm: cited thm about C infinity}
Let $C_1$ and $C_2$ be two $\mathcal{C}_\infty$-coalgebras. There exists a zig-zag of quasi-isomorphisms of $\mathcal{C}_\infty$-coalgebras 
\[
C_1 \lqi \cdot \qi C_2
\]
if and only if there exists a zig-zag of $\mathcal{A}_\infty$-coalgebras 
\[
\mathrm{Res}_{\varphi}(C_1) \lqi \cdot \qi \mathrm{Res}_{\varphi}(C_2)~.
\]
\end{theorem}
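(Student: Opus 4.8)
The forward implication is the easy one, and I would dispose of it first: the functor $\mathrm{Res}_\varphi$ leaves the underlying chain complex of a $\mathcal{C}_\infty$-coalgebra unchanged, so it preserves quasi-isomorphisms, and applying it to each arrow of a zig-zag $C_1 \lqi \cdot \qi C_2$ yields a zig-zag $\mathrm{Res}_\varphi(C_1) \lqi \cdot \qi \mathrm{Res}_\varphi(C_2)$. All the content sits in the converse, and the plan is to reduce it to an isomorphism question between minimal models.

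Since $\Omega\mathcal{L}ie^*$ and $\Omega\mathcal{A}ss^*$ are cofibrant dg operads, both categories of coalgebras carry the left-transferred model structure of \cref{thm: model structure on P-cog}, in which the weak equivalences are the quasi-isomorphisms and every object is cofibrant. In such a setting two objects are linked by a zig-zag of quasi-isomorphisms exactly when they become isomorphic in the homotopy category. Working over the characteristic-zero field $\kk$, I would invoke the Homotopy Transfer Theorem to replace each $C_i$ by a minimal $\mathcal{C}_\infty$-coalgebra $M_i$ carried by $\mathrm{H}_*(C_i)$ and connected to $C_i$ by a zig-zag of quasi-isomorphisms, together with the rigidity of minimal models: a zig-zag of quasi-isomorphisms between minimal coalgebras is necessarily an isomorphism. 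Hence a zig-zag $C_1 \simeq C_2$ is equivalent to the existence of an isomorphism $M_1 \cong M_2$ of $\mathcal{C}_\infty$-coalgebras. Because $\mathrm{Res}_\varphi$ preserves the (zero) internal differential, $\mathrm{Res}_\varphi(M_i)$ is again minimal and is therefore the minimal model of $\mathrm{Res}_\varphi(C_i)$; so a zig-zag downstairs is equivalent to $\mathrm{Res}_\varphi(M_1) \cong \mathrm{Res}_\varphi(M_2)$. The theorem thus reduces to the algebraic claim that $\mathrm{Res}_\varphi$ is injective on isomorphism classes of minimal $\mathcal{C}_\infty$-coalgebra structures on a fixed graded space.

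To settle this last claim I would pass to deformation complexes. Fixing $H = \mathrm{H}_*(C_1)$, the minimal $\mathcal{C}_\infty$- (resp.\ $\mathcal{A}_\infty$-) coalgebra structures on $H$ are encoded by the Maurer--Cartan elements of the convolution $\mathcal{L}_\infty$-algebra $\mathrm{Hom}(\mathcal{L}ie^*, \mathrm{Coend}_H)$ (resp.\ $\mathrm{Hom}(\mathcal{A}ss^*, \mathrm{Coend}_H)$), in the spirit of \cite[Section~4]{lucio2022integration}, and their isomorphism classes correspond to gauge classes. The cooperad morphism $\mathcal{S}kew^*: \mathcal{A}ss^* \to \mathcal{L}ie^*$ induces by precomposition a morphism $\mathrm{Hom}(\mathcal{L}ie^*, \mathrm{Coend}_H) \to \mathrm{Hom}(\mathcal{A}ss^*, \mathrm{Coend}_H)$ of convolution $\mathcal{L}_\infty$-algebras that implements $\mathrm{Res}_\varphi$ on Maurer--Cartan loci, so what must be shown is that the induced map on gauge classes is injective. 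The hard part is precisely this injectivity: it is the deformation-theoretic comparison carried out in \cite[Section~4]{campos2020lie}, and it hinges on the fact that in characteristic zero $\mathcal{S}kew^*$ admits a Poincaré--Birkhoff--Witt splitting, allowing one to reconstruct the cocommutative (Harrison-type) data from its antisymmetrization and thereby to lift a gauge equivalence of the associative structures to a gauge equivalence of the Lie structures. Controlling this obstruction at the level of deformation cohomology is the one genuinely nontrivial step; everything else is the formal model-categorical reduction above.
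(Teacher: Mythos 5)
There is nothing in the paper to compare your argument against: \cref{thm: cited thm about C infinity} is imported verbatim from \cite[Theorem 4.27]{campos2020lie}, as the attribution in its header indicates, and the paper offers no proof of it, using it purely as a black box to deduce \cref{thm: inclusion infini cat des Lie absolues}. Judged as an independent reconstruction, your outline does correctly reproduce the architecture of the cited paper's proof: the forward implication is immediate because $\mathrm{Res}_\varphi$ does not change the underlying complex; the converse is reduced, over a characteristic-zero field, to comparing minimal structures on a fixed graded space through their deformation complexes, with the equivariant splitting of $\mathcal{S}kew^*$ (PBW/Eulerian idempotents) as the essential characteristic-zero input, and the identification of structures with Maurer--Cartan elements of convolution algebras and of $\mathrm{Res}_\varphi$ with precomposition by $\mathcal{S}kew^*$ is accurate. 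The problem is that the one step you yourself label ``the hard part'' --- injectivity of the induced map on gauge classes of Maurer--Cartan elements --- \emph{is} the substance of \cite[Theorem 4.27]{campos2020lie}, and you discharge it by citing \cite[Section 4]{campos2020lie}. As a standalone proof this is circular: you have reduced the theorem to the proof of that very theorem. Relative to the paper under review this is not a sin (the paper does the same thing more economically by citing the full statement), but it means your proposal is a proof sketch of the reference, not a proof.

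There is also one genuine mathematical slip in your reduction. You assert that ``a zig-zag of quasi-isomorphisms between minimal coalgebras is necessarily an isomorphism.'' For \emph{strict} isomorphisms of $\mathcal{C}_\infty$-coalgebras this is false: a zig-zag of quasi-isomorphisms between minimal objects composes into an $\infty$-quasi-isomorphism whose linear part is invertible, hence into an $\infty$-isomorphism, but $\infty$-isomorphic minimal structures need not be strictly isomorphic --- gauge-equivalent Maurer--Cartan elements that are not conjugate under $\mathrm{GL}(H)$ give standard counterexamples. The invariant attached to a quasi-isomorphism type is the \emph{gauge class} (equivalently, the $\infty$-isomorphism class) of the minimal structure, and your later passage to Maurer--Cartan loci and gauge classes implicitly uses this corrected statement, so the reduction survives once ``isomorphism classes of minimal structures'' is read as ``gauge classes'' throughout. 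But the distinction is not cosmetic: the entire difficulty in \cite{campos2020lie}, and the reason its main theorem about nilpotent Lie algebras is nontrivial, lives precisely in the gap between strict isomorphism and $\infty$-isomorphism, so a proof writeup cannot afford to blur it.
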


The above result is the main technical result of \cite{campos2020lie}. The main idea here is to pass to appropriate Koszul dual categories, namely those that arise as algebras over the Koszul dual cooperads, to derive an a priori stronger version \cite[Theorem B]{campos2020lie}. 

\begin{Proposition}\label{Cor: commutative of the square of absolute Lie}
There is a commuting square
\[
\begin{tikzcd}[column sep=5pc,row sep=5pc]
\mathcal{A}_\infty\text{-}\mathsf{coalg} \arrow[r,"\widehat{\Omega}_\iota"{name=B},shift left=1.1ex] \arrow[d,"\mathrm{Coind}_{\varphi} "{name=SD},shift left=1.1ex ]
&\mathsf{dg}~\mathsf{abs}~\mathsf{assoc}\text{-}\mathsf{alg}^{\mathsf{comp}} \arrow[d,"\mathrm{Skew}"{name=LDC},shift left=1.1ex ] \arrow[l,"\widehat{\mathrm{B}}_\iota"{name=C},,shift left=1.1ex]  \\
\mathcal{C}_\infty\text{-}\mathsf{coalg} \arrow[r,"\widehat{\Omega}_\iota "{name=CC},shift left=1.1ex]  \arrow[u,"\mathrm{Res}_{\varphi}"{name=LD},shift left=1.1ex ]
&\mathsf{dg}~\mathsf{abs}~\mathsf{Lie}\text{-}\mathsf{alg}^{\mathsf{comp}} \arrow[l,"\widehat{\mathrm{B}}_\iota"{name=CB},shift left=1.1ex] \arrow[u,"\widehat{\mathfrak{U}}"{name=TD},shift left=1.1ex] \arrow[phantom, from=SD, to=LD, , "\dashv" rotate=0] \arrow[phantom, from=C, to=B, , "\dashv" rotate=-90]\arrow[phantom, from=TD, to=LDC, , "\dashv" rotate=0] \arrow[phantom, from=CC, to=CB, , "\dashv" rotate=-90]
\end{tikzcd}
\] 

of Quillen adjunctions.
\end{Proposition}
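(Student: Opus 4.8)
The plan is to exhibit this square as a direct instance of the general compatibility result for complete Bar--Cobar adjunctions, Proposition~\ref{prop: compatibility conditions}. I take the two cofibrant dg operads to be $\mathcal{P} = \Omega \mathcal{A}ss^*$ and $\mathcal{Q} = \Omega \mathcal{L}ie^*$, which are cofibrant precisely because they are Cobar constructions of conilpotent cooperads, and the two conilpotent curved cooperads to be $\mathcal{C} = \mathcal{A}ss^*$ and $\mathcal{D} = \mathcal{L}ie^*$, both carrying zero curvature so that the curved hypotheses hold trivially. The twisting morphisms are the universal ones $\alpha = \iota \colon \mathcal{A}ss^* \to \Omega \mathcal{A}ss^*$ and $\beta = \iota \colon \mathcal{L}ie^* \to \Omega \mathcal{L}ie^*$, the operad morphism is $f = \varphi = \Omega(\mathcal{S}kew^*)$, and the cooperad morphism is $g = \mathcal{S}kew^* \colon \mathcal{A}ss^* \to \mathcal{L}ie^*$. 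Under these choices the horizontal adjunctions of Proposition~\ref{prop: compatibility conditions} become exactly the complete Bar--Cobar adjunctions $\widehat{\Omega}_\iota \dashv \widehat{\mathrm{B}}_\iota$ in the two rows; the left vertical adjunction $\mathrm{Res}_f \dashv \mathrm{Coind}_f$ becomes $\mathrm{Res}_\varphi \dashv \mathrm{Coind}_\varphi$; and the right vertical adjunction $\mathrm{Ind}_g \dashv \mathrm{Res}_g$ becomes $\widehat{\mathfrak{U}} \dashv \mathrm{Skew}$, after identifying $\mathrm{Skew} = \mathrm{Res}_{\mathcal{S}kew^*}$ and $\widehat{\mathfrak{U}} = \mathrm{Ind}_{\mathcal{S}kew^*}$ as in Proposition~\ref{prop: adjunction universal enveloping absolute algebra}.

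The single hypothesis of Proposition~\ref{prop: compatibility conditions} that requires checking is the commutativity of the square of twisting morphisms, namely $\varphi \circ \iota = \iota \circ \mathcal{S}kew^*$ as maps $\mathcal{A}ss^* \to \Omega \mathcal{L}ie^*$. This is precisely the naturality of the universal twisting morphism $\iota$ with respect to the morphism of conilpotent cooperads $\mathcal{S}kew^*$: for any morphism $g$ of conilpotent cooperads one has $\Omega(g) \circ \iota = \iota \circ g$, since the Cobar construction $\Omega(-)$ is functorial and $\iota$ is natural in its cooperad argument. As $\varphi$ was defined in the preceding lemma to be exactly $\Omega(\mathcal{S}kew^*)$, the required identity is this naturality square, and no computation beyond unwinding the definition of $\iota$ is needed.

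With this compatibility in hand, Proposition~\ref{prop: compatibility conditions} applies and yields simultaneously both assertions: the square commutes in the stated sense, the right adjoints running from the top-right corner to the bottom-left corner being naturally isomorphic, and all four adjunctions are Quillen adjunctions for the transferred model structures already fixed on each corner. I expect the genuine content to lie entirely in the bookkeeping of the first paragraph, namely matching the abstractly named functors $\mathrm{Res}_f, \mathrm{Coind}_f, \mathrm{Ind}_g, \mathrm{Res}_g$ of the general statement with the concrete functors $\mathrm{Res}_\varphi, \mathrm{Coind}_\varphi, \widehat{\mathfrak{U}}, \mathrm{Skew}$ of the present square, and in confirming that the conilpotency of $\mathcal{A}ss^*$ and $\mathcal{L}ie^*$ established earlier in this section places us within the scope of Proposition~\ref{prop: compatibility conditions}. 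The naturality of $\iota$ is the only mathematical point, and it is formal.
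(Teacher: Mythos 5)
Your proposal is correct and follows essentially the same route as the paper: the paper's proof also checks that the square of twisting morphisms $\varphi \circ \iota = \iota \circ \mathcal{S}kew^*$ commutes and then invokes the compatibility result for complete Bar--Cobar adjunctions (Proposition~\ref{prop: compatibility conditions}; the paper's printed citation at that point is a self-referential typo, but that is clearly the intended reference). Your write-up merely makes explicit what the paper leaves as ``immediate'' --- the naturality of $\iota$ under the Cobar functor and the identifications $\mathrm{Skew} = \mathrm{Res}_{\mathcal{S}kew^*}$, $\widehat{\mathfrak{U}} = \mathrm{Ind}_{\mathcal{S}kew^*}$ from Proposition~\ref{prop: adjunction universal enveloping absolute algebra}.
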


\begin{proof}
It is immediate to check that the square 
\[
\begin{tikzcd}[column sep=3pc,row sep=3pc]
\mathcal{A}ss^* \arrow[r,"\iota "] \arrow[d,"\mathcal{S}kew^*",swap] 
&\Omega \mathcal{A}ss^* \arrow[d,"\varphi"]\\
\mathcal{L}ie^* \arrow[r,"\iota"]
&\Omega \mathcal{L}ie^*~.
\end{tikzcd}
\]

is commutative. The result follows from Proposition \ref{prop: compatibility conditions} (where here conilpotent dg cooperad are considered as conilpotent curved cooperads with zero curvature). 
\end{proof}

\begin{theorem}\label{thm: inclusion infini cat des Lie absolues}
Let $\mathfrak{g}$ and $\mathfrak{h}$ be two complete dg absolute Lie algebras. There exists a zig-zag of weak equivalences of dg absolute Lie algebras
\[
\mathfrak{g} \lqi \cdot \qi \mathfrak{h}
\]
if and only if there exists a zig-zag of weak equivalences of dg absolute associative algebras
\[
\widehat{\mathfrak{U}}(\mathfrak{g}) \lqi \cdot \qi \widehat{\mathfrak{U}}(\mathfrak{h})  ~.
\]
\end{theorem}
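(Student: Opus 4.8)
The plan is to transport the whole question to the level of coalgebras, where it becomes precisely the theorem of Campos--Petersen--Robert-Nicoud--Wierstra recalled in \cref{thm: cited thm about C infinity}, and to use the commuting square of \cref{Cor: commutative of the square of absolute Lie} to identify the universal enveloping functor with the restriction functor $\mathrm{Res}_\varphi$ under this transport. Throughout I write $X \sim Y$ to mean that $X$ and $Y$ are linked by a zig-zag of weak equivalences (resp.\ of quasi-isomorphisms on the coalgebra side); after a preliminary reduction along the completion reflector I may assume the algebras in question are complete.

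First I would set up the translation dictionary. Since $\widehat{\Omega}_\iota \dashv \widehat{\mathrm{B}}_\iota$ is a Quillen equivalence on both the Lie and the associative sides, since every $\mathcal{C}_\infty$- (resp.\ $\mathcal{A}_\infty$-)coalgebra is cofibrant, and since every complete absolute algebra is fibrant (fibrations being degree-wise epimorphisms), the complete bar functor $\widehat{\mathrm{B}}_\iota$ both \emph{creates} the weak equivalences, by the very definition of the transferred model structure, and \emph{preserves} them, being a right Quillen functor between categories in which every object is fibrant. Hence $\widehat{\mathrm{B}}_\iota$ preserves and reflects zig-zags: a zig-zag $\mathfrak{g} \lqi \cdot \qi \mathfrak{h}$ of absolute Lie algebras exists if and only if $\widehat{\mathrm{B}}_\iota \mathfrak{g} \sim \widehat{\mathrm{B}}_\iota \mathfrak{h}$ as $\mathcal{C}_\infty$-coalgebras, and symmetrically $\widehat{\mathfrak{U}}(\mathfrak{g}) \sim \widehat{\mathfrak{U}}(\mathfrak{h})$ if and only if $\widehat{\mathrm{B}}_\iota \widehat{\mathfrak{U}}(\mathfrak{g}) \sim \widehat{\mathrm{B}}_\iota \widehat{\mathfrak{U}}(\mathfrak{h})$ as $\mathcal{A}_\infty$-coalgebras. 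This reduces the theorem to comparing the $\mathcal{C}_\infty$-coalgebras $\widehat{\mathrm{B}}_\iota \mathfrak{g}, \widehat{\mathrm{B}}_\iota \mathfrak{h}$ with the $\mathcal{A}_\infty$-coalgebras $\widehat{\mathrm{B}}_\iota \widehat{\mathfrak{U}}(\mathfrak{g}), \widehat{\mathrm{B}}_\iota \widehat{\mathfrak{U}}(\mathfrak{h})$.

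Next I would pin down the homotopy type of $\widehat{\mathrm{B}}_\iota \widehat{\mathfrak{U}}(\mathfrak{g})$. Taking mates in \cref{Cor: commutative of the square of absolute Lie} turns the stated isomorphism of right adjoints into the isomorphism of left adjoints $\widehat{\mathfrak{U}} \circ \widehat{\Omega}_\iota \cong \widehat{\Omega}_\iota \circ \mathrm{Res}_\varphi$. Applying this to the cofibrant resolution $\widehat{\Omega}_\iota \widehat{\mathrm{B}}_\iota \mathfrak{g} \qi \mathfrak{g}$, then applying $\widehat{\mathrm{B}}_\iota$, and invoking that the unit $\mathrm{id} \qi \widehat{\mathrm{B}}_\iota \widehat{\Omega}_\iota$ of the Quillen equivalence is a weak equivalence on the cofibrant coalgebra $\mathrm{Res}_\varphi \widehat{\mathrm{B}}_\iota \mathfrak{g}$, I obtain a natural identification
\[
\widehat{\mathrm{B}}_\iota \widehat{\mathfrak{U}}\big(\widehat{\Omega}_\iota \widehat{\mathrm{B}}_\iota \mathfrak{g}\big) \;\sim\; \mathrm{Res}_\varphi \widehat{\mathrm{B}}_\iota \mathfrak{g}
\]
in $\mathcal{A}_\infty$-coalgebras. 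Granting (see below) that $\widehat{\mathfrak{U}}$ carries the resolution weak equivalence to a weak equivalence, the left-hand side is itself $\sim \widehat{\mathrm{B}}_\iota \widehat{\mathfrak{U}}(\mathfrak{g})$. Feeding this into \cref{thm: cited thm about C infinity}, applied to $\widehat{\mathrm{B}}_\iota \mathfrak{g}$ and $\widehat{\mathrm{B}}_\iota \mathfrak{h}$, yields the equivalences $\widehat{\mathrm{B}}_\iota \mathfrak{g} \sim \widehat{\mathrm{B}}_\iota \mathfrak{h} \Leftrightarrow \mathrm{Res}_\varphi \widehat{\mathrm{B}}_\iota \mathfrak{g} \sim \mathrm{Res}_\varphi \widehat{\mathrm{B}}_\iota \mathfrak{h} \Leftrightarrow \widehat{\mathrm{B}}_\iota \widehat{\mathfrak{U}}(\mathfrak{g}) \sim \widehat{\mathrm{B}}_\iota \widehat{\mathfrak{U}}(\mathfrak{h})$, which by the first paragraph is exactly the asserted biconditional. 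The passage to the $\mathcal{L}_\infty/\mathcal{A}_\infty$-statement is then verbatim, replacing $\mathcal{L}ie^*, \mathcal{A}ss^*$ by $\Omega\mathcal{L}ie^*, \Omega\mathcal{A}ss^*$.

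The main obstacle is precisely the parenthetical step: passing from the derived universal envelope $\widehat{\mathfrak{U}}(\widehat{\Omega}_\iota \widehat{\mathrm{B}}_\iota \mathfrak{g})$ back to $\widehat{\mathfrak{U}}(\mathfrak{g})$, that is, showing that $\widehat{\mathfrak{U}}$ sends the weak equivalence $\widehat{\Omega}_\iota \widehat{\mathrm{B}}_\iota \mathfrak{g} \qi \mathfrak{g}$ to a weak equivalence, since $\mathfrak{g}$ need not be cofibrant and $\widehat{\mathfrak{U}}$ is only a \emph{left} Quillen functor. I would resolve this by proving the stronger statement that $\widehat{\mathfrak{U}}$ preserves weak equivalences outright: because $\kk$ has characteristic zero, a Poincaré--Birkhoff--Witt-type argument identifies the associated graded of $\widehat{\mathfrak{U}}(\mathfrak{g})$ with a completed symmetric construction on $\mathfrak{g}$, so the underlying map of $\widehat{\mathfrak{U}}(f)$ is a quasi-isomorphism whenever $f$ is, and the weak equivalences here are in particular quasi-isomorphisms. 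With $\widehat{\mathfrak{U}}$ shown to preserve weak equivalences, the forward implication of the theorem follows by applying it term-by-term to any zig-zag of weak equivalences of absolute Lie algebras, and the reverse implication follows from the chain of equivalences above together with the reflection property of $\widehat{\mathrm{B}}_\iota$ established in the first paragraph.
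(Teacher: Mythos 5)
Your first two paragraphs are, in expanded form, exactly the paper's own proof: the paper deduces the theorem from \cref{thm: cited thm about C infinity} plus the fact that the horizontal adjunctions of \cref{Cor: commutative of the square of absolute Lie} are Quillen equivalences, and your mate-passing identification $\widehat{\mathrm{B}}_\iota\widehat{\mathfrak{U}}\big(\widehat{\Omega}_\iota\widehat{\mathrm{B}}_\iota\mathfrak{g}\big) \sim \mathrm{Res}_\varphi\widehat{\mathrm{B}}_\iota\mathfrak{g}$ is a correct spelling-out of how the commuting square enters. The step you isolate as the ``main obstacle'' --- replacing the derived envelope $\widehat{\mathfrak{U}}\big(\widehat{\Omega}_\iota\widehat{\mathrm{B}}_\iota\mathfrak{g}\big)$ by $\widehat{\mathfrak{U}}(\mathfrak{g})$ --- is genuinely the only non-formal point, and the paper's two-line proof passes over it in silence; flagging it is to your credit.

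The gap is that your proposed fix proves the wrong statement. You claim $\widehat{\mathfrak{U}}$ preserves weak equivalences because a PBW-type associated-graded argument shows it preserves quasi-isomorphisms, ``and the weak equivalences here are in particular quasi-isomorphisms.'' This runs the comparison of the two classes in the wrong direction. In the transferred model structure on complete dg absolute associative algebras, $f$ is a weak equivalence \emph{by definition} when $\widehat{\mathrm{B}}_\iota(f)$ is a quasi-isomorphism; \cref{thm: weak equiv inclues dans les quasi-isos} gives only the inclusion of weak equivalences into quasi-isomorphisms, and the reverse inclusion fails in general --- if it held, $\widehat{\mathrm{B}}_\iota$ would invert all quasi-isomorphisms and the transferred structure would collapse onto the naive one, which is exactly what does not happen for bar-detected equivalences (cf.\ the remark following \cref{thm: weak equiv inclues dans les quasi-isos}). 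So even granting your PBW computation, you only learn that $\widehat{\mathfrak{U}}(\epsilon_\mathfrak{g})$, and $\widehat{\mathfrak{U}}$ of each leg of a zig-zag, are quasi-isomorphisms. That is insufficient at both places you use it: it does not make $\widehat{\mathrm{B}}_\iota\widehat{\mathfrak{U}}(\epsilon_\mathfrak{g})$ a quasi-isomorphism (so the left-hand side of your display need not be $\sim \widehat{\mathrm{B}}_\iota\widehat{\mathfrak{U}}(\mathfrak{g})$, since $\widehat{\mathrm{B}}_\iota$ does not preserve quasi-isomorphisms), and in the forward implication it yields only a zig-zag of quasi-isomorphisms between $\widehat{\mathfrak{U}}(\mathfrak{g})$ and $\widehat{\mathfrak{U}}(\mathfrak{h})$, not the asserted zig-zag of weak equivalences. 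What your argument actually needs is the bar-level comparison: that the Beck--Chevalley map $\mathrm{Res}_\varphi\widehat{\mathrm{B}}_\iota\mathfrak{g} \longrightarrow \widehat{\mathrm{B}}_\iota\widehat{\mathfrak{U}}(\mathfrak{g})$ is a quasi-isomorphism of $\mathcal{A}_\infty$-coalgebras, equivalently that $\widehat{\mathfrak{U}}$ sends the counit $\widehat{\Omega}_\iota\widehat{\mathrm{B}}_\iota\mathfrak{g} \qi \mathfrak{g}$ to a map inverted by $\widehat{\mathrm{B}}_\iota$; an associated-graded description of $\widehat{\mathfrak{U}}(\mathfrak{g})$ alone does not deliver this. (The preliminary ``reduction to complete algebras'' has the same defect: the completion map $\mathfrak{g}\to\widehat{\mathfrak{g}}$ is not known to be a weak equivalence.)
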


\begin{proof}
It is a direct consequence of Theorem \ref{thm: cited thm about C infinity}, using the fact that the horizontal Quillen adjunctions of Proposition \ref{Cor: commutative of the square of absolute Lie} are Quillen equivalences.
\end{proof}

Let us make more explicit what these weak equivalences of dg absolute associative algebras or dg absolute Lie algebras look like. We state the analogue of \cite[Proposition 2.5]{Vallette14} for the model structure on algebras over a conilpotent dg cooperad transferred along the complete bar-cobar adjunction.

\begin{Proposition}\label{thm: weak equiv inclues dans les quasi-isos}
The category of complete absolute dg Lie algebras admits a model structure right-transferred from dg modules, where fibrations are given by degree-wise epimorphisms and weak equivalences by quasi-isomorphisms. Furthermore, the identity induces a Quillen adjunction 
\[
\begin{tikzcd}[column sep=5pc,row sep=5pc]
\left(\mathsf{dg}~\mathsf{abs}~\mathsf{Lie}\text{-}\mathsf{alg}^{\mathsf{comp}}, \mathsf{W.eq} \right) \arrow[r,"\mathrm{Id}"{name=LDC},shift left=1.1ex ] 
&\left(\mathsf{dg}~\mathsf{abs}~\mathsf{Lie}\text{-}\mathsf{alg}^{\mathsf{comp}}, \mathsf{Q.iso} \right)~. \arrow[l,"\mathrm{Id}"{name=TD},shift left=1.1ex] \arrow[phantom, from=TD, to=LDC, , "\dashv" rotate=90] 
\end{tikzcd}
\] 
In particular, any weak equivalence of complete dg absolute Lie algebras transferred from $\mathcal{C}_\infty$-coalgebras is a quasi-isomorphism. 
\end{Proposition}

\begin{proof}
First notice that the dg operad $\Omega \mathcal{L}ie^*$ is augmented, that is, there is a morphism of dg operads $\varepsilon: \Omega \mathcal{L}ie^* \longrightarrow \I$. Now we consider the following commutative square 
\[
\begin{tikzcd}[column sep=3pc,row sep=3pc]
\mathcal{L}ie^* \arrow[r,"\iota "] \arrow[d,"\mathrm{id}_{\mathcal{L}ie^*}",swap] 
&\Omega \mathcal{L}ie^* \arrow[d,"\varepsilon"]\\
\mathcal{L}ie^* \arrow[r,"\epsilon_{*}\iota"]
&\I~,
\end{tikzcd}
\]
where $\varepsilon_{*}\iota$ is the push forward of the twisting morphism $\iota$ by $\varepsilon$. It induces a commutative square of Quillen adjunctions 
\[
\begin{tikzcd}[column sep=5pc,row sep=5pc]
\mathcal{C}_\infty\text{-}\mathsf{coalg} \arrow[r,"\widehat{\Omega}_\iota"{name=B},shift left=1.1ex] \arrow[d,"\mathrm{Coind}_{\varepsilon} "{name=SD},shift left=1.1ex ]
&\mathsf{dg}~\mathsf{abs}~\mathsf{Lie}\text{-}\mathsf{alg}^{\mathsf{comp}} \arrow[d,"\mathrm{Id}"{name=LDC},shift left=1.1ex ] \arrow[l,"\widehat{\mathrm{B}}_\iota"{name=C},,shift left=1.1ex]  \\
\mathsf{dg}\text{-}\mathsf{mod} \arrow[r,"\widehat{\Omega}_{\varepsilon_{*}\iota}"{name=CC},shift left=1.1ex]  \arrow[u,"\mathrm{Res}_{\varepsilon}"{name=LD},shift left=1.1ex ]
&\mathsf{dg}~\mathsf{abs}~\mathsf{Lie}\text{-}\mathsf{alg}^{\mathsf{comp}}~. \arrow[l,"\widehat{\mathrm{B}}_{\varepsilon_{*}\iota}"{name=CB},shift left=1.1ex] \arrow[u,"\mathrm{Id}"{name=TD},shift left=1.1ex] \arrow[phantom, from=SD, to=LD, , "\dashv" rotate=0] \arrow[phantom, from=C, to=B, , "\dashv" rotate=-90]\arrow[phantom, from=TD, to=LDC, , "\dashv" rotate=0] \arrow[phantom, from=CC, to=CB, , "\dashv" rotate=-90]
\end{tikzcd}
\] 

by the functionality of the model structures on complete algebras over cooperads of \cite[Theorem 10.32]{grignoulejay18}. One can easily see that the bottom adjunction is the free-forgetful adjunction. The category of dg modules is equivalent to the category of coalgebras over the trivial operad $I$ and the bottom complete bar-cobar is just the free-forgetful adjunction. Thus the model structure transferred along this adjunction has quasi-isomorphisms as weak equivalences. Finally, since the identity functor 

\[
\mathrm{Id}: \left(\mathsf{dg}~\mathsf{abs}~\mathsf{Lie}\text{-}\mathsf{alg}^{\mathsf{comp}}, \mathsf{W.eq} \right) \longrightarrow \left(\mathsf{dg}~\mathsf{abs}~\mathsf{Lie}\text{-}\mathsf{alg}^{\mathsf{comp}}, \mathsf{Q.iso} \right) 
\]
\vspace{0.2pc}

is a right Quillen functor and since every object is fibrant, then it sends weak equivalences of dg absolute Lie algebras to quasi-isomorphisms and therefore any weak equivalence of dg absolute Lie algebras is in particular a quasi-isomorphisms.
\end{proof}

\begin{Remark}
On the other hand, any filtered quasi-isomorphism of complete absolute dg Lie algebras is a weak equivalence in the first model structure by \cite[Theorem 10.26]{grignoulejay18}.
\end{Remark}

\begin{Remark}
The above proposition can be generalized \textit{mutatis mutandis} to the case of any twisting morphism $\alpha: \mathcal{C} \longrightarrow \mathcal{P}$ between a reduced cofibrant dg operad $\mathcal{P}$ and a conilpotent dg cooperad $\mathcal{C}$. In particular, this also holds for weak equivalences of dg absolute associative algebras.
\end{Remark}

\begin{Remark}
Model category structures on algebras over a conilpotent dg cooperads behave in an analogous way to what happens with model category structures on coalgebras over conilpotent dg cooperads, as described in \cite{DrummondColeHirsh14}.
\end{Remark}

Let us explain how to recover the results of \cite{campos2020lie} from Theorem \ref{thm: inclusion infini cat des Lie absolues}. The first thing to notice is that the $\mathrm{cAbs} \dashv \mathrm{Res}$ adjunction between complete dg Lie algebras and dg absolute Lie algebras 
\[
\begin{tikzcd}[column sep=7pc,row sep=3pc]
            \mathsf{dg}~\mathsf{abs}~\mathsf{Lie}\text{-}\mathsf{alg}^{\mathsf{comp}} \arrow[r,"\mathrm{Res}"{name=F}, shift left=1.1ex] 
           &\mathsf{dg}~\mathsf{Lie}\text{-}\mathsf{alg}~, \arrow[l, shift left=.75ex, "\mathrm{cAbs}"{name=U}]
            \arrow[phantom, from=F, to=U, , "\dashv" rotate=90]
\end{tikzcd}
\]

is a Quillen adjunction since $\mathrm{Res}$ preserves fibrations and, by Proposition \ref{thm: weak equiv inclues dans les quasi-isos}, it also preserves weak equivalences. A dg Lie algebra $\mathfrak{g}$ is \textit{homotopy complete} precisely when the derived unit of adjuction
\[
\mathbb{L}(\eta): \mathfrak{g} \qi \mathrm{Res}~\mathbb{L}\mathrm{cAbs}(\mathfrak{g}) 
\]
is a quasi-isomorphism. Indeed, this derived unit can be computed as 
\[
\mathbb{L}(\eta): \mathfrak{g} \longrightarrow \widehat{\Omega}_\iota \mathrm{B}_\pi \mathfrak{g}~, 
\]
where the right hand side term is indeed the completion of the standard cofibrant resolution $\Omega_\pi\mathrm{B}_\pi \mathfrak{g} \qi \mathfrak{g}$ given the classical bar-cobar construction, hence this map is a quasi-isomorphism if and only if this standard resolution is complete. See \cite{HarperHess} for more details on homotopy completeness. 

\medskip

Suppose that $\mathfrak{g}, \mathfrak{h}$ are two cofibrant dg Lie algebras such that $\mathfrak{U}(\mathfrak{g}) \simeq \mathfrak{U}(\mathfrak{h})$. Then $\widehat{\mathfrak{U}}(\mathrm{cAbs}(\mathfrak{g})) \simeq \widehat{\mathfrak{U}}(\mathrm{cAbs}(\mathfrak{h}))$, since they are cofibrant. Now, using the commutativity of the square of adjunctions in the proof of Proposition \ref{prop: carré qui commute} together with Theorem \ref{thm: inclusion infini cat des Lie absolues}, it follows that $\mathrm{cAbs}(\mathfrak{g})$ and $\mathrm{cAbs}(\mathfrak{h})$ are weakly-equivalent as complete dg absolute Lie algebras. So, in particular, when $\mathfrak{g}, \mathfrak{h}$ are also homotopy complete, it follows that they are quasi-isomorphic, and we recover the result of \cite{campos2020lie}. 

\begin{theorem}\label{thm: iso envelopantes Lie absolues}
Let $\mathfrak{g}$ and $\mathfrak{h}$ be two complete graded absolute Lie algebras. They are isomorphic as complete graded absolute Lie algebras if and only if their universal enveloping absolute algebras are isomorphic.
\end{theorem}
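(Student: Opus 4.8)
The plan is to prove the nontrivial (\textbf{if}) direction by transporting an isomorphism of universal enveloping absolute algebras into a zig-zag of weak equivalences of dg absolute Lie algebras, and then collapsing that zig-zag into a genuine isomorphism using the vanishing of the differential. The \textbf{only if} direction is immediate: since $\widehat{\mathfrak{U}}$ is a functor (Proposition \ref{prop: adjunction universal enveloping absolute algebra}), an isomorphism $\mathfrak{g} \cong \mathfrak{h}$ of complete graded absolute Lie algebras is sent to an isomorphism $\widehat{\mathfrak{U}}(\mathfrak{g}) \cong \widehat{\mathfrak{U}}(\mathfrak{h})$.

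For the converse, I would first observe that any isomorphism $\widehat{\mathfrak{U}}(\mathfrak{g}) \cong \widehat{\mathfrak{U}}(\mathfrak{h})$ is in particular a weak equivalence, hence provides a (trivial) zig-zag of weak equivalences of dg absolute associative algebras between $\widehat{\mathfrak{U}}(\mathfrak{g})$ and $\widehat{\mathfrak{U}}(\mathfrak{h})$. Applying Theorem \ref{thm: inclusion infini cat des Lie absolues} then yields a zig-zag
\[
\mathfrak{g} \lqi \mathfrak{k} \qi \mathfrak{h}
\]
of weak equivalences of dg absolute Lie algebras. Because this zig-zag lives in the model category of \emph{complete} dg absolute Lie algebras, the intermediate object $\mathfrak{k}$ is itself complete, so Proposition \ref{thm: weak equiv inclues dans les quasi-isos} applies and shows that both legs are quasi-isomorphisms.

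The remaining, and most delicate, step is to promote these quasi-isomorphisms into an isomorphism of graded absolute Lie algebras. For this I would show that the homology functor $\mathrm{H}_*$ lifts to a functor from dg absolute Lie algebras to graded absolute Lie algebras. The key computation is that $\mathrm{H}_*$ commutes with the monad $\widehat{\mathscr{S}}^c(\mathcal{L}ie^*)$: working over a field of characteristic zero, the spaces $\mathcal{L}ie^*(n)$ are finite dimensional and $\mathrm{Hom}_{\mathbb{S}_n}(\mathcal{L}ie^*(n),-)$ is exact (as $\kk[\mathbb{S}_n]$ is semisimple), arbitrary products of vector spaces are exact, and the Künneth formula gives $(\mathrm{H}_*\mathfrak{k})^{\otimes n} \cong \mathrm{H}_*(\mathfrak{k}^{\otimes n})$; combining these yields a natural isomorphism $\mathrm{H}_*\big(\widehat{\mathscr{S}}^c(\mathcal{L}ie^*)(\mathfrak{k})\big) \cong \widehat{\mathscr{S}}^c(\mathcal{L}ie^*)(\mathrm{H}_*\mathfrak{k})$. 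The induced map $\mathrm{H}_*(\gamma_{\mathfrak{k}})$ then equips $\mathrm{H}_*(\mathfrak{k})$ with a graded absolute Lie algebra structure, functorially in $\mathfrak{k}$, and any quasi-isomorphism of dg absolute Lie algebras is sent to an isomorphism of graded absolute Lie algebras.

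Applying $\mathrm{H}_*$ to the zig-zag above and using that $\mathfrak{g}$ and $\mathfrak{h}$ carry the zero differential, so that $\mathrm{H}_*(\mathfrak{g}) = \mathfrak{g}$ and $\mathrm{H}_*(\mathfrak{h}) = \mathfrak{h}$, I obtain isomorphisms of graded absolute Lie algebras
\[
\mathfrak{g} = \mathrm{H}_*(\mathfrak{g}) \xleftarrow{\ \cong\ } \mathrm{H}_*(\mathfrak{k}) \xrightarrow{\ \cong\ } \mathrm{H}_*(\mathfrak{h}) = \mathfrak{h}~,
\]
and hence an isomorphism $\mathfrak{g} \cong \mathfrak{h}$ of graded absolute Lie algebras. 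Since complete graded absolute Lie algebras form a full subcategory, this is automatically an isomorphism of complete graded absolute Lie algebras, which concludes the argument. The main obstacle is precisely the functoriality of $\mathrm{H}_*$ as a map of \emph{absolute} structures: it is here that the infinite products built into $\widehat{\mathscr{S}}^c$ could have obstructed commutation with homology, and the characteristic-zero and finite-dimensionality hypotheses are exactly what makes it go through.
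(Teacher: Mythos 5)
Your proposal is correct, and its skeleton coincides with the paper's proof: the paper also deduces the statement from Theorem \ref{thm: inclusion infini cat des Lie absolues} and Proposition \ref{thm: weak equiv inclues dans les quasi-isos}, ``considering the fact that any weak-equivalence is in particular a quasi-isomorphism, and that any quasi-isomorphism between graded modules with zero differential is an isomorphism.'' Where you genuinely add content is in the final step, and it is worth noting why. Read literally, the paper's closing phrase does not apply to the zig-zag produced by Theorem \ref{thm: inclusion infini cat des Lie absolues}: its legs are quasi-isomorphisms $\mathfrak{g} \lqi \mathfrak{k} \qi \mathfrak{h}$ whose intermediate object $\mathfrak{k}$ need not have zero differential, so neither leg is a quasi-isomorphism between graded modules with zero differential. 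What is implicitly required---and what you supply---is that $\mathrm{H}_*$ lifts to a functor from dg absolute Lie algebras to graded absolute Lie algebras, so that applying it to the zig-zag gives isomorphisms of \emph{absolute} structures $\mathfrak{g} = \mathrm{H}_*(\mathfrak{g}) \cong \mathrm{H}_*(\mathfrak{k}) \cong \mathrm{H}_*(\mathfrak{h}) = \mathfrak{h}$, rather than merely of graded modules. Your justification that homology commutes with the monad $\widehat{\mathscr{S}}^c(\mathcal{L}ie^*)$ (exactness of arbitrary products of vector spaces, semisimplicity of $\kk[\mathbb{S}_n]$ in characteristic zero, K\"unneth over a field) is exactly the right argument, and this is precisely where the infinite products could have caused trouble. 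The one point you assert rather than verify is that the resulting natural isomorphism $\mathrm{H}_*\bigl(\widehat{\mathscr{S}}^c(\mathcal{L}ie^*)(-)\bigr) \cong \widehat{\mathscr{S}}^c(\mathcal{L}ie^*)(\mathrm{H}_*(-))$ is compatible with the monad unit and multiplication; this is what makes $\mathrm{H}_*(\gamma_{\mathfrak{k}})$ a genuine absolute Lie algebra structure and $\mathrm{H}_*$ functorial on such morphisms. That check is routine (all the isomorphisms involved are canonical), but it is the substance of the step the paper compresses into one clause, so it should be written out. Modulo that verification, your write-up is a complete proof, and indeed more detailed than the paper's own.
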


\begin{proof}
Using Theorems \ref{thm: inclusion infini cat des Lie absolues}, we know that there is a zig-zag of weak equivalences between these complete graded absolute Lie algebras if and only if there is between their universal enveloping absolute algebras. Now, by Proposition \ref{thm: weak equiv inclues dans les quasi-isos}, we know that these weak equivalences are in particular quasi-isomorphisms. Finally, by Proposition \ref{Prop: the homology has an absolute structure}, we can pass to the homology and obtain a direct isomorphism in both cases.
\end{proof}

\begin{Remark}
Any nilpotent graded Lie algebra is a complete graded absolute Lie algebra by the analogue of Proposition \ref{prop: les nilpotentes sont des absolues} for absolute Lie algebras. Therefore this result is a generalization of \cite[Corollary 0.12]{campos2020lie}.
\end{Remark}

This approach also allows us to generalize the above theorems to the universal enveloping absolute $\mathcal{A}_\infty$-algebra of an absolute $\mathcal{L}_\infty$-algebra. We change our \textit{shifting conventions}, considering this time unshifted $\mathcal{C}_\infty$ or $\mathcal{A}_\infty$-coalgebras, and therefore shifting absolute $\mathcal{L}_\infty$-algebras and absolute $\mathcal{A}_\infty$-algebras.

\begin{theorem}\label{thm: inclusion infini cat des L infinies}
Let $\mathfrak{g}$ and $\mathfrak{h}$ be two complete absolute $\mathcal{L}_\infty$-algebras. There exists a zig-zag of weak equivalences of absolute $\mathcal{L}_\infty$-algebras
\[
\mathfrak{g} \lqi \cdot \qi \mathfrak{h}
\]
if and only if there exists a zig-zag of weak equivalences of absolute $\mathcal{A}_\infty$-algebras
\[
\widehat{\mathfrak{U}}_\infty(\mathfrak{g}) \lqi \cdot \qi \widehat{\mathfrak{U}}_\infty(\mathfrak{h})  ~.
\]
\end{theorem}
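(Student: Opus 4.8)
The plan is to reproduce, \emph{mutatis mutandis}, the proof of Theorem~\ref{thm: inclusion infini cat des Lie absolues}, transporting the coalgebraic comparison of Theorem~\ref{thm: cited thm about C infinity} across a homotopical duality square. First I would record the analogue, in the present shifting conventions, of the commuting square of Quillen adjunctions of Proposition~\ref{Cor: commutative of the square of absolute Lie}. Applying Proposition~\ref{prop: compatibility conditions} to the commuting square of (co)operads
\[
\begin{tikzcd}[column sep=3pc,row sep=3pc]
\mathcal{A}ss^* \arrow[r,"\iota"] \arrow[d,"\mathcal{S}kew^*",swap]
&\Omega \mathcal{A}ss^* \arrow[d,"\varphi"]\\
\mathcal{L}ie^* \arrow[r,"\iota"]
&\Omega \mathcal{L}ie^*
\end{tikzcd}
\]
produces a square relating $\mathcal{A}_\infty$-coalgebras, $\mathcal{C}_\infty$-coalgebras, complete absolute $\mathcal{A}_\infty$-algebras and complete absolute $\mathcal{L}_\infty$-algebras, in which the vertical left adjoints (pointing upwards) are $\mathrm{Res}_\varphi$ and $\widehat{\mathfrak{U}}_\infty$, and in which the horizontal complete Bar--Cobar adjunctions relative to $\iota$ are Quillen equivalences. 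In particular the left adjoints commute: $\widehat{\mathfrak{U}}_\infty \circ \widehat{\Omega}_\iota \cong \widehat{\Omega}_\iota \circ \mathrm{Res}_\varphi$.

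Then I would argue at the level of homotopy categories, using that two objects are linked by a zig-zag of weak equivalences exactly when they become isomorphic in the homotopy category. Because every absolute algebra is fibrant (fibrations are epimorphisms) and every coalgebra is cofibrant (cofibrations are monomorphisms), the functors $\widehat{\mathrm{B}}_\iota$ compute their own total right derived functors and, being right adjoints of Quillen equivalences, induce equivalences of homotopy categories. Thus $\mathfrak{g}$ and $\mathfrak{h}$ are linked by a zig-zag of weak equivalences if and only if $\widehat{\mathrm{B}}_\iota \mathfrak{g}$ and $\widehat{\mathrm{B}}_\iota \mathfrak{h}$ are linked by a zig-zag of quasi-isomorphisms of $\mathcal{C}_\infty$-coalgebras; by Theorem~\ref{thm: cited thm about C infinity} this is equivalent to $\mathrm{Res}_\varphi \widehat{\mathrm{B}}_\iota \mathfrak{g}$ and $\mathrm{Res}_\varphi \widehat{\mathrm{B}}_\iota \mathfrak{h}$ being so linked as $\mathcal{A}_\infty$-coalgebras, hence, after applying the equivalence $\widehat{\Omega}_\iota$ and the commutation of left adjoints, to $\widehat{\mathfrak{U}}_\infty \widehat{\Omega}_\iota \widehat{\mathrm{B}}_\iota \mathfrak{g}$ and $\widehat{\mathfrak{U}}_\infty \widehat{\Omega}_\iota \widehat{\mathrm{B}}_\iota \mathfrak{h}$ being linked in the absolute $\mathcal{A}_\infty$ world. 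Here $\widehat{\Omega}_\iota \widehat{\mathrm{B}}_\iota \mathfrak{g}$ is nothing but the canonical cofibrant resolution of $\mathfrak{g}$.

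The hard part will be the final identification: passing from $\widehat{\mathfrak{U}}_\infty$ of the resolution $\widehat{\Omega}_\iota \widehat{\mathrm{B}}_\iota \mathfrak{g}$ to $\widehat{\mathfrak{U}}_\infty(\mathfrak{g})$ itself. This amounts to showing that $\widehat{\mathfrak{U}}_\infty$ sends the counit weak equivalence $\widehat{\Omega}_\iota \widehat{\mathrm{B}}_\iota \mathfrak{g} \qi \mathfrak{g}$ to a weak equivalence of absolute $\mathcal{A}_\infty$-algebras. Since $\widehat{\mathfrak{U}}_\infty$ is only a left Quillen functor, and weak equivalences of absolute algebras are strictly finer than quasi-isomorphisms (Proposition~\ref{thm: weak equiv inclues dans les quasi-isos} provides only one inclusion), this does not follow formally. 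I would establish it by proving that $\widehat{\mathfrak{U}}_\infty$ preserves \emph{all} weak equivalences, through a homotopical Poincaré--Birkhoff--Witt comparison: a natural quasi-isomorphism $\widehat{\mathrm{B}}_\iota \widehat{\mathfrak{U}}_\infty \mathfrak{g} \qi \mathrm{Res}_\varphi \widehat{\mathrm{B}}_\iota \mathfrak{g}$ of $\mathcal{A}_\infty$-coalgebras, the absolute analogue of the Cartan--Eilenberg identification of the bar construction of an enveloping algebra. As $\mathrm{Res}_\varphi$ neither changes the underlying complex nor detects anything beyond quasi-isomorphisms, this comparison forces $\widehat{\mathfrak{U}}_\infty f$ to be a weak equivalence precisely when $f$ is, so that the chain of equivalences above closes on the nose and yields the stated bi-implication.
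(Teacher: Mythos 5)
Your first two paragraphs are, in outline, exactly the paper's own proof: the paper likewise produces the commuting square of Quillen adjunctions from Proposition~\ref{prop: compatibility conditions} (with the technical difference that it works with the cofibrant operads $\Omega\mathrm{B}\mathcal{C}om$ and $\Omega\mathrm{B}\mathcal{A}ss$ and first transports Theorem~\ref{thm: cited thm about C infinity} along the Quillen equivalences induced by operad quasi-isomorphisms such as $\Omega\mathrm{B}\mathcal{C}om \qi \Omega s\mathcal{L}ie^*$ --- a model-comparison step you elide, and which is needed because Theorem~\ref{thm: cited thm about C infinity} is stated for particular models of $\mathcal{C}_\infty$- and $\mathcal{A}_\infty$-coalgebras), and then concludes from that theorem, the horizontal Quillen equivalences, and the commutation of left adjoints $\widehat{\mathfrak{U}}_\infty \widehat{\Omega}_\iota \cong \widehat{\Omega}_\iota \mathrm{Res}$.

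The divergence is your final paragraph: the paper's proof contains no counterpart of it and simply declares the argument finished after the formal chain. Your diagnosis of what is still missing is correct: the formal chain only controls $\widehat{\mathfrak{U}}_\infty\widehat{\Omega}_\iota\widehat{\mathrm{B}}_\iota\mathfrak{g}$, i.e.\ the \emph{derived} enveloping functor, and to reach the underived $\widehat{\mathfrak{U}}_\infty(\mathfrak{g})$ of the statement one must know that $\widehat{\mathfrak{U}}_\infty$ sends the counit $\widehat{\Omega}_\iota\widehat{\mathrm{B}}_\iota\mathfrak{g} \qi \mathfrak{g}$ --- a weak equivalence whose target is in general not cofibrant --- to a weak equivalence; Ken Brown's lemma does not apply, Proposition~\ref{prop: weak equiv de L infinie absolues sont des quasi-isos} gives only one inclusion, and both directions of the biconditional require this (the converse direction must convert $\widehat{\mathrm{B}}_\iota\widehat{\mathfrak{U}}_\infty\mathfrak{g}$ into $\mathrm{Res}\,\widehat{\mathrm{B}}_\iota\mathfrak{g}$ before Theorem~\ref{thm: cited thm about C infinity} can be invoked). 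However, your proposal does not actually close this gap: the ``homotopical Poincar\'e--Birkhoff--Witt'' quasi-isomorphism comparing $\mathrm{Res}_\varphi\widehat{\mathrm{B}}_\iota\mathfrak{g}$ with $\widehat{\mathrm{B}}_\iota\widehat{\mathfrak{U}}_\infty\mathfrak{g}$ is precisely the hard, non-formal content (it is the absolute analogue of the Cartan--Eilenberg comparison underlying \cite{campos2020lie}, and a completion/filtration argument is needed to prove it in the absolute setting), and you assert it rather than prove it; note also that the canonical Beck--Chevalley map runs $\mathrm{Res}_\varphi\widehat{\mathrm{B}}_\iota \to \widehat{\mathrm{B}}_\iota\widehat{\mathfrak{U}}_\infty$, opposite to the direction you wrote. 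Granting that lemma, your deduction that $\widehat{\mathfrak{U}}_\infty$ preserves and reflects weak equivalences, and hence the bi-implication, is sound; as submitted, though, the proposal is the paper's argument plus a correctly identified but unproven key lemma, so it is incomplete exactly at its one genuinely new step --- a step on which, it should be said, the paper's own two-sentence conclusion is also silent.
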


\begin{proof}
We consider the dg operad $\Omega \mathrm{B} \mathcal{C}om$. Since it is a cofibrant resolution for the operad $\mathcal{C}om$, there exists a quasi-isomorphism of dg operads $f: \Omega \mathrm{B} \mathcal{C}om \qi \Omega s\mathcal{L}ie^*$. Therefore there is a Quillen equivalence between dg $\Omega \mathrm{B} \mathcal{C}om$-coalgebras and $\mathcal{C}_\infty$-coalgebras. Likewise, there is a Quillen equivalence between dg $\Omega \mathrm{B} \mathcal{A}ss$-coalgebras and $\mathcal{A}_\infty$-coalgebras. We consider the adjunction 
\[
\begin{tikzcd}[column sep=7pc,row sep=3pc]
           \mathsf{dg}~\Omega \mathrm{B} \mathcal{C}om\text{-}\mathsf{coalg} \arrow[r,"\mathrm{Res}_{\rho}"{name=F}, shift left=1.1ex] 
           &\mathsf{dg}~\Omega \mathrm{B} \mathcal{A}ss\text{-}\mathsf{coalg}~. \arrow[l, shift left=.75ex, "\mathrm{Coind}_{\rho}"{name=U}]
            \arrow[phantom, from=F, to=U, , "\dashv" rotate=-90]
\end{tikzcd}
\]

induced by the morphism of dg operads $\rho: \Omega \mathrm{B} \mathcal{A}ss \longrightarrow \Omega \mathrm{B} \mathcal{C}om$. Two dg $\Omega \mathrm{B} \mathcal{C}om$-coalgebras $C_1$ and $C_2$ are linked by a zig-zag of quasi-isomorphisms if and only if $\mathrm{Res}_{\rho}(C_1)$ and $\mathrm{Res}_{\rho}(C_2)$ are linked by a zig-zag of quasi-isomorphisms of dg $\Omega \mathrm{B} \mathcal{A}ss$-coalgebras. Using the commutative square 

\[
\begin{tikzcd}[column sep=5pc,row sep=5pc]
\mathsf{dg}~\Omega \mathrm{B} \mathcal{A}ss\text{-}\mathsf{coalg} \arrow[r,"\widehat{\Omega}_\iota"{name=B},shift left=1.1ex] \arrow[d,"\mathrm{Coind}_{\rho} "{name=SD},shift left=1.1ex ]
&\mathsf{abs}~\mathcal{A}_\infty\text{-}\mathsf{alg}^{\mathsf{comp}} \arrow[d,"\mathrm{Skew}"{name=LDC},shift left=1.1ex ] \arrow[l,"\widehat{\mathrm{B}}_\iota"{name=C},,shift left=1.1ex]  \\
\mathsf{dg}~\Omega \mathrm{B} \mathcal{C}om\text{-}\mathsf{coalg} \arrow[r,"\widehat{\Omega}_\iota "{name=CC},shift left=1.1ex]  \arrow[u,"\mathrm{Res}_{\rho}"{name=LD},shift left=1.1ex ]
&\mathsf{abs}~\mathcal{L}_\infty\text{-}\mathsf{alg}^{\mathsf{comp}} \arrow[l,"\widehat{\mathrm{B}}_\iota"{name=CB},shift left=1.1ex] \arrow[u,"\widehat{\mathfrak{U}}"{name=TD},shift left=1.1ex] \arrow[phantom, from=SD, to=LD, , "\dashv" rotate=0] \arrow[phantom, from=C, to=B, , "\dashv" rotate=-90]\arrow[phantom, from=TD, to=LDC, , "\dashv" rotate=0] \arrow[phantom, from=CC, to=CB, , "\dashv" rotate=-90]
\end{tikzcd}
\] 

and the fact that the horizontal adjunction are Quillen equivalences concludes the proof.
\end{proof}

\begin{Proposition}\label{prop: weak equiv de L infinie absolues sont des quasi-isos}
Let $f: \mathfrak{g} \qi \mathfrak{h}$ be a weak equivalence of complete absolute $\mathcal{L}_\infty$-algebras. It is in particular a quasi-isomorphism.
\end{Proposition}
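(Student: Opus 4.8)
The plan is to mirror exactly the strategy used in the proof of Proposition \ref{thm: weak equiv inclues dans les quasi-isos} for the dg absolute Lie case, transporting it to the $\mathcal{L}_\infty$ setting. The key observation is that the dg operad $\Omega s\mathcal{L}ie^*$ (governing shifted $\mathcal{C}_\infty$-coalgebras, hence complete absolute $\mathcal{L}_\infty$-algebras after the Bar-Cobar adjunction) is augmented: there is a morphism of dg operads $\varepsilon: \Omega s\mathcal{L}ie^* \longrightarrow \I$ onto the trivial operad. First I would record this augmentation and form the commutative square of twisting morphisms relating $\iota: s\mathcal{L}ie^* \longrightarrow \Omega s\mathcal{L}ie^*$ with the pushed-forward twisting morphism $\varepsilon_{*}\iota: s\mathcal{L}ie^* \longrightarrow \I$, exactly as in the Lie case.

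Next I would invoke Proposition \ref{prop: compatibility conditions} to turn this commuting square of twisting morphisms into a commuting square of Quillen adjunctions. The bottom adjunction, induced by the augmentation onto $\I$, is the free-forgetful adjunction between dg modules and complete absolute $\mathcal{L}_\infty$-algebras. Since the category of dg modules is (trivially) the category of coalgebras over the trivial operad $\I$, one can transfer its model structure along the complete Bar-Cobar adjunction using \cite[Section 10]{grignoulejay18}. This produces a second model structure on complete absolute $\mathcal{L}_\infty$-algebras in which the weak-equivalences are precisely the quasi-isomorphisms.

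The crux is then the identity-functor comparison. I would argue that the identity functor
\[
\mathrm{Id}: \left(\mathsf{abs}~\mathcal{L}_\infty\text{-}\mathsf{alg}^{\mathsf{comp}}, \mathcal{W}\right) \longrightarrow \left(\mathsf{abs}~\mathcal{L}_\infty\text{-}\mathsf{alg}^{\mathsf{comp}}, \mathcal{Q}uasi\text{-}isos\right)
\]
is a right Quillen functor, coming from the right-hand vertical adjunction of the square. Since every object in these transferred model structures is fibrant, a right Quillen functor preserves all weak-equivalences, not merely those between fibrant objects. Hence any weak-equivalence $f: \mathfrak{g} \qi \mathfrak{h}$ in the canonical model structure $\mathcal{W}$ is sent by $\mathrm{Id}$ to a quasi-isomorphism, which is the claim.

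The main obstacle I anticipate is verifying cleanly that the bottom adjunction really is the free-forgetful adjunction and that the resulting transferred model structure has quasi-isomorphisms as its weak-equivalences; this rests on correctly identifying $\widehat{\Omega}_{\varepsilon_{*}\iota}$ and $\widehat{\mathrm{B}}_{\varepsilon_{*}\iota}$ with the free and forgetful functors and on checking the hypotheses of the transfer theorem of \cite{grignoulejay18} in this unshifted-convention setting. Once that identification is in place, the fibrancy-of-every-object argument is formal, exactly as in Proposition \ref{thm: weak equiv inclues dans les quasi-isos}, and the proof concludes. I would also remark that the same argument applies \emph{mutatis mutandis} to absolute $\mathcal{A}_\infty$-algebras, since $\Omega s\mathcal{A}ss^*$ is likewise augmented.
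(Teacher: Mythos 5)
Your proposal is correct and coincides with the paper's own proof: the paper simply states that the arguments of Proposition \ref{thm: weak equiv inclues dans les quasi-isos} apply verbatim in this situation, and what you have written is precisely that argument (augmentation $\varepsilon$ to $\I$, the commuting square of twisting morphisms, Proposition \ref{prop: compatibility conditions}, identification of the bottom adjunction with the free-forgetful one, and the right-Quillen identity functor combined with fibrancy of every object) transported to the $\mathcal{L}_\infty$ setting. The only cosmetic difference is that you work with $\Omega s\mathcal{L}ie^*$ while the paper's surrounding results use the quasi-isomorphic cofibrant operad $\Omega \mathrm{B}\mathcal{C}om$, which changes nothing in the argument.
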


\begin{proof}
The arguments are the same as in the proof of Theorem \ref{thm: weak equiv inclues dans les quasi-isos}.
\end{proof}

\begin{Definition}[Minimal absolute $\mathcal{L}_\infty$-algebra]
Let $(\mathfrak{g},\gamma_\mathfrak{g},d_\mathfrak{g})$ be an absolute $\mathcal{L}_\infty$-algebra. It is \textit{minimal} if the differential $d_\mathfrak{g}$ is equal to zero.
\end{Definition}

\begin{theorem}\label{thm: isos envelopantes absolues de L infinies}
Let $\mathfrak{g}$ and $\mathfrak{h}$ be two complete minimal absolute $\mathcal{L}_\infty$-algebras. If their universal enveloping absolute $\mathcal{A}_\infty$-algebras are weakly equivalent, there exists an $\infty$-isomorphism of $\mathcal{L}_\infty$-algebras between them. 
\end{theorem}

\begin{proof}
By Theorem \ref{thm: inclusion infini cat des L infinies}, there exists a zig-zag of weak equivalences of absolute $\mathcal{L}_\infty$-algebras between the two. Now, since by Proposition \ref{prop: weak equiv de L infinie absolues sont des quasi-isos}, any weak equivalence is a quasi-isomorphism, the aforementioned zig-zag is in particular a zig-zag of quasi-isomorphisms of absolute $\mathcal{L}_\infty$-algebras. By applying the restriction functor to $\mathcal{L}_\infty$-algebras to this zig-zag, we get a zig-zag of quasi-isomorphisms of $\mathcal{L}_\infty$-algebras. The existence of such a zig-zag is equivalent to the existence of a direct $\infty$-quasi-isomorphism, which in this case, since both algebras have zero differential, is an $\infty$-isomorphism. See \cite[Section 10.2.2]{LodayVallette12} for more details on $\infty$-(quasi)-isomorphisms. 
\end{proof}

\begin{Example}[Arity-wise nilpotent $\mathcal{L}_\infty$-algebras]
Nilpotent $\mathcal{L}_\infty$-algebras in the sense of \cite{Getzler09} are particular examples of absolute $\mathcal{L}_\infty$-algebras. Therefore the above theorem applies to \textit{minimal} nilpotent $\mathcal{L}_\infty$-algebras without any degree restriction.
\end{Example}

\begin{Remark}
The analogues of Theorems \ref{thm: inclusion infini cat des Lie absolues} and \ref{thm: inclusion infini cat des L infinies} should also hold when we replace the categories of absolute Lie/$\mathcal{L}_\infty$-algebras by their \textit{curved} counterparts. Indeed, using \cite[Remark 3.14]{lucio2022curved}, one should get analogue statements as in \cite{campos2020lie} concerning the deformation complexes of unital $\mathcal{C}_\infty$-coalgebras and unital $\mathcal{A}_\infty$-coalgebras. Then applying the same formalism is straightforward.
\end{Remark}

\bibliographystyle{alpha}
\bibliography{bibe}

\end{document}